\documentclass[11pt]{article}

\usepackage[T1]{fontenc}
\usepackage{lmodern}
\usepackage{amsmath,amssymb,amsthm,mathtools}
\usepackage{enumitem}
\usepackage{microtype}
\usepackage[hidelinks]{hyperref}

\theoremstyle{plain}
\newtheorem{theorem}{Theorem}[section]
\newtheorem*{maintheorem}{Main Theorem}
\newtheorem{proposition}[theorem]{Proposition}
\newtheorem{lemma}[theorem]{Lemma}

\theoremstyle{definition}
\newtheorem{definition}[theorem]{Definition}
\newtheorem{convention}[theorem]{Convention}
\theoremstyle{remark}
\newtheorem{remark}[theorem]{Remark}
\numberwithin{equation}{section}

\newcommand{\Z}{\mathbb Z}
\newcommand{\Q}{\mathbb Q}

\newcommand{\Zp}{\mathbb Z_p}
\newcommand{\Qp}{\mathbb Q_p}
\newcommand{\Fitt}{\operatorname{Fitt}}
\newcommand{\Isol}{\operatorname{Isol}}
\newcommand{\tor}{\operatorname{tor}}
\newcommand{\Ker}{\operatorname{Ker}}
\newcommand{\im}{\operatorname{Im}}

\newcommand{\spec}{\operatorname{spec}}

\title{The Conjugacy Problem for Recursively Presented Virtually Soluble Groups of Finite Pr\"ufer Rank}
\author{M. Brescia \and E. Ingrosso \and M. Trombetti}
\date{}

\usepackage{xcolor}

\begin{document}
\maketitle

\begin{abstract}
We solve a long-standing conjugacy problem for finitely presented soluble groups of finite Pr\"ufer rank, recorded as Problem~9.52 in the Kourovka Notebook, in the stronger setting of recursively presented finitely generated virtually soluble groups. Robinson proved that, in every recursively presented finitely generated virtually soluble minimax group, conjugacy to any fixed element is decidable, while explicitly lefting open whether the procedure could be made uniform in the target element. We establish this uniformity. More specifically, we prove that every recursively presented finitely generated virtually soluble group of finite Pr\"ufer rank has decidable Conjugacy Problem.
\end{abstract}

\section{Introduction}

The Word and Conjugacy Problems are two of the basic decision problems in combinatorial group theory. A solution of the Conjugacy Problem always gives a solution of the Word Problem, since an element is conjugate to the identity if and only if it is the identity. The converse fails even for finitely presented groups (see \cite{Bokut67}), and Collins \cite{Collins69} even showed that the Conjugacy Problem of a finitely presented group can realize arbitrary recursively enumerable degrees of unsolvability. {The solvability of the Conjugacy Problem also need not pass to finite extensions, as shown by Collins and Miller \cite{CollinsMiller1977}.}

{Within soluble group theory, finitely generated minimax groups form a natural extension of the class of polycyclic groups: their defining series may have quasicyclic factors in addition to cyclic factors. The Conjugacy Problem is decidable for virtually polycyclic groups, by the conjugacy-separability results of Remeslennikov and Formanek \cite{Remeslennikov1969,Formanek1976}. However, conjugacy separability cannot provide a general solution beyond the polycyclic case: Wehrfritz \cite{Wehrfritz1973} constructed a finitely presented soluble minimax group which is not conjugacy separable. On the other hand, virtually torsion-free minimax groups admit faithful rational linear representations \cite{KL20}, providing a natural starting point for a different approach.}

A finitely generated group is \emph{recursively presented} if, on a fixed finite generating set, it has a recursively enumerable set of defining relators. For soluble groups of finite {Pr\"ufer} rank, the Word Problem is much better behaved. Cannonito and Robinson \cite{CR84} proved that a finitely generated soluble group of finite {Pr\"ufer} rank has decidable Word Problem if and only if it is recursively presented. Robinson later proved that, in a recursively presented finitely generated virtually soluble minimax group $G$, if $g$ is a fixed element of $G$, one can decide whether an input element is conjugate to $g$ (see {\cite{Robinson1986} and }\cite[Theorem~9.3.5]{LR04}). In his argument, auxiliary data depending on the fixed $g$ may be incorporated into the resulting algorithm without being computed uniformly from $g$. When $g$ is allowed to vary as part of the input, these data can no longer be treated as fixed constants. {Robinson explicitly stated that the question of a uniform algorithm was open \cite{Robinson1986}.} The uniformity problem is therefore to show that all target-dependent information required by the decision procedure can itself be obtained effectively from the target. This is the issue behind Problem~9.52 of the Kourovka Notebook \cite{Kourovka21} for finitely presented soluble groups of finite rank. Our argument yields the following slightly stronger statement, allowing recursive presentations and virtually soluble groups.

\begin{maintheorem}
Let $G$ be a finitely generated virtually soluble group of finite {Pr\"ufer} rank, equipped with a fixed recursive presentation on a finite generating set. For this fixed group $G$, there exists an algorithm which, on input two words representing elements $x,y\in G$, decides whether there exists $g\in G$ such that $x^g=y$.
\end{maintheorem}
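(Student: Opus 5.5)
The plan is to reduce to a uniform orbit problem for a finitely generated group acting on a $\Q$-vector space, or on a module over a localization $\Z[1/m]$, and then to solve that problem with effective linear-algebraic and arithmetic-group methods.

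\textbf{Step 1: finite-index reductions and effective structure.} By Cannonito--Robinson \cite{CR84}, the Word Problem of $G$ is decidable. Since $G$ is fixed, we may hard-code a finite amount of structural data into the algorithm. Concretely, we fix a normal subgroup $N$ of finite index that is soluble, and we fix a finite set $\pi$ of primes with the following property: the torsion radical $T=\tor(N)$ is a $\pi$-group of finite rank, and $N/T$ is torsion-free, nilpotent-by-abelian-by-finite, and embeds in $GL_n(\Z[1/m])$ (as in \cite{KL20}). For finitely generated soluble groups of finite rank, $T$ is a direct product of finitely many Pr\"ufer-type pieces and is Chernikov. The problem $x\sim_G y$ reduces to finitely many problems of the form ``$x^{t}\sim_N y$'', one for each coset representative $t$. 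For these we need a version with the target varying, i.e.\ deciding $x\sim_N y$ uniformly in both $x$ and $y$.

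\textbf{Step 2: the torsion-free linear quotient.} In $\bar N=N/T\le GL_n(\Z[1/m])$, the question whether $\bar x$ and $\bar y$ are conjugate becomes an orbit problem: does there exist $g\in\bar N$ with $g\bar x g^{-1}=\bar y$? I would use the fact that $\bar N$ is a finitely generated soluble $S$-arithmetic-type group: its Zariski closure is computable from the generators, and $\bar N$ has finite index in the $\Z[1/m]$-points of that closure, or at least is commensurable in a computable way. Then the Grunewald--Segal algorithms for orbit problems of $S$-arithmetic groups acting rationally decide this, and they also compute generators of the centralizer $C_{\bar N}(\bar x)$ uniformly from $\bar x$. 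This uniform computation of the centralizer is precisely the ``target-dependent data'' that Robinson left as a constant.

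\textbf{Step 3: lifting through the Chernikov radical.} Given some $g$ with $\bar x^{g}=\bar y$, we replace $x$ by $x^g$. It then remains to decide whether there exist $c\in C_{\bar N}(\bar y)$ (lifted) and $t\in T$ with $x^{ct}=y$. Since $x^{c}y^{-1}\in T$, this is a twisted-conjugacy or cohomology question in the Chernikov group $T$, acted on by the finitely generated group $C$ that we computed. We handle it via the finite-rank structure of $T$: $T/T^{\circ}$ is finite, and the divisible part $T^\circ$ is a finite direct sum of $\Z_{p^\infty}$ that embeds in $(\Q_p/\Z_p)^r$ with $C$ acting through $GL_r(\Z_p)$. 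The condition then becomes the solvability of finitely many equations $(x\text{-action}-1)u=v$ in $(\Qp/\Zp)^r$, up to the $C$-orbit. Modulo the finite quotient $C/C\cap$(kernel on a sufficiently deep congruence level), this is decidable. That quotient is finite and computable because the relevant $p$-power bound depends effectively on the determinants of $x$-action$-1$ restricted to the torsion-free part.

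\textbf{Main obstacle.} The hardest step is Step 3 combined with making the centralizer computation of Step 2 effective in $\bar N$ rather than only in its arithmetic hull. Elements of $T$ must be represented by words, and the $C$-orbit on $T$ is infinite. The key point is to bound, computably in $x$ and $y$, a finite level $p^k$ such that conjugacy can be detected in $N/T[p^k]$-type quotients. I would first try to obtain this bound from the index of $\im(\phi_x-1)$ in the Tate module, which is computable from the matrix of the action of $x$.
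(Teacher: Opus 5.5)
Your Step~2 rests on a false structural claim, and that step is where the content of the theorem lies. A finitely generated torsion-free soluble minimax group need not be commensurable with an $S$-arithmetic group. Take $\Gamma=\Z[1/6]\rtimes\langle t\rangle$, with $t$ acting as multiplication by $3/2$. It is generated by $1$ and $t$, since $\Z[3/2,2/3]=\Z[1/6]$, and it sits in $GL_2(\Z[1/6])$ as the matrices $\left(\begin{smallmatrix}(3/2)^k&a\\0&1\end{smallmatrix}\right)$. Its Zariski closure is $\mathbb{G}_a\rtimes\mathbb{G}_m$, whose $\Z[1/6]$-points $\Z[1/6]\rtimes\{\pm2^a3^b\}$ contain $\Gamma$ with infinite index. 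No other $\Q$-group helps either. Hirsch length $2$ and Fitting subgroup $\Z[1/6]$ would force $S=\{2,3\}$ and a torus acting on $\mathbb{G}_a$ through a non-trivial $\Q$-character. Such a torus contains a split torus whose $S$-units already have rank $2$, a contradiction. Orbit algorithms for an $S$-arithmetic overgroup do not decide orbits of an infinite-index subgroup, so Grunewald--Segal does not apply. Consistently with this, the paper points out that even the virtually torsion-free case was not covered by earlier results.

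Second, centralizers in these groups are generally not finitely generated: in $\Z[1/p]\rtimes_p\Z$ the centralizer of $1\in\Z[1/p]$ is $\Z[1/p]$. So ``computing generators of $C_{\bar N}(\bar x)$'' is impossible as stated. Step~3 also fails, since it treats $C$ as a finitely generated group and passes to a finite congruence quotient of its action on $T$. Here is a concrete example. Let $G$ be $H_3(\Z[1/p])\rtimes\langle t\rangle$, with $t=\operatorname{diag}(p,1,p)$, modulo the central subgroup $E_{13}(\Z)$. Then $T=E_{13}(\Z[1/p])/E_{13}(\Z)\cong C_{p^\infty}$ is central. For $x=E_{12}(1)$, both $x$ and $C_{\bar N}(\bar x)=\Z[1/p]^2$ act trivially on $T$, so every congruence quotient of the action is trivial and your equations $(x\text{-action}-1)u=v$ have no solution for $v\neq0$. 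Nevertheless $x$ is conjugate to $xv$ for every $v\in T$, via $E_{23}(b)$ with $b$ of unbounded $p$-denominator.

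What matters is the crossed homomorphism $c\mapsto x^{-1}x^c$ on the non-finitely generated part of the centralizer. Its image can be an infinite subgroup, and deciding membership in such images uniformly in $x$ is the real problem; no finite level $p^k$ read off from the action of $x$ captures it. The paper never presents that part by generators. It goes up to the cover of Theorem~\ref{thm:KL} instead of down modulo $T$. It computes finitely generated centralizers only in the virtually abelian quotient $H/N_0$ (Lemma~\ref{lem:virtabelconj}). It encodes the nilpotent part by $\Lambda_i(x)$ through \textup{(I2)}--\textup{(I4)}. It then decides membership in the image $I_i=\delta_i(L)$ by the local--global criterion of Lemma~\ref{lem:ratpadicmembership}, together with the uniform $p$-adic procedure of Lemma~\ref{lem:padicdecision}. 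Your proposal contains nothing that replaces this image-membership machinery.
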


{The theorem is uniform in the two input elements for each fixed group $G$; no uniformity in the recursive presentation of $G$ is claimed. Once conjugacy is decidable, a conjugating element can also be found, when one exists, by enumerating words and using the Word Problem.}

\smallskip

The result already includes all recursively presented virtually torsion-free virtually soluble minimax groups. Beyond the virtually polycyclic setting, solvability of conjugacy was previously known for particular families, including the Baumslag--Solitar groups covered by Anshel and Stebe \cite{AnshelStebe1974} and certain HNN extensions of finitely generated abelian groups treated by Sokolov \cite{Sokolov2005}. These special cases do not cover arbitrary virtually torsion-free minimax groups. In the virtually torsion-free case of our proof, one may take the cover to be the identity $H=G$. Hence $K=K_0=1$, $J_i=W_i=0$ in the notation below, and $E=H/K_0=G$. The conjugacy-lifting algorithm of Sections~\ref{sec:3}--\ref{sec:5} then proves the result directly, without the final reduction in Section~\ref{sec:6}. For a general minimax group, by contrast, conjugate elements may have chosen lifts that are not conjugate in $H$. Keeping track of the kernel throughout the lifting procedure and passing from $E$ to $G$ therefore requires additional work.

The proof proceeds by passing to the virtually torsion-free minimax cover $H\twoheadrightarrow G$ supplied by Kropholler and Lorensen \cite{KL20}, and then solving a sequence of lifting problems. Inside $H$ we choose a torsion-free nilpotent normal subgroup $N_0$ such that $H/N_0$ is virtually abelian. Conjugacy in this quotient can be decided by integral linear algebra. Once conjugacy has been settled modulo $N_0$, it remains to determine whether a conjugacy found in the quotient can be lifted to the whole group. We do this successively along a central series of $N_0$, correcting the conjugating element one layer at a time. At each stage, the remaining obstruction lies in an abelian minimax group and is expressed as an image-membership problem for a homomorphism, or, in its natural general form, as a derivation equation. We decide this membership problem by passing to the rational completion and to the tensor $p$-completions $X_p$. In this way, each lifting step is reduced to rational linear algebra, effective $p$-adic decision procedures, and finite calculations in nilpotent groups.

\smallskip

The layout of the paper is as follows. Section~\ref{sec:2} collects the structural and computational tools used throughout the argument, including the finite-rank structure of the groups involved, rational and tensor $p$-completions, coordinate systems for torsion-free nilpotent minimax groups, and the required effective $p$-adic computations. Section~\ref{sec:3} fixes the virtually torsion-free cover of $G$ and makes effective the subgroup, coordinate, and membership data attached to the nilpotent subgroup $N_0$ and its successive abelian factors. Section~\ref{sec:4} develops the effective kernel and image computations needed in the lifting argument, both in the rational completion and in the tensor $p$-completions. Section~\ref{sec:5} first decides conjugacy in the virtually abelian quotient and then lifts a conjugacy successively through the fixed series of the nilpotent normal subgroup, using the results of Section~\ref{sec:4} to decide the obstruction at each stage. Finally, Section~\ref{sec:6} transfers the resulting decision procedure from the virtually torsion-free minimax cover back to the original group and proves the Main Theorem.

\section{Preliminaries}\label{sec:2}

We collect here the structural and computational results used throughout the paper.  Proposition~\ref{prop:fatr} relates the finite-rank hypothesis to the minimax condition for finitely generated soluble groups.  We then recall rational and tensor $p$-completions of nilpotent minimax groups.  Lemma~\ref{lem:tensor} records the functoriality and exactness properties of tensor $p$-completion that will be used repeatedly, while Lemma~\ref{lem:coordinates} provides a fixed coordinate system for the rational completion of a torsion-free nilpotent minimax group, together with rational polynomials describing multiplication and powers.

For torsion-free abelian minimax groups, we give an explicit description of their tensor $p$-completions in Lemma~\ref{lem:abelian-q-completion} and an effective lifting procedure in Lemma~\ref{lem:factorlift}.  The $p$-adic calculations require exact arithmetic in finitely generated subfields of $\Qp$: Lemma~\ref{lem:field} provides this arithmetic, and Lemma~\ref{lem:additive} gives the corresponding effective linear operations on subgroups of finite-dimensional $\Qp$-vector spaces. Finally, Theorem~\ref{thm:KL} supplies the virtually torsion-free cover of $G$ on which the subsequent constructions are carried out.

\smallskip

A group $G$ has \textnormal(\emph{Pr\"ufer\textnormal) rank $r$} if every finitely generated subgroup of $G$ can be generated by at most $r$ elements, and $r$ is the smallest integer with this property; it has \emph{finite \textnormal(Pr\"ufer\textnormal) rank} if such an $r$ exists.  For an abelian group~$A$, write $r_0(A)$ for its torsion-free rank and
\[
 r_p(A)=\dim_{\mathbf F_p}A[p],
 \qquad
 A[p]=\{a\in A:pa=0\}.
\]
Following \cite[Section~5.1]{LR04}, its total and reduced ranks are
\[
 r_{\rm tot}(A)=r_0(A)+\sum_p r_p(A),
 \qquad
 \widehat r(A)=r_0(A)+\sup_p r_p(A).
\]
For abelian groups, finite Pr\"ufer rank is equivalent to $\widehat r(A)<\infty$.  Finite total rank is stronger in general.  A soluble group is a \emph{FATR-group} if it has a finite series whose factors are abelian groups of finite total rank.

For a prime $p$, write $C_{p^\infty}=\Z[1/p]/\Z$ for the quasicyclic $p$-group.  A group is \emph{minimax} if it has a finite series whose factors are finite, infinite cyclic, or quasicyclic.  We shall also use the standard term \emph{rational group} for a group isomorphic to a subgroup of the additive group of rational numbers $(\Q,+)$, which of course is not minimax. The following standard equivalence for finitely generated soluble groups is recalled in \cite[Section~9.3]{LR04}.

\begin{proposition}\label{prop:fatr}
For a finitely generated soluble group $S$, the following are equivalent:
\begin{enumerate}[label=\textup{(\roman*)}]
\item $S$ has finite Pr\"ufer rank\textnormal;
\item $S$ is minimax\textnormal;
\item $S$ is an FATR-group.
\end{enumerate}
\end{proposition}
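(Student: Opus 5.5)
We prove the cycle of implications (ii)$\Rightarrow$(iii)$\Rightarrow$(i)$\Rightarrow$(ii). The first two are elementary, and the substance lies in (i)$\Rightarrow$(ii).

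For (ii)$\Rightarrow$(iii), take a minimax series of $S$ and intersect it with the derived series of $S$. By Zassenhaus/Schreier refinement, this gives a series whose factors are sections of finite, infinite cyclic or quasicyclic groups, hence abelian with finite total rank. Since $S$ is soluble, the refined series is finite with abelian factors, so $S$ is FATR. For (iii)$\Rightarrow$(i), note that an abelian group $A$ of finite total rank satisfies $\widehat r(A)\le r_{\rm tot}(A)<\infty$, so it has finite Pr\"ufer rank. Finite Pr\"ufer rank passes to extensions, since $r(G)\le r(N)+r(G/N)$: a finitely generated subgroup $X$ is generated by at most $r(G/N)$ elements modulo the finitely generated subgroup $X\cap N$ of $N$. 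Induction on the length of the series then gives (i).

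For (i)$\Rightarrow$(ii), which is the main obstacle, we follow the standard route recalled in \cite[Section~9.3]{LR04}.

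\textbf{Step 1.} A soluble group of finite Pr\"ufer rank has a series of finite length whose factors are torsion-free abelian of finite rank or periodic abelian of finite rank. Each factor is a subgroup of a finite-dimensional $\Q$-space, or has finite $p$-rank for all $p$.

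\textbf{Step 2.} Argue by induction on the derived length. Let $A$ be the last nontrivial term of the derived series. Then $S/A$ is finitely generated soluble of finite rank, hence minimax by induction, and it suffices to show that $A$ is minimax. Now $A$ is an abelian normal subgroup of finite Pr\"ufer rank, and a $\Z Q$-module for $Q=S/A$ in the appropriate sense. Since $S/A$ is finitely presented modulo finite data this is not automatic, so one should reduce, via Hall's theorem on finitely generated modules over polycyclic-type rings, or via a Robinson-type theorem that soluble groups of finite rank which are finitely generated are minimax.

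\textbf{Step 3.} Show that $A$ has only finitely many primes in its spectrum, in both the torsion part and the divisibility of the torsion-free part. The key input is that a finitely generated soluble group of finite rank is virtually nilpotent-by-abelian and residually finite-type arguments. Concretely, if infinitely many primes $p$ occurred, one would construct quotients of $S$ with unbounded $p$-sections, or an infinitely generated subgroup, contradicting finite generation together with finite rank. With finitely many primes, each finite-rank abelian factor with $\pi$-finite spectrum is minimax: it is finitely generated modulo a $\pi$-divisible part, and the reduced parts of the $p$-torsion are finite.

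This spectral finiteness is the genuinely hard step; the closure properties are routine. The plan is to quote it from \cite[Section~9.3]{LR04} rather than reprove it, which the paper's phrasing suggests is intended.
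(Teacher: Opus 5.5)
The paper does not prove this proposition; it only recalls the equivalence from \cite[Section~9.3]{LR04}. So deferring the hard implication (i)$\Rightarrow$(ii) to that source is exactly what the paper does. Your direct proof of (ii)$\Rightarrow$(iii), refining a minimax series by the derived series, is sound.

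Your proof of (iii)$\Rightarrow$(i) needs one correction. You call $X\cap N$ ``the finitely generated subgroup'' of $N$, but it need not be finitely generated: in $X=\Z[1/2]\rtimes\Z\cong BS(1,2)$ with $N=\Z[1/2]$ one has $X\cap N=N$. The inequality $r(G)\le r(N)+r(G/N)$ still holds, argued as follows. Let $X=\langle x_1,\ldots,x_k\rangle$, and let $y_1,\ldots,y_s$ with $s\le r(G/N)$ generate $X$ modulo $X\cap N$. Write $x_i=w_i(y_1,\ldots,y_s)n_i$ with $n_i\in X\cap N$. Then $\langle n_1,\ldots,n_k\rangle$ is a finitely generated subgroup of $N$, hence generated by at most $r(N)$ elements. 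These elements together with the $y_j$ generate $X$.

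For (i)$\Rightarrow$(ii), be clear that your Steps~2--3 are not an argument.
\begin{itemize}
\item The second option in Step~2, ``a Robinson-type theorem that finitely generated soluble groups of finite rank are minimax'', is the very implication being proved.
\item The first option, Hall's theory of finitely generated modules, needs $S/A$ polycyclic-by-finite. Here $S/A$ is in general only minimax. For $S/A=BS(1,p)$ the ring $\Z(S/A)$ is not Noetherian, because $BS(1,p)$ fails the maximal condition on subgroups.
\item Moreover $S/A$ need not be finitely presented, so nothing makes $A$ a finitely generated Noetherian $\Z(S/A)$-module.
\item Step~3 (``construct quotients with unbounded $p$-sections'') names no mechanism for producing such quotients.
\end{itemize}
Hence the entire content of your (i)$\Rightarrow$(ii) is the citation. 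That is acceptable only because it is also all the paper offers, and the heuristic sketch is better replaced by the precise reference.
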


In order to carry out our work, we need to recall the notions of rational and $p$-adic completions. First note that, throughout, groups are written multiplicatively, while abelian factors are written additively. Moreover, we use the following standard conventions
\[
 a^b=b^{-1}ab,
 \qquad
 [a,b]=a^{-1}b^{-1}ab=a^{-1}a^b
\] for $a,b$ elements of a group. Let $U$ be a torsion-free nilpotent group. By  \cite[Theorem~2.1.1]{LR04}, there exists a nilpotent group $U_{\Q}$ containing $U$ such that roots can be extracted uniquely in $U_{\Q}$ and every element of $U_{\Q}$ has a positive power lying in $U$. Moreover, $U_{\Q}$ is unique up to isomorphism. We call $U_{\Q}$ the \emph{rational completion}, or \emph{Mal'cev completion}, of $U$. Thus, $U_{\Q}$ is torsion-free and radicable. Note that if $U$ is also abelian, then $U_{\Q}\simeq U\otimes_{\Z}\Q$. More generally, for an arbitrary abelian group $A$, we write $A_{\Q}:=A\otimes_{\Z}\Q$. Thus, when $A$ is torsion-free, this notation agrees with the rational completion defined above. If $B\leq A$, then, since $\Q$ is a flat $\Z$-module, the natural map $B_{\Q}\to A_{\Q}$ is injective, and we identify $B_{\Q}$ with its image in $A_{\Q}$.

If $U$ is torsion-free nilpotent and $Y\leq U$, the \emph{rational closure} of $Y$ in $U_{\Q}$ is
\begin{equation}
\overline Y^{\Q}
=
\Isol_{U_{\Q}}(Y)
=
\{z\in U_{\Q}:z^m\in Y\text{ for some }m\geq1\}.
\label{eq:4.1}
\end{equation}
Under the natural embedding $Y_{\Q}\leq U_{\Q}$, one has
$Y_{\Q}=\overline Y^{\Q}$. A subgroup $R\leq U_{\Q}$ is called
\emph{rational} if $R=\Isol_{U_{\Q}}(R)$.

\smallskip

We now define the $p$-adic objects which will be used below. Fix a prime $p$. Let $\Qp$ denote the field of $p$-adic numbers. Every non-zero element $x\in\Qp$ has a unique expansion
$x=\sum_{n=N}^{\infty}a_np^n$, where $N\in\Z$, $a_n\in\{0,1,\ldots,p-1\}$, and $a_N\neq0$. The $p$-adic valuation of $x$ is $v_p(x)=N$, and we put $v_p(0)=+\infty$. In particular, $v_p(p)=1$. The $p$-adic topology on $\Qp$ is the topology for which the sets $x+p^n\Zp$, with $n\in\Z$, form a neighbourhood basis at $x$, where $\Zp=\{z\in\Qp:v_p(z)\geq0\}$. Thus a sequence $(x_k)$ converges to $x$ if and only if $v_p(x_k-x)\to+\infty$. We also write $\Z_{(p)}=\{a/b\in\Q:p\nmid b\}$. Under the natural embedding $\Q\leq\Qp$, one has $\Z_{(p)}=\Q\cap\Zp$, and both $\Z$ and $\Z_{(p)}$ are dense in $\Zp$.

For a finitely generated group $J$, its {\it pro-$p$ completion} is
$\widehat J_p=\varprojlim_R J/R$, where $R$ runs over the normal subgroups of $J$ such that $J/R$ is a finite $p$-group. For each such $R$, let $K_R$ denote the kernel of the canonical projection $\widehat J_p\to J/R$. We endow $\widehat J_p$ with the inverse-limit topology, in which the subgroups $K_R$ form a neighbourhood basis of the identity, and hence the cosets $xK_R$, with $x\in\widehat J_p$, form a basis of open sets. Now let $X$ be nilpotent of finite torsion-free rank, and let $\mathcal H(X)$ be the directed set of its finitely generated subgroups.  If $J\leq K$ belong to $\mathcal H(X)$, the natural map $\widehat J_p\to\widehat K_p$ is injective by \cite[Lemma~5.1(ii)]{KL20}.  We therefore  define the \emph{tensor $p$-completion} of $X$ by putting
\begin{equation}
 X_p=\varinjlim_{J\in\mathcal H(X)}\widehat J_p.
 \label{eq:tensor-p-definition}
\end{equation}
This is the definition from \cite[Section~5.1]{KL20}.  We endow $X_p$ with the direct-limit topology: a subset $U\subseteq X_p$ is open exactly when $U\cap\widehat J_p$ is open in $\widehat J_p$ for every $J\in\mathcal H(X)$.   If~$X$ is finitely generated, then $X_p=\widehat X_p$.  If~$X$ is torsion-free abelian, then
\begin{equation}
 X_p\simeq X\otimes_{\Z}\Zp.
 \label{eq:abelian-tensor-p}
\end{equation}
Thus, for example, $(\Z^d)_p=\Zp^d$ and
$\Z[1/p]_p=\Z[1/p]\otimes_{\mathbb Z}\Zp=\Qp$. 


For nilpotent minimax groups, tensor $p$-completion has the following functoriality and exactness properties.

\begin{lemma}\label{lem:tensor}
Let $X$ be a nilpotent minimax group, and let $A\leq X$.  For every prime $p$ the following hold.
\begin{enumerate}[label=\textup{(\roman*)}]
\item The natural homomorphism $A_p\to X_p$ is injective.
\item If $A\trianglelefteq X$, then $X_p/A_p\simeq (X/A)_p$.
\item Every homomorphism $f:X\to Y$ between nilpotent groups of finite torsion-free rank extends uniquely to a continuous homomorphism $f_p:X_p\to Y_p$.
\end{enumerate}
\end{lemma}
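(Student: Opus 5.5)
The plan is to reduce all three statements to the finitely generated case, where they are standard facts about pro-$p$ completions of finitely generated nilpotent groups. We then pass to the direct limit over $\mathcal H(X)$, using that $X_p=\varinjlim_{J\in\mathcal H(X)}\widehat J_p$ with injective transition maps \cite[Lemma~5.1(ii)]{KL20}. The basic input is this: for a finitely generated nilpotent group $J$, every subgroup $B\le J$ is closed in the pro-$p$ topology up to its $p'$-part. More precisely, the pro-$p$ topology on $J$ induces on $B$ the pro-$p$ topology of $B$, because in a finitely generated nilpotent group every subgroup of $p$-power index in $B$ contains $B\cap R$ for some normal $R$ of $p$-power index in $J$. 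Consequently $\widehat B_p\to\widehat J_p$ is injective. In addition, pro-$p$ completion is exact on short exact sequences of finitely generated nilpotent groups.

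For (i), note that every finitely generated subgroup $B$ of $A$ lies in the finitely generated subgroup $J=B$ of $X$. Since $\mathcal H(A)\subseteq\mathcal H(X)$ is a subsystem, the map $A_p=\varinjlim_{B\in\mathcal H(A)}\widehat B_p\to\varinjlim_{J\in\mathcal H(X)}\widehat J_p$ is injective. Direct limits of injective maps along injective transition maps are injective: an element of $\widehat B_p$ dying in the colimit dies in some $\widehat J_p$ with $B\le J$, but $\widehat B_p\to\widehat J_p$ is injective by the fact above.

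For (ii), I would take $J\in\mathcal H(X)$ and consider the exact sequence $1\to J\cap A\to J\to JA/A\to1$. Here $J\cap A$ is finitely generated, because subgroups of finitely generated nilpotent groups are finitely generated. Exactness of pro-$p$ completion gives $\widehat J_p/\widehat{(J\cap A)}_p\simeq\widehat{(JA/A)}_p$. The subgroups $JA/A$ run cofinally through $\mathcal H(X/A)$, since any finitely generated subgroup of $X/A$ lifts to one of $X$. Moreover, $\varinjlim_J\widehat{(J\cap A)}_p=A_p$, since the $J\cap A$ are cofinal in $\mathcal H(A)$. Exactness of filtered direct limits then yields $X_p/A_p\simeq(X/A)_p$. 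Normality is used so that $A_p$ is normal, which follows from each $\widehat{(J\cap A)}_p$ being normal in $\widehat J_p$.

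For (iii), for each $J\in\mathcal H(X)$ the restriction $f|_J$ lands in $f(J)\in\mathcal H(Y)$. By the universal property of pro-$p$ completion, $f|_J$ induces a unique continuous homomorphism $\widehat J_p\to\widehat{f(J)}_p\to Y_p$. These maps are compatible with the transition maps, again by uniqueness, so they glue to $f_p$. The map $f_p$ is continuous for the direct-limit topology by definition, since its restriction to each $\widehat J_p$ is continuous. Uniqueness follows from the density of $J$ in $\widehat J_p$ for all $J$. Only the finitely generated case for $Y$ matters here, so finite torsion-free rank suffices.

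The main obstacle is the injectivity fact $\widehat B_p\hookrightarrow\widehat J_p$, together with exactness, for finitely generated nilpotent groups. I would either cite it from \cite[Lemma~5.1]{KL20} or prove it by induction on the nilpotency class, using that finitely generated nilpotent groups are subgroup separable in the pro-$p$ sense modulo $p'$-torsion. A secondary subtlety is checking that the direct-limit topology makes the isomorphism in (ii) topological, but only the algebraic isomorphism is needed.
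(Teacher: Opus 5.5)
Your proposal is correct. For (i) it is the paper's proof: injectivity of $\widehat B_p\to\widehat J_p$ for finitely generated nilpotent groups \cite[Lemma~5.1(ii)]{KL20}, then passage to the direct limit.

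For (ii) and (iii) the routes differ. The paper gives no argument there and cites \cite[Lemma~5.4(iii)]{KL20} and \cite[Proposition~5.3]{KL20}. You derive both from the finitely generated case:
\begin{itemize}
\item For (ii) you use exactness of pro-$p$ completion on short exact sequences of finitely generated nilpotent groups. Left exactness is exactly the induced-topology statement you quote; right exactness holds for all finitely generated groups. You combine this with finality of $J\mapsto J\cap A$ and $J\mapsto JA/A$ and exactness of filtered colimits.
\item For (iii) you glue the universal properties of the $\widehat J_p$.
\end{itemize}
The citation is shorter and inherits whatever framework KL20 set up. Your argument is self-contained modulo the finitely generated input, and it shows that (ii) holds for any nilpotent group of finite torsion-free rank, not only minimax ones.

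The one step you should spell out is uniqueness in (iii). Density of $J$ in $\widehat J_p$ gives uniqueness only once you control where a continuous extension $g$ sends $\widehat J_p$, inside a Hausdorff target.
\begin{itemize}
\item First show each $\widehat K_p$ is a closed subspace of $Y_p$ with its profinite topology. If $C\subseteq\widehat K_p$ is closed and $K'$ is arbitrary, choose $K''\supseteq K,K'$. Then $C$ and $\widehat{K'}_p$ are compact, hence closed, in $\widehat{K''}_p$. Also $\widehat{K'}_p\to\widehat{K''}_p$ is an embedding, being a continuous injection from a compact space to a Hausdorff one. So $C\cap\widehat{K'}_p$ is closed in $\widehat{K'}_p$, and therefore $C$ is closed in $Y_p$.
\item Next, $g^{-1}\bigl(\widehat{f(J)}_p\bigr)\cap\widehat J_p$ is closed and contains the dense subgroup $J$. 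Hence $g(\widehat J_p)\subseteq\widehat{f(J)}_p$.
\item Finally, $g|_{\widehat J_p}$ and $f_p|_{\widehat J_p}$ are then continuous maps into the profinite group $\widehat{f(J)}_p$ that agree on $J$, so they coincide.
\end{itemize}
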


\begin{proof}
Statement \textup{(i)} follows from the injectivity of the maps between pro-$p$ completions of finitely generated nilpotent subgroups (see \cite[Lemma~5.1(ii)]{KL20}), after passage to the direct limits defining tensor completion.  Statement \textup{(ii)} is \cite[Lemma~5.4(iii)]{KL20}, and \textup{(iii)} follows from \cite[Proposition~5.3]{KL20}.
\end{proof}

In view of Lemma~\ref{lem:tensor}\textup{(i)}, whenever $A\leq X$ are nilpotent minimax groups we shall identify $A_p$ with its image in $X_p$.

Let $U$ be a torsion-free nilpotent group, and let $V\leq U$. The {\it isolator} of~$V$ in $U$ is defined as follows:
$$
 \Isol_U(V)=\{u\in U: u^n\in V\text{ for some }n\geq1\}.
$$
A subgroup $V\le U$ is said {\it isolated} if it coincides with its isolator in $U$.

For a torsion-free nilpotent minimax group $U$, set
\[
 U_i=\Isol_U(\gamma_i(U)),\qquad i=1,\ldots,c+1,
\]
where $c$ is the nilpotency class and $U_{c+1}=1$. Now, \cite[Theorem~2.3.1(i)]{LR04} shows that the isolator of every subgroup of $U$ is a subgroup, and hence is isolated. In particular, since the subgroups $\gamma_i(U)$ are fully invariant in $U$, it follows directly from the definition of the isolator that each $U_i$ is characteristic in $U$. Moreover, \cite[Theorem~2.3.5]{LR04} gives
$$
 [\Isol_U(A),\Isol_U(B)]
 \leq \Isol_U([A,B])
$$
for all subgroups $A,B\leq U$.  Since
$[\gamma_i(U),\gamma_j(U)]\leq\gamma_{i+j}(U)$, we also obtain
\begin{equation}
 [U_i,U_j]\leq U_{i+j}.
 \label{eq:2.1}
\end{equation}
In particular,
$[U_i,U_i]\leq U_{2i}\leq U_{i+1}$, so $U_i/U_{i+1}$ is abelian.
Since $U_{i+1}$ is isolated in $U$, the quotient $U_i/U_{i+1}$ is
torsion-free.  Thus every $U_i/U_{i+1}$ is torsion-free
abelian of finite rank.

\medskip

The next result is fundamental and provides a coordinate system for the rational completion of  a torsion-free nilpotent minimax group. 

\begin{lemma}\label{lem:coordinates}
Let $U$ be a torsion-free nilpotent minimax group, and let $h$ be its Hirsch length. There exist a finitely generated subgroup $U^{\ast}\leq U$, elements $u_1,\ldots,u_h\in U^{\ast}$, and, for $k=1,\ldots,h$, polynomials
\begin{align*}
P_k&\in\Q[X_1,\ldots,X_h,Y_1,\ldots,Y_h],\\
R_k&\in\Q[X_1,\ldots,X_h,T],
\end{align*}
with the following properties\textnormal:
\begin{enumerate}[label=\textup{(\roman*)}]
\item $U=\Isol_U(U^{\ast})$.

\item Every element $x\in U^{\ast}$ has a unique expression $x=u_1^{a_1}\ldots u_h^{a_h}$, where
$$
\operatorname{cr}(x)=(a_1,\ldots,a_h)\in\Z^h
$$
is the \emph{coordinate vector} of $x$.

\item If $x,y\in U^{\ast}$, then
\begin{align}
\operatorname{cr}(xy)
&=
\bigl(
P_1(\operatorname{cr}(x),\operatorname{cr}(y)),\ldots,
P_h(\operatorname{cr}(x),\operatorname{cr}(y))
\bigr),
\label{eq:2.3}\\
\operatorname{cr}(x^n)
&=
\bigl(
R_1(\operatorname{cr}(x),n),\ldots,R_h(\operatorname{cr}(x),n)
\bigr)
\qquad (n\in\Z).
\label{eq:2.5}
\end{align}

\item Every element $x\in U_{\Q}$ has a unique expression
$$
x=u_1^{a_1}\ldots u_h^{a_h},
\qquad
a_1,\ldots,a_h\in\Q.
$$
With respect to these coordinates, the same polynomials $P_k$ give multiplication in $U_{\Q}$, while, for $\lambda\in\Q$, the polynomials $R_k$ give the coordinates of the $\lambda$-th power.
\end{enumerate}
\end{lemma}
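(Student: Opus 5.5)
The plan is to build everything from a Mal'cev basis of a suitable finitely generated subgroup, using the isolator series $U_i=\Isol_U(\gamma_i(U))$ introduced above. Each factor $U_i/U_{i+1}$ is torsion-free abelian of finite rank. Choose elements of $U_i$ whose images in $U_i/U_{i+1}$ form a maximal $\Z$-independent set. Their number is $r_0(U_i/U_{i+1})$, so the total over all $i$ is the Hirsch length $h$. Concatenate these lists in the order $i=1,\ldots,c$ and call the result $u_1,\ldots,u_h$.

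First I would show that $\langle u_1,\dots,u_h\rangle$ has isolator $U$. This goes by induction down the series: modulo $U_{i+1}$, a suitable power of any element of $U_i$ lies in the span of the chosen elements of layer $i$ times $U_{i+1}$. Combining this with \cite[Theorem~2.3.1]{LR04} gives $U=\Isol_U(\langle u_1,\ldots,u_h\rangle)$.

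Next I would enlarge this subgroup to a finitely generated $U^{\ast}$ with a genuine Mal'cev basis. The difficulty is that $\langle u_1,\dots,u_h\rangle\cap U_i$ need not be generated by the later $u_j$ together with $U_{i+1}$. The fix is to pass to $V=\langle u_1,\ldots,u_h\rangle$, a finitely generated torsion-free nilpotent group of Hirsch length $h$, whose isolator series has factors $V\cap U_i/V\cap U_{i+1}$ that are free abelian of rank $r_i$. I would then replace $u_1,\ldots,u_h$ by a basis adapted to this series, that is, lifts of $\Z$-bases of these factors. Set $U^{\ast}=V$. Since $V$ has finite index in its isolator in the relevant sense, (i) still holds. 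For (ii), existence and uniqueness of $x=u_1^{a_1}\cdots u_h^{a_h}$ with $a_j\in\Z$ follows by peeling off one factor at a time: the image of $x$ in the free abelian top factor determines $a_1,\ldots,a_{r_1}$, and one then recurses on $V\cap U_2$.

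For (iii) and (iv) I would invoke Hall's classical theorem on Mal'cev bases of finitely generated torsion-free nilpotent groups. It states that multiplication and powering in coordinates are given by rational polynomials $P_k$ and $R_k$, proved via the Hall–Petresco / Baker–Campbell–Hausdorff collection process. Here $P_k$ depends only on $X_j,Y_j$ with $j\le k$, with leading term $X_k+Y_k$. Passing to $U_{\Q}$, I would note that $V_{\Q}=U_{\Q}$, because every element of $U_{\Q}$ has a power in $U$ and every element of $U$ has a power in $V$, and by uniqueness of the completion \cite[Theorem~2.1.1]{LR04}. The map $\Q^h\to U_{\Q}$ given by $(a_j)\mapsto u_1^{a_1}\cdots u_h^{a_h}$, using unique rational powers, is a bijection; this is shown by the same layer-by-layer peeling with $\Q$-vector spaces as factors. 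The polynomial identities then transfer from $\Z^h$ to $\Q^h$ by Zariski density: both sides are polynomial in the coordinates on the completion, because $U_{\Q}$ is the $\Q$-points of a unipotent group, and they agree on $\Z^h$.

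The main obstacle is this final transfer, namely justifying that the group law on $U_{\Q}$ in these coordinates is polynomial at all. I would settle it by embedding $V$ into unitriangular $\Q$-matrices, which is possible for finitely generated torsion-free nilpotent groups. There rational powers are given by $\exp(\lambda\log)$, so all operations are polynomial in the entries and in $\lambda$, and the density argument goes through.
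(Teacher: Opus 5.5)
Your proposal is correct. For (i)--(iii) it follows the paper. There too $U^{\ast}$ is generated by lifts of $\Q$-bases of the factors $U_i/U_{i+1}$, and $U_i\le\Isol_U(U^{\ast})$ is obtained by descending induction. A Mal'cev basis is then re-chosen adapted to the central series $U_i\cap U^{\ast}$, which is your replacement step, and Hall's theorem is quoted as \cite[Theorem~4.9]{CMZ17}.

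The genuine difference is in (iv). The paper does not prove it. It observes that $U_{\Q}$ is also a Mal'cev completion of $U^{\ast}$ and then cites the $R$-completion construction of \cite[Section~4.3]{CMZ17} (Theorems~4.11 and~4.13), where the completion is built directly as $\Q^h$ with the law given by the same $P_k$ and $R_k$. You instead derive (iv) from (iii). Peeling along $\Isol_{U_{\Q}}(U^{\ast}\cap U_i)$ gives the bijection $\Q^h\to U_{\Q}$. A unitriangular embedding, with $g^\lambda=\exp(\lambda\log g)$, then lets you push the integer identities to rational points by density.

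Your route is more self-contained: it needs only the embedding of finitely generated torsion-free nilpotent groups in $UT_n(\Z)$ and uniqueness of Mal'cev completions. The citation is shorter and puts the lemma in the same framework the paper uses right afterwards for the $\Q_p$-completion $U_{\Q,p}$.

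Two points of execution. First, to make the density step airtight, do not try to show that $\operatorname{cr}$ is polynomial in the matrix entries. Instead put $u(\mathbf a)=\exp(a_1\log u_1)\cdots\exp(a_h\log u_h)$. The maps $u(\mathbf a)u(\mathbf b)$ and $u(P(\mathbf a,\mathbf b))$ are entrywise polynomial and agree on integer points, hence agree everywhere; the same holds for $\exp(\lambda\log u(\mathbf a))$ and $u(R(\mathbf a,\lambda))$. Injectivity of $u$ then gives the coordinate statements.

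Second, (i) for $U^{\ast}=V$ was already proved in your second paragraph, and changing the basis inside $V$ does not affect it. The appeal to $V$ having ``finite index in its isolator'' is misleading and should be dropped: $\Z\le\Z[1/p]$ has isolator $\Z[1/p]$ but infinite index.
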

\begin{proof}
For \textup{(i)}, for every $i$ choose a finite set $S_i\subseteq U_i$ whose images form a~\hbox{$\Q$-basis} of $(U_i/U_{i+1})\otimes_{\Z}\Q$, and put
$U^{\ast}=\langle S_1,\ldots, S_c\rangle$. Clearly, $U_i\leq\Isol_U(U^{\ast})$ for every $i$, and hence  $U=\Isol_U(U^{\ast})$. This proves (i).

\smallskip

For (ii), each $U_i\cap U^{\ast}$ is isolated in $U^{\ast}$, and hence
$$
U^{\ast}=U_1\cap U^{\ast}\geq U_2\cap U^{\ast}\geq\ldots\geq U_{c+1}\cap U^{\ast}=\{1\}
$$
is a central series of the finitely generated torsion-free
nilpotent group $U^{\ast}$ whose factors are torsion-free (and obviously finitely generated and abelian). It is therefore possible to find elements $u_1,\ldots,u_h\in U^{\ast}$ such that every element of
$U^{\ast}$ has a unique expression
$$
 u_1^{a_1}\ldots u_h^{a_h},
 \qquad
 a_1,\ldots,a_h\in\Z,
$$
and such that, for every $i$, the subgroup $U_i\cap U^{\ast}$ is
generated by a final segment of $u_1,\ldots,u_h$ (this is a Mal'cev basis as defined in \cite[Section~4.2.2]{CMZ17}). This proves~\textup{(ii)}.

\smallskip

For \textup{(iii)}, apply  \cite[Theorem~4.9]{CMZ17} to $u_1,\ldots,u_h$. It gives rational polynomials $P_k$ and $R_k$ such that multiplication and integral powers in $U^{\ast}$ are given by \eqref{eq:2.3} and \eqref{eq:2.5}. This proves \textup{(iii)}.

\smallskip

For \textup{(iv)}, note first that $U_{\Q}$ is also a Mal'cev completion of $U^{\ast}$, because $U=\Isol_U(U^{\ast})$. Hence the $R$-completion construction of \cite[Section~4.3]{CMZ17}, together with \cite[Theorems~4.11 and~4.13]{CMZ17}, gives the unique expressions
$$
x=u_1^{a_1}\ldots u_h^{a_h},
\qquad
a_1,\ldots,a_h\in\Q,
$$
for $x\in U_{\Q}$, with multiplication and rational powers given by the same polynomials $P_k$ and $R_k$. This proves \textup{(iv)}.
\end{proof}

Retain the notation of Lemma~\ref{lem:coordinates}. We introduce vector notation for the coordinate
polynomials. If $\mathbf a,\mathbf b$ are tuples of admissible
coordinates and $t$ is a scalar, put
\begin{align}
 P(\mathbf a,\mathbf b)
 &=
 \bigl(
 P_1(\mathbf a,\mathbf b),\ldots,
 P_h(\mathbf a,\mathbf b)
 \bigr),
 \label{eq:P-vector}\\
 R(\mathbf a,t)
 &=
 \bigl(
 R_1(\mathbf a,t),\ldots,R_h(\mathbf a,t)
 \bigr).
 \label{eq:R-vector}
\end{align}
Thus, for example, if $x,y\in U_{\Q}$ have coordinate vectors
$\mathbf a$ and $\mathbf b$, respectively, then $xy$ has coordinate
vector $P(\mathbf a,\mathbf b)$.

Now, fix a prime $p$. Since the coefficients of the polynomials
$P_k$ and $R_k$ belong to $\Q$, the same polynomials may be
evaluated on elements of $\Qp$. We define a multiplication on
$\Qp^h$ by $\mathbf a\ast\mathbf b=P(\mathbf a,\mathbf b)$. The group identities satisfied by the coordinate polynomials over
$\Q$ remain valid over~$\Qp$. Hence~$\Qp^h$, with this multiplication,
is a torsion-free nilpotent group. We denote it by $U_{\Q,p}$. (This is the $\mathbb Q_p$-completion of $U^\ast$ as described in \cite[Section~4.3]{CMZ17}.) Again the same polynomials $P_k$ give multiplication in $U_{\Qp}$, while, for $\lambda\in\Qp$, the polynomials $R_k$ give the coordinates of the $\lambda$-th power. We also extend the scope of the coordinate vectors to $\Qp$. 

In $U_{\Q,p}$, we can find a natural copy of the tensor $p$-completion of $U$. In fact, if $N$ is a
torsion-free nilpotent group of finite torsion-free rank, then its tensor $p$-completion $N_p$ is naturally isomorphic to the closure of
$N$ in its $\Qp$-completion (see the beginning of Section 5.1 of \cite{KL20}). Thus,
$$
 U_p=\overline{U}^{\,U_{\Q,p}}\leq U_{\mathbb Q,p},
$$ where the overline denotes the closure.

We need the following observation. First, conjugation is also given by the coordinate
polynomials. For $\mathbf a,\mathbf b\in\Qp^h$, put
\begin{equation}
 C(\mathbf a,\mathbf b)
 =
 P\bigl(R(\mathbf b,-1),P(\mathbf a,\mathbf b)\bigr).
 \label{eq:2.6}
\end{equation}
If $x$ and $y$ have coordinate vectors $\mathbf a$ and $\mathbf b$,
respectively, then $x^y=y^{-1}xy$ has coordinate vector
$C(\mathbf a,\mathbf b)$. 

Similarly, the action of every automorphism of $U$ on $U^\ast$ can be described in these coordinates by a polynomial map. Let
$\alpha\in\operatorname{Aut}(U)$, and suppose that the coordinate vector
\[
 \mathbf a^{(j)}
 :=
 \operatorname{cr}(\alpha(u_j))
 \in\Q^h
\]
is known for every $j=1,\ldots,h$. For integral
$\mathbf x=(x_1,\ldots,x_h)$ one has
\[
 \alpha\bigl(u_1^{x_1}\ldots u_h^{x_h}\bigr)
 =
 \alpha(u_1)^{x_1}\ldots\alpha(u_h)^{x_h}.
\]
The coordinate vector of $\alpha(u_j)^{x_j}$ is $R(\mathbf a^{(j)},x_j)$. Hence the coordinate vector of the whole product is obtained by
successive applications of $P$. Explicitly, define {\small
\begin{align}
 A_{\alpha}(\mathbf x)
 \!:=\!
 P\Bigl(
   \ldots
   P\bigl(
      P(
        R(\mathbf a^{(1)},x_1),
        R(\mathbf a^{(2)},x_2)
      ),
      R(\mathbf a^{(3)},x_3)
   \bigr)
   \ldots,
   R(\mathbf a^{(h)},x_h)
 \Bigr).
 \label{eq:external-aut}
\end{align}}
Then $A_{\alpha}(\mathbf x)$ is the coordinate vector of
$\alpha(u_1^{x_1}\ldots u_h^{x_h})$. 

Finally, let $g\in U_p$ have coordinate vector
$\mathbf a\in\Qp^h$, and let $z\in\Zp$. Choose integers
$n_k\in\Z$ such that $n_k\longrightarrow z$ in $\Zp$. For every $k$, the coordinate vector of $g^{n_k}$ is $R(\mathbf a,n_k)$. Since each $R_i$ is a polynomial over $\Qp$,
\[
 R(\mathbf a,n_k)\longrightarrow R(\mathbf a,z).
\]
Since $U_p$ is closed in $U_{\Q,p}$, the sequence $g^{n_k}$ converges
to an element of $U_p$. We define
\[
 g^z:=\lim_{k\to\infty}g^{n_k}.
\]
Its coordinate vector is $R(\mathbf a,z)$. In particular, the limit is
independent of the chosen sequence $(n_k)$. This is the meaning of a
$\Zp$-power throughout the paper. Moreover, continuous homomorphisms between tensor $p$-completions commute with $\Z_p$-powers, since, if $\varphi:U_p\to V_p$ is such a homomorphism, $g\in U_p$, $z\in\Z_p$, and $n_k\to z$ in $\Z_p$, then
$$
\varphi(g^z)  
=
\varphi\left(\lim_{k\to\infty}g^{n_k}\right)
=
\lim_{k\to\infty}\varphi(g)^{n_k}
=
\varphi(g)^z.
$$

\medskip

Let $B$ be a torsion-free abelian minimax group. Choose a free abelian subgroup $L\leq B$ with $r:=r_0(L)=r_0(B)$ such that $B/L$ is divisible. Then
\begin{equation}
 B/L\simeq\bigoplus_{q\in\spec(B)} C_{q^\infty}^{\,r_q},
 \label{eq:2.7}
\end{equation}
where $\spec(B)$ is a finite set of primes and each $r_q$ is a non-negative integer (see, for instance, \cite[Section~5.1]{LR04}). For each $q\in\spec(B)$, put $C_{q'}$ to be the $q'$-component of $B/L$, so that
\[
 B/L\simeq C_{q^\infty}^{\,r_q}\oplus C_{q'},
\]
and $C_{q'}$ has no non-trivial elements of $q$-power order. For each $q\in\spec(B)$ and $1\leq j\leq r_q$, choose a sequence $c_{q,j,n}$, $n\geq1$, in the $j$-th copy of $C_{q^\infty}$ such that $|c_{q,j,n}|=q^n$ and $q c_{q,j,n+1}=c_{q,j,n}$, and choose $b_{q,j,n}\in B$ whose image under this decomposition is $c_{q,j,n}$ in the $j$-th copy of $C_{q^\infty}$ and zero elsewhere. Then
\begin{equation}
 qb_{q,j,n+1}-b_{q,j,n}\in L.
 \label{eq:2.8}
\end{equation}
Now, define $v_{q,j,n}:=q^n b_{q,j,n}\in L$ for every $n\geq1$. Via the canonical embedding
\[
 L\longrightarrow L\otimes_{\Z}\Z_q,
 \qquad
 \ell\longmapsto \ell\otimes1,
\]
we regard each $v_{q,j,n}$ as an element of
$L\otimes_{\Z}\Z_q\simeq\Z_q^r$ (which is regarded with the product topology).

\begin{lemma}\label{lem:abelianpadic}
For every $j$, the sequence $\bigl(v_{q,j,n}\bigr)_{n\geq1}$ is Cauchy in
$L\otimes_{\Z}\Z_q$, and therefore converges to an element $v_{q,j}\in L\otimes_{\Z}\Z_q$. Moreover, for every $n\geq1$, we have $v_{q,j}-v_{q,j,n}\in q^n(L\otimes_{\Z}\Z_q)$.
\end{lemma}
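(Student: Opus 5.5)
The key point is that consecutive terms of the sequence differ by a multiple of $q^n$ in $L$. By \eqref{eq:2.8}, for every $n\geq1$ there is $\ell_n\in L$ with $qb_{q,j,n+1}=b_{q,j,n}+\ell_n$. Multiplying by $q^n$ gives
\[
 v_{q,j,n+1}-v_{q,j,n}=q^{n+1}b_{q,j,n+1}-q^nb_{q,j,n}=q^n\ell_n\in q^nL.
\]
Note that $\ell_n$ genuinely lies in $L$, so no divisibility in $B$ is being used; the whole computation takes place in the free abelian group $L$. I do not expect any step to be a real obstacle, since the argument is essentially this one identity.

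Next I transfer this to $L\otimes_{\Z}\Z_q$. Fix a $\Z$-basis of $L$, giving $L\otimes_{\Z}\Z_q\simeq\Z_q^r$ with $L$ embedded as $\Z^r$. Under this identification $q^nL$ maps into $q^n\Z_q^r$, and the subgroups $q^n\Z_q^r$, $n\geq1$, form a neighbourhood basis of $0$ for the product topology. For $m>n$ a telescoping sum gives
\[
 v_{q,j,m}-v_{q,j,n}=\sum_{k=n}^{m-1}q^k\ell_k\in q^n\Z_q^r .
\]
Hence the sequence is Cauchy. Since $\Z_q^r$ is complete, being a finite product of the complete spaces $\Z_q$, the limit $v_{q,j}$ exists.

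Finally, $q^n\Z_q^r$ is closed in $\Z_q^r$, because it is open and hence also closed in a topological group. The differences $v_{q,j,m}-v_{q,j,n}$ lie in $q^n\Z_q^r$ for all $m>n$. Letting $m\to\infty$ therefore gives $v_{q,j}-v_{q,j,n}\in q^n(L\otimes_{\Z}\Z_q)$, which is the final claim.

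The only point to check carefully is that the identification $L\otimes_{\Z}\Z_q\simeq\Z_q^r$ carries $q^n(L\otimes\Z_q)$ onto $q^n\Z_q^r$. This is immediate because the identification comes from a basis and is $\Z_q$-linear. The choice of basis is irrelevant to the topology.
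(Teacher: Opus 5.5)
Your proposal is correct and follows essentially the same route as the paper: use \eqref{eq:2.8} to get $v_{q,j,n+1}-v_{q,j,n}\in q^nL$, telescope to obtain $v_{q,j,m}-v_{q,j,n}\in q^n(L\otimes_{\Z}\Z_q)$ for $m>n$, and then let $m\to\infty$ using that $q^n(L\otimes_{\Z}\Z_q)$ is closed. The only addition is that you explicitly invoke the completeness of $\Z_q^r$ for convergence, which the paper leaves implicit.
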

\begin{proof}
By \eqref{eq:2.8}, $qb_{q,j,n+1}-b_{q,j,n}\in L$. Therefore
\begin{align*}
 v_{q,j,n+1}-v_{q,j,n}
 &=
 q^{n+1}b_{q,j,n+1}-q^n b_{q,j,n}\\
 &=
 q^n\bigl(qb_{q,j,n+1}-b_{q,j,n}\bigr)
 \in q^nL.
\end{align*}
After embedding $L$ in $L\otimes_{\Z}\Z_q$, this gives
\[
 v_{q,j,n+1}-v_{q,j,n}
 \in
 q^n(L\otimes_{\Z}\Z_q).
\]

Let $m>n$. Then
\[
 v_{q,j,m}-v_{q,j,n}
 =
 \sum_{r=n}^{m-1}
 \bigl(v_{q,j,r+1}-v_{q,j,r}\bigr).
\]
For every $r\geq n$,
\[
 v_{q,j,r+1}-v_{q,j,r}
 \in q^r(L\otimes_{\Z}\Z_q)
 \subseteq
 q^n(L\otimes_{\Z}\Z_q).
\]
Consequently
\[
 v_{q,j,m}-v_{q,j,n}
 \in
 q^n(L\otimes_{\Z}\Z_q).
\]
Thus, for every $N\geq1$ and every $m,n\geq N$,
\[
 v_{q,j,m}-v_{q,j,n}
 \in
 q^N(L\otimes_{\Z}\Z_q).
\]
This is precisely the required Cauchy condition. Since
$q^n(L\otimes_{\Z}\Z_q)$ is closed in $L\otimes_{\Z}\Z_q$, letting
$m\to\infty$ in
$$
v_{q,j,m}-v_{q,j,n}\in q^n(L\otimes_{\Z}\Z_q)
$$
gives
$$
v_{q,j}-v_{q,j,n}\in q^n(L\otimes_{\Z}\Z_q),
$$
as required.
\end{proof}

Before stating the next lemma, we fix the ambient $q$-adic vector
space in which all the groups involved will be compared. We start by noticing that
$$
L\otimes_{\Z}\Q
\simeq
B\otimes_{\Z}\Q,
$$
and we put
$$
V_q:=
(B\otimes_{\Z}\Q)\otimes_{\Q}\Q_q.
$$  Thus, $B\otimes_{\Z}\Q$ is the rational completion of $B$, while $V_q$ is the $q$-adic Mal'cev completion of $B$. Note that since $L\simeq\Z^r$, we have
$V_q\simeq\Q_q^r$.

The natural map
$$
B\longrightarrow V_q,
\qquad
b\longmapsto (b\otimes1)\otimes1,
$$
is injective, since $B$ is torsion-free.  There is also a natural injective map
$$
B\otimes_{\Z}\Z_q\longrightarrow V_q,\qquad
b\otimes z
\longmapsto
(b\otimes1)\otimes z.
$$ Thus $B$, $L\otimes_{\Z}\Z_q$, and
$B\otimes_{\Z}\Z_q$ are all to be regarded as subgroups of the same
ambient group $V_q$.  
Now, using the natural identification
$B_q\simeq B\otimes_{\Z}\Z_q$ already noticed in \eqref{eq:abelian-tensor-p}, we regard $B_q$ as a subgroup of $V_q$ through the embedding above. The next lemma describes $B_q$ as a subgroup of $V_q$.

\begin{lemma}\label{lem:abelian-q-completion}
We have
$B_q=
(L\otimes_{\Z}\Z_q)
+
\sum_{j=1}^{r_q}\Q_q v_{q,j}
$. If $q\notin\spec(B)$, then $r_q=0$, so that the sum is empty and
$B_q=L\otimes_{\Z}\Z_q$. Moreover, 
$B_q=\overline{B}^{\,V_q}$.
\end{lemma}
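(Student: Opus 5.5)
We will identify $B_q$ with the image of $B\otimes_{\Z}\Z_q$ in $V_q$, which is the $\Z_q$-submodule of $V_q$ generated by $B$. Write $M:=(L\otimes_{\Z}\Z_q)+\sum_{j}\Q_q v_{q,j}$. The plan is to show $B_q\subseteq M$ and $M\subseteq B_q$, and then to identify both with the closure $\overline B^{\,V_q}$.

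\emph{Step 1: $B\subseteq M$.} Let $b\in B$. By \eqref{eq:2.7}, the image of $b$ in $B/L$ decomposes as a $C_{q^\infty}^{r_q}$-part plus a $C_{q'}$-part.
\begin{itemize}[leftmargin=*]
\item The $q$-part is $\sum_j m_j c_{q,j,n}$ for some $n$ and integers $m_j$. So $b-\sum_j m_j b_{q,j,n}$ maps into $C_{q'}$.
\item Every element of $C_{q'}$ has order $s$ prime to $q$. Hence $s\bigl(b-\sum_j m_jb_{q,j,n}\bigr)\in L$, and since $s$ is a unit in $\Z_q$, the element $b-\sum_j m_jb_{q,j,n}$ lies in $L\otimes\Z_q$.
\item Finally $b_{q,j,n}=q^{-n}v_{q,j,n}$. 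By Lemma~\ref{lem:abelianpadic}, $v_{q,j,n}=v_{q,j}-w$ with $w\in q^n(L\otimes\Z_q)$. Therefore $b_{q,j,n}\in q^{-n}v_{q,j}+L\otimes\Z_q\subseteq M$.
\end{itemize}
Since $M$ is a $\Z_q$-submodule containing $B$, we get $B_q\subseteq M$.

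\emph{Step 2: $M\subseteq B_q$.} Clearly $L\otimes\Z_q\subseteq B_q$. It remains to show $q^{-n}v_{q,j}\in B_q$ for every $n$; since $\Q_q=\bigcup_n q^{-n}\Z_q$, this gives $\Q_qv_{q,j}\subseteq B_q$. We have
\[
q^{-n}v_{q,j}=b_{q,j,n}+q^{-n}(v_{q,j}-v_{q,j,n}),
\]
and by Lemma~\ref{lem:abelianpadic} the second term lies in $L\otimes\Z_q$. Hence $q^{-n}v_{q,j}\in B_q$.

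\emph{The case $q\notin\spec(B)$.} Here $r_q=0$, the sum is empty, and $B_q=L\otimes_{\Z}\Z_q$.

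\emph{Step 3: $B_q=\overline B^{\,V_q}$.} First, $M$ is closed in $V_q\simeq\Q_q^r$. Indeed, $W:=\sum_j\Q_qv_{q,j}$ is a $\Q_q$-subspace, and $M/W$ is the image of the compact module $L\otimes\Z_q$ in $V_q/W$, hence compact and closed there; so $M$, its full preimage, is closed. This gives $\overline B\subseteq M$. Conversely, $\overline B$ is a closed subgroup containing $B$, so it contains the closure $L\otimes\Z_q$ of $L$, because $\Z$ is dense in $\Z_q$. It also contains each $b_{q,j,n}$, and hence each limit
\[
q^{-n}v_{q,j}=\lim_m q^{m-n}b_{q,j,m}.
\]
For this limit, note $q^{m-n}b_{q,j,m}=q^{-n}v_{q,j,m}\to q^{-n}v_{q,j}$. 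Therefore $\overline B$ is closed under multiplication by $\Z_q$, since $\Z$ is dense in $\Z_q$ and $\overline B$ is a closed subgroup, and it contains $q^{-n}v_{q,j}$ for every $n$. It follows that $\overline B\supseteq M$.

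The main obstacle is the closedness of $M$ in Step 3, i.e.\ the claim that a compact $\Z_q$-lattice plus a subspace is closed. The quotient-by-$W$ argument above handles it.
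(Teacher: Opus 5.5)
Your proof is correct. Steps 1 and 2 match the paper's argument up to bookkeeping. The paper proves $B\le D_q$ by checking a generating set of $B$: namely $L$, the $b_{q,j,n}$, and the chain elements $b_{p,j,n}$ at the other primes $p\in\spec(B)$, which lie in $L\otimes_{\Z}\Z_q$ because $p$ is a $q$-adic unit. You instead argue elementwise through the splitting $C_{q^\infty}^{\,r_q}\oplus C_{q'}$ of $B/L$. This is the same idea, and it has the small advantage of needing no chains at primes other than $q$.

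The real difference is Step 3. The paper does not verify $B_q=\overline B^{\,V_q}$ directly. It quotes from \cite[Section~5.1]{KL20} that the tensor $q$-completion of a torsion-free nilpotent group is the closure of the group in its $q$-adic Mal'cev completion. It then invokes uniqueness of continuous extensions \cite[Proposition~5.3]{KL20} to conclude that this realization coincides with the image of $B\otimes_{\Z}\Z_q$ in $V_q$. You instead prove two things by hand: that $M$ is closed, being a compact lattice plus a finite-dimensional subspace; and that $\overline B$ is a $\Z_q$-module containing $L$ and every $q^{-n}v_{q,j}=\lim_m q^{m-n}b_{q,j,m}$.

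Your route is self-contained and elementary. It also sidesteps the comparison of two a priori different embeddings of $B_q$ into $V_q$, and it exhibits the closedness of $B_q$ in $V_q$ concretely. The paper's route is shorter given the citation. It also places the abelian case inside the general framework $U_p=\overline{U}^{\,U_{\Q,p}}$, which the paper relies on later for non-abelian $U$.
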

\begin{proof}
Put
$$
D_q:=
(L\otimes_{\Z}\Z_q)
+
\sum_{j=1}^{r_q}\Q_q v_{q,j}
\leq V_q.
$$ First,
$$
L\otimes_{\Z}\Z_q\leq B\otimes_{\Z}\Z_q=B_q,
$$
because $L\leq B$ and tensoring the injection $L\hookrightarrow B$
with $\Z_q$ preserves injectivity. Fix $1\leq j\leq r_q$.  We claim that
$$
q^{-m}v_{q,j}\in B_q
$$
for every $m\geq1$. By Lemma \ref{lem:abelianpadic} $v_{q,j,n}\longrightarrow v_{q,j}$ in the $q$-adic topology of $L\otimes_{\Z}\Z_q$. Hence, there exists an integer $N\geq m$ such that
$$
v_{q,j}-v_{q,j,N}
\in
q^m(L\otimes_{\Z}\Z_q).
$$
Thus there exists
$\varepsilon\in L\otimes_{\Z}\Z_q$ such that
$v_{q,j}-v_{q,j,N}=q^m\varepsilon$. Since
$v_{q,j,N}=q^N b_{q,j,N}$, and $q$ is invertible inside $V_q$, we obtain,
\begin{align*}
q^{-m}v_{q,j}
=
q^{-m}v_{q,j,N}+\varepsilon=
q^{N-m}b_{q,j,N}+\varepsilon.
\end{align*}
Now
$$
q^{N-m}b_{q,j,N}\in B\leq B_q
\quad\textnormal{
and
}\quad
\varepsilon\in L\otimes_{\Z}\Z_q\leq B_q.
$$
Therefore
$q^{-m}v_{q,j}\in B_q$, as claimed.

Let now $\lambda\in\Q_q$.  There exists $m\geq1$ such that
$q^m\lambda\in\Z_q$. Since $B_q=B\otimes_{\Z}\Z_q$ is a $\Z_q$-module,
$$
\lambda v_{q,j}
=
(q^m\lambda)(q^{-m}v_{q,j})
\in B_q.
$$
Thus
$\Q_qv_{q,j}\leq B_q$
for every $j$, and therefore $D_q\leq B_q$.

\smallskip

We now prove the converse inclusion
$B_q\leq D_q$. We first show that
$B\leq D_q$. Clearly,
$L\leq L\otimes_{\Z}\Z_q\leq D_q$. Fix $n\geq1$ and take $b_{q,j,n}$. Since $\bigl(v_{q,j,n}\bigr)_n$ converges to $v_{q,j}$, there exists $N\geq n$ such that
$$
v_{q,j}-v_{q,j,N}
\in
q^n(L\otimes_{\Z}\Z_q).
$$
Hence
$$
q^{-n}(v_{q,j}-v_{q,j,N})
\in
L\otimes_{\Z}\Z_q.
$$

On the other hand, the relations
$q(b_{q,j,s+1}+L)=b_{q,j,s}+L$
for $s\geq1$ imply, by iteration, that
$q^{N-n}b_{q,j,N}-b_{q,j,n}\in L$. Since
$v_{q,j,N}=q^N b_{q,j,N}$, we have
\begin{align*}
q^{-n}v_{q,j}
&=
q^{-n}v_{q,j,N}
+
q^{-n}(v_{q,j}-v_{q,j,N})\\
&=
q^{N-n}b_{q,j,N}
+
q^{-n}(v_{q,j}-v_{q,j,N}).
\end{align*}
Therefore
$$
b_{q,j,n}
=
q^{-n}v_{q,j}
-
\bigl(q^{N-n}b_{q,j,N}-b_{q,j,n}\bigr)
-
q^{-n}(v_{q,j}-v_{q,j,N}).
$$
The first term belongs to $\Q_qv_{q,j}$, while the second belongs to
$L$ and the third belongs to $L\otimes_{\Z}\Z_q$.  Hence
$b_{q,j,n}\in D_q$.

Now let $p\in\spec(B)$ with $p\neq q$.  By construction,
$p^n b_{p,j,n}\in L$. Since $p\neq q$, the integer $p$ is a unit in $\Z_q$, so $p^{-n}\in\Z_q$.
Consequently
$$
b_{p,j,n}
=
p^{-n}(p^n b_{p,j,n})
\in
L\otimes_{\Z}\Z_q
\leq D_q.
$$ Thus $L$ and all the chosen elements belong to $D_q$. Since $L$ and these elements generate~$B$, we obtain $B\leq D_q$.

Moreover, $D_q$ is a $\Z_q$-submodule of $V_q$: the group
$L\otimes_{\Z}\Z_q$ is a $\Z_q$-module, and every
$\Q_qv_{q,j}$ is stable under multiplication by elements of $\Z_q$.
Hence the inclusion $B\leq D_q$ induces a homomorphism
$$
B\otimes_{\Z}\Z_q\longrightarrow D_q
$$
whose composition with the inclusion $D_q\leq V_q$ is the canonical
embedding
$$
B\otimes_{\Z}\Z_q\hookrightarrow V_q.
$$
It follows that the image of $B\otimes_{\Z}\Z_q$ in $V_q$ is contained
in $D_q$.  Under our identification
$B_q=B\otimes_{\Z}\Z_q$, this gives
$B_q\leq D_q$.

Combining the two inclusions, we obtain
$$
B_q
=
(L\otimes_{\Z}\Z_q)
+
\sum_{j=1}^{r_q}\Q_qv_{q,j}.
$$


Finally, we show that $B_q=\overline{B}^{\,V_q}$. As observed above, the rational completion of the torsion-free
abelian group $B$ is
$B_{\Q}=B\otimes_{\Z}\Q$. Moreover,
$$
V_q=
(B\otimes_{\Z}\Q)\otimes_{\Q}\Q_q
$$
is the corresponding $q$-adic Mal'cev completion.   The third classical case listed at the beginning of
\cite[Section~5.1]{KL20}, immediately before Lemma~5.1, states that,
for a torsion-free nilpotent group $N$, its tensor $q$-completion
$N_q$ is naturally isomorphic to the closure of $N$ in its $q$-adic
Mal'cev completion.  Applying this statement to $N=B$ gives a natural
isomorphism
$B_q\simeq\overline{B}^{\,V_q}$.

On the other hand, earlier in the proof we used the natural
isomorphism
$B_q\simeq B\otimes_{\Z}\Z_q,
$ together with the
canonical embedding
$B\otimes_{\Z}\Z_q\hookrightarrow V_q$, to realize $B_q$ as the subgroup $D_q$ of $V_q$. The realization of $B_q$ inside $V_q$ constructed above and the
realization of $B_q$ as $\overline{B}^{\,V_q}$ both extend the same
canonical embedding
$$
B\longrightarrow V_q,
\qquad
b\longmapsto (b\otimes1)\otimes1.
$$
By the uniqueness in the universal property of the tensor $q$-completion \cite[Proposition~5.3]{KL20}, the two embeddings of $B_q$ into $V_q$ coincide. Hence the subgroup of $V_q$ obtained above is precisely $\overline{B}^{\,V_q}$.  With this identification we may therefore write $B_q=\overline{B}^{\,V_q}$.
\end{proof}

Now, let $Y$ be a torsion-free nilpotent minimax group, and let $\pi$ be an epimorphism from $Y$ to $B$. Let
$\pi_q:Y_q\longrightarrow B_q$ be the homomorphism induced by $\pi$ on the tensor $q$-completions. Choose a basis
$\ell_1,\ldots,\ell_d$ of $L$.  Choose lifts
$\widehat\ell_k\in Y$, $1\leq k\leq d$ and
$\widehat b_{q,j,n}\in Y$, $1\leq j\leq r_q$, $n\geq1$
such that
$\pi(\widehat\ell_k)=\ell_k$ and $\pi(\widehat b_{q,j,n})=b_{q,j,n}$. Fix $z_k\in\Z_q$ and $\lambda_j\in\Q_q$ and let
$$
b=
\sum_{k=1}^d z_k\ell_k+
\sum_{j=1}^{r_q}\lambda_jv_{q,j},
$$
which is an element of $B_q$ by Lemma \ref{lem:abelian-q-completion}.
Moreover, for each $j=1,\ldots,r_q$, write uniquely
$$
v_{q,j}
=
\sum_{k=1}^d a_{j,k}(\ell_k\otimes1),
\qquad
a_{j,k}\in\Z_q,
$$
with respect to the $\Z_q$-basis
$\ell_1\otimes1,\ldots,\ell_d\otimes1$ of $L\otimes_{\Z}\Z_q$. Apply Lemma~\ref{lem:coordinates} to $Y$, and fix the resulting
elements $y_1,\ldots,y_h$ and coordinate polynomials.

Next lemma will allow us to lift $b$ effectively back to $Y$.

\begin{lemma}\label{lem:factorlift}
Suppose there exists a finitely generated subfield \hbox{$F\leq\Q_q$} such that\textnormal:

\begin{itemize}
    \item $a_{j,k}, z_k,\lambda_j\in F$ are given by effective representations in $F$\textnormal;
    
    \item from the effective representations of elements of $F$, one can compute effectively their sums, differences, products and quotients whenever the denominator is non-zero, and decide equality in $F$\textnormal;

    \item from the effective representation of any $\alpha\in F$, one can  effectively compute its $q$-adic valuation $v_q(\alpha)\in\Z\cup\{\infty\}$\textnormal;

    \item for every $j$ and $N$, the integers $c_{j,k,N}$ determined by
    $$
    q^N b_{q,j,N}
    =
    \sum_{k=1}^d c_{j,k,N}\ell_k
    $$
    can be computed effectively from $j$ and $N$\textnormal;

    \item for every $j$ and $N$, the coordinates, with respect to $y_1,\ldots,y_h$, of a lift $\widehat b_{q,j,N}\in Y$ of $b_{q,j,N}$ can be computed effectively from $j$ and $N$, and the coordinates of the lifts $\widehat\ell_1,\ldots,\widehat\ell_d\in Y$ are given effectively.
\end{itemize}
Then a lift $y\in Y_q$ of $b$ can be constructed effectively from the given data using products, inverses and $\Z_q$-powers in $Y_q$.
\end{lemma}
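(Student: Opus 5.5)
The plan is to use the description of $b$ through the basis of $L\otimes_{\Z}\Z_q$ and the $v_{q,j}$, and to build the lift term by term. By Lemma~\ref{lem:tensor}(iii), $\pi_q$ is a continuous homomorphism, and as observed above it commutes with $\Z_q$-powers. For the first summand I would therefore take
\[
 y_L:=\widehat\ell_1^{\,z_1}\cdots\widehat\ell_d^{\,z_d}\in Y_q .
\]
Its coordinate vector is obtained from the given coordinates of the $\widehat\ell_k$ by applying $R(\cdot,z_k)$ and then $P$. All of these evaluations take place in $F$, since the polynomials have rational coefficients and $z_k\in F$. Because $B_q$ is abelian, $\pi_q(y_L)=\sum_k z_k\ell_k$.

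The main task is the summand $\lambda_jv_{q,j}$ with $\lambda_j\in\Q_q$ possibly non-integral. First I compute $m:=\max(0,-v_q(\lambda_j))$ effectively, using the valuation oracle on $F$, so that $\mu_j:=q^m\lambda_j\in\Z_q\cap F$. As in the proof of Lemma~\ref{lem:abelian-q-completion}, I want to write $q^{-m}v_{q,j}=q^{N-m}b_{q,j,N}+\varepsilon$ with $\varepsilon\in L\otimes_{\Z}\Z_q$. The condition on $N$ is $v_{q,j}-v_{q,j,N}\in q^m(L\otimes\Z_q)$, and Lemma~\ref{lem:abelianpadic} shows that $N:=m$ already works. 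So no search is needed; I can set $N=m$ (any larger $N$ would also do). Then
\[
 \varepsilon=\sum_k q^{-m}\bigl(a_{j,k}-c_{j,k,m}\bigr)\ell_k ,
\]
and here $c_{j,k,m}$ is computable by hypothesis, while $e_{j,k}:=q^{-m}(a_{j,k}-c_{j,k,m})$ lies in $F\cap\Z_q$. Membership in $\Z_q$ follows from Lemma~\ref{lem:abelianpadic}, and it can also be confirmed with the valuation oracle as a check. This gives
\[
 \lambda_jv_{q,j}=\mu_j\,q^{-m}v_{q,j}=\mu_j b_{q,j,m}+\sum_k\mu_je_{j,k}\ell_k .
\]
The lift I take for this summand is
\[
 y_j:=\widehat b_{q,j,m}^{\;\mu_j}\,\prod_k\widehat\ell_k^{\;\mu_je_{j,k}} .
\]
All exponents lie in $\Z_q$, so every factor is a genuine element of $Y_q$: $\Z_q$-powers of elements of $Y\le Y_q$ stay in the closed subgroup $Y_q$. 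The coordinates of $\widehat b_{q,j,m}$ are computable by hypothesis. Applying $\pi_q$, which commutes with products and $\Z_q$-powers, gives $\pi_q(y_j)=\lambda_jv_{q,j}$.

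Finally I set $y:=y_L\,y_1\cdots y_{r_q}$. Then $\pi_q(y)=b$, since $\pi_q$ is a homomorphism into the abelian group $B_q$. Its coordinate vector in $F^h\subseteq\Q_q^h$ comes out of finitely many applications of $P$ and $R$ with arguments in $F$, so the construction is effective. The step I expect to be the main obstacle, and the one to check most carefully, is the integrality argument: one must make sure the exponents $\mu_je_{j,k}$ really lie in $\Z_q$, so that no non-integral power of an element of $Y$ is taken (such a power would leave $Y_q$ inside $Y_{\Q,q}$). Along the same lines one must check that the valuation bound in Lemma~\ref{lem:abelianpadic} applies with the effectively chosen $N=m$. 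One should also note that the $\Z_q$-power of $y\in Y$ used here is the limit construction defined above, so $\pi_q(y^z)=\pi_q(y)^z$ holds by continuity.
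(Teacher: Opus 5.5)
Your proposal is correct and follows the paper's proof: you lift the lattice part by $\widehat\ell_1^{\,z_1}\cdots\widehat\ell_d^{\,z_d}$ and the divisible part through $q^{-m}v_{q,j}=q^{N-m}b_{q,j,N}+\varepsilon$ with $\mu_j=q^m\lambda_j\in\Z_q$, and the only difference is that you distribute the $\mu_j$-th power over the factors instead of raising the product $w_j$ to the power $\mu_j$, which is harmless because $B_q$ is abelian. One small slip: the choice $N=m$ breaks down when $m=0$ (that is, when $\lambda_j\in\Z_q$), because the chains $b_{q,j,n}$, the integers $c_{j,k,n}$ and the lifts $\widehat b_{q,j,n}$ are only supplied for $n\geq1$; either take $N=\max\{1,m\}$ as the paper does, putting the exponent $\mu_jq^{N-m}$ on $\widehat b_{q,j,N}$, or in that case use $e_{j,k}=a_{j,k}$ directly.
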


\begin{proof}
We first lift the summand of $b$ which belongs to $L\otimes_{\Z}\Z_q$. For \hbox{$z=(z_1,\ldots,z_d)\in\Z_q^d$} define
$\sigma_L(z):=\widehat\ell_1^{\,z_1}\ldots\widehat\ell_d^{\,z_d}\in Y_q$. Since $\pi_q(\widehat\ell_k)=\ell_k$ and homomorphisms between the $q$-completions commute with $\Z_q$-powers, we have $\pi_q(\widehat\ell_k^{\,z_k})=z_k\ell_k$. Consequently
$\pi_q(\sigma_L(z))=\sum_{k=1}^d z_k\ell_k$.

We now lift the terms $\lambda_jv_{q,j}$. Fix $j\in\{1,\ldots,r_q\}$. If $\lambda_j\neq0$, put $s_j:=\max\{0,-v_q(\lambda_j)\}$, while if $\lambda_j=0$, put $s_j:=0$. Then $\mu_j:=q^{s_j}\lambda_j\in\Z_q$. Put $N_j:=\max\{1,s_j\}$. By Lemma~\ref{lem:abelianpadic},
$$
v_{q,j}-v_{q,j,N_j}
\in
q^{N_j}(L\otimes_{\Z}\Z_q)
\leq
q^{s_j}(L\otimes_{\Z}\Z_q).
$$
Hence there exists $\varepsilon_j\in L\otimes_{\Z}\Z_q$ such that
$v_{q,j}-v_{q,j,N_j}=q^{s_j}\varepsilon_j$. After identifying $\ell_k$ with $\ell_k\otimes1$, write uniquely
$$
\varepsilon_j
=
\sum_{k=1}^d e_{j,k}\ell_k,
\qquad
e_{j,k}\in\Z_q.
$$
Regard all these elements inside the ambient $\Q_q$-vector space $V_q$. After multiplication by $q^{-s_j}$ in $V_q$, we have $q^{-s_j}v_{q,j}=q^{-s_j}v_{q,j,N_j}+\varepsilon_j$. Since $v_{q,j,N_j}=q^{N_j}b_{q,j,N_j}$
and $N_j\geq s_j$, this becomes
$$
q^{-s_j}v_{q,j}
=
q^{N_j-s_j}b_{q,j,N_j}
+
\sum_{k=1}^d e_{j,k}\ell_k.
$$

Define
$$
w_j
:=
\widehat b_{q,j,N_j}^{\,q^{N_j-s_j}}\,
\sigma_L(e_{j,1},\ldots,e_{j,d})
\in Y_q.
$$
Applying $\pi_q$ gives
$$
\pi_q(w_j)
=
q^{N_j-s_j}b_{q,j,N_j}
+
\sum_{k=1}^d e_{j,k}\ell_k=
q^{-s_j}v_{q,j}.
$$
Since $\mu_j=q^{s_j}\lambda_j\in\Z_q$, the $\Z_q$-power $w_j^{\mu_j}$ is defined in $Y_q$. Using the compatibility of $\pi_q$ with $\Z_q$-powers, we obtain
\begin{align*}
\pi_q(w_j^{\mu_j})
&=
\mu_j\,\pi_q(w_j)\\
&=
(q^{s_j}\lambda_j)(q^{-s_j}v_{q,j})\\
&=
\lambda_jv_{q,j}.
\end{align*}
Thus $w_j^{\mu_j}$ is a lift in $Y_q$ of the summand $\lambda_jv_{q,j}$.

Now, if we put
$$
y
:=
\sigma_L(z_1,\ldots,z_d)
\prod_{j=1}^{r_q}w_j^{\mu_j}
\in Y_q,
$$
we immediately have that $\pi_q(y)=b= \sum_{k=1}^d z_k\ell_k+
\sum_{j=1}^{r_q}\lambda_jv_{q,j}$,
as required.

\smallskip

We now prove the effective assertion. Since equality and $q$-adic valuation in $F$ are computable, the integers $s_j$ and $N_j$ chosen above are effectively computable from $\lambda_j$. By Lemma~\ref{lem:abelianpadic},
$$
v_{q,j}-v_{q,j,N_j}
\in
q^{N_j}(L\otimes_{\Z}\Z_q)
\leq
q^{s_j}(L\otimes_{\Z}\Z_q).
$$
Write
$$
v_{q,j,N_j}
=
\sum_{k=1}^d c_{j,k,N_j}\ell_k,
\qquad
c_{j,k,N_j}\in\Z.
$$
By hypothesis, the integers $c_{j,k,N_j}$ can be computed effectively. The coefficients of
$\varepsilon_j=q^{-s_j}(v_{q,j}-v_{q,j,N_j})$
are
$e_{j,k}=
q^{-s_j}(a_{j,k}-c_{j,k,N_j})$.
These elements can be computed exactly in $F$. Since
$$
v_{q,j}-v_{q,j,N_j}
\in
q^{s_j}(L\otimes_{\Z}\Z_q),
$$
we have $e_{j,k}\in\Z_q$ for every $k$. Likewise, $\mu_j=q^{s_j}\lambda_j$ is computable and belongs to $\Z_q$.

Therefore all the exponents $z_k,e_{j,k},\mu_j$ are effectively computable. By hypothesis, the coordinates of $\widehat\ell_1,\ldots,\widehat\ell_d$ are given effectively, and, for every~$j$, the coordinates of $\widehat b_{q,j,N_j}$ can be computed effectively. Since multiplication and $\Z_q$-powers in $Y_q$, with respect to the fixed coordinate system $y_1,\ldots,y_h$, are given by known polynomials, the coordinates of $\sigma_L(z_1,\ldots,z_d),w_j,w_j^{\mu_j}$ can therefore be computed effectively. Hence the element $y$ above can be constructed effectively from the given data using products, inverses and $\Z_q$-powers in $Y_q$.
\end{proof}

We next record the exact arithmetic used in the $p$-adic linear calculations.

\begin{lemma}\label{lem:field}
Fix a prime $p$ and elements $\alpha_1,\ldots,\alpha_s\in\Qp$. Put $F=\Q(\alpha_1,\ldots,\alpha_s)$.
Assume that the following exact data and approximation procedure are given\textnormal:
\begin{enumerate}[label=\textup{(\roman*)}]
\item a subset $\beta_1,\ldots,\beta_t$ of the $\alpha_i$ which is a transcendence basis of $F/\Q$\textnormal;
\item a basis $e_1,\ldots,e_d$ of the finite extension
$F/\Q(\beta_1,\ldots,\beta_t)$, with each $e_k$ given as a monomial in the $\alpha_i$, together with the multiplication table of this basis and the coordinates of the $\alpha_i$ in it\textnormal;
\item an algorithm which, on input $(i,n)$, returns an element $a_{i,n}$ of $\Q$ satisfying
$v_p(\alpha_i-a_{i,n})\geq n$.
\end{enumerate}
Then equality in $F$ is decidable and the field operations in $F$ are computable. Moreover, for every $a\in F$, the value $v_p(a)\in\Z\cup\{\infty\}$ is computable, and for every $a\in F$ and $n\geq1$ one can effectively compute $a_n\in\Q$ such that
\begin{equation}
v_p(a-a_n)\geq n.
\label{eq:2.12}
\end{equation}
\end{lemma}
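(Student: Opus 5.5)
The field $F$ is built as a tower: first the purely transcendental field $K:=\Q(\beta_1,\ldots,\beta_t)$, then the finite extension $F/K$ with basis $e_1,\ldots,e_d$. I would represent every element of $F$ as a coordinate vector $(c_1,\ldots,c_d)\in K^d$ with respect to this basis. Each $c_k$ is a quotient of two polynomials in $\Q[\beta_1,\ldots,\beta_t]$, written in lowest terms with a nonzero denominator.

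\textbf{Field operations and equality.} Arithmetic in $K$ is ordinary rational-function arithmetic. Since the $\beta_i$ are algebraically independent over $\Q$, a rational function represents $0$ if and only if its numerator is the zero polynomial, so equality in $K$ is decidable. Addition and subtraction in $F$ act coordinatewise. Multiplication uses the given multiplication table, whose structure constants lie in $K$.

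For inversion of $a\neq0$, the $K$-linear map $m_a\colon x\mapsto ax$ has a $d\times d$ matrix over $K$ computable from the table. This matrix is invertible, and solving $m_a(x)=1$ by Gaussian elimination over $K$ gives $a^{-1}$. The coordinates of the $\alpha_i$ are given, so every element of $F$ can be input through them. This proves the first assertion.

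\textbf{Rational approximations.} Approximations propagate through $+$, $-$, $\times$. If $v_p(x-x_n)\ge n$ and $v_p(y-y_n)\ge n$, then $v_p(xy-x_ny_n)\ge n+\min(v_p(x),v_p(y))$.

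Lower bounds for $v_p(x)$ are available from an approximation: if $x_n\neq0$ and $v_p(x_n)<n$, then $v_p(x)=v_p(x_n)$; otherwise $v_p(x)\ge n$. So for a polynomial expression in the $\alpha_i$ with rational coefficients, I can compute a priori lower bounds $-M$ on the valuations of all subterms. This is done recursively, starting from $v_p(\alpha_i)\ge v_p(a_{i,1})$ when that value is less than $1$, and $\ge1$ otherwise. I then ask for approximations of the $\alpha_i$ to precision $n+M'$ for a suitable computable $M'$. That yields $a_n$ satisfying \eqref{eq:2.12} for any $a$ given as a polynomial in the $\alpha_i$, in particular for each basis monomial $e_k$.

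\textbf{The main obstacle: division.} Each coordinate $c_k$ is a quotient $f/g$ of polynomials in the $\beta$'s, and approximating $1/g$ requires a computable upper bound on $v_p(g)$. Here I use the exact equality test. Since $g\neq0$ in $F$ is decided exactly, I can compute approximations $g_n$ for $n=1,2,\ldots$ until some $g_n$ has $v_p(g_n)<n$. This must happen eventually, because $v_p(g)$ is finite. Then $v_p(g)=v_p(g_n)$, and $1/g$ is approximated by $1/g_{n'}$ with precision loss controlled by $2v_p(g)$.

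Combining these steps, $a=\sum_k c_ke_k$ can be approximated to any precision. Finally, $v_p(a)$ itself is computed the same way. If $a=0$, which is decided exactly, output $\infty$. Otherwise compute $a_n$ for increasing $n$ until $v_p(a_n)<n$ and output $v_p(a_n)$. This search terminates because $v_p(a)<\infty$, and its correctness is exactly the decidability of equality established in the first step.
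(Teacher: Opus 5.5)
Your proposal is correct and follows essentially the same route as the paper: the tower $\Q\subseteq K\subseteq F$ with Gaussian elimination for inverses, propagation of rational approximations through sums and products, approximation of $1/g$ after computing $v_p(g)$ by searching for the first $m$ with $v_p(g_m)<m$, and the exact zero test to guarantee that the final valuation search terminates. The only variation is that you propagate computable lower bounds on valuations in the polynomial stage, whereas the paper first computes the exact valuations of the non-zero $\alpha_i$; there is also one small slip, harmless for your lower-bound argument, namely that the product estimate should read $v_p(xy-x_ny_n)\ge n+\min\{v_p(x),v_p(y),n\}$, since when $v_p(x)>n$ the approximation $x_n$ is only guaranteed to satisfy $v_p(x_n)\ge n$.
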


\begin{proof}
Put $K=\Q(\beta_1,\ldots,\beta_t)$. Represent an element of $K$ by a quotient of two polynomials in $\beta_1,\ldots,\beta_t$. Two such quotients are equal if and only if the corresponding cross-products are the same polynomial, so equality in $K$ is decidable and the four field operations in $K$ are computable. Represent an element of $F$ by its coordinate vector in the basis $e_1,\ldots,e_d$. Addition is coordinatewise and multiplication is obtained from the fixed multiplication table. If $0\neq x\in F$, the coordinates of $x^{-1}$ are the unique solution of the $d\times d$ linear system expressing $xy=1$ and Gaussian elimination over $K$ effectively computes this solution. Hence equality in $F$ is decidable and all field operations in $F$ are computable.

We shall use repeatedly the following elementary observation. Suppose that $0\neq x\in\Qp$ and that rational numbers $x_n$ can be computed effectively with $v_p(x-x_n)\geq n$ for every $n\geq1$. Then $v_p(x)$ is computable. Indeed, compute $x_1,x_2,\ldots$ successively and stop at the first index $m$ such that $v_p(x_m)<m$. This search terminates since, if $r=v_p(x)$, then for every $m>r$ we have $v_p(x-x_m)\geq m>r$, whence $v_p(x_m)=r<m$. For the first such $m$ we have $v_p(x-x_m)\geq m>v_p(x_m)$, and hence, since  $x=x_m+(x-x_m)$, we get $v_p(x)=v_p(x_m)$.

We first show that $v_p(\beta_j)$ can be computed effectively. Write $\beta_j=\alpha_{i(j)}$. By \textup{(iii)}, for every $n\geq1$ we can compute $b_n:=a_{i(j),n}\in\Q$ such that
$$
v_p(\beta_j-b_n)\geq n.
$$
Since $\beta_j\neq0$, the preceding observation shows that $v_p(\beta_j)$ is computable.

More generally, equality in $F$ allows us to determine which of the $\alpha_i$ are zero, while \textup{(iii)} and the preceding observation compute $v_p(\alpha_i)$ for every non-zero $\alpha_i$.

We next observe that rational approximations can be propagated through the field operations needed below. Let $u,v\neq0$ and suppose that $v_p(u)$ and $v_p(v)$ are known. Given a prescribed precision $n\geq1$, choose integers $M_u$ and $M_v$ such that
$$
M_u>v_p(u),\qquad M_v>v_p(v),
$$
and
$$
M_u+v_p(v)\geq n,\qquad v_p(u)+M_v\geq n.
$$
If $u',v'\in\Q$ satisfy
$$
v_p(u-u')\geq M_u,\qquad v_p(v-v')\geq M_v,
$$
then $v_p(u')=v_p(u)$ and $v_p(v')=v_p(v)$. Moreover,
$$
uv-u'v'=(u-u')v+u'(v-v'),
$$
and therefore
$$
v_p(uv-u'v')\geq\min\{M_u+v_p(v),\,v_p(u)+M_v\}\geq n.
$$
Thus sufficiently accurate rational approximations to $u$ and $v$ effectively yield a rational approximation to their product with any prescribed $p$-adic precision. Addition is handled similarly, since
$$
(u+v)-(u'+v')=(u-u')+(v-v').
$$
Starting from the approximations supplied by \textup{(iii)} for the $\alpha_i$, repeated use of these observations therefore gives, effectively, rational approximations to every fixed polynomial, and in particular to every fixed monomial, in the $\alpha_i$, to arbitrary prescribed $p$-adic precision.

Now, consider
a non-zero polynomial value
$$
c=r(\beta_1,\ldots,\beta_t),
\qquad
0\neq r\in\Q[X_1,\ldots,X_t].
$$
By the preceding paragraph, one computes rational numbers $c_n$ such that $v_p(c-c_n)\geq n$. The preceding observation therefore shows that $v_p(c)$ is computable. Thus the valuation of every non-zero polynomial value in $\beta_1,\ldots,\beta_t$ is computable.

Every element of $K$ has the form
$$
a=
\frac{f(\beta_1,\ldots,\beta_t)}
{g(\beta_1,\ldots,\beta_t)},
\qquad
f,g\in\Q[X_1,\ldots,X_t],
\quad
g(\beta_1,\ldots,\beta_t)\neq0.
$$
Applying the preceding paragraph to $g(\beta_1,\ldots,\beta_t)$ gives the integer
$$
\sigma=v_p(g(\beta_1,\ldots,\beta_t)).
$$
To approximate its inverse, choose an integer $N>\sigma$ and a rational number $g_N$ such that
$$
v_p(g(\beta_1,\ldots,\beta_t)-g_N)\geq N.
$$
Then $v_p(g_N)=\sigma$. Moreover,
$$
g(\beta_1,\ldots,\beta_t)^{-1}-g_N^{-1}
=
\frac{g_N-g(\beta_1,\ldots,\beta_t)}
{g(\beta_1,\ldots,\beta_t)g_N},
$$
and therefore
$$
\begin{aligned}
v_p\left(g(\beta_1,\ldots,\beta_t)^{-1}-g_N^{-1}\right)
&=
v_p\bigl(g_N-g(\beta_1,\ldots,\beta_t)\bigr)
-v_p\bigl(g(\beta_1,\ldots,\beta_t)\bigr)
-v_p(g_N)\\
&\geq N-2\sigma.
\end{aligned}
$$
Hence, given $n\geq1$, by choosing for instance $N\geq n+2|\sigma|+1$, we obtain effectively a rational number $t_n$ such that
$$
v_p\left(g(\beta_1,\ldots,\beta_t)^{-1}-t_n\right)\geq n.
$$
Thus the inverse of every non-zero polynomial value in $\beta_1,\ldots,\beta_t$ can be approximated rationally to arbitrary prescribed $p$-adic precision.

If $f=0$, then $a=0$, and we may take $a_n=0$ for every $n$. Suppose therefore that $f\neq0$, and put
$$
\rho=v_p(f(\beta_1,\ldots,\beta_t)),
\qquad
\sigma=v_p(g(\beta_1,\ldots,\beta_t)).
$$
By the preceding paragraphs, both $\rho$ and $\sigma$ are computable. Moreover,
$$
v_p\bigl(g(\beta_1,\ldots,\beta_t)^{-1}\bigr)=-\sigma.
$$
Given a prescribed precision $n\geq1$, choose integers $M_f$ and $M_g$ such that
$$
M_f>\rho,
\qquad
M_g>-\sigma,
$$
and
$$
M_f-\sigma\geq n,
\qquad
\rho+M_g\geq n.
$$
These integers can be chosen effectively from $n,\rho$ and $\sigma$. For example, one may take
$$
M_f=\max\{1,\rho+1,n+\sigma\},
\qquad
M_g=\max\{1,-\sigma+1,n-\rho\}.
$$
Using the approximation procedures above, compute rational numbers $f'$ and $t'$ such that
$$
v_p\bigl(f(\beta_1,\ldots,\beta_t)-f'\bigr)\geq M_f,
\qquad
v_p\left(g(\beta_1,\ldots,\beta_t)^{-1}-t'\right)\geq M_g.
$$
Since $M_f>\rho$ and $M_g>-\sigma$, we have
$$
v_p(f')=\rho,
\qquad
v_p(t')=-\sigma.
$$
Applying the product estimate to $f(\beta_1,\ldots,\beta_t)$ and $g(\beta_1,\ldots,\beta_t)^{-1}$ gives
$$
v_p(a-f't')
\geq
\min\{M_f-\sigma,\rho+M_g\}
\geq n.
$$
Thus, setting $a_n=f't'$, we obtain effectively a rational number $a_n$ such that
$$
v_p(a-a_n)\geq n.
$$
Hence every element of $K$ admits effectively computable rational approximations to arbitrary prescribed $p$-adic precision.

If $a\neq0$, the preceding observation applied to the rational approximations just constructed shows that $v_p(a)$ is computable.

Finally, write an arbitrary element of $F$ uniquely as
$$
x=\sum_{k=1}^d a_k e_k,
\qquad
a_k\in K.
$$
By \textup{(ii)}, each $e_k$ is a monomial in the $\alpha_i$. Consequently, from \textup{(iii)} and the effective product estimate above, rational approximations to each $e_k$ can be computed to arbitrary prescribed $p$-adic precision.  Moreover, since the valuations of the non-zero $\alpha_i$ have already been computed, the valuation of each $e_k$ is computable directly from its monomial expression.

Equality in $K$ decides whether each coefficient $a_k$ is zero.  If $a_k\neq0$, the preceding paragraph computes both $v_p(a_k)$ and rational approximations to $a_k$ of arbitrary prescribed precision. Thus, for every non-zero term $a_ke_k$ and every prescribed $n$, we can choose effectively precisions $M_k$ and $N_k$ for approximations $a_k'$ and $e_k'$ such that
$$
v_p(a_k-a_k')\geq M_k,
\qquad
v_p(e_k-e_k')\geq N_k,
$$
with
$$
M_k>v_p(a_k),
\qquad
N_k>v_p(e_k),
$$
and
$$
M_k+v_p(e_k)\geq n,
\qquad
v_p(a_k)+N_k\geq n.
$$
The product estimate then gives
$$
v_p(a_ke_k-a_k'e_k')\geq n.
$$
For a zero coefficient $a_k$ we simply take the corresponding approximation to be zero.

Since there are only finitely many terms, summing the rational approximations $a_k'e_k'$ and using the $p$-adic triangle inequality gives a rational number~$x_n$ satisfying $v_p(x-x_n)\geq n$. Thus rational approximations to every element of~$F$ can be computed effectively to arbitrary prescribed $p$-adic precision.

Finally, equality in $F$ decides whether $x=0$. If $x=0$, then $v_p(x)=\infty$, while if $x\neq0$, the preceding observation applied to the rational approximations just constructed computes $v_p(x)$. Hence $v_p(x)$ is computable for every $x\in F$.
\end{proof}

With this exact arithmetic available, we can now make effective the linear-algebraic operations on the mixed $\Qp$--$\Zp$ subgroups that occur in tensor $p$-completions.

\begin{lemma}\label{lem:additive}
Let $F\leq\Q_p$ be a finitely generated subfield containing $\Q$. Assume that elements of $F$ are given in an exact finite presentation for which equality is decidable, the field operations are computable, and the $p$-adic valuation $v_p:F\to\Z\cup\{\infty\}$ is computable. Put $\mathcal O=F\cap\Z_p$ and $V=\Q_p^d$.

We consider subgroups $B\leq V$ given by finite data of the form
\begin{equation}
B=D+L,
\qquad
D=\operatorname{span}_{\Q_p}\{d_1,\ldots,d_r\},
\qquad
L=\sum_{j=1}^s\Z_p\ell_j,
\label{eq:2.13}
\end{equation}
where $d_i,\ell_j\in F^d$, and each $d_i$ and $\ell_j$ is given by its coordinates with respect to the standard basis of $F^d$, with each coordinate given in the prescribed exact presentation of $F$.

Then there are terminating algorithms for the following operations\textnormal:
\begin{enumerate}[label=\textup{(\roman*)}]
\item given $x\in F^d$ by its coordinates with respect to the given basis, deciding whether $x\in B$\textnormal; when membership holds, the algorithm computes coefficients $a_1,\ldots,a_r\in F$ and $z_1,\ldots,z_s\in\mathcal O$ such that
$$
x=\sum_{i=1}^r a_i d_i+\sum_{j=1}^s z_j\ell_j\textnormal;
$$

\item sums and intersections of subgroups of $V$ given in the form
\eqref{eq:2.13}\textnormal;

\item given finite-dimensional $\Q_p$-vector spaces
$V_1=\Q_p^{d_1}$ and $V_2=\Q_p^{d_2}$, an $F$-linear map
$T:V_1\to V_2$ whose matrix with respect to the standard bases is
given exactly over $F$, and subgroups $B\leq V_1$ and $C\leq V_2$
given in the form \eqref{eq:2.13}, computing
$T(B)$, $B\cap\Ker T$, and
$\{x\in B:Tx\in C\}$\textnormal;

\item the corresponding operations in quotients:
given $V'=\Q_p^{d'}$, subgroups
$B_0\leq B_1\leq V$ and $C_0\leq C_1\leq V'$ in the form
\eqref{eq:2.13}, and an $F$-linear map
$T:V\to V'$ given exactly over $F$ with
$T(B_1)\leq C_1$ and $T(B_0)\leq C_0$, one can compute sums and
intersections of subgroups in the quotients, and images, kernels and
inverse images under the induced map.  Equality of cosets represented
by elements of $F^d$ is decidable as well.
\end{enumerate}
Moreover, every subgroup obtained in \textup{(ii)} and \textup{(iii)} can again
be computed in the form~\eqref{eq:2.13}, with all its generators given
by their coordinates with respect to the given
basis and with all coordinates represented exactly in $F$.
\end{lemma}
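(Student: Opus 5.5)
The plan is to reduce every subgroup of the form \eqref{eq:2.13} to a \emph{normal form}, and then read off all four operations from it.

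\textbf{Normal form.} Given generators $d_1,\ldots,d_r,\ell_1,\ldots,\ell_s\in F^d$:
\begin{enumerate}[label=\textup{(\arabic*)}]
\item Use Gaussian elimination over $F$, which needs only decidable equality and the field operations, to replace $d_1,\ldots,d_r$ by an $F$-basis $d'_1,\ldots,d'_{r'}$ of $\operatorname{span}_F\{d_i\}$ in reduced echelon form. Since the $d_i$ lie in $F^d$, we have $\operatorname{span}_{\Q_p}\{d_i\}=\operatorname{span}_{\Q_p}\{d'_i\}$.
\item Choose the pivot coordinates of $D$ and project $V$ onto the complementary coordinates. This projection $\pi$ is $F$-rational, has kernel exactly $D$, and maps $V/D\cong\Q_p^{d-r'}$. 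The images $\pi(\ell_j)$ again lie in $F^{d-r'}$.
\item Bring the $\Z_p$-span $\sum_j\Z_p\pi(\ell_j)$ to Hermite-type form by column operations over $\mathcal O$. Repeatedly pick, within a coordinate, an entry of minimal valuation among the remaining generators; this is computable since $v_p$ is. Divide by it (the quotient of entries of larger valuation lies in $\mathcal O$) and eliminate the others.
\end{enumerate}
Each step is an $\mathcal O$-unimodular operation, so the $\Z_p$-span is preserved. Since all quotients stay in $F$, the result is a family $m_1,\ldots,m_t$ in echelon form, with pivots of known valuations, together with relations expressing the old generators via $\mathcal O$-coefficients, and conversely. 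Zero columns are discarded.

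\textbf{(i) Membership.} For $x\in F^d$:
\begin{itemize}
\item First subtract the $D$-component determined by the pivot coordinates. The coefficients $a_i\in F$ are computed by back-substitution.
\item Then solve $\pi(x)=\sum c_k m_k$ triangularly. The coefficients $c_k\in F$ are forced and unique because the $m_k$ are in echelon form and $\Q_p$-independent. Membership holds iff every $c_k\in\mathcal O$, i.e.\ $v_p(c_k)\ge0$, and iff the leftover coordinates vanish.
\item Finally, pull back through the recorded unimodular transformations to obtain the $z_j\in\mathcal O$ on the original $\ell_j$. A priori these coefficients live in $\Z_p$, but the triangular solve shows they actually lie in $F$.
\end{itemize}

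\textbf{(ii) Sums and intersections.} Sums are immediate: concatenate the generators and renormalize. For an intersection $B\cap B'$, realize it as the kernel of the map $B\oplus B'\to V$, $(b,b')\mapsto b-b'$, followed by projection to the first factor. This reduces (ii) to (iii).

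\textbf{(iii) Images, kernels, inverse images.}
\begin{itemize}
\item For the image, $T(B)$ is generated by $T(d_i)$ and $T(\ell_j)$.
\item For the kernel $B\cap\Ker T$, parametrize $B$ as the image of $\Q_p^r\oplus\Z_p^s$ under the matrix $M=[d\,|\,\ell]$. We then need the kernel of $TM$ restricted to $\Q_p^r\oplus\Z_p^s$.
\item Compute $\Ker(TM)$ over $F$: its $F$-basis also spans it over $\Q_p$, because kernels commute with the flat extension $F\to\Q_p$. This gives a $\Q_p$-space $W\le\Q_p^{r+s}$. Intersect $W$ with $\Q_p^r\oplus\Z_p^s$, which is the preimage of the lattice $\Z_p^s$ under the projection $W\to\Q_p^s$.
\item So the key subproblem is: given an $F$-rational subspace $W'=p_2(W)\le\Q_p^s$ and the lattice $\Z_p^s$, compute $W'\cap\Z_p^s$ as a $\Z_p$-span with $F$-generators. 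Take an $F$-basis of $W'$, put it in echelon form with pivots normalized to $1$, and form the $\mathcal O$-module of vectors with bounded coordinates. A Smith/Hermite reduction over the valuation via elementary operations shows that $W'\cap\Z_p^s=\sum\Z_p w_i$ with $w_i\in F^s$. Concretely, scale each basis vector by $p^{-\min v_p(\text{coords})}$ after triangularizing with respect to the valuation, pivoting on minimal valuation as before.
\item Then lift back: the $\Q_p$-directions of $\Ker(p_2|_W)$ plus lifts of the $w_i$ generate the intersection, and applying $M$ gives $B\cap\Ker T$.
\item For the inverse image, $\{x\in B:Tx\in C\}$ is the projection to $V_1$ of the kernel of $B\oplus C\to V_2$, $(b,c)\mapsto Tb-c$.
\end{itemize}

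\textbf{(iv) Quotients.} Represent a subgroup of $B_1/B_0$ by its preimage, which is a subgroup containing $B_0$. Sums, intersections, images (computed as $T(X)+C_0$), kernels and inverse images then all reduce to (ii)–(iii). Equality of cosets is the membership $x-x'\in B_0$, decided by (i).

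\textbf{Main obstacle.} I expect the main difficulty to be the lattice–subspace intersection $W'\cap\Z_p^s$, together with ensuring that all generators remain in $F$. The worry is that the intersection might need $\Z_p$-coefficients outside $F$. The remedy is that the valuation-pivoted echelon form only ever divides elements of $F$ by elements of $F$, and the resulting basis is $\Z_p$-saturated. Verifying saturation is an induction on the number of rows: at each stage the pivot entry has valuation exactly $0$, so any $\Z_p$-point of $W'$ has $\Z_p$ coefficients in that basis.
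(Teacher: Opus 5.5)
Your proposal is correct and follows essentially the paper's route. You pass to $V/D$ through an $F$-rational map with kernel $D$, and you use valuation-pivoted elimination over $\mathcal O=F\cap\Z_p$; your Hermite/echelon form plays the role of the paper's Smith form. You reduce intersections, kernels and inverse images to the mixed $\Q_p$/$\Z_p$ kernel problem: your saturated lattice $W'\cap\Z_p^s$ is exactly the paper's solution set $\Ker(C')\cap\Z_p^s$. For quotients you use full preimages, as the paper does.

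One point needs stating precisely. $\pi$ must be reduction modulo the reduced echelon basis of $D$ followed by taking the non-pivot coordinates. The bare coordinate projection onto the complementary coordinates does not have kernel $D$: for $D=\Q_p(1,1)$ and $x\mapsto x_2$, the kernel is $\Q_p(1,0)$.
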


\begin{proof}
By its definition, we have $\mathcal O=\{a\in F:v_p(a)\geq0\}$. For an element $a\in F$ given in the prescribed exact representation, membership in $\mathcal O$ is therefore decidable. Moreover, if $a,b\in\mathcal O$ and $a\neq0$, then
$$
a\mid b\text{ in }\mathcal O
\iff
b/a\in\mathcal O
\iff
v_p(a)\leq v_p(b).
$$
Hence divisibility in $\mathcal O$ is decidable and, when it holds,
the quotient $b/a\in\mathcal O$ is computable using the field
operations in $F$.  Also
$a\in\mathcal O^\times$ if and only if $v_p(a)=0$, so membership in
$\mathcal O^\times$ is decidable.

We first prove \textup{(i)}.  Put
$$
D_F=\operatorname{span}_F\{d_1,\ldots,d_r\}\leq F^d.
$$
Gaussian elimination over $F$ effectively computes a basis
$u_1,\ldots,u_k$ of $D_F$ and extends it to a basis
$u_1,\ldots,u_d$ of $F^d$.  Put $e=d-k$ and define
$q:F^d\to F^e$ by
$$
q\left(\sum_{i=1}^d a_i u_i\right)
=
(a_{k+1},\ldots,a_d).
$$
Since equality and the field operations in $F$ are computable, the
matrix of $q$ with respect to the standard bases can be computed
effectively, and by construction $\Ker(q)=D_F$.
The same matrix defines a $\Q_p$-linear map
$q:\Q_p^d\to\Q_p^e$.  Since its kernel is obtained from
$\Ker(q)$ by extension of scalars,
$$
\Ker(q:\Q_p^d\to\Q_p^e)
=
\operatorname{span}_{\Q_p}\{d_1,\ldots,d_r\}
=
D.
$$
In particular, $D\cap F^d=D_F$.  For $x\in F^d$ we therefore have
\begin{equation}
x\in B=D+L
\iff
q(x)\in\sum_{j=1}^s\Z_p q(\ell_j).
\label{eq:2.15}
\end{equation}

Set
$$
M_0=\sum_{j=1}^s\mathcal O q(\ell_j)\leq F^e.
$$ Let $A$ be the $e\times s$ matrix whose columns are
$q(\ell_1),\ldots,q(\ell_s)$.  Thus
$M_0=\{Az:z\in\mathcal O^s\}$.
The vectors $q(\ell_j)$ have coordinates in $F$, but these coordinates need not belong to $\mathcal O$.  Since $v_p$ is computable, we can effectively choose an integer $c\geq0$ such that every coordinate of every vector $p^cq(\ell_j)$ has non-negative valuation, and hence lies in $\mathcal O$.  Thus every entry of $p^cA$ lies in $\mathcal O$, and we may apply Smith reduction over $\mathcal O$.

This reduction is effective.  At each step, choose a non-zero matrix entry having minimal $p$-adic valuation and move it to the upper-left corner.  If $a$ is this pivot and $b$ is any other entry, then $v_p(a)\leq v_p(b)$, and therefore $a$ divides $b$ in $\mathcal O$. Thus elementary row and column operations over $\mathcal O$ can be used to clear the remaining entries in the pivot row and pivot column. Repeating the same procedure on the smaller matrix which remains
eventually produces a diagonal matrix.

Smith reduction therefore gives matrices $U\in\operatorname{GL}_e(\mathcal O)$ and $V\in\operatorname{GL}_s(\mathcal O)$ such that
$$
U(p^cA)V
=
\operatorname{diag}(\delta_1,\ldots,\delta_t,0,\ldots,0),
$$
where $\delta_i\in\mathcal O\setminus\{0\}$.  The matrix $V$ performs an invertible change of generators among the columns, and therefore does not change the $\mathcal O$-module generated by them.  On the other hand, multiplication by $U$ applies the same invertible $\mathcal O$-linear transformation to the ambient space $F^e$.

Let $\varepsilon_1,\ldots,\varepsilon_e$ denote the standard basis of $F^e$. For each $i$, put $n_i=v_p(\delta_i)$. Since $\delta_i\mathcal O=p^{n_i}\mathcal O$, the columns of $U(p^cA)V$ generate
$$
U(p^cM_0)
=
p^{n_1}\mathcal O\varepsilon_1
\oplus\cdots\oplus
p^{n_t}\mathcal O\varepsilon_t.
$$
Applying $U^{-1}$ and setting $f_i=U^{-1}\varepsilon_i$, we obtain
$$
p^cM_0
=
p^{n_1}\mathcal O f_1
\oplus\ldots\oplus
p^{n_t}\mathcal O f_t.
$$
Since $U$ is invertible, the vectors $f_1,\ldots,f_t$ are $F$-linearly independent. Multiplying by $p^{-c}$ and putting $e_i=n_i-c$, we obtain
\begin{equation}
M_0
=
p^{e_1}\mathcal O f_1
\oplus\cdots\oplus
p^{e_t}\mathcal O f_t,
\qquad
e_1,\ldots,e_t\in\Z.
\label{eq:2.17}
\end{equation}
The row and column operations, and hence the vectors $f_i$ and the
change-of-generator matrices, are computed effectively.

Extend $f_1,\ldots,f_t$ effectively, by Gaussian elimination over $F$, to an~\hbox{$F$-basis} $f_1,\ldots,f_e$ of $F^e$. Since $\Z_{(p)}\subseteq\mathcal O\subseteq\Z_p$ and $\Z_{(p)}$ is dense in $\Z_p$, the closure of~$\mathcal O$ in $\Z_p$ is $\Z_p$. Taking $p$-adic closures in
$$
M_0
=
p^{e_1}\mathcal O f_1
\oplus\ldots\oplus
p^{e_t}\mathcal O f_t
$$
and using $M_0=\sum_{j=1}^s\mathcal O q(\ell_j)$ gives
$$
\sum_{j=1}^s\Z_pq(\ell_j)
=
p^{e_1}\Z_pf_1
\oplus\ldots\oplus
p^{e_t}\Z_pf_t.
$$

The standard coordinates of $q(x)$ are computable from the known matrix of $q$ and the standard coordinates of $x$. Since the basis $f_1,\ldots,f_e$ is also known effectively, Gaussian elimination over $F$ computes the unique coefficients
$$
q(x)=\sum_{i=1}^e\xi_if_i,
\qquad
\xi_i\in F.
$$
Then \eqref{eq:2.15} is equivalent to $\xi_i=0$ for $i>t$ and $v_p(\xi_i)\geq e_i$ for $1\leq i\leq t$.  These are finitely many effective tests, so membership in $B$ is decidable.

Suppose that the above membership conditions hold.  Then $v_p(\xi_i)\geq e_i$ for $1\leq i\leq t$, so $\eta_i:=p^{-e_i}\xi_i\in\mathcal O$.  Since $\xi_i=0$ for $i>t$, we have
$$
q(x)=\sum_{i=1}^t p^{e_i}\eta_i f_i\in M_0.
$$

We now recover coefficients with respect to the original generators $q(\ell_1),\ldots,q(\ell_s)$. Recall that
$$
U(p^cA)V=\Delta=\operatorname{diag}(p^{n_1},\ldots,p^{n_t},0,\ldots,0).
$$
Since $q(x)\in M_0$, the vector
$$
\gamma:=p^cUq(x)=(\gamma_1,\ldots,\gamma_e)^T
$$
belongs to $U(p^cM_0)$. Hence $\gamma_i\in p^{n_i}\mathcal O$ for $1\leq i\leq t$ and $\gamma_i=0$ for $i>t$. Define $w\in\mathcal O^s$ by
$$
w_i=p^{-n_i}\gamma_i\quad(1\leq i\leq t),
\qquad
w_i=0\quad(i>t),
$$
and put $z=Vw$. Then $z\in\mathcal O^s$ is computable and
$$
p^cUAz
=
U(p^cA)Vw
=
\Delta w
=
\gamma
=
p^cUq(x).
$$
Since $p^cU$ is invertible over $F$, it follows that
$$
Az=q(x),
$$
that is,
$$
q(x)=\sum_{j=1}^s z_jq(\ell_j).
$$

By linearity of $q$,
$q\left(x-\sum_{j=1}^s z_j\ell_j\right)=0$.  Hence
$$
x-\sum_{j=1}^s z_j\ell_j
\in
\Ker q
=
D_F
=
\operatorname{span}_F\{d_1,\ldots,d_r\}.
$$
All terms on the left are known exactly in $F^d$, so Gaussian elimination over $F$ computes coefficients $a_1,\ldots,a_r\in F$ such that $x-\sum_{j=1}^s z_j\ell_j=\sum_{i=1}^r a_id_i$.  Therefore
$$
x
=
\sum_{i=1}^r a_i d_i
+
\sum_{j=1}^s z_j\ell_j,
\qquad
a_i\in F,\quad z_j\in\mathcal O.
$$
This proves \textup{(i)}.

Sums and images are immediate.  If
$$
B=D+L,
\qquad
B'=D'+L',
$$
then
$$
B+B'=(D+D')+(L+L'),
$$
so one obtains a presentation of the required form by adjoining the corresponding generators. Likewise, if $T$ is an $F$-linear map, then
$$
T(B)
=
\operatorname{span}_{\Q_p}\{Td_1,\ldots,Td_r\}
+
\sum_{j=1}^s\Z_pT\ell_j,
$$
and all resulting vectors have exact coordinates in $F$.

We next treat intersections, kernels and inverse images. Each of these problems reduces to a finite system with some variables ranging freely over~$\Q_p$ and the remaining variables constrained to lie in $\Z_p$. For example, if
$$
B=D+L,
\qquad
B'=D'+L',
$$
then $B\cap B'$ is described by solutions of
$$
\sum_i u_id_i+\sum_j z_j\ell_j
=
\sum_i u_i'd_i'+\sum_j z_j'\ell_j',
$$
where $u_i,u_i'\in\Q_p$ and $z_j,z_j'\in\Z_p$. Similarly, conditions defining kernels and inverse images have the same form after introducing the appropriate auxiliary variables.

Thus, after collecting all variables ranging over $\Q_p$ in $u$ and all variables constrained to lie in $\Z_p$ in $z$, we obtain a system
\begin{equation}
Au+Cz=0,
\qquad
u\in\Q_p^r,\quad z\in\Z_p^s,
\label{eq:mixedsystem}
\end{equation}
where, for some effectively known integer $N$,
$$
A\in M_{N\times r}(F),
\qquad
C\in M_{N\times s}(F).
$$

For a fixed $z\in\Z_p^s$, the equation $Au+Cz=0$ is solvable in $u\in\Q_p^r$ if and only if $-Cz$ belongs to the $\Q_p$-span of the columns of $A$. Gaussian elimination over $F$ effectively computes an $F$-linear map $q_A:F^N\to F^h$ whose kernel is the $F$-span of these columns. After extension of scalars to $\Q_p$, the kernel of the same map is their $\Q_p$-span.  Hence
$$
Au+Cz=0\text{ is solvable in }u
\iff
q_A(Cz)=0.
$$
Thus, putting $C'=q_AC$, the original system reduces to
$$
C'z=0,
\qquad
z\in\Z_p^s,
$$
where $C'$ has entries in $F$.

Choose $c\geq0$ such that every entry of $p^cC'$ belongs to $\mathcal O$. Since $p^c\neq0$, the systems $C'z=0$ and $p^cC'z=0$ have the same solutions in $\Z_p^s$. Smith reduction over $\mathcal O$ effectively gives matrices $R\in\operatorname{GL}_h(\mathcal O)$, $S\in\operatorname{GL}_s(\mathcal O)$ such that
$$
R(p^cC')S
=
\operatorname{diag}(\delta_1,\ldots,\delta_t,0,\ldots,0),
$$
where $\delta_1,\ldots,\delta_t$ are non-zero. Since $R$ and $S$ are invertible and each $\delta_i\neq0$, the solutions in $\Z_p^s$ are precisely
$$
\Ker(C':\Z_p^s\to\Q_p^h)
=
\bigoplus_{\nu=1}^{s-t}\Z_pz_\nu,
\qquad
z_\nu:=S\varepsilon_{t+\nu},
$$
where $\varepsilon_1,\ldots,\varepsilon_s$ is the standard basis of $F^s$. Since $S\in\operatorname{GL}_s(\mathcal O)$, each $z_\nu$ has coordinates in $\mathcal O$, and the generators $z_\nu$ are effectively computable.

For each generator $z_\nu$, we have $C'z_\nu=0$, that is,
$q_A(Cz_\nu)=0$.  Since
$z_\nu\in F^s$, the vector $Cz_\nu$ belongs to $F^N$.  Hence
$q_A(Cz_\nu)=0$ implies that $Cz_\nu$ lies in the $F$-span of the
columns of $A$.  Therefore the equation
$Au=-Cz_\nu$ has a solution already in $F^r$, and Gaussian elimination
over $F$ computes one such solution $u_\nu\in F^r$.
The same Gaussian elimination computes vectors
$k_1,\ldots,k_\rho\in F^r$
whose $\Q_p$-span is
$\Ker(A:\Q_p^r\to\Q_p^N)$.

Every solution of \eqref{eq:mixedsystem} is therefore of the form
$$
z=\sum_\nu\lambda_\nu z_\nu,
\qquad
u=\sum_\nu\lambda_\nu u_\nu+\sum_{i=1}^{\rho}\mu_i k_i,
\qquad
\lambda_\nu\in\Z_p,\quad \mu_i\in\Q_p.
$$
In each of the intersection, kernel and inverse-image problems, the ambient element is given by an effectively known $F$-linear expression
$$
\Phi(u,z)=Pu+Qz.
$$
Hence
$$
\Phi(u,z)
=
\sum_{i=1}^{\rho}\mu_i\,Pk_i
+
\sum_\nu\lambda_\nu\,(Pu_\nu+Qz_\nu).
$$
Thus the subgroup obtained from the solution set has the required form
$$
D''+L'',
\qquad
D''=\operatorname{span}_{\Q_p}\{Pk_1,\ldots,Pk_\rho\},
\qquad
L''=\sum_\nu\Z_p(Pu_\nu+Qz_\nu).
$$
Since $P,Q,k_i,u_\nu,z_\nu$ are all effectively known over $F$, this presentation is computable and all its generators have coordinates in $F$. This proves intersections, kernels and inverse images, and hence \textup{(ii)} and \textup{(iii)}.

Finally, let $W=\Q_p^{d'}$, let
$$
B_0\leq B_1\leq V,
\qquad
C_0\leq C_1\leq W
$$
be subgroups given in the form \eqref{eq:2.13}, and let $T:V\to W$ be an $F$-linear map given exactly over $F$, with $T(B_1)\leq C_1$ and $T(B_0)\leq C_0$. Then $T$ induces
$$
\overline T:B_1/B_0\longrightarrow C_1/C_0.
$$
We represent a subgroup of $B_1/B_0$ by its full preimage in $B_1$, and similarly for $C_1/C_0$. With this convention, equality of cosets represented by $x,y\in F^d$ is decidable by \textup{(i)}, since
$$
x+B_0=y+B_0
\iff
x-y\in B_0.
$$
Moreover,
$$
\Ker(\overline T)
=
\{x\in B_1:Tx\in C_0\}/B_0,
\qquad
\operatorname{Im}(\overline T)
=
(T(B_1)+C_0)/C_0.
$$
More generally, if $C_0\leq H\leq C_1$ is given in the form \eqref{eq:2.13}, then
$$
\overline T^{-1}(H/C_0)
=
\{x\in B_1:Tx\in H\}/B_0.
$$
Thus kernels, images and inverse images in the quotients are computed by \textup{(ii)} and \textup{(iii)}. Sums and intersections are computed by taking sums and intersections of the corresponding full preimages and then quotienting by $B_0$. The coefficient data retained in the inverse-image computation also give representatives of preimages. Hence all the operations in \textup{(iv)} are effective.
\end{proof}

Next, we need the following result which allows us to pass to a virtually torsion-free cover (see \cite[Theorem~1.5]{KL20} and \cite[Proposition~1.3]{KL20}).

\begin{theorem}\label{thm:KL}
Let $G$ be a finitely generated virtually soluble minimax group.  There exist a finitely generated virtually torsion-free virtually soluble minimax group $H$ and an epimorphism $\phi:H\twoheadrightarrow G$
such that
\begin{equation}
 \Fitt(H)=\phi^{-1}(\Fitt(G)).
 \label{eq:2.21}
\end{equation}
If $G$ is soluble, $H$ may be chosen soluble with the same derived length, and the two Fitting subgroups have the same nilpotency class.
\end{theorem}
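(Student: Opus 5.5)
The statement is imported from Kropholler and Lorensen, so my plan is to assemble it from \cite[Theorem~1.5]{KL20} and \cite[Proposition~1.3]{KL20} rather than rebuild the cover. I would check only that the hypotheses match and that every clause, including \eqref{eq:2.21}, is actually delivered there. First I would apply \cite[Theorem~1.5]{KL20} to the finitely generated virtually soluble minimax group $G$. This should give a finitely generated virtually torsion-free virtually soluble minimax group $H$ and an epimorphism $\phi:H\twoheadrightarrow G$.

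I expect the construction underlying that theorem to run as follows. The torsion in a minimax group sits, up to finite pieces, in quasicyclic sections $C_{p^\infty}$ of $\Fitt(G)$. Each such section is covered by $\Z[1/p]$ through $\Z\to\Z[1/p]\to C_{p^\infty}$, and these covers are glued in via tensor $p$-completions, using Lemma~\ref{lem:tensor} and Lemma~\ref{lem:abelian-q-completion} for functoriality. Finally one passes to a suitable finitely generated subgroup mapping onto $G$. The kernel $\Ker\phi$ is then a torsion-free abelian minimax group lying inside the preimage of $\Fitt(G)$.

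Next I would verify \eqref{eq:2.21}, treating the two inclusions separately.
\begin{itemize}
\item The inclusion $\phi(\Fitt(H))\leq\Fitt(G)$ is automatic. The image of a nilpotent normal subgroup under an epimorphism is nilpotent and normal, and $\Fitt(H)$ is nilpotent since $H$ is minimax, hence of finite rank. This gives $\Fitt(H)\leq\phi^{-1}(\Fitt(G))$.
\item For the reverse inclusion it suffices that $M:=\phi^{-1}(\Fitt(G))$ is nilpotent, since it is clearly normal in $H$.
\end{itemize}
This is the key point and the step I expect to be the main obstacle. An extension of the abelian group $\Ker\phi$ by the nilpotent group $\Fitt(G)$ need not be nilpotent. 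One needs $M$ to act unipotently on $\Ker\phi$, and in fact nilpotently on a finite series of $M$ refining $\Ker\phi\leq M$. I would try to extract this from the precise form of the construction, which is the content of \cite[Proposition~1.3]{KL20}. The covering of each quasicyclic layer is made $\Fitt(G)$-equivariantly, so $M$ acts on each new torsion-free layer through its action on the corresponding layer of $\Fitt(G)$, where that action is trivial. Hence $M$ inherits a central series of length bounded by the class of $\Fitt(G)$.

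Finally, I would treat the soluble case. If $G$ is soluble, the construction is carried out inside the soluble world. The derived series of $H$ maps onto that of $G$, and the kernel is absorbed into $M$, so the derived length can be kept equal to that of $G$. The central series found in the previous step has the same length as one of $\Fitt(G)$, so $\Fitt(H)=M$ and $\Fitt(G)$ have the same nilpotency class. Again I would cite the corresponding clauses in \cite{KL20} rather than reprove them.
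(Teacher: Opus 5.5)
Your plan is the same as the paper's. The paper does not prove this statement; it imports it from \cite[Theorem~1.5]{KL20} together with \cite[Proposition~1.3]{KL20}, so \eqref{eq:2.21} and the two soluble refinements come from that citation with no further argument. Your surrounding sketches should be read as heuristics only, for three reasons:
\begin{itemize}
\item The cited result does not assert that $\Ker\phi$ is torsion-free abelian, and the paper never assumes it; it works instead with $K_0=K\cap H_0$, a central series of $K_0$, and the finite quotient $K/K_0$.
\item The remark that the kernel is ``absorbed into $M$'' does not give equal derived length. One needs $H^{(d)}=1$, whereas only $H^{(d)}\le\Ker\phi$ is automatic. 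The same issue arises for $\gamma_{c+1}(\Fitt(H))$ and the nilpotency class.
\item The paper uses \cite[Proposition~1.3]{KL20} for structural facts about Fitting subgroups, such as $H/\Fitt(H)$ being virtually abelian. It is not the source of the equivariant construction you describe.
\end{itemize}
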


\section{Effective data for the fixed cover}\label{sec:3}

From this point on, the group $G$ and the cover $\phi:H\twoheadrightarrow G$ supplied by Theorem~\ref{thm:KL} are fixed. The purpose of this section is to make effective the group-theoretic data attached to this cover that will be used in the subsequent algorithms. We then make effective the relevant subgroups of the torsion-free nilpotent group $N_0$ and the successive abelian factors associated with its fixed series. Lemma~\ref{lem:wordcoords} provides the conversion from words representing elements of $N_0$ to their coordinate vectors, while Lemma~\ref{lem:effectivechains} gives the general effective construction of the $p$-primary chains needed for the factors $V_i$ and $W_i$. Lemma~\ref{lem:fixed} applies this construction and collects the resulting membership, coordinate and lifting procedures for the fixed subgroups and factors. Finally, Lemma~\ref{lem:ratpadicmembership} gives the local-to-global membership criterion for abelian minimax groups that will later be applied to the images arising in the conjugacy-lifting procedure.

\begin{convention}\label{def:fixeddata}
From now on, let $G$ be a fixed group satisfying the hypotheses of the Main Theorem.  By Proposition~\ref{prop:fatr}, $G$ is a finitely generated virtually soluble minimax group.  Therefore Theorem~\ref{thm:KL} applies, and we fix an epimorphism $\phi:H\twoheadrightarrow G$ as in that theorem. We also fix the recursive presentation of $G$, including its finite generating set and a procedure enumerating its defining relators. The input to the conjugacy algorithm consists of two words $x$ and $y$ in these fixed generators. All the preceding group-theoretic data are regarded as fixed throughout the argument, while $x$ and $y$ are the variable input. Some other fixed data will be introduced later.

Whenever finitely many fixed non-rational $p$-adic coefficients $\alpha_1,\ldots,\alpha_s$ occur in the constructions below, we consider the finitely generated field
$$
\Q(\alpha_1,\ldots,\alpha_s)\leq\Q_p.
$$
Choose a transcendence basis $\beta_1,\ldots,\beta_t$ among the $\alpha_i$.  Then the remaining $\alpha_i$ are algebraic over $\Q(\beta_1,\ldots,\beta_t)$, and therefore
$$
\Q(\alpha_1,\ldots,\alpha_s)/
\Q(\beta_1,\ldots,\beta_t)
$$
is a finite extension.  Choose a basis
$e_1,\ldots,e_d$ of this extension, with each $e_k$ a monomial in the
$\alpha_i$.  For every pair $i,j$, fix the coefficients
$c_{ij}^1,\ldots,c_{ij}^d\in\Q(\beta_1,\ldots,\beta_t)$ such that
$$
e_ie_j=\sum_{k=1}^d c_{ij}^k e_k,
$$
and also fix the coordinates of each $\alpha_i$ with respect to the
basis $e_1,\ldots,e_d$.  Since the basis is finite, this requires only
finitely many coefficients and gives precisely the algebraic data
needed in Lemma~\ref{lem:field}. The remaining hypothesis of Lemma~\ref{lem:field}, namely the ability
to approximate each $\alpha_i$ effectively by rational numbers to
arbitrary $p$-adic precision, is verified in each application below.  Thus, whenever such coefficients are introduced,
all the hypotheses of Lemma~\ref{lem:field} are satisfied.
More generally, whenever finitely many finitely generated exact coefficient fields $F^{(1)},\ldots,F^{(r)}\leq\Qp$ occur simultaneously in one computation, we replace them by the finitely generated subfield $E\leq\Qp$ generated jointly by their coefficient data, equipped with exact coefficient data of the same type. With respect to the fixed exact representations, the inclusions $F^{(\nu)}\hookrightarrow E$ are taken to be explicitly computable. Thus all exact $p$-adic coefficients occurring simultaneously may be handled in one common coefficient field.
\end{convention}

We shall use the Word Problem both in $G$ and in $H$.  Since $G$ is finitely generated, virtually soluble and minimax, and has a recursive presentation, \cite[Theorem~9.3.4]{LR04} applied to the recursively enumerable
subgroup $\{1\}$ gives a word-problem algorithm in $G$. By Theorem~\ref{thm:KL}, the group $H$ is finitely generated and virtually torsion-free minimax. Such groups are linear over $\Q$ (see \cite[Section~1]{KL20}).  Hence, for some $n$, there exists an injective homomorphism
$$
\rho:H\longrightarrow \operatorname{GL}_n(\Q).
$$
Since the group $G$, and hence the chosen cover $H\twoheadrightarrow G$, is fixed throughout, we choose and fix one such representation $\rho$, together with the rational matrices representing a fixed finite generating set of $H$.  No procedure for constructing this representation from the recursive presentation of $G$ is required; these finitely many matrices are treated as fixed auxiliary data depending only on the chosen group $G$. Once these matrices are fixed, the Word Problem in $H$ is decidable. Indeed, given a word in the fixed generators of $H$, its image under $\rho$ is computed by multiplying the corresponding rational matrices, and, since $\rho$ is injective, the word represents the identity in $H$ if and only if the resulting matrix is the identity matrix.

Now put
\begin{equation}
K:=\Ker(\phi)=\{h\in H:\phi(h)=1\}.
\label{eq:kernel-K}
\end{equation}
For a word $w$ in the fixed generators of $H$, the word $\phi(w)$ in the fixed generators of $G$ is effectively obtained from the fixed images under $\phi$ of the generators of $H$. Since the Word Problem in $G$ is decidable,
\begin{equation}
w\in K
\iff
\phi(w)=1\text{ in }G
\label{eq:deckernel-K}
\end{equation}
is decidable.  Hence membership in $K$ is decidable. Finally, the decidability of the Word Problem in $H$ implies that $H$ has a recursive presentation on its fixed finite generating set. Therefore, for every recursively enumerable subgroup $L\leq H$ that is fixed in advance, \cite[Theorem~9.3.4]{LR04} provides an algorithm deciding membership in $L$.  The algorithm is allowed to depend on the fixed subgroup $L$.

\smallskip

We shall also need to pass effectively from words in $H$ which are known to lie in a fixed nilpotent subgroup to their coordinates.

\begin{lemma}\label{lem:wordcoords}
Let $L$ be a torsion-free nilpotent minimax normal subgroup of $H$, and let $u_1,\ldots,u_h$, $\operatorname{cr}$, $P$ and $R$ be the coordinate data supplied by~\textnormal{Lemma~\ref{lem:coordinates}} for $L$.  Thus every element $x\in L$ has a unique expression $x=u_1^{a_1}\ldots u_h^{a_h}$ with $\operatorname{cr}(x)=(a_1,\ldots,a_h)\in\Q^h$.

Fix a finite generating set $s_1,\ldots,s_m$ of $H$, and assume that $H/L$ is finitely presented.  Fix a presentation $H/L=\langle \overline{s}_1,\ldots,\overline{s}_m \mid r_1,\ldots,r_t\rangle$, where $\overline{s}_i=s_iL$.  For $\nu=1,\ldots,t$, let $y_\nu=r_\nu(s_1,\ldots,s_m)\in L$, and fix the coordinate vectors $\operatorname{cr}(y_\nu)$.  Also fix, for every $i=1,\ldots,m$, $j=1,\ldots,h$, and $\varepsilon\in\{\pm1\}$, the coordinate vector $\operatorname{cr}(u_j^{\,s_i^\varepsilon})$. 

Then there is an algorithm which, on input a word $w$ in $s_1^{\pm1},\ldots,s_m^{\pm1}$ and representing an element of $L$, computes its coordinate vector $\operatorname{cr}(w)\in\Q^h$.
\end{lemma}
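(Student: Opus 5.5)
Since $w$ represents an element of $L$, its image $\overline w$ in $H/L$ is trivial. Hence $w$ is freely equal to a product of conjugates of the relators,
\[
w=\prod_{k=1}^{M} g_k^{-1} r_{\nu_k}^{\varepsilon_k} g_k ,
\]
where $g_k$ are words in the $s_i^{\pm1}$ and $\varepsilon_k\in\{\pm1\}$. The plan is to find such an expression effectively and then evaluate it in coordinates. Enumerate all finite sequences $(g_k,\nu_k,\varepsilon_k)$ and test free equality of the resulting product with $w$, which is decidable by free reduction. Because the presentation of $H/L$ is finite and $\overline w=1$, such an expression exists, so the search terminates. Note that this uses only the fact that $w$ is known to lie in $L$; no decision procedure for membership in $L$ is needed.

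With the expression in hand, we compute $\operatorname{cr}(w)$ using the polynomials $P$ and $R$ of Lemma~\ref{lem:coordinates}. The subgroup $U^\ast$ need not contain the relevant elements, so we work throughout in $L_{\Q}$, where part (iv) of that lemma gives multiplication and rational powers via the same polynomials. Since $L\trianglelefteq H$, each $g_k$ induces an automorphism of $L$, and this extends uniquely to $L_{\Q}$ because rational completion is functorial and roots are unique. The coordinates of $y_\nu^{\varepsilon}$ are $R(\operatorname{cr}(y_\nu),\varepsilon)$, which is computable from the fixed data. It therefore remains to compute $\operatorname{cr}(x^{g})$ for $x\in L$ with known coordinates and $g$ a word, and we do this one letter at a time. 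For a single letter $s=s_i^{\pm1}$, the vectors $\mathbf a^{(j)}=\operatorname{cr}(u_j^{s})$ are fixed data. Writing $\alpha$ for conjugation by $s$, we have
\[
\operatorname{cr}\bigl((u_1^{a_1}\cdots u_h^{a_h})^{s}\bigr)=A_\alpha(a_1,\ldots,a_h)
\]
by the formula \eqref{eq:external-aut}. This holds for rational $a_j$ as well, since $\alpha$ commutes with rational powers in $L_{\Q}$: it preserves unique roots, so $\alpha(u^{\lambda})=\alpha(u)^{\lambda}$. Iterating over the letters of $g_k$ gives $\operatorname{cr}(y_{\nu_k}^{\varepsilon_k g_k})$, and multiplying the $M$ factors with $P$ yields $\operatorname{cr}(w)\in\Q^h$. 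All arithmetic is exact rational arithmetic.

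The main point requiring care is the first step. We need that $w\in L$ implies the word lies in the normal closure of $\{r_\nu\}$ in the free group. This is exactly the statement that $\langle s\mid r\rangle$ presents $H/L$, i.e. that the kernel of $F(s)\to H/L$ is the normal closure of the $r_\nu$. The second point is the extension of $A_\alpha$ from integral to rational exponents, which is where we rely on Lemma~\ref{lem:coordinates}(iv) and the uniqueness of roots. Everything else is bookkeeping with fixed finite data.
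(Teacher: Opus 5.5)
Your proof is correct, and its overall shape matches the paper's. Both enumerate products of conjugates of the relators and stop at one representing $w$. Both then evaluate that product using the fixed vectors $\operatorname{cr}(u_j^{\,s_i^{\pm1}})$, formula \eqref{eq:external-aut}, and the polynomials $P$ and $R$.

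The one real difference is the stopping test.
\begin{itemize}
\item The paper notes that $y_1,\ldots,y_t$ normally generate $L$ in $H$. It then checks each candidate product $\prod(y_{j_\nu}^{\varepsilon_\nu})^{h_\nu}$ against $w$ using the word-problem algorithm for $H$, which it gets from the fixed faithful representation $\rho:H\to\operatorname{GL}_n(\Q)$.
\item You work one level up, in the free group. Since the $r_\nu$ present $H/L$, a word with trivial image in $H/L$ lies in the normal closure of the $r_\nu$ in $F(s_1,\ldots,s_m)$. So free reduction alone is a valid stopping test.
\end{itemize}

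What each buys:
\begin{itemize}
\item Your version is self-contained. It uses only the finite presentation and the fixed coordinate data, with no appeal to the Word Problem in $H$.
\item The paper's version needs less from the words $r_\nu$. It works for any finite set of elements normally generating $L$ in $H$, even if their defining words do not present $H/L$ without also adding relators of $H$. The price is an externally supplied decision procedure for equality in $H$.
\end{itemize}
Under the lemma's actual hypotheses, both searches terminate and produce the same coordinates.

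You also make explicit something the paper uses silently. Formula \eqref{eq:external-aut} was derived for integral exponent vectors, but the vectors that actually occur are rational. It still applies to them because conjugation extends to $L_{\Q}$ and commutes with rational powers there, so Lemma~\ref{lem:coordinates}(iv) covers these cases.
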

\begin{proof}
By the chosen presentation of $H/L$, the elements $y_1,\ldots,y_t$ defined in the statement normally generate $L$ in $H$.  Indeed, the kernel of the natural epimorphism from $H$ onto the presented quotient $H/L$ is the normal closure in $H$ of the lifts of the defining relators, and these lifts are precisely $y_1,\ldots,y_t$.

Enumerate all finite products
\begin{equation}
z=\prod_{\nu=1}^q
\bigl(y_{j_\nu}^{\varepsilon_\nu}\bigr)^{h_\nu},
\qquad
\varepsilon_\nu\in\{\pm1\},
\label{eq:2.23}
\end{equation}
where $q\geq0$, $j_\nu\in\{1,\ldots,t\}$, and each $h_\nu$ ranges
over the words in the fixed generators
$s_1^{\pm1},\ldots,s_m^{\pm1}$ of $H$.  These products enumerate the
normal closure of $y_1,\ldots,y_t$, and hence all elements of $L$.
For each such product, use the word-problem algorithm in $H$ to test
whether $z=w$.  Since $w$ is known to belong to $L$, this search
eventually terminates with a product \eqref{eq:2.23} representing $w$.

It remains to compute the coordinate vector of that product.  The
vectors $\operatorname{cr}(y_j)$ are fixed data, and hence the
coordinates of $y_j^{-1}$ are computed from $R$ in
Lemma~\ref{lem:coordinates}.  Moreover, for every
$s_i^{\pm1}$ the coordinate vectors
$\operatorname{cr}(u_j^{\,s_i^{\pm1}})$ are fixed.  Therefore
formula~\eqref{eq:external-aut}, together with the polynomials~$P$ and
$R$, computes the action by conjugation of each $s_i^{\pm1}$ on
coordinate vectors.  If
$h_\nu=s_{i_1}^{\delta_1}\ldots s_{i_r}^{\delta_r}$, with
$\delta_k\in\{\pm1\}$, the coordinate vector of
$\bigl(y_{j_\nu}^{\varepsilon_\nu}\bigr)^{h_\nu}$ is consequently
obtained by successively applying these explicitly known conjugation
maps.

Finally, the polynomial multiplication map $P$ computes the coordinate
vector of the product of the finitely many factors in
\eqref{eq:2.23}.  Since this product represents $w$, the resulting
vector is precisely $\operatorname{cr}(w)$.
\end{proof}

Now, fix a generating set $h_1,\ldots,h_m$ for $H$, and put
$N=\Fitt(H)$. For each
fixed generator of $G$, fix a word in $h_1^{\pm1},\ldots,h_m^{\pm1}$
whose image under $\phi$ is that generator. By~\eqref{eq:2.21}, $K\leq N$. Since $H$ is virtually
torsion-free, choose a torsion-free normal subgroup $H_0$ of $H$ of
finite index. Set
\begin{equation}
 N_0=N\cap H_0,
 \qquad
 K_0=K\cap H_0=K\cap N_0.
 \label{eq:3.1}
\end{equation}
Then $N_0$ is torsion-free nilpotent and normal in $H$.
By \cite[Proposition~1.3]{KL20}, $H/N$ is virtually abelian.
Since $H/N_0$ is finite-by-abelian-by-finite and finitely generated,
we conclude that it is also virtually abelian. But a finitely
generated virtually abelian group is finitely presented, so we fix
once and for all a finite presentation
$$
H/N_0=
\langle \overline h_1,\ldots,\overline h_m
\mid r_1,\ldots,r_t\rangle,
\qquad
\overline h_i=h_iN_0.
$$
For $\nu=1,\ldots,t$, put
$y_\nu=r_\nu(h_1,\ldots,h_m)\in N_0$.
Lifting its finitely many relators shows that $N_0$ is finitely
normally generated in $H$.

Define
\begin{equation}
 N_0=N_1\geq N_2\geq\ldots\geq N_{c+1}=1,
 \qquad
 N_i=\Isol_{N_0}(\gamma_i(N_0)),
 \label{eq:3.3}
\end{equation}
and put
\begin{equation}
 V_i=N_i/N_{i+1},
 \qquad
 J_i=K_0\cap N_i,
 \qquad
 W_i=J_iN_{i+1}/N_{i+1}\leq V_i.
 \label{eq:3.4}
\end{equation}
In particular, every $V_i$ is torsion-free abelian minimax.

Apply Lemma~\ref{lem:coordinates} to $N_0$, making the
choices in its proof so that, for each $i$, the finite set chosen
in $N_i$ has images in $V_i$ generating a free abelian subgroup of
full rank with divisible quotient. If
$u_{0,1},\ldots,u_{0,h_0}$
are the resulting coordinate elements and
$N_0^{\ast}=\langle u_{0,1},\ldots,u_{0,h_0}\rangle$, choose
them, as in the proof of that lemma, so that for each $i$ there is
an index $e_i$ such that
$$
N_i\cap N_0^{\ast}
=
\langle u_{0,e_i},\ldots,u_{0,h_0}\rangle.
$$
For each $\nu=1,\ldots,h_0$, fix a word in
$h_1^{\pm1},\ldots,h_m^{\pm1}$ representing $u_{0,\nu}$ in $H$.

Fix also the coordinate vector of each $y_\nu$ with respect to
$u_{0,1},\ldots,u_{0,h_0}$. Moreover, for every
$i=1,\ldots,m$, $\nu=1,\ldots,h_0$, and
$\varepsilon\in\{\pm1\}$, fix the coordinate vector of
$
u_{0,\nu}^{\,h_i^\varepsilon}
$
with respect to $u_{0,1},\ldots,u_{0,h_0}$.

For each $i$, we use
$u_{0,e_i},\ldots,u_{0,h_0}$, together with the corresponding
restrictions of the coordinate polynomials for $N_0$, as the
coordinate data for $N_i$. We include these choices, together with the finite presentation of $H/N_0$
and the coordinate vectors fixed above,
among the fixed data of Convention~\ref{def:fixeddata}.

Note that all the hypotheses of Lemma~\ref{lem:wordcoords} are therefore
satisfied for~\hbox{$L=N_0$} and $s_i=h_i$. Hence, whenever a word in
$h_1^{\pm1},\ldots,h_m^{\pm1}$ is known to represent an element
of $N_0$, its coordinate vector with respect to
$u_{0,1},\ldots,u_{0,h_0}$ is computable.

After extending the coordinates to $\Qp$ as above, the coordinate
vector of an element of $(N_i)_p$ is therefore identified with its
coordinate vector in~$(N_0)_p$ by adjoining $e_i-1$ initial zero
entries.

For each $i$, set
$$
S_i=(N_i\cap N_0^{\ast})N_{i+1}/N_{i+1}\leq V_i.
$$
Since the finite set used in the construction of $N_0^{\ast}$ at
level $i$ maps into $S_i$, the group $S_i$ has full rank in $V_i$;
moreover, $V_i/S_i$ is a quotient of the divisible quotient fixed
in that construction, and is therefore divisible.  Since
$$
N_{i+1}\cap N_0^{\ast}
=
\langle u_{0,e_{i+1}},\ldots,u_{0,h_0}\rangle,
$$
the elements
$$
s_{i,1}=u_{0,e_i}N_{i+1},\ldots,
s_{i,r_i}=u_{0,e_{i+1}-1}N_{i+1},
\qquad
r_i=e_{i+1}-e_i,
$$
form a basis of $S_i$.  In particular, they form a $\Q$-basis of
$(V_i)_{\Q}=V_i\otimes_{\Z}\Q$, and their word lifts to $N_i$ are
the words already fixed for
$u_{0,e_i},\ldots,u_{0,e_{i+1}-1}$.

Choose also a free abelian subgroup $T_i\leq W_i$ of full rank such
that $W_i/T_i$ is divisible, and fix a basis
$t_{i,1},\ldots,t_{i,d_i}$ of $T_i$, together with word lifts of
these elements to $J_i$.

For every $p\in\spec(V_i)$, fix an $\mathbf F_p$-basis
\(
e_{i,p,1}^{V},\ldots,e_{i,p,r_{i,p}^{V}}^{V}
\)
of $(V_i/S_i)[p]$.  For each basis element, fix a representative
$b_{i,p,j,1}^{V}\in V_i$ and a word
$\widehat b_{i,p,j,1}^{V}$ in
$h_1^{\pm1},\ldots,h_m^{\pm1}$ representing an element of $N_i$
whose image under
\hbox{$\pi_i^V:N_i\twoheadrightarrow V_i$}
is $b_{i,p,j,1}^{V}$. Likewise, for every $p\in\spec(W_i)$, fix an $\mathbf F_p$-basis
of $(W_i/T_i)[p]$, representatives
$b_{i,p,j,1}^{W}\in W_i$ of its basis elements, and words
$\widehat b_{i,p,j,1}^{W}$ in
$h_1^{\pm1},\ldots,h_m^{\pm1}$ representing elements of $J_i$
whose images under
$\pi_i^W:J_i\twoheadrightarrow W_i$
are the corresponding representatives.

All these choices are included among the fixed data of
Convention~\ref{def:fixeddata}. They are finite in number, since the
series \eqref{eq:3.3} has finitely many factors and each of
$\spec(V_i)$ and $\spec(W_i)$ is finite.  With these fixed data, the following computations can be performed.

\begin{lemma}\label{subgroupsniarere}
The subgroups $N_i$ are recursively enumerable in $H$.
\end{lemma}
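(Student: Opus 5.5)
To show that each $N_i$ is recursively enumerable in $H$, I would produce an explicit procedure that lists words in $h_1^{\pm1},\ldots,h_m^{\pm1}$ representing exactly the elements of $N_i$. The obvious idea is to generate $N_i$ from fixed finite data, but $N_i$ need not be finitely generated, so I would enumerate $N_0$ first and then describe $N_i$ inside $N_0$ in coordinates.

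\emph{Step 1: enumerate $N_0$.} The elements $y_1,\ldots,y_t$ normally generate $N_0$ in $H$, because they lift the relators of the fixed finite presentation of $H/N_0$. Hence the products \eqref{eq:2.23} in the $y_\nu^{\pm1}$, conjugated by arbitrary words, enumerate all of $N_0$. For each such product we already have a concrete word in $h_1^{\pm1},\ldots,h_m^{\pm1}$.

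\emph{Step 2: filter by coordinates.} For each word $w$ from Step 1, compute $\operatorname{cr}(w)\in\Q^{h_0}$. This is possible by Lemma~\ref{lem:wordcoords} with $L=N_0$, whose hypotheses were checked above. Alternatively, one can compute the coordinates directly along the enumerated product, using the fixed vectors $\operatorname{cr}(y_\nu)$, the conjugation data $\operatorname{cr}(u_{0,\nu}^{h_i^{\pm1}})$, formula \eqref{eq:external-aut}, and the polynomials $P$ and $R$. Output $w$ exactly when the first $e_i-1$ coordinates of $\operatorname{cr}(w)$ are zero.

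\emph{Step 3: justify the filter.} It remains to show that an element $x\in N_0$ lies in $N_i$ if and only if its first $e_i-1$ coordinates vanish. Take $x\in N_0$ and choose $m\ge1$ with $x^m\in N_0^{\ast}$; this is possible by Lemma~\ref{lem:coordinates}(i). The condition $x\in N_i$ is equivalent to $x^m\in N_i$, because $N_i$ is isolated in $N_0$. Since $N_i\cap N_0^{\ast}=\langle u_{0,e_i},\ldots,u_{0,h_0}\rangle$, and this subgroup is exactly the set of elements of $N_0^\ast$ whose Mal'cev coordinates in the first $e_i-1$ positions are zero, the condition $x^m\in N_i$ is equivalent to the first $e_i-1$ coordinates of $x^m$ vanishing.

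We then pass from $x^m$ back to $x$. In the coordinates of Lemma~\ref{lem:coordinates}(iv), the subgroup $\{u_{e_i}^{a_{e_i}}\cdots u_{h_0}^{a_{h_0}}: a_k\in\Q\}$ equals $(N_i\cap N_0^\ast)_\Q$, the rational completion of a subgroup of $N_0^\ast$. It is therefore a radicable subgroup of $(N_0)_\Q$, and it is closed under unique roots. Consequently the first $e_i-1$ coordinates of $x^m$ vanish if and only if those of $x$ vanish.

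I expect the main obstacle to be this last rational-closure verification. One has to be careful that the leading coordinates of $x^m$ and of $x$ vanish simultaneously, since the power polynomials $R$ are not linear. The argument above handles this by recognizing the rational span of a terminal segment of the Mal'cev basis as the completion of a subgroup of $N_0^\ast$, which then makes it stable under extraction of unique roots.
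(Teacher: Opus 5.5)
Your proof is correct, but it takes a different route from the paper. The paper uses only the definition $N_i=\Isol_{N_0}(\gamma_i(N_0))$ and no coordinates:
\begin{itemize}
\item it enumerates $N_0$ as you do;
\item it enumerates $\gamma_i(N_0)$ as products of simple commutators of weight $i$ in elements of $N_0$;
\item it enumerates the isolator by searching over pairs $(w,q)$ and checking $w^q=z$, for enumerated $z\in\gamma_i(N_0)$, with the Word Problem in $H$.
\end{itemize}
You instead filter the enumeration of $N_0$ by the test $\xi_1(w)=\cdots=\xi_{e_i-1}(w)=0$. That relies on two things. The first is the adapted choice $N_i\cap N_0^{\ast}=\langle u_{0,e_i},\ldots,u_{0,h_0}\rangle$. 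The second is that $\{x\in(N_0)_{\Q}:\xi_1(x)=\cdots=\xi_{e_i-1}(x)=0\}$ is a subgroup of $(N_0)_{\Q}$ closed under rational powers; the paper records this only later, in Remark~\ref{rem:coordinate-tails}.

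Your justification of the second point is right in substance, but the identification of this set with a rational completion deserves one more line. The quickest rigorous version runs as follows. Since $\langle u_{0,e_i},\ldots,u_{0,h_0}\rangle$ is a subgroup with Mal'cev normal forms, the polynomials $P_k$ and $R_k$ with $k<e_i$ vanish on all integer inputs whose first $e_i-1$ entries are zero. Hence they vanish on all rational inputs of that shape. Uniqueness of roots in $(N_0)_{\Q}$ then gives that $x^m$ lies in the set if and only if $x$ does.

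Your route buys more than recursive enumerability. For a word known to represent an element of $N_0$, membership in $N_i$ is decided outright by an exact coordinate computation. Combined with the solvable Word Problem in the finitely presented virtually abelian group $H/N_0$, this gives a membership algorithm for $N_i$. It avoids the appeal to \cite[Theorem~9.3.4]{LR04} that the paper makes later in Lemma~\ref{lem:fixed}\textup{(i)}. The paper's route, in exchange, does not depend on how the coordinate system was chosen.
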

\begin{proof}
The subgroup $N_0$ is recursively enumerable because it is finitely normally generated in $H$. Now, starting with an enumeration of~$N_0$, enumerate all
simple commutators
$[n_1,\ldots,n_i]$ with $n_1,\ldots,n_i\in N_0$, and then all finite
products of these commutators and their inverses.  Since~$\gamma_i(N_0)$ is generated by the simple commutators of weight
$i$, this enumerates precise\-ly~$\gamma_i(N_0)$. Now enumerate pairs $(w,q)$, where $w$ is a word in
$h_1^{\pm1},\ldots,h_m^{\pm1}$ representing an element of~$N_0$ and
$q\geq1$, together with the above enumeration of~$\gamma_i(N_0)$.  Output $w$ whenever the Word Problem in $H$
verifies
$w^q=z$ for some enumerated $z\in\gamma_i(N_0)$.  If $w$ is output, then the element represented by $w$ has a positive power in $\gamma_i(N_0)$, and hence belongs to $\Isol_{N_0}(\gamma_i(N_0))$.  Conversely, if $n\in\Isol_{N_0}(\gamma_i(N_0))$, then $n^q\in\gamma_i(N_0)$ for some $q\geq1$, so the corresponding word $w$, exponent $q$, and element $z=n^q$ eventually occur in the enumeration and $w$ is output.  Therefore this procedure enumerates precisely $N_i=\Isol_{N_0}(\gamma_i(N_0))$.
\end{proof}

We next make effective, in a form that can be applied uniformly to the factors $V_i$ and $W_i$, the $p$-primary chains used in the description of tensor $p$-completions from Section~\ref{sec:2}.

\begin{lemma}\label{lem:effectivechains}
Let $B$ be a torsion-free abelian minimax group whose elements are effectively represented, and such that addition, inverses and equality in~$B$ are computable. Let $L\leq B$ be a free abelian subgroup with $r_0(L)=r_0(B)$ such that~$B/L$ is divisible, fix a basis $\ell_1,\ldots,\ell_r$ of $L$, and assume that membership in~$L$ is decidable.

Let $\pi:Y\twoheadrightarrow B$ be an epimorphism, where the elements of $Y$ are given by an effective enumeration of words on a certain system of generators, and assume that $\pi(y)$ can be computed from such a word.  For every $p\in\spec(B)$, fix an $\mathbf F_p$-basis $e_{1,1},\ldots,e_{r_p,1}$ of $(B/L)[p]$, representatives $b_{p,j,1}\in B$ satisfying $b_{p,j,1}+L=e_{j,1}$, and words $\widehat b_{p,j,1}\in Y$ satisfying $\pi(\widehat b_{p,j,1})=b_{p,j,1}$.

Then there is a single procedure which, given $p\in\spec(B)$ and $n\geq1$, constructs, for every $j=1,\ldots,r_p$, elements $b_{p,j,1},\ldots,b_{p,j,n}\in B$ and words $\widehat b_{p,j,1},\ldots,\widehat b_{p,j,n}\in Y$ such that $\pi(\widehat b_{p,j,k})=b_{p,j,k}$ for every $k\leq n$,
\[
p(b_{p,j,k+1}+L)=b_{p,j,k}+L
\qquad (1\leq k<n),
\]
each $b_{p,j,k}+L$ has order $p^k$, and
\[
(B/L)[p^k]
=
\bigoplus_{j=1}^{r_p}
\langle b_{p,j,k}+L\rangle
\qquad (1\leq k\leq n).
\]

With respect to the fixed basis $\ell_1,\ldots,\ell_r$, identify
$L\otimes_{\Z}\Zp$ with $\Zp^r$.  Then the limits
\[
v_{p,j}
=
\lim_{n\to\infty}p^n b_{p,j,n}
\in L\otimes_{\Z}\Zp
\]
are effectively approximable: for every $n$, the coordinates of $p^n b_{p,j,n}\in L$ can be computed and differ from those of $v_{p,j}$ by an element of $p^n\Zp^r$.  Moreover,
\[
B_p
=
(L\otimes_{\Z}\Zp)
+
\sum_{j=1}^{r_p}\Qp v_{p,j}.
\]

After fixing an exact presentation, as in {\rm Convention~\ref{def:fixeddata}}, of the subfield of $\Qp$ generated over $\Q$ by the coordinates of the finitely many $v_{p,j}$, {\rm Lemma~\ref{lem:additive}} gives terminating membership and preimage procedures in $B_p$.  If $p\notin\spec(B)$, then $B_p=L\otimes_{\Z}\Zp$.
\end{lemma}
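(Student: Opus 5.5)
The chains will be built by induction on $n$, using a bounded search justified by divisibility of $B/L$. The base case $n=1$ is the fixed data: the representatives $b_{p,j,1}$ and words $\widehat b_{p,j,1}$ are given, each $b_{p,j,1}+L=e_{j,1}$ has order $p$, and $(B/L)[p]=\bigoplus_j\langle e_{j,1}\rangle$ by the choice of basis. For the inductive step, suppose $b_{p,j,k}$ and $\widehat b_{p,j,k}$ have been constructed for $k\le n$. Since $B/L$ is divisible, for each $j$ there exists $c\in B$ with $pc-b_{p,j,n}\in L$. To find one, I will enumerate words $w$ in the effective enumeration of $Y$, compute $\pi(w)\in B$, and test whether $p\pi(w)-b_{p,j,n}\in L$, using the computable operations in $B$ and decidability of membership in $L$. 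Since $\pi$ is surjective, this search terminates, and I set $\widehat b_{p,j,n+1}:=w$ and $b_{p,j,n+1}:=\pi(w)$. This choice depends only on $p$, $j$, $n$ and the fixed data, so repeating it gives a single uniform procedure.

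Next I check the structural claims. By induction, $b_{p,j,k+1}+L$ has order $p^{k+1}$: $p^{k}(b_{p,j,k+1}+L)=p^{k-1}(b_{p,j,k}+L)\ne0$, while $p^{k+1}$ kills it. For the direct sum, I use that $(B/L)_p\simeq C_{p^\infty}^{r_p}$ by \eqref{eq:2.7}, so $(B/L)[p^k]\simeq(\Z/p^k)^{r_p}$. The elements $x_j=b_{p,j,k}+L$ lie in $(B/L)[p^k]$, and multiplication by $p^{k-1}$ sends them to the $\mathbf F_p$-basis $e_{j,1}$ of the socle. Suppose a relation $\sum a_jx_j=0$ holds with not all $a_j x_j$ zero. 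Let $p^s$ be the minimal $p$-power dividing all the $a_j$ modulo $p^k$. Multiplying by $p^{k-1-s}$ gives a nontrivial $\mathbf F_p$-relation among the $e_{j,1}$, which is a contradiction. Hence the $x_j$ generate a subgroup isomorphic to $(\Z/p^k)^{r_p}$, which equals $(B/L)[p^k]$ by cardinality.

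For the $p$-adic statements, the elements $b_{p,j,n}$ now satisfy exactly the hypotheses used to set up \eqref{eq:2.8}. So Lemma~\ref{lem:abelianpadic} gives convergence of $p^nb_{p,j,n}$ to $v_{p,j}$ with error in $p^n(L\otimes\Zp)$, and Lemma~\ref{lem:abelian-q-completion} gives the description of $B_p$, including the case $B_p=L\otimes\Zp$ when $p\notin\spec(B)$. The only additional requirement is computing the integer coordinates of $p^nb_{p,j,n}\in L$ in the basis $\ell_1,\dots,\ell_r$. I do this by enumerating integer vectors $(c_1,\dots,c_r)$ and testing equality $\sum c_k\ell_k=p^nb_{p,j,n}$ in $B$. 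This search terminates, and the result is unique since the $\ell_k$ form a basis.

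Finally, these approximations are exactly hypothesis~(iii) of Lemma~\ref{lem:field} for the coordinates of the $v_{p,j}$. Together with the exact algebraic data fixed as in Convention~\ref{def:fixeddata}, Lemma~\ref{lem:additive} then applies to $B_p=D+L\otimes\Zp$ with $D=\operatorname{span}_{\Qp}\{v_{p,j}\}$, giving the membership and preimage procedures. I expect the main subtlety to be the middle step, showing that the naively chosen $p$-th roots still split $(B/L)[p^k]$ as a direct sum. The socle argument above handles this, since it needs no choices beyond the fixed basis at level one.
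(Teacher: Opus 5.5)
Your proposal is correct and follows the paper's proof essentially step by step: the same search over words of $Y$ for a $p$-th root of $b_{p,j,n}$ modulo $L$, the same appeal to Lemma~\ref{lem:abelianpadic} and Lemma~\ref{lem:abelian-q-completion}, the same enumeration of integer tuples to find the coordinates of $p^nb_{p,j,n}$, and the same use of Lemma~\ref{lem:additive}. The one place you go further is the direct-sum decomposition of $(B/L)[p^k]$, which the paper simply calls clear; your socle-plus-counting argument justifies it correctly, except that in your wording $p^s$ should be the \emph{largest} power of $p$ dividing all the $a_j$, i.e.\ $s=\min_j v_p(a_j)<k$.
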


\begin{proof}
Fix $p\in\spec(B)$.  We construct the elements $b_{p,j,n}$ and their lifts inductively on $n$.  The element $b_{p,j,1}$ is already part of the fixed data. Suppose that $b_{p,j,n}$ and a word $\widehat b_{p,j,n}\in Y$ with $\pi(\widehat b_{p,j,n})=b_{p,j,n}$ have been constructed. Enumerate the words representing elements $y\in Y$, compute $b=\pi(y)\in B$, and use the membership test for $L$ to test whether
\[
pb-b_{p,j,n}\in L.
\]
Since $B/L$ is divisible, there exists $b\in B$ satisfying this condition. Since $\pi$ is onto, such an element occurs in the enumeration of $Y$. Hence the search terminates. For the first successful word $y$, put $b_{p,j,n+1}=b$ and $\widehat b_{p,j,n+1}=y$. Then $p(b_{p,j,n+1}+L)=b_{p,j,n}+L$. Since $b_{p,j,n}+L$ has order $p^n$, it follows that $b_{p,j,n+1}+L$ has order $p^{n+1}$. It is also clear that, for every $n\geq1$, the classes $b_{p,1,n}+L,\ldots,b_{p,r_p,n}+L$ are independent, so
\[
(B/L)[p^n]
=
\bigoplus_{j=1}^{r_p}
\langle b_{p,j,n}+L\rangle.
\]

Since $b_{p,j,n}+L$ has order $p^n$, we have $p^n b_{p,j,n}\in L$. The relations constructed above are precisely those used in Lemma~\ref{lem:abelianpadic}; hence
\[
v_{p,j}
=
\lim_{n\to\infty}p^n b_{p,j,n}
\]
exists in $L\otimes_{\Z}\Zp$ and satisfies
\[
v_{p,j}-p^n b_{p,j,n}
\in
p^n(L\otimes_{\Z}\Zp).
\]

It remains to prove the effectivity of these approximations.  Given $p^n b_{p,j,n}\in L$, its coordinates with respect to the fixed basis $\ell_1,\ldots,\ell_r$ can be found effectively: enumerate the tuples $(a_1,\ldots,a_r)\in\Z^r$ and use equality in $B$ until $p^n b_{p,j,n} = a_1\ell_1+\ldots+a_r\ell_r$. Since the element belongs to $L$, this search terminates.  Under the identification $L\otimes_{\Z}\Zp\simeq\Zp^r$, the preceding inclusion therefore shows that these integer coordinate vectors approximate the coordinates of $v_{p,j}$ modulo $p^n\Zp^r$.

The constructed chains have exactly the properties used in Lemma~\ref{lem:abelian-q-completion}, and hence
\[
B_p
=
(L\otimes_{\Z}\Zp)
+
\sum_{j=1}^{r_p}\Qp v_{p,j}.
\]
Relative to the basis $\ell_1,\ldots,\ell_r$, this is a subgroup of $\Qp^r$ of the form considered in Lemma~\ref{lem:additive}: $L\otimes_{\Z}\Zp$ is generated by the standard basis, and the divisible part is spanned by the vectors $v_{p,j}$.  After fixing the exact field data specified in the statement, Lemma~\ref{lem:additive} therefore gives the required terminating membership and preimage procedures. Finally, if $p\notin\spec(B)$, Lemma~\ref{lem:abelian-q-completion} gives directly $B_p=L\otimes_{\Z}\Zp$.
\end{proof}

We now apply the preceding construction to the fixed factors $V_i$ and $W_i$ and collect, in a form convenient for later use, all the rational and $p$-adic computations that will be needed in the lifting argument.

\begin{lemma}\label{lem:fixed}
For every $i$ the following computations can be performed.

\begin{enumerate}[label=\textup{(\roman*)}]

\item Given a word in $h_1^{\pm1},\ldots,h_m^{\pm1}$, one can decide whether the element of $H$ represented by that word belongs to $N_i$ or to $J_i$. If the word represents an element $n\in N_i$, one can compute the coordinates of $nN_{i+1}$ in $(V_i)_{\Q}$ with respect to $s_{i,1},\ldots,s_{i,r_i}$, and one can decide whether $nN_{i+1}$ belongs to $W_i$. Moreover, given an element of $(V_i)_{\Q}$ by its coordinates with respect to $s_{i,1},\ldots,s_{i,r_i}$, one can decide whether it belongs to $(W_i)_{\Q}$ and, if so, compute its coordinates with respect to $t_{i,1},\ldots,t_{i,d_i}$.

$\;$ In addition, one can compute a basis of $(V_i)_{\Q}/(W_i)_{\Q}$ and the matrix, with respect to $s_{i,1},\ldots,s_{i,r_i}$ and this quotient basis, of the natural map $(V_i)_{\Q}\to(V_i)_{\Q}/(W_i)_{\Q}$. In particular, the quotient coordinates of every element of $(V_i)_{\Q}$ are computable.

\item Given a prime $p$, one can construct finite presentations
$$
(V_i)_p=\operatorname{span}_{\Qp}\{d_1^V,\ldots,d_a^V\}+\sum_{k=1}^{b}\Zp\ell_k^V
$$
and
$$
(W_i)_p=\operatorname{span}_{\Qp}\{d_1^W,\ldots,d_{a'}^W\}+\sum_{k=1}^{b'}\Zp\ell_k^W
$$
inside $(V_i)_{\Q}\otimes_{\Q}\Qp$. If $p\notin\spec(V_i)$, one may take $a=0$, $b=r_i$ and $\ell_k^V=s_{i,k}\otimes1$ for $1\leq k\leq r_i$; likewise, if $p\notin\spec(W_i)$, one may take $a'=0$, $b'=d_i$ and $\ell_k^W=t_{i,k}\otimes1$ for $1\leq k\leq d_i$. In particular, in these cases the corresponding presentation vectors have rational coordinates. All the displayed vectors have coordinates in a finitely generated subfield $F_{i,p}\leq\Qp$ whose exact coefficient data are fixed as in {\rm Convention~\ref{def:fixeddata}}; with these data, equality, the field operations and the $p$-adic valuation in $F_{i,p}$ are computable.

$\;$ Given an input vector $x\in F_{i,p}^{r_i}$, with coordinates given in the fixed exact representation of $F_{i,p}$, which represents an element of $(V_i)_p$ with respect to $s_{i,1},\ldots,s_{i,r_i}$, one can compute coefficients $\lambda_1,\ldots,\lambda_a\in F_{i,p}$ and $z_1,\ldots,z_b\in F_{i,p}\cap\Zp$ such that
$$
x=\sum_{j=1}^{a}\lambda_jd_j^V+\sum_{k=1}^{b}z_k\ell_k^V.
$$
One can also decide whether $x\in(W_i)_p$. If the answer is positive, one can compute coefficients $\mu_1,\ldots,\mu_{a'}\in F_{i,p}$ and $z'_1,\ldots,z'_{b'}\in F_{i,p}\cap\Zp$ such that
$$
x=\sum_{j=1}^{a'}\mu_jd_j^W+\sum_{k=1}^{b'}z'_k\ell_k^W.
$$

$\;$ Moreover, given coefficients $\lambda_j\in F_{i,p}$ and $z_k\in F_{i,p}\cap\Zp$ defining
$$
v=\sum_j\lambda_jd_j^V+\sum_k z_k\ell_k^V\in(V_i)_p,
$$
one can compute, in the $\Qp$-extension of the coordinate system fixed for~$N_i$, a lift $\widetilde v\in(N_i)_p$ satisfying $(\pi_i^V)_p(\widetilde v)=v$. By adjoining the initial zero coordinates, this also gives its coordinate vector with respect to $u_{0,1},\ldots,u_{0,h_0}$.

$\;$ Likewise, given coefficients $\mu_j\in F_{i,p}$ and $z'_k\in F_{i,p}\cap\Zp$ defining
$$
w=\sum_j\mu_jd_j^W+\sum_k z'_k\ell_k^W\in(W_i)_p,
$$
one can compute a lift $\widetilde w\in(J_i)_p$ satisfying $(\pi_i^W)_p(\widetilde w)=w$, both in the $\Qp$-extension of the coordinate system supplied by {\rm Lemma~\ref{lem:coordinates}} for~$J_i$ and, after the fixed coordinate conversion, in the coordinates with respect to $u_{0,1},\ldots,u_{0,h_0}$ in $(N_0)_p$

$\;$ All these decomposition, membership, preimage and lifting procedures remain valid after replacing $F_{i,p}$ by any finitely generated subfield $E\leq\Qp$ containing $F_{i,p}$, provided that $E$ is given by an exact finite presentation for which equality, the field operations and the $p$-adic valuation are computable, and that the inclusion $F_{i,p}\hookrightarrow E$ is effective with respect to the chosen exact representations.

\end{enumerate}
\end{lemma}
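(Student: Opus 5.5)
The plan is to assemble Lemma~\ref{lem:fixed} from the earlier effective tools, treating (i) and (ii) separately.

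\textbf{Part (i): membership.} By Lemma~\ref{subgroupsniarere}, each $N_i$ is recursively enumerable in $H$. The subgroup $J_i=K\cap H_0\cap N_i$ is also recursively enumerable, for three reasons:
\begin{itemize}
\item membership in $K$ is decidable by \eqref{eq:deckernel-K};
\item $H_0$ has finite index, so it is finitely generated and hence recursively enumerable;
\item an intersection of recursively enumerable subgroups is recursively enumerable, obtained by dovetailing the enumerations and comparing via the Word Problem in $H$.
\end{itemize}
Since $H$ has a recursive presentation, \cite[Theorem~9.3.4]{LR04} then decides membership in $N_i$ and in $J_i$.

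\textbf{Part (i): coordinates and $W_i$.} If $w$ represents $n\in N_i\leq N_0$, Lemma~\ref{lem:wordcoords} computes $\operatorname{cr}(n)$ with respect to $u_{0,1},\ldots,u_{0,h_0}$. The coordinate system was chosen so that $N_{i}\cap N_0^\ast$ and $N_{i+1}\cap N_0^\ast$ are final segments, and the multiplication polynomials are compatible with $N_i/N_{i+1}$ being central in $N_i/N_{i+2}$. Hence the coordinates of $nN_{i+1}$ in $(V_i)_\Q$ with respect to $s_{i,1},\ldots,s_{i,r_i}$ are exactly the entries in positions $e_i,\ldots,e_{i+1}-1$. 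This uses that coordinates $1,\ldots,e_i-1$ of $n$ vanish, and that the abelian quotient map is linear on these coordinates.

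Deciding whether $nN_{i+1}\in W_i=J_iN_{i+1}/N_{i+1}$ reduces to membership in the recursively enumerable subgroup $J_iN_{i+1}$ of $H$, again via \cite[Theorem~9.3.4]{LR04}. For the rational statements, compute the $V_i$-coordinates of the fixed word lifts of $t_{i,1},\ldots,t_{i,d_i}$ by the procedure just described. These vectors span $(W_i)_\Q$ over $\Q$, so Gaussian elimination over $\Q$ does the rest: it decides membership in $(W_i)_\Q$, finds the $t$-coordinates, and extends to a basis of $(V_i)_\Q$ whose complement gives the quotient basis and the matrix of the projection.

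\textbf{Part (ii): presentations of the $p$-completions.} Apply Lemma~\ref{lem:effectivechains} twice:
\begin{itemize}
\item with $B=V_i$, $L=S_i$, $Y=N_i$, $\pi=\pi_i^V$;
\item with $B=W_i$, $L=T_i$, $Y=J_i$, $\pi=\pi_i^W$.
\end{itemize}
The hypotheses hold by part (i): elements are represented by words, and equality and membership in $S_i$ are decided by rational coordinates being integral. For $T_i$, one checks integrality of the $t$-coordinates. The fixed data $b^V_{i,p,j,1}$ and $b^W_{i,p,j,1}$ serve as the starting chains.

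This yields $B_p=(L\otimes\Zp)+\sum_j\Qp v_{p,j}$, with $v_{p,j}$ effectively approximable to arbitrary $p$-adic precision. For $W_i$, the $t$-basis vectors are rewritten in $s$-coordinates by their rational coordinate vectors. By Convention~\ref{def:fixeddata} the coefficients generate a field $F_{i,p}$, to which Lemma~\ref{lem:field} applies. If $p\notin\spec$, Lemma~\ref{lem:abelian-q-completion} gives the rational presentation. Membership and decomposition coefficients then come from Lemma~\ref{lem:additive}(i).

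\textbf{Part (ii): lifts.} Apply Lemma~\ref{lem:factorlift} with $Y=N_i$ (respectively $J_i$). Its hypotheses are as follows:
\begin{itemize}
\item field arithmetic and valuations, from Lemma~\ref{lem:field};
\item the integers $c_{j,k,N}$, which are the coordinates of $p^Nb_{p,j,N}$ and computable by Lemma~\ref{lem:effectivechains};
\item coordinates of the lifts $\widehat b$, from Lemma~\ref{lem:wordcoords} applied to the enumerated words.
\end{itemize}
For $J_i$, the coordinates are taken in a fixed Mal'cev basis of $J_i$ with a fixed conversion into $N_0$-coordinates. This conversion is a polynomial map, since $J_i\le N_0$ and both coordinate systems come from Lemma~\ref{lem:coordinates}.

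Finally, replacing $F_{i,p}$ by a larger effective field $E$ changes nothing, since every step uses only equality, field operations and valuations.

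\textbf{Main obstacle.} I expect the main obstacle to be the effectivity of $J_i$ and $W_i$. One needs membership in $J_iN_{i+1}$ and lifts to $J_i$ rather than merely to $N_i$. The key point is that all these subgroups are recursively enumerable, so that \cite[Theorem~9.3.4]{LR04} applies uniformly. The coordinate conversion between $J_i$ and $N_0$ also requires some care.
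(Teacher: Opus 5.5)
Your proposal follows the paper's proof essentially step by step. The common steps are: membership in $N_i$ and in $J_iN_{i+1}$ via \cite[Theorem~9.3.4]{LR04}; the $s$-coordinates of $nN_{i+1}$ read off as the entries $e_i,\ldots,e_{i+1}-1$ of the $u_0$-coordinate vector; Gaussian elimination against the coordinate vectors of the lifts of $t_{i,1},\ldots,t_{i,d_i}$; two applications of Lemma~\ref{lem:effectivechains}; and then Lemmas~\ref{lem:field}, \ref{lem:additive} and~\ref{lem:factorlift}. The one deviation is harmless. You get membership in $J_i$ by proving $J_i$ recursively enumerable and invoking \cite[Theorem~9.3.4]{LR04}. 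The paper instead combines the decidable tests for $N_i$, $K$ and $H_0$, the last through the fixed finite quotient $H/H_0$.

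One step is not justified as written. When you apply Lemma~\ref{lem:factorlift} with $Y=J_i$, you need coordinates of the lifts $\widehat b^{W}_{i,p,j,N}$, and of the lifts of $t_{i,k}$, with respect to the Mal'cev basis $y_{i,1},\ldots,y_{i,h_i'}$ of $J_i$. The statement also asks for $\widetilde w$ in that system. Lemma~\ref{lem:wordcoords} does not supply these coordinates. Its hypotheses (a finite presentation of $H/L$ together with fixed relator and conjugation data) have been set up only for $L=N_0$, so applied to your enumerated words it returns $u_0$-coordinates. The conversion you describe runs from $J_i$-coordinates to $N_0$-coordinates, which is the opposite direction.

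The paper closes this gap using $J_i=\Isol_{J_i}(J_i^\ast)$. For a word $w$ known to represent an element of $J_i$, search over $q\geq1$ and $(a_1,\ldots,a_{h_i'})\in\Z^{h_i'}$ until the Word Problem in $H$ confirms $w^q=y_{i,1}^{a_1}\cdots y_{i,h_i'}^{a_{h_i'}}$. Then extract the $q$-th root with the rational power polynomials of $J_i$. Alternatively, enumerate $\xi\in\Q^{h_i'}$ until your polynomial conversion map sends $\xi$ to the known $u_0$-coordinate vector; uniqueness of coordinates in $(J_i)_{\Q}\leq(N_0)_{\Q}$ guarantees that this terminates. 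With this step inserted, the rest of your argument goes through as in the paper.
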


\begin{proof}
We first prove \textup{(i)}. Since $N_i$ is a fixed recursively enumerable subgroup of $H$ (see Lemma~\ref{subgroupsniarere}), \cite[Theorem~9.3.4]{LR04} decides membership in~$N_i$. Membership in $K$ is decidable by \eqref{eq:deckernel-K}. Moreover, since $H_0\trianglelefteq H$ has finite index, we fix the finite quotient $H/H_0$ and the images in this quotient of the fixed generators of~$H$. Hence membership in $H_0$ is decidable. Since $K_0=K\cap H_0$, membership in $K_0$ is decidable, and therefore so is membership in $J_i=K_0\cap N_i$.

We represent an element of $V_i=N_i/N_{i+1}$ by a word in $h_1^{\pm1},\ldots,h_m^{\pm1}$ which is known to represent an element $n\in N_i$, the represented element being $nN_{i+1}$. Addition and inverses in this representation are computed by multiplication and inversion of words. Equality is decidable, since
$$
nN_{i+1}=n'N_{i+1}\quad\Longleftrightarrow\quad n^{-1}n'\in N_{i+1},
$$
and membership in $N_{i+1}$ is decidable.

Likewise, an element of $W_i=J_iN_{i+1}/N_{i+1}$ may be represented by a word which is known to represent an element $j\in J_i$, the represented element being $jN_{i+1}$. Addition and inverses are again computed by multiplication and inversion of words, and equality is decided in the same way modulo~$N_{i+1}$. Thus addition, inverses and equality are computable in the chosen representations of both $V_i$ and $W_i$.

Let now $n\in N_i$ be represented by a given word. Since $n\in N_0$, Lemma~\ref{lem:wordcoords}, applied to $N_0$, computes its coordinate vector with respect to $u_{0,1},\ldots,u_{0,h_0}$. By the choice of the coordinate data adapted to the series \eqref{eq:3.3}, this vector has the form
$$
(0,\ldots,0,a_{e_i},\ldots,a_{h_0}),\qquad a_\nu\in\Q.
$$
Moreover,
$$
s_{i,1}=u_{0,e_i}N_{i+1},\ldots,s_{i,r_i}=u_{0,e_{i+1}-1}N_{i+1}.
$$
Hence the coordinates of $nN_{i+1}$ in $(V_i)_{\Q}$ with respect to $s_{i,1},\ldots,s_{i,r_i}$ are simply
$$
(a_{e_i},\ldots,a_{e_{i+1}-1}),
$$
and are therefore computable directly from the given word.

Finally,
$$
nN_{i+1}\in W_i\quad\Longleftrightarrow\quad n\in J_iN_{i+1}.
$$
The subgroup $J_iN_{i+1}$ is fixed and recursively enumerable: enumerate words representing elements $j\in J_i$ and $u\in N_{i+1}$ and enumerate the products $ju$. Hence \cite[Theorem~9.3.4]{LR04} decides membership in $J_iN_{i+1}$.

It remains to prove the rational assertion in \textup{(i)}. Since $T_i$ has full rank in~$W_i$, the elements $t_{i,1},\ldots,t_{i,d_i}$ form a $\Q$-basis of $(W_i)_{\Q}$. For each $k=1,\ldots,d_i$, apply the procedure just proved to the fixed word lift of $t_{i,k}$ and compute its coordinates in $(V_i)_{\Q}$ with respect to $s_{i,1},\ldots,s_{i,r_i}$. Let
$$
C_i\in M_{r_i\times d_i}(\Q)
$$
be the matrix whose $k$-th column is this coordinate vector.

Now let $x\in(V_i)_{\Q}$ be given by its coordinate vector $\mathbf{x}\in\Q^{r_i}$ with respect to $s_{i,1},\ldots,s_{i,r_i}$. Since the columns of $C_i$ are the coordinate vectors of a $\Q$-basis of $(W_i)_{\Q}$,
$$
x\in(W_i)_{\Q}\quad\Longleftrightarrow\quad C_i\boldsymbol{\lambda}=\mathbf{x}\text{ for some }\boldsymbol{\lambda}\in\Q^{d_i}.
$$
Gaussian elimination over $\Q$ decides whether such a vector exists. When it exists, it is unique, and its entries are precisely the coordinates of $x$ with respect to $t_{i,1},\ldots,t_{i,d_i}$.

The same Gaussian elimination computes a matrix
$$
R_i^{\mathrm{quot}}\in M_{(r_i-d_i)\times r_i}(\Q)
$$
of rank $r_i-d_i$ such that
$$
\Ker R_i^{\mathrm{quot}}=\operatorname{Im}C_i=(W_i)_{\Q}
$$
under the coordinate identification $(V_i)_{\Q}\simeq\Q^{r_i}$. For each $\nu=1,\ldots,r_i-d_i$, solve
$$
R_i^{\mathrm{quot}}\mathbf q_{i,\nu}=\mathbf e_\nu
$$
over $\Q$. If $q_{i,\nu}\in(V_i)_{\Q}$ denotes the element with coordinate vector $\mathbf q_{i,\nu}$, then
$$
q_{i,1}+(W_i)_{\Q},\ldots,q_{i,r_i-d_i}+(W_i)_{\Q}
$$
form a basis of $(V_i)_{\Q}/(W_i)_{\Q}$, and for every $x\in(V_i)_{\Q}$ with coordinate vector~$\mathbf x$ the quotient coordinate vector of $x+(W_i)_{\Q}$ in this basis is
$
R_i^{\mathrm{quot}}\mathbf x.
$
We fix this quotient basis and quotient matrix from now on. This completes the proof of \textup{(i)}.

We record first the membership procedures for the lattices $S_i$ and $T_i$ that will be used below. If $v\in V_i$ is represented by a word for an element of $N_i$, then \textup{(i)} computes its coordinates $\lambda_1,\ldots,\lambda_{r_i}$ with respect to $s_{i,1},\ldots,s_{i,r_i}$. Since $S_i=\langle s_{i,1},\ldots,s_{i,r_i}\rangle$, we have
$$
v\in S_i\quad\Longleftrightarrow\quad \lambda_1,\ldots,\lambda_{r_i}\in\Z.
$$
Thus membership in $S_i$ is decidable. Likewise, if $w\in W_i$ is represented by a word for an element of $J_i$, then the rational procedure in \textup{(i)} computes its coordinates $\mu_1,\ldots,\mu_{d_i}$ with respect to $t_{i,1},\ldots,t_{i,d_i}$. Since $T_i=\langle t_{i,1},\ldots,t_{i,d_i}\rangle$, we have
$$
w\in T_i\quad\Longleftrightarrow\quad \mu_1,\ldots,\mu_{d_i}\in\Z.
$$
Thus membership in $T_i$ is decidable.

\smallskip

We now prove \textup{(ii)}. We first apply Lemma~\ref{lem:effectivechains} to
$$
B=V_i,\qquad L=S_i,\qquad Y=N_i,
$$
with the epimorphism
$$
\pi_i^V:N_i\twoheadrightarrow V_i,\qquad \pi_i^V(n)=nN_{i+1}.
$$
Indeed, $V_i$ is torsion-free abelian minimax and, by \textup{(i)}, its chosen word representation has computable addition, inverses and equality. The subgroup~$S_i$ is free abelian of full rank, $V_i/S_i$ is divisible, the basis $s_{i,1},\ldots,s_{i,r_i}$ is fixed, and membership in $S_i$ is decidable by the preceding paragraph. The elements of $N_i$ can be effectively enumerated by words in $h_1^{\pm1},\ldots,h_m^{\pm1}$, and~$\pi_i^V(n)$ is computed simply by retaining such a word as a representative of~$nN_{i+1}$. Finally, for every $p\in\spec(V_i)$, the required $\mathbf F_p$-basis of $(V_i/S_i)[p]$, its representatives in $V_i$, and their word lifts to $N_i$ were fixed before the statement of the lemma. Thus all the hypotheses of Lemma~\ref{lem:effectivechains} are satisfied.

We apply the same lemma to
$$
B=W_i,\qquad L=T_i,\qquad Y=J_i,
$$
with
$$
\pi_i^W:J_i\twoheadrightarrow W_i,\qquad \pi_i^W(j)=jN_{i+1}.
$$
The group $W_i$ is torsion-free abelian minimax and, by \textup{(i)}, its chosen word representation has computable addition, inverses and equality. The subgroup~$T_i$ is free abelian of full rank, $W_i/T_i$ is divisible, the basis $t_{i,1},\ldots,t_{i,d_i}$ is fixed, and membership in $T_i$ is decidable by the preceding paragraph. Since membership in $J_i$ is decidable, the elements of $J_i$ can be effectively enumerated by words in $h_1^{\pm1},\ldots,h_m^{\pm1}$, and $\pi_i^W(j)$ is computed by retaining such a word as a representative of $jN_{i+1}$. For every $p\in\spec(W_i)$, the required $\mathbf F_p$-basis of $(W_i/T_i)[p]$, its representatives in $W_i$, and their word lifts to $J_i$ were fixed before the statement of the lemma. Hence all the hypotheses of Lemma~\ref{lem:effectivechains} are satisfied.

For uniform notation, put
$$
r_{i,p}^{V}=\dim_{\mathbf F_p}(V_i/S_i)[p]\quad\text{if }p\in\spec(V_i),\qquad r_{i,p}^{V}=0\quad\text{otherwise},
$$
and define $r_{i,p}^{W}$ analogously using $(W_i/T_i)[p]$.

For $p\in\spec(V_i)$, Lemma~\ref{lem:effectivechains} produces effectively approximable limit vectors
$$
v_{i,p,1}^{V},\ldots,v_{i,p,r_{i,p}^{V}}^{V}\in S_i\otimes_{\Z}\Zp
$$
such that
$$
(V_i)_p=\operatorname{span}_{\Qp}\{v_{i,p,1}^{V},\ldots,v_{i,p,r_{i,p}^{V}}^{V}\}+\sum_{k=1}^{r_i}\Zp(s_{i,k}\otimes1).
$$
Write
$$
v_{i,p,j}^{V}=\sum_{k=1}^{r_i}\alpha_{i,p,j,k}(s_{i,k}\otimes1),\qquad \alpha_{i,p,j,k}\in\Zp.
$$
For every prescribed precision, Lemma~\ref{lem:effectivechains} computes rational approximations to all the coefficients $\alpha_{i,p,j,k}$. If $p\notin\spec(V_i)$, the first summand in the displayed presentation is absent.

Likewise, for $p\in\spec(W_i)$ the same lemma produces effectively approximable limit vectors
$$
v_{i,p,1}^{W},\ldots,v_{i,p,r_{i,p}^{W}}^{W}\in T_i\otimes_{\Z}\Zp
$$
such that
$$
(W_i)_p=\operatorname{span}_{\Qp}\{v_{i,p,1}^{W},\ldots,v_{i,p,r_{i,p}^{W}}^{W}\}+\sum_{k=1}^{d_i}\Zp(t_{i,k}\otimes1).
$$
Write
$$
v_{i,p,j}^{W}=\sum_{k=1}^{d_i}\beta_{i,p,j,k}(t_{i,k}\otimes1),\qquad \beta_{i,p,j,k}\in\Zp.
$$
Again, all the coefficients $\beta_{i,p,j,k}$ are effectively approximable to arbitrary $p$-adic precision. If $p\notin\spec(W_i)$, the first summand is absent.

For $p\in\spec(V_i)\cup\spec(W_i)$, let $F_{i,p}\leq\Qp$ be the subfield generated over $\Q$ by all the coefficients $\alpha_{i,p,j,k}$ and $\beta_{i,p,j,k}$ occurring above; if $p\notin\spec(V_i)\cup\spec(W_i)$, put $F_{i,p}=\Q$. We use the exact finite presentation of $F_{i,p}$ fixed as part of Convention~\ref{def:fixeddata}. The approximation procedures supplied by Lemma~\ref{lem:effectivechains}, together with Lemma~\ref{lem:field}, make equality, the field operations and the $p$-adic valuation effective in $F_{i,p}$.

Thus the first displayed presentation has the form required in \textup{(ii)}, with
$$
d_j^V=v_{i,p,j}^{V},\qquad \ell_k^V=s_{i,k}\otimes1,
$$
where the vectors $d_j^V$ are omitted when $p\notin\spec(V_i)$. To obtain the corresponding presentation of $(W_i)_p$ inside $(V_i)_{\Q}\otimes_{\Q}\Qp$, use the rational matrix $C_i$ constructed in the proof of \textup{(i)} to express each $t_{i,k}$, and hence each $v_{i,p,j}^{W}$, in the basis $s_{i,1},\ldots,s_{i,r_i}$. We then take
$$
d_j^W=v_{i,p,j}^{W},\qquad \ell_k^W=t_{i,k}\otimes1,
$$
again omitting the vectors $d_j^W$ when $p\notin\spec(W_i)$. Since $C_i$ has rational entries, all coordinates of the displayed vectors belong to $F_{i,p}$.

For the decomposition and membership procedures in \textup{(ii)}, let $x\in(V_i)_p$ be given by its coordinate vector in the exact coefficient representation of $F_{i,p}$. Applying Lemma~\ref{lem:additive} to the displayed presentation of $(V_i)_p$ gives coefficients
$$
\lambda_1,\ldots,\lambda_a\in F_{i,p},\qquad z_1,\ldots,z_b\in F_{i,p}\cap\Zp
$$
such that
$$
x=\sum_{j=1}^{a}\lambda_jd_j^V+\sum_{k=1}^{b}z_k\ell_k^V.
$$
Applying the same lemma to the displayed presentation of $(W_i)_p$ decides whether $x\in(W_i)_p$. When the answer is positive, it computes coefficients
$$
\mu_1,\ldots,\mu_{a'}\in F_{i,p},\qquad z'_1,\ldots,z'_{b'}\in F_{i,p}\cap\Zp
$$
such that
$$
x=\sum_{j=1}^{a'}\mu_jd_j^W+\sum_{k=1}^{b'}z'_k\ell_k^W.
$$

It remains to construct the lifts required in \textup{(ii)}. Consider first
$$
(\pi_i^V)_p:(N_i)_p\twoheadrightarrow(V_i)_p.
$$
Recall that the coordinate data fixed for $N_i$ are the final segment $u_{0,e_i},\ldots,u_{0,h_0}$ of the coordinate data for $N_0$. Hence, whenever a word in $h_1^{\pm1},\ldots,h_m^{\pm1}$ is known to represent an element of $N_i$, Lemma~\ref{lem:wordcoords}, applied to $N_0$, computes its coordinate vector with respect to $u_{0,1},\ldots,u_{0,h_0}$; its first $e_i-1$ entries are zero, and deleting them gives its coordinate vector in the fixed system for~$N_i$.

The fixed lifts of $s_{i,1},\ldots,s_{i,r_i}$ to $N_i$ are $u_{0,e_i},\ldots,u_{0,e_{i+1}-1}$, so their coordinate vectors in $N_i$ are the first $r_i$ standard basis vectors. Moreover, in the application of Lemma~\ref{lem:effectivechains} to $\pi_i^V$, the lemma computes, for every $j$ and $n$, the integers $c_{i,p,j,k,n}$ determined by
$$
p^n b_{i,p,j,n}^{V}=\sum_{k=1}^{r_i}c_{i,p,j,k,n}s_{i,k},
$$
and constructs a word $\widehat b_{i,p,j,n}^{V}\in N_i$ lifting $b_{i,p,j,n}^{V}$. By the preceding paragraph, the coordinates of this lift in the fixed coordinate system for $N_i$ are computable from $j$ and $n$.

We may therefore apply Lemma~\ref{lem:factorlift} with $Y=N_i$, $B=V_i$, $L=S_i$, $q=p$ and coefficient field $F_{i,p}$. Indeed, the coefficients $a_{j,k}$ in that lemma are precisely the coefficients $\alpha_{i,p,j,k}$ of
$$
v_{i,p,j}^{V}=\sum_{k=1}^{r_i}\alpha_{i,p,j,k}(s_{i,k}\otimes1),
$$
while the remaining required data are supplied by the preceding paragraph and by Lemma~\ref{lem:effectivechains}. Thus, given coefficients $\lambda_j\in F_{i,p}$ and $z_k\in F_{i,p}\cap\Zp$ defining
$$
v=\sum_j\lambda_jd_j^V+\sum_k z_k\ell_k^V\in(V_i)_p,
$$
Lemma~\ref{lem:factorlift} computes the coordinate vector, in the fixed coordinate system for $(N_i)_p$, of an element $\widetilde v\in(N_i)_p$ satisfying
$$
(\pi_i^V)_p(\widetilde v)=v.
$$
If $p\notin\spec(V_i)$, the divisible sum is absent and the same argument applies using only the lattice generators.

For the corresponding lift from $(W_i)_p$, apply Lemma~\ref{lem:coordinates} to $J_i$ and fix the resulting coordinate elements $y_{i,1},\ldots,y_{i,h_i'}$, coordinate polynomials, and words in $h_1^{\pm1},\ldots,h_m^{\pm1}$ representing the $y_{i,\nu}$. Coordinates of a word known to represent an element of $J_i$ are computable in this system: if $J_i^\ast=\langle y_{i,1},\ldots,y_{i,h_i'}\rangle$ is the subgroup supplied by Lemma~\ref{lem:coordinates}, then $J_i=\Isol_{J_i}(J_i^\ast)$, so for such a word $w$ there exist $q\geq1$ and $a_1,\ldots,a_{h_i'}\in\Z$ with
$$
w^q=y_{i,1}^{a_1}\ldots y_{i,h_i'}^{a_{h_i'}}.
$$
Enumerating $q$ and the integer tuples and testing these equalities with the Word Problem in $H$ terminates; the rational power polynomials for $J_i$ then compute the coordinate vector of $w$.

Consequently the coordinates in $J_i$ of the fixed lifts of $t_{i,1},\ldots,t_{i,d_i}$ and of every word $\widehat b_{i,p,j,n}^{W}$ constructed by Lemma~\ref{lem:effectivechains} are computable. That lemma also computes the integers determined by
$$
p^n b_{i,p,j,n}^{W}=\sum_{k=1}^{d_i}c'_{i,p,j,k,n}t_{i,k}.
$$
Hence Lemma~\ref{lem:factorlift} applies to
$$
(\pi_i^W)_p:(J_i)_p\twoheadrightarrow(W_i)_p
$$
with $Y=J_i$, $B=W_i$, $L=T_i$, $q=p$ and coefficient field $F_{i,p}$; its coefficients $a_{j,k}$ are precisely the coefficients $\beta_{i,p,j,k}$ of
$$
v_{i,p,j}^{W}=\sum_{k=1}^{d_i}\beta_{i,p,j,k}(t_{i,k}\otimes1).
$$
Thus, from coefficients $\mu_j\in F_{i,p}$ and $z'_k\in F_{i,p}\cap\Zp$ defining
$$
w=\sum_j\mu_jd_j^W+\sum_k z'_k\ell_k^W\in(W_i)_p,
$$
the lemma computes, in the coordinate system just fixed for $J_i$, an element $\widetilde w\in(J_i)_p$ satisfying
$(\pi_i^W)_p(\widetilde w)=w.$

We finally convert this output to the ambient coordinates of $N_0$, which will be useful below. For each fixed coordinate element $y_{i,\nu}\in J_i\leq N_0$, Lemma~\ref{lem:wordcoords} computes its rational coordinate vector with respect to $u_{0,1},\ldots,u_{0,h_0}$. If $(\xi_1,\ldots,\xi_{h_i'})$ is the coordinate vector of $\widetilde w$ in $(J_i)_p$, then, using these fixed vectors together with the $\Qp$-extensions of the power and multiplication polynomials for $N_0$, we compute the coordinate vector of
$\widetilde w=y_{i,1}^{\xi_1}\ldots y_{i,h_i'}^{\xi_{h_i'}}$
with respect to $u_{0,1},\ldots,u_{0,h_0}$. Thus both kinds of factor lifts required in~\textup{(ii)} are available in coordinate systems compatible with the fixed ambient coordinates of $N_0$. Finally, let $E\leq\Qp$ be a finitely generated exact coefficient field containing $F_{i,p}$, and suppose that the inclusion $F_{i,p}\hookrightarrow E$ is effective with respect to the chosen exact representations. Via this inclusion, all the presentation vectors and all the fixed chain data used above are exactly represented over $E$. Lemmas~\ref{lem:additive} and~\ref{lem:factorlift} therefore apply over $E$, and the coordinate conversions use only the fixed rational power and multiplication polynomials. Hence all the decomposition, membership, preimage and lifting procedures above remain effective after replacing $F_{i,p}$ by $E$. This proves~\textup{(ii)}.
\end{proof}

We shall finally need the following criterion for
membership in abelian minimax groups.

\begin{lemma}\label{lem:ratpadicmembership}
Let $A$ be an abelian minimax group whose elements have a fixed effective representation, in which equality is computable. Let $B$ be a recursively enumerable subgroup of $A$ in this representation. Suppose that the membership problems
\begin{equation}
 a\otimes1\in B_{\Q}=B\otimes_{\mathbb Z}\mathbb Q,
 \label{eq:3.11}
\end{equation}
and, for every prime $p$,
\begin{equation}
 a\in B_p\leq A_p
 \label{eq:3.12}
\end{equation}
where $B_p$ is identified with its image in $A_p$ by
{\rm Lemma~\ref{lem:tensor}\textup{(i)}}, are decidable with one algorithm taking $p$ and $a$ as input. Then membership in $B$
is decidable.
\end{lemma}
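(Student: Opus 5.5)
The plan is to combine the recursive enumerability of $B$ with the given local tests. The aim is to reduce membership of $a$ in $B$ to the rational test \eqref{eq:3.11} together with finitely many $p$-adic tests \eqref{eq:3.12}, namely those for the primes $p$ dividing an integer $m$ that can be computed from $a$.

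First, given $a\in A$, run the decision procedure for \eqref{eq:3.11}. If $a\otimes1\notin B_{\Q}$, then certainly $a\notin B$ and we stop. Otherwise $a\otimes1=b\otimes\frac1k$ for some $b\in B$ and $k\geq1$. Hence $ka-b$ lies in the kernel of $A\to A\otimes_{\Z}\Q$, that is, $ka-b$ is a torsion element of $A$, so $t(ka-b)=0$ for some $t\geq1$. Consequently there is an integer $m\geq1$ with $ma\in B$. Such an $m$ is found effectively by enumerating pairs $(m,b)$ with $m\geq1$ and $b$ running through the recursive enumeration of $B$, and testing $ma=b$ using the computable equality in $A$. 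The search terminates by the argument just given.

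Next, put $C=\langle a\rangle+B\leq A$. This is again abelian minimax, and $C/B$ is a finite cyclic group of order dividing $m$. By Lemma~\ref{lem:tensor}\textup{(i)}, $B_p\leq C_p\leq A_p$, and by Lemma~\ref{lem:tensor}\textup{(ii)}, $C_p/B_p\simeq(C/B)_p$ via the map induced by $C\to C/B$. For a finite abelian group $F$, the definition of $F_p$ as the pro-$p$ completion gives $F_p=F/O^p(F)$, which is isomorphic to the Sylow $p$-subgroup of $F$. Hence $a\in B_p$ if and only if the image of $a+B$ in the $p$-primary component of $C/B$ is trivial. Since $a+B$ is trivial exactly when all its primary components are trivial, and only primes dividing $m$ can occur, we get
\[
 a\in B
 \iff
 a\in B_p\ \text{for every prime }p\mid m.
\]
Given that $a\otimes1\in B_{\Q}$ and $m$ has been computed, we factor $m$ and run the uniform procedure for \eqref{eq:3.12} on each of these finitely many primes. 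This decides membership.

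The step I expect to need the most care is the identification of $a\in B_p$ inside $A_p$ with the vanishing of the $p$-part of $a+B$ in $C/B$. This requires two things. First, the embedding $B_p\leq A_p$ must factor through $C_p$, which is the compatibility of the injections of Lemma~\ref{lem:tensor}\textup{(i)}. Second, the tensor $p$-completion of a finite group must be its largest $p$-quotient, which follows directly from \eqref{eq:tensor-p-definition} because $F$ is finitely generated, so $F_p=\widehat F_p$. Both are formal consequences of functoriality, but they have to be checked for abelian minimax groups that may contain torsion.
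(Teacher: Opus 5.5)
Your argument is correct in substance, and its mathematical core is the same as the paper's. Both rest on the finite cyclic group $(\langle a\rangle+B)/B$, on the fact that the tensor $p$-completion of a finite cyclic group is its $p$-primary part, and on Lemma~\ref{lem:tensor}\textup{(i),(ii)} to transport this to $A_p/B_p$.

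The difference is algorithmic. You use the rational test to guarantee that some multiple $ma$ lies in $B$, and you search for such an $m$ explicitly. You then run the $p$-adic test only at the finitely many primes dividing $m$, so a YES answer is certified by the local tests. The paper never computes $m$. After the rational test it dovetails two branches: an enumeration of $B$, comparing each element with $a$, certifies YES; the $p$-adic tests, run over all primes in order, certify NO. The finite-order argument is used only to show that, when $a\notin B$, the test at some prime dividing the order of $a+B$ must fail. Your version buys an explicit finite list of primes and avoids the unbounded sweep over primes.

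Note, however, that your search tests $ma=b$, which requires computing $ma$ from $a$, i.e.\ computable addition in $A$. The statement grants only computable equality. Elsewhere the paper lists addition separately when it is needed (Lemmas~\ref{lem:effectivechains} and~\ref{lem:affineabelian}). The paper's dovetailing is exactly what lets it get by with equality alone, since it only compares enumerated elements of $B$ with $a$.

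In the one application, $A=A_i$ with elements represented by words, addition is computable, so your proof suffices there. To prove the lemma as literally stated, replace your search for $m$ by the dovetailed procedure.
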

\begin{proof}
On input $a\in A$, first perform \eqref{eq:3.11}.  If it fails, return NO.  Suppose therefore that
$a\otimes1\in B_{\Q}.$
Run the following two computations by
alternately executing one step of each unfinished computation:
\begin{enumerate}[label=\textup{(\alph*)}]
\item Enumerate $B$, compare each enumerated element with $a$ using
the decidable equality in $A$, and return YES when equality occurs.

\item Enumerate the primes as $p_1,p_2,\ldots$.  Starting with $p_1$,
run the terminating membership test \eqref{eq:3.12}.  If it returns
NO, return NO; if it returns YES, proceed to the next prime.
\end{enumerate}
From the hypothesis it follows that this is a single procedure.

If $a\in B$, then branch \textup{(a)} eventually encounters $a$, because $B$ is recursively enumerable, and returns YES.  Moreover $a\in B_p$ for every prime $p$, so branch~(b) never returns NO. It remains to prove that the procedure stops when $a\notin B$.  In this case $
C=(\langle a\rangle+B)/B$ is a non-trivial cyclic group generated by $a+B$.

Suppose first that $C$ is infinite.  Then $C\simeq\Z$.  Since $\Q$ is a flat $\Z$-module, tensoring the injection $C\longrightarrow A/B$ with $\Q$ gives an injection
$$
C\otimes_{\Z}\Q
\longrightarrow
(A/B)\otimes_{\Z}\Q.
$$
Hence the image of $a+B$, which is non-zero in $C\otimes_{\Z}\Q$, is non-zero in $(A/B)\otimes_{\Z}\Q$. On the other hand, exactness of tensoring
$$
0\longrightarrow B\longrightarrow A\longrightarrow A/B
\longrightarrow0
$$
with the flat $\Z$-module $\Q$ gives
$$
0\longrightarrow B_{\Q}\longrightarrow A_{\Q}
\longrightarrow (A/B)\otimes_{\Z}\Q
\longrightarrow0.
$$
Thus
$a\otimes1\notin B_{\Q}$,
a contradiction. This means that, whenever $C$ is infinite, the algorithm has already terminated with NO after the first use of \eqref{eq:3.11}.

Therefore in this case $C$ is finite, of order $m>1$. Choose a prime $p\mid m$. Since $C$ is finitely generated, the definition \eqref{eq:tensor-p-definition} gives
$C_p=\widehat C_p$. Writing $m=p^eu$ with $e\geq1$ and $p\nmid u$, the pro-$p$ completion of the cyclic group $C$ is its non-trivial cyclic $p$-primary quotient of order $p^e$.  In particular, the image of the generator $a+B$ in $C_p$ is non-zero. Now, applying again Lemma~\ref{lem:tensor}\textup{(i)} we get an injection
$$
C_p\longrightarrow(A/B)_p,
$$
and hence the image of $a+B$ is non-zero in $(A/B)_p$.

Finally, an application of Lemma~\ref{lem:tensor}\textup{(ii)}
gives
$A_p/B_p\simeq(A/B)_p$. Hence the image of $a$ in $A_p/B_p$ is non-zero, and therefore
$a\notin B_p$. Thus, when branch \textup{(b)} reaches this prime $p$, its terminating membership test returns NO. Since only finitely many primes precede $p$ in the enumeration and every preceding test terminates, branch \textup{(b)} reaches $p$ after finitely many steps.

We have therefore shown that the procedure returns YES in finite time when $a\in B$, while if $a\notin B$ it returns NO either at the initial test or at one of the $p$-adic tests. Hence membership in $B$ is decidable.
\end{proof}

\section{Kernels and images}\label{sec:4}

The purpose of this section is to make effective the computation of kernels and images of the homomorphisms that arise in the successive lifting steps.  Let $Y\leq N_0$ be a subgroup for which $Y_{\Q}$ is given by a finite rational row description (see the definition below), and let $f:Y\to A$ be an effectively computable homomorphism to an abelian minimax group. The main rational result of the section, Lemma~\ref{lem:rationalkernel}, computes a finite rational row description of $(\Ker f)_{\Q}$ and a basis of the rational image $f(Y)_{\Q}$.  Lemma~\ref{lem:rationalrow} provides the rational row operations needed for this computation, while Lemma~\ref{lem:transport} shows that the resulting membership and rational row data can be transported effectively under conjugation in $H$.

The rational computation alone does not determine membership in the actual image $f(Y)$.  For the applications below we must also decide, for every prime $p$, membership in its tensor $p$-completion. Lemma~\ref{lem:padicproduct} describes the completions of the fixed products $N_iK_0$, and Lemma~\ref{lem:fixedcharts} gives explicit parameterizations of the $p$-completions of the fixed subgroups occurring in the lifting argument.  These parameterizations convert the required membership conditions into finite systems of polynomial equations over $\Qp$, whose solvability is decided uniformly in $p$ by Lemma~\ref{lem:padicdecision}.

\begin{definition}
Let $x\in(N_0)_{\Q}\setminus\{1\}$. With respect to the coordinate system $u_{0,1},\ldots,u_{0,h_0}$ fixed in Section~\ref{sec:3} for $N_0$, write $x=u_{0,1}^{\xi_1(x)}\ldots u_{0,h_0}^{\xi_{h_0}(x)}$, where $\xi_j(x)\in\Q$. The \emph{pivot index} of $x$ is $\nu(x)=\min\{j:\xi_j(x)\neq0\}$.  We say that $x$ is \emph{normalized} if $\xi_{\nu(x)}(x)=1$. A normalized element of $(N_0)_{\Q}$ will be called a \emph{rational row}.

A \emph{finite rational row description} of a rational subgroup $R\leq(N_0)_{\Q}$ is an ordered tuple $b_1,\ldots,b_s\in R$ such that
$$
\nu(b_1)<\nu(b_2)<\ldots<\nu(b_s),
\qquad
\xi_{\nu(b_j)}(b_j)=1
\quad (1\leq j\leq s),
$$
and every $r\in R$ has a unique expression
\begin{equation}
r=b_1^{q_1}\ldots b_s^{q_s},
\qquad q_1,\ldots,q_s\in\Q.
\end{equation}
The coordinate vector of each $b_j$ with respect to
$u_{0,1},\ldots,u_{0,h_0}$ is part of the finite description.
\end{definition}

\begin{remark}\label{rem:coordinate-tails}
We record a property of the coordinate system $u_{0,1},\ldots,u_{0,h_0}$ which will be used below. In the construction of Lemma~\ref{lem:coordinates}, each torsion-free abelian factor of the central series can be refined, by choosing a basis of that factor, to a series with infinite cyclic central factors. Thus the coordinate elements may be chosen so that, for $j=1,\ldots,h_0$, the subgroup $N_0^{\ast}(j)=\langle u_{0,j},\ldots,u_{0,h_0}\rangle$ is normal in $N_0^{\ast}$ and $[N_0^{\ast},N_0^{\ast}(j)]\leq N_0^{\ast}(j+1)$, where $N_0^{\ast}(h_0+1)=1$. Put $Q_j=\Isol_{(N_0)_{\Q}}(N_0^{\ast}(j))$. By the unique coordinate expressions in Lemma~\ref{lem:coordinates},
$$
Q_j=\{x\in(N_0)_{\Q}:
\xi_1(x)=\ldots=\xi_{j-1}(x)=0\},
$$
and $Q_j/Q_{j+1}$ is the one-dimensional $\Q$-vector space generated by $u_{0,j}Q_{j+1}$. Moreover, $[(N_0)_{\Q},Q_j]\leq Q_{j+1}$, since the corresponding inclusion holds before passing to the rational completion. Consequently, let $y,z\in(N_0)_{\Q}$ and suppose that $\xi_1(y)=\ldots=\xi_{j-1}(y)=0$. Then $y\in Q_j$, so $yz$ and $z$ have the same image in $(N_0)_{\Q}/Q_j$ and therefore have the same coordinates of indices smaller than $j$. Modulo $Q_{j+1}$, the subgroup $Q_j/Q_{j+1}$ is central, and its coordinate with respect to $u_{0,j}Q_{j+1}$ is additive. Hence, for every $\lambda\in\Q$,
$$
\begin{array}{c}
\xi_\ell(yz)=\xi_\ell(z)\quad(\ell<j),\qquad
\xi_j(yz)=\xi_j(y)+\xi_j(z),\\[0.2cm]
\xi_\ell(y^\lambda)=0\quad(\ell<j),\qquad
\xi_j(y^\lambda)=\lambda\xi_j(y).
\end{array}
$$
\end{remark}

\begin{remark}\label{rem:rational-reduction}
We shall also use the following reduction procedure. Let $B=(b_1,\ldots,b_s)$ be an ordered tuple of rational rows with strictly increasing pivot indices $p_k=\nu(b_k)$. Given $x\in(N_0)_{\Q}$, put $z_0=x$ and, for $k=1,\ldots,s$, set
\begin{equation}
z_k=b_k^{-\xi_{p_k}(z_{k-1})}z_{k-1}.
\label{eq:rational-reduction}
\end{equation}
Since $b_k$ has zero coordinates before $p_k$ and $\xi_{p_k}(b_k)=1$, Remark~\ref{rem:coordinate-tails} gives $\xi_\ell(z_k)=\xi_\ell(z_{k-1})$ for $\ell<p_k$ and $\xi_{p_k}(z_k)=0$. Put $\rho_B(x)=z_s$, with $\rho_B(x)=x$ if $s=0$. Its coordinate vector is computable from that of $x$ by the fixed rational power and multiplication polynomials for~$N_0$. If $\alpha_k=\xi_{p_k}(z_{k-1})$, the equalities defining the $z_k$ give
$$
x=b_1^{\alpha_1}\ldots b_s^{\alpha_s}\rho_B(x).
$$
Thus the reduction procedure computes both $\rho_B(x)$ and the rational numbers $\alpha_1,\ldots,\alpha_s$.
\end{remark}

\begin{remark}\label{rem:rational-reduction-properties}
If $B$ is a finite rational row description of a rational subgroup $R\leq(N_0)_{\Q}$, then
$$
x\in R\quad\Longleftrightarrow\quad\rho_B(x)=1.
$$
Indeed, if $x=b_1^{q_1}\ldots b_s^{q_s}\in R$, then Remark~\ref{rem:coordinate-tails} shows successively that the numbers $\alpha_k$ in Remark~\ref{rem:rational-reduction} satisfy $\alpha_k=q_k$ for $k=1,\ldots,s$, and hence $\rho_B(x)=1$. Conversely, if $\rho_B(x)=1$, the expression for $x$ in Remark~\ref{rem:rational-reduction} shows that $x\in R$. Moreover, if the coordinates of an element $x(\lambda)$ are rational polynomials in $\lambda$, then the same is true, inductively from \eqref{eq:rational-reduction}, for the coordinates of every $z_k(\lambda)$ and hence of $\rho_B(x(\lambda))$.
\end{remark}

\medskip

The following technical observation will ensure that the rational subgroups produced by the row-reduction procedure remain stable under the normalizer operations occurring below.

\begin{lemma}\label{lem:rational-normalizer}
Let $T\leq(N_0)_{\Q}$ be a rational subgroup admitting a finite rational row description. Then $N_{(N_0)_{\Q}}(T)$ is a rational subgroup of $(N_0)_{\Q}$.
\end{lemma}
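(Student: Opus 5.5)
We have to show that if $x\in(N_0)_{\Q}$ and $x^m$ normalizes $T$ for some $m\geq1$, then $x$ normalizes $T$. Put $U=(N_0)_{\Q}$, which is torsion-free, nilpotent and radicable. $T$ is rational, so it is closed under taking roots. The plan is to reduce to the unique extraction of roots in $U$, combined with a unipotence argument.

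First, for $t\in T$, the conjugate $x^{-1}tx$ has coordinates that are rational polynomial functions of $t$'s coordinates, by \eqref{eq:2.6}. The row description of $T$ parametrizes $T$ by $\Q^s$ via $(q_1,\ldots,q_s)\mapsto b_1^{q_1}\ldots b_s^{q_s}$, and these coordinates are polynomial in the $q_j$ by Lemma~\ref{lem:coordinates}(iv). By Remark~\ref{rem:rational-reduction-properties}, the condition $t^{x}\in T$ is equivalent to $\rho_B(t^x)=1$. This is a polynomial identity in the parameters $q$.

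Second, consider the conjugation automorphism $\gamma=\iota_x$ and the map $\lambda\mapsto x^\lambda$ for $\lambda\in\Q$. The coordinates of $x^\lambda$ are given by $R(\mathbf a,\lambda)$ and are therefore polynomial in $\lambda$. For each fixed $t=b_1^{q_1}\ldots b_s^{q_s}$, the coordinates of $\rho_B(t^{x^\lambda})$ are rational polynomials in $(\lambda,q)$, again by Remark~\ref{rem:rational-reduction-properties}.

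Third, if $x^m$ normalizes $T$, then so does every $x^{km}$ with $k\in\Z$. The polynomial $\lambda\mapsto\rho_B(t^{x^\lambda})$ then vanishes at the infinitely many points $\lambda=km$, so it vanishes identically. Taking $\lambda=1$ gives $t^x\in T$ for all $t\in T$, that is, $T^x\leq T$. Applying the same argument with $\lambda=-1$ gives $T^{x^{-1}}\leq T$, hence $T^x=T$. It follows that $x\in N_U(T)$, so the normalizer is isolated in $U$.

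It remains to check that $N_U(T)$ is a subgroup closed under roots, which is exactly what being rational means here. Conversely, every element of the isolator has a positive power in the normalizer, and the argument above then shows it lies in the normalizer, so $N_U(T)=\Isol_U(N_U(T))$.

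The main obstacle is the polynomiality claim in $\lambda$. One must check that the reduction $\rho_B$ applied to $t^{x^\lambda}$ really yields polynomial coordinates jointly in $(\lambda,q)$. This follows because every step is a composition of the fixed polynomials $P$ and $R$, with exponents that are themselves coordinates, hence polynomial, as recorded in Remark~\ref{rem:rational-reduction-properties}. Once that is in place, the argument reduces to the fact that a polynomial over $\Q$ with infinitely many zeros is zero.
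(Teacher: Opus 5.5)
Your proof is correct and follows the paper's argument: the coordinates of $\rho_B(t^{x^\lambda})$ are rational polynomials in $\lambda$ that vanish on $m\Z$, hence vanish identically, and evaluating at $\lambda=\pm1$ gives $T^x=T$. The only difference is that you run this for every $t\in T$, whereas the paper runs it only for the row elements $b_j$ and then uses that conjugation preserves rational powers; your version simply makes that last step unnecessary.
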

\begin{proof}
Let $C=(c_1,\ldots,c_t)$ be a finite rational row description of $T$, and put \hbox{$p_i=\nu(c_i)$.} Let $g\in(N_0)_{\Q}$ and suppose that $g^m\in N_{(N_0)_{\Q}}(T)$ for some $m\geq1$. We prove that $g\in N_{(N_0)_{\Q}}(T)$.

Fix $i\in\{1,\ldots,t\}$. For $\lambda\in\Q$, consider $c_i^{g^\lambda}=g^{-\lambda}c_i g^\lambda$. By the fixed multiplication and rational power polynomials for $N_0$, every coordinate of $c_i^{g^\lambda}$ is a rational polynomial in $\lambda$. By Remark~\ref{rem:rational-reduction-properties}, the same is therefore true of every coordinate of $\rho_C(c_i^{g^\lambda})$.

For every $n\in\mathbb Z$, the element $g^{mn}$ normalizes $T$. Hence $c_i^{g^{mn}}\in T$, and Remark~\ref{rem:rational-reduction-properties} gives $\rho_C(c_i^{g^{mn}})=1$. Thus every coordinate of $\rho_C(c_i^{g^\lambda})$ vanishes at all points $\lambda=mn$, $n\in\mathbb Z$. Since these are infinitely many rational numbers, each coordinate polynomial is identically zero. In particular,
$$
\rho_C(c_i^g)=\rho_C(c_i^{g^{-1}})=1.
$$
Another application of Remark~\ref{rem:rational-reduction-properties} gives $c_i^g,c_i^{g^{-1}}\in T$ for every $i$.

Now every element of $T$ has the form $c_1^{q_1}\ldots c_t^{q_t}$ with $q_1,\ldots,q_t\in\Q$. Conjugation preserves rational powers in the uniquely divisible group $(N_0)_{\Q}$, and therefore $T^g\leq T$ and $T^{g^{-1}}\leq T$. Hence $T^g=T$, so $g\in N_{(N_0)_{\Q}}(T)$. Therefore $N_{(N_0)_{\Q}}(T)$ is rational.
\end{proof}

We can now turn the preceding reduction procedure into effective row operations for rational subgroups of $(N_0)_{\Q}$.

\begin{lemma}\label{lem:rationalrow}
There are terminating procedures with the following properties.
\begin{enumerate}[label=\textup{(\roman*)}]
\item Given finitely many coordinate vectors, with respect to $u_{0,1},\ldots,u_{0,h_0}$, representing a finite set $S\subseteq(N_0)_{\Q}$, one can compute a finite rational row description of $R=\Isol_{(N_0)_{\Q}}(\langle S\rangle)$.

\item Given a finite rational row description $b_1,\ldots,b_s$ of a rational subgroup $R\leq(N_0)_{\Q}$, one can compute a finite rational row description of~$D_R=\Isol_R([R,R])$, a basis of the finite-dimensional $\Q$-vector space $R/D_R$, and the matrix of the quotient map $\pi_R:R\to R/D_R$ with respect to the row coordinates determined by $b_1,\ldots,b_s$ and the computed basis of~$R/D_R$.

\item Given the data computed in \textup{(ii)} and a subspace $W\leq R/D_R$ by its coordinates with respect to the computed basis of $R/D_R$, one can compute a finite rational row description of $\pi_R^{-1}(W)$.
\end{enumerate}
\end{lemma}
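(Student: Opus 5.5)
For \textup{(i)}, I would run a Mal'cev-type row reduction driven by Remarks~\ref{rem:rational-reduction} and~\ref{rem:rational-reduction-properties}. Keep a current tuple $B$ of rational rows with strictly increasing pivots, initially empty, and a queue initially equal to $S$. For each queued $x$, compute $\rho_B(x)$. If $\rho_B(x)\neq1$, let $j=\nu(\rho_B(x))$ and replace $\rho_B(x)$ by its normalization $\rho_B(x)^{1/\xi_j(\rho_B(x))}$. By Remark~\ref{rem:coordinate-tails} this has pivot $j$ and coefficient $1$. Insert it into $B$ at the position dictated by $j$; no existing row has pivot $j$, since $\rho_B$ kills all pivot coordinates of $B$. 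Each insertion strictly increases $|B|\leq h_0$, so there are at most $h_0$ insertions. After each insertion, enqueue all commutators $[b,b']$ and the elements $b^{-1}$ for $b,b'\in B$, and repeat until the queue is exhausted with every element reducing to $1$. Termination follows because the queue only grows when $|B|$ grows.

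At the end, let $R'$ be the set of products $b_1^{q_1}\cdots b_s^{q_s}$. I must show $R'$ is a subgroup; it then contains $S$, is rational, and equals $\Isol(\langle S\rangle)$. Minimality holds because each row lies in $\Isol(\langle S\rangle)$, which is closed under products and rational powers. The subgroup property is the main obstacle. The plan is a downward induction on pivots. Let $R'_k=\{b_k^{q_k}\cdots b_s^{q_s}\}$. Using $[b_k,b_\ell]\in R'_{\max(k,\ell)+1}$, which is guaranteed since all such commutators reduce to $1$ by rows of larger pivot, I would show each $R'_k$ is closed under multiplication. The tool is the Hall--Petresco/Baker--Campbell--Hausdorff identities in the radicable nilpotent group, namely that $b^{q}c^{r}$ differs from $c^{r}b^{q}$ by a product of rational powers of iterated commutators. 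An alternative route is to pass to the Lie algebra via the polynomial coordinates. Uniqueness of the expressions follows from Remark~\ref{rem:rational-reduction-properties}.

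For \textup{(ii)}, I would apply \textup{(i)} to the finite set $\{[b_j,b_k]\}$. The isolator of the subgroup generated by these commutators together with their $R$-conjugates is $\Isol_R([R,R])$. Normal closure is handled by also enqueueing the conjugates $c^{b_j^{\pm1}}$. Lemma~\ref{lem:rational-normalizer} shows that checking generators of $R$ suffices, because the normalizer of the computed rational subgroup is rational and hence contains all of $R$ once it contains the $b_j$. This yields a description $D_R$. The quotient $R/D_R$ is abelian and uniquely divisible. I would represent $b_j D_R$ by their images and pick as basis those $b_j$ whose pivot is not a pivot of $D_R$. The reduction of $b_j$ modulo the rows of $D_R$ and the retained basis rows then gives the matrix of $\pi_R$, which is linear in the row coordinates because $R/D_R$ is abelian. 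For \textup{(iii)}, I would take a $\Q$-basis $w_1,\dots,w_k$ of $W$ and lift each $w_i$ to an element of $R$ via the chosen basis rows. Then I apply \textup{(i)} to these lifts together with the rows of $D_R$; the result is $\pi_R^{-1}(W)$, since the latter is rational and generated up to isolator by these elements.
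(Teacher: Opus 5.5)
Your proposal is correct and has the same architecture as the paper: pivot-based row reduction with commutator closure, descending induction on $R'_k=\{b_k^{q_k}\cdots b_s^{q_s}\}$, normal closure in (ii) by checking only the $b_j$ and invoking Lemma~\ref{lem:rational-normalizer}, and (iii) by lifting a basis of $W$ and rerunning (i).

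The differences are local.

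At the crux of (i) you pass from ``$b_k$ normalizes $R'_{k+1}$'' to ``every $b_k^q$ normalizes $R'_{k+1}$'' via a rational Hall--Petresco/BCH expansion or the Mal'cev Lie correspondence. The paper instead applies Lemma~\ref{lem:rational-normalizer} to $T=R'_{k+1}$. That lemma rests on the elementary fact that the coordinates of $\rho_C(c^{g^\lambda})$ are polynomials in $\lambda$ vanishing on $m\Z$. You already use this lemma in (ii), so it would close the step in (i) more cheaply.

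Your route does work, but it needs two things you only sketch:
\begin{itemize}
\item $[b_\ell,b_k^q]$ is a product of rational powers of basic commutators in $b_k,b_\ell$, which comes from the Mal'cev basis of the completed free nilpotent group;
\item $b_k$ must first normalize $R'_{k+1}$, which follows from $b_\ell^{b_k}=b_\ell[b_\ell,b_k]$ and the rationality of $R'_{k+1}$, so that all those iterated commutators lie in $R'_{k+1}$.
\end{itemize}

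Your step ``$R'$ is then rational'' also needs a line. If $x^m\in R'$, reducing $x$ by $\rho_B$ stays inside $\Isol(R')$. Every non-trivial element $y$ of $\Isol(R')$ has $\nu(y)=\nu(y^m)\in\{\nu(b_1),\ldots,\nu(b_s)\}$, while $\rho_B(x)$ has zero coordinates at all these indices. Hence $\rho_B(x)=1$ and $x\in R'$. The paper gets this instead from unique divisibility of $N_k/R_{k+1}$.

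In (ii), your basis of $R/D_R$ consists of the images of those $b_j$ whose pivots are not pivots of $D_R$. This choice is valid: pivots of $D_R$ are among those of $R$, the merged tuple of rows of $D_R$ and retained $b_j$ is a row description of $R$, and the pivot argument gives both independence and spanning. It avoids the paper's second run of the insertion procedure from the rows of $D_R$, at the price of checking these facts.
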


\begin{proof}
We basically use the reduction procedure $\rho_B$ of Remark~\ref{rem:rational-reduction}.

\smallskip

For \textup{(i)}, start with $B$ empty and place the finitely many elements of $S$ in a finite list. At any stage, let $B=(b_1,\ldots,b_s)$ be the rational rows which have been inserted so far, where $s$ may be zero. Take an element $x$ from the list and compute $r=\rho_B(x)$. If $r=1$, make no change. If $r\neq1$, put $p=\nu(r)$ and $b=r^{1/\xi_p(r)}$. The fixed rational power polynomials compute the coordinate vector of $b$, and Remark~\ref{rem:coordinate-tails} gives $\xi_p(b)=1$ and $\xi_\ell(b)=0$ for $\ell<p$. If $s>0$, Remark~\ref{rem:rational-reduction} gives $\xi_{\nu(b_k)}(r)=0$ for every $k=1,\ldots,s$. Since $\xi_p(r)\neq0$, the index $p$ is different from $\nu(b_1),\ldots,\nu(b_s)$. If $s=0$, there are no existing pivot indices. Thus in either case $p$ is a new pivot index.

Insert the new rational row $b$ into $B$ so that the pivot indices remain strictly increasing. Put $x$ back into the finite list. For every other rational row $c$ currently in $B$, also put $[b,c]$ and $[c,b]$ in the list. Their coordinate vectors are computable from those of $b$ and $c$ using the fixed multiplication and rational power polynomials for $N_0$.

Each time a new rational row is inserted into $B$, its pivot index is different from the pivot indices of all rational rows already in $B$. Since the possible pivot indices are $1,\ldots,h_0$, at most $h_0$ rational rows can be inserted into~$B$. The initial list of elements to be processed is finite. Each time a new rational row is inserted into $B$, only finitely many elements are added to this list. Since only finitely many rational rows can be inserted into $B$, only finitely many elements are added to the list during the whole procedure. After the last insertion into $B$, processing an element can no longer produce a non-trivial value of $\rho_B$, since this would produce a new rational row with a new pivot index. Hence, from that point on, each step removes one element from the list and adds none. Therefore the list is eventually empty and the procedure terminates.

Let $B=(b_1,\ldots,b_s)$ be the final ordered tuple of rational rows and put $p_k=\nu(b_k)$. Thus $p_1<\ldots<p_s$. Define
$$
R_B=\{b_1^{q_1}\ldots b_s^{q_s}:q_1,\ldots,q_s\in\Q\}.
$$
We first prove uniqueness of these expressions. Suppose $x=b_1^{q_1}\ldots b_s^{q_s}$. Remark~\ref{rem:coordinate-tails} shows that the $p_1$-th coordinate of $x$ is $q_1$, because every $b_k$ with~\hbox{$k>1$} has zero coordinates before $p_k$ and hence zero $p_1$-th coordinate. Remark~\ref{rem:rational-reduction} then shows that the first reduction step removes $b_1^{q_1}$. Repeating the same argument at~\hbox{$p_2,\ldots,p_s$} recovers successively $q_2,\ldots,q_s$. Thus the exponent tuple $(q_1,\ldots,q_s)$ is determined by $x$, and the expression is unique.

We now show that $R_B$ is a rational subgroup. We first note the following consequence of the construction. Let $i<j$. Since $b_j\in Q_{p_j}$, Remark~\ref{rem:coordinate-tails} gives
$$
[b_i,b_j],[b_j,b_i]\in Q_{p_j+1}.
$$
Both commutators were placed in the finite list when the later of $b_i$ and $b_j$ was inserted. Hence, when the procedure terminates,
$$
\rho_B([b_i,b_j])=\rho_B([b_j,b_i])=1.
$$
The decomposition in Remark~\ref{rem:rational-reduction} therefore expresses each of these commutators as an ordered product of rational powers of the final rational rows (in particular, every commutator of two final rational rows belongs to $R_B$). We claim that only $b_{j+1},\ldots,b_s$ can occur with non-zero exponent in such an expression. Suppose, to the contrary, that one of $b_1,\ldots,b_j$ occurs with non-zero exponent, and let $b_\ell$ be the first such rational row. Since all preceding exponents are zero, Remark~\ref{rem:coordinate-tails} shows that the $p_\ell$-th coordinate of the product is precisely the non-zero exponent of $b_\ell$. On the other hand, the commutator belongs to $Q_{p_j+1}$, so all its coordinates of index at most $p_j$ are zero. Since $p_\ell\leq p_j$, this is a contradiction. Thus both $[b_i,b_j]$ and $[b_j,b_i]$ are ordered products of rational powers of $b_{j+1},\ldots,b_s$.

For $k=1,\ldots,s$, put $R_k=\{b_k^{q_k}\ldots b_s^{q_s}:q_k,\ldots,q_s\in\Q\}$, and put \hbox{$R_{s+1}=\{1\}$.} We prove, by descending induction on $k$, that $R_k$ is a rational subgroup of $(N_0)_{\Q}$. Assume that $R_{k+1}$ is a rational subgroup. For every $j>k$, the conclusion above gives $[b_j,b_k]\in R_{j+1}\leq R_{k+1}$. Hence $b_j^{b_k}=b_j[b_j,b_k]\in R_{k+1}$, and therefore $R_{k+1}^{b_k}\leq R_{k+1}$. We prove the reverse inclusion. We show, by descending induction on $j=s,\ldots,k+1$, that $b_j\in R_{k+1}^{b_k}$. For $j=s$, one has $[b_s,b_k]\in R_{s+1}=\{1\}$, and hence $b_s^{b_k}=b_s$. Now suppose that $b_{j+1},\ldots,b_s\in R_{k+1}^{b_k}$. The subgroup $R_{k+1}^{b_k}$ is rational, being the conjugate of the rational subgroup $R_{k+1}$. It therefore contains $R_{j+1}$, since $R_{j+1}$ consists of the ordered products of rational powers of $b_{j+1},\ldots,b_s$. Put $c_j=[b_j,b_k]\in R_{j+1}$. Since $b_j^{b_k}=b_jc_j$, both $b_j^{b_k}$ and $c_j$ belong to $R_{k+1}^{b_k}$, and hence $b_j=b_j^{b_k}c_j^{-1}\in R_{k+1}^{b_k}$. Thus $R_{k+1}\leq R_{k+1}^{b_k}$, and consequently $R_{k+1}^{b_k}=R_{k+1}$. Hence $b_k$ normalizes $R_{k+1}$.

Put $N_k=N_{(N_0)_{\Q}}(R_{k+1})$. By Lemma~\ref{lem:rational-normalizer}, $N_k$ is rational. Since $b_k\in N_k$, it follows that $b_k^q\in N_k$ for every $q\in\Q$. Thus every rational power of $b_k$ normalizes $R_{k+1}$. Since the rational powers of $b_k$ form a subgroup, it follows that
$$
R_k=\{b_k^q t:q\in\Q,\ t\in R_{k+1}\}
$$
is a subgroup of $(N_0)_{\Q}$.

We now prove that $R_k$ is rational. Let $x\in(N_0)_{\Q}$ and suppose that $x^m\in R_k$ for some $m\geq1$. Since $R_k\leq N_k$ and $N_k$ is rational, $x^m\in N_k$ implies $x\in N_k$. The subgroup $R_{k+1}$ is normal in $N_k$, so the quotient $N_k/R_{k+1}$ is uniquely divisible. Write $x^m=b_k^q t$ with $q\in\Q$ and $t\in R_{k+1}$. In~$N_k/R_{k+1}$ one has
$$
(xR_{k+1})^m=(b_kR_{k+1})^q.
$$
Unique divisibility therefore gives
$$
xR_{k+1}=(b_kR_{k+1})^{q/m},
$$
and hence $x\in b_k^{q/m}R_{k+1}\subseteq R_k$. Thus $R_k$ is rational.

Therefore induction yields that $R_B=R_1$ is a rational subgroup of $(N_0)_{\Q}$. Together with the uniqueness of the expressions $b_1^{q_1}\ldots b_s^{q_s}$ proved above, this shows that the final rational rows give a finite rational row description of~$R_B$.

It remains to identify $R_B$ with $I:=\Isol_{(N_0)_{\Q}}(\langle S\rangle)$. We prove inductively that every rational row inserted into $B$, and every element subsequently placed in the finite list, belongs to $I$. The elements initially placed in the finite list are precisely the elements of $S$, so they belong to $I$. Suppose that all rational rows inserted so far belong to $I$ and that an element $x\in I$ is processed. Since $\rho_B(x)$ is obtained from $x$ by multiplication by rational powers of these rational rows, $\rho_B(x)\in I$. If $\rho_B(x)\neq1$, then the new rational row $\rho_B(x)^{1/\xi_{\nu(\rho_B(x))}(\rho_B(x))}$ also belongs to $I$, because $I$ is rational. The commutators subsequently placed in the finite list belong to $I$, since their entries belong to $I$. Hence every final rational row belongs to $I$, and therefore $R_B\leq I$.

Conversely, whenever an element $x\in S$ is removed from the list without being reinserted, it satisfies $\rho_B(x)=1$ for the tuple $B$ present at that stage. Remark~\ref{rem:rational-reduction} therefore expresses $x$ as an ordered product of rational powers of those rational rows, all of which belong to the final tuple. Hence $S\subseteq R_B$. Since $R_B$ is rational, $I\leq R_B$. Consequently $R_B=\Isol_{(N_0)_{\Q}}(\langle S\rangle)$, and the final rational rows give the required finite rational row description. This proves \textup{(i)}.

For \textup{(ii)}, let $r_1,\ldots,r_s$ be the given finite rational row description of~$R$. Start with an empty ordered tuple $B_D$ of rational rows and place all commutators $[r_j,r_k]$ in a finite list. At any stage, take an element $x$ from this finite list and compute $\rho_{B_D}(x)$. If $\rho_{B_D}(x)=1$, make no change. If $v=\rho_{B_D}(x)\neq1$, put $p=\nu(v)$ and $d=v^{1/\xi_p(v)}$. Insert the rational row $d$ into $B_D$ so that the pivot indices remain strictly increasing, and put $x$ back into the finite list. For every $j=1,\ldots,s$, also put $[d,r_j]$ and $[r_j,d]$ in the finite list. For every other rational row $e$ currently in $B_D$, put $[d,e]$ and $[e,d]$ in the finite list. Each time a new rational row is inserted into $B_D$, its pivot index differs from the pivot indices of all rational rows already in $B_D$. Thus at most $h_0$ rational rows can be inserted. Only finitely many elements are added to the finite list at each such insertion. After the last insertion, a non-trivial value of $\rho_{B_D}$ would produce a further rational row, which is impossible. Hence the finite list is eventually empty and the procedure terminates.

Let $d_1,\ldots,d_t$ be the final rational rows, in increasing order of pivot index, and let $q_\ell=\nu(d_\ell)$. Put
$$
D=\{d_1^{a_1}\ldots d_t^{a_t}:a_1,\ldots,a_t\in\Q\}.
$$
The pivot argument used in \textup{(i)} gives uniqueness of these expressions.

If $\ell<k$, then
$$
[d_\ell,d_k],[d_k,d_\ell]\in Q_{q_k+1}
$$
by Remark~\ref{rem:coordinate-tails}. These commutators were processed and therefore have value $1$ under the final map $\rho_{B_D}$. By Remark~\ref{rem:rational-reduction}, each of them is therefore expressed as an ordered product of rational powers of the final rational rows, and the coordinate-tail argument of Remark~\ref{rem:coordinate-tails} shows that only $d_{k+1},\ldots,d_t$ can occur with non-zero exponent. Applying the descending induction proved in \textup{(i)}, now to $d_1,\ldots,d_t$, shows that $D$ is a rational subgroup and that $d_1,\ldots,d_t$ is a finite rational row description of $D$.

We next prove that $D\trianglelefteq R$. Fix $r_j$. When the rational row $d_\ell$ was inserted into $B_D$, the commutators $[d_\ell,r_j]$ and $[r_j,d_\ell]$ were placed in the finite list. Since the procedure terminates with this list empty, both commutators have value $1$ under the final map $\rho_{B_D}$. Remark~\ref{rem:rational-reduction} therefore expresses each of them as an ordered product of rational powers of $d_1,\ldots,d_t$. Since $d_\ell\in Q_{q_\ell}$, Remark~\ref{rem:coordinate-tails} gives
$$
[d_\ell,r_j],[r_j,d_\ell]\in Q_{q_\ell+1}.
$$
Therefore, in the expressions above, the exponents of $d_1,\ldots,d_\ell$ are zero. Indeed, if one of these rational rows occurred with non-zero exponent, let $d_k$ be the first such rational row. By Remark~\ref{rem:coordinate-tails}, the coordinate at $q_k$ of the corresponding ordered product would then be non-zero, whereas every element of $Q_{q_\ell+1}$ has zero coordinates at all indices at most $q_\ell$. Since $q_k\leq q_\ell$, this is impossible. Thus $[d_\ell,r_j]$ and $[r_j,d_\ell]$ are ordered products of rational powers of $d_{\ell+1},\ldots,d_t$. Consequently, for each~$\ell$ one has
$$
d_\ell^{r_j}=d_\ell c_\ell
$$
for some
$$
c_\ell\in\{d_{\ell+1}^{a_{\ell+1}}\ldots d_t^{a_t}:a_{\ell+1},\ldots,a_t\in\Q\}.
$$
Going backwards from $\ell=t$ to $\ell=1$, exactly as above, shows that conjugation by $r_j$ maps $D$ onto $D$. Thus $r_j$ normalizes $D$. Since the normalizer of $D$ is rational, every rational power $r_j^q$ also normalizes $D$. Every element of $R$ has an expression $r_1^{a_1}\ldots r_s^{a_s}$ with rational exponents, so $D\trianglelefteq R$.

Every initial commutator $[r_j,r_k]$ belongs to $D$. Hence the images of $r_1,\ldots,r_s$ commute in $R/D$. Their rational powers generate $R/D$, so $R/D$ is abelian. Therefore $[R,R]\leq D$. Since $D$ is rational in $R$, it follows that $D_R=\Isol_R([R,R])\leq D$.

For the reverse inclusion, first note that $D_R$ is normal in $R$. Indeed, if $x^m\in[R,R]$ and $g\in R$, then $(x^g)^m=(x^m)^g\in[R,R]$, and hence $x^g\in D_R$. Every element in the initial finite list belongs to $D_R$. Suppose that all rational rows inserted so far belong to $D_R$ and that an element $x\in D_R$ is processed. Since the computation of $\rho_{B_D}(x)$ uses only rational powers of rational rows in $D_R$, one has $\rho_{B_D}(x)\in D_R$. If this element is non-trivial, its normalized root belongs to $D_R$, because $D_R$ is rational. Every commutator subsequently placed in the finite list belongs to $D_R$: commutators with the $r_j$ belong to $D_R$ by normality, and commutators between rational rows already in $D_R$ belong to $D_R$ because it is a subgroup. Thus every final rational row $d_\ell$ belongs to $D_R$, and hence $D\leq D_R$. Therefore $D=D_R$.

It remains to compute a basis of $R/D_R$ and the matrix of $\pi_R$. Start with the finite rational row description $d_1,\ldots,d_t$ of $D_R$, ordered by increasing pivot index, and denote this ordered tuple by $B$. Put $r_1,\ldots,r_s$ in a finite list. Process this finite list by the procedure of \textup{(i)}: if $x$ has non-trivial value $v=\rho_B(x)$, insert the rational row $v^{1/\xi_{\nu(v)}(v)}$ into $B$, put $x$ back into the finite list, and add the commutators of the new rational row with every other rational row currently in $B$.

All rational rows inserted in this procedure belong to $R$. Indeed, this is true initially for the $d_\ell$, and whenever a new rational row is produced, it is obtained from an element of $R$ by multiplication and rational powers of elements of $R$; the rationality of $R$ therefore keeps it inside $R$. Moreover, every commutator placed in the finite list belongs to $[R,R]\leq D_R$. The same pivot argument and descending induction as in \textup{(i)} therefore show, at termination, that the final ordered tuple of rational rows is a finite rational row description of a rational subgroup of $R$.

Let $c_1,\ldots,c_d$ be the rational rows inserted in addition to $d_1,\ldots,d_t$, numbered in increasing order of their pivot indices. Whenever an original rational row $r_j$ is removed from the list without being reinserted, it has value $1$ under the reduction map associated with the tuple $B$ present at that stage, and hence belongs to the rational subgroup described by that tuple. This subgroup is contained in the rational subgroup described by the final tuple. Thus every $r_j$ belongs to the rational subgroup described by the final rational rows. Since $r_1,\ldots,r_s$ is a finite rational row description of $R$, this subgroup contains $R$. The reverse inclusion was proved above. Hence the final ordered tuple obtained by combining the $d_\ell$ and the $c_\nu$ in increasing order of pivot index is a finite rational row description of $R$.

The cosets $c_1D_R,\ldots,c_dD_R$ form a basis of the $\Q$-vector space $R/D_R$. They span because every element of $R$ has an expression in the combined finite rational row description, and every $d_\ell$ maps to zero in the quotient. For linear independence, suppose that $c_1^{a_1}\ldots c_d^{a_d}\in D_R$. Express this element using $d_1,\ldots,d_t$. We then have two expressions of the same element with respect to the combined finite rational row description of $R$: in one, every exponent belonging to a rational row $d_\ell$ is zero, and in the other every exponent belonging to a rational row $c_\nu$ is zero. Uniqueness gives $a_1=\ldots=a_d=0$.

For each $j=1,\ldots,s$, compute the unique expression of $r_j$ with respect to the final finite rational row description of $R$. Let $e_1,\ldots,e_{t+d}$ denote the tuple obtained by combining $d_1,\ldots,d_t$ and $c_1,\ldots,c_d$ in increasing order of pivot index. Thus there are unique rational numbers $\beta_{j,1},\ldots,\beta_{j,t+d}$ such that

$$
r_j=e_1^{\beta_{j,1}}\ldots e_{t+d}^{\beta_{j,t+d}}.
$$

For $\nu=1,\ldots,d$, let $\lambda_{j,\nu}$ be the exponent $\beta_{j,k}$ corresponding to the rational row $e_k=c_\nu$. Since $d_1,\ldots,d_t$ belong to $D_R$, their images under $\pi_R:R\to R/D_R$ are trivial. Therefore

$$
\pi_R(r_j)=\sum_{\nu=1}^d\lambda_{j,\nu}(c_\nu D_R).
$$

Hence $(\lambda_{j,1},\ldots,\lambda_{j,d})^{\mathsf T}$ is the coordinate vector of $\pi_R(r_j)$ with respect to the computed basis $c_1D_R,\ldots,c_dD_R$ of $R/D_R$.

The coordinates of an element of $R$ with respect to the given finite rational row description $r_1,\ldots,r_s$ are its exponents in the unique expression $r_1^{q_1}\ldots r_s^{q_s}$. Thus the image of the $j$-th coordinate element $r_j$ is represented by the column vector $(\lambda_{j,1},\ldots,\lambda_{j,d})^{\mathsf T}$. Taking these vectors as columns therefore gives the matrix of $\pi_R$ with respect to the coordinates determined by $r_1,\ldots,r_s$ and the basis $c_1D_R,\ldots,c_dD_R$ of $R/D_R$.

\smallskip

For \textup{(iii)}, compute from the given coordinates a basis $\bar w_1,\ldots,\bar w_t$ of $W$. Write $\bar w_j=\sum_{\nu=1}^d\mu_{j,\nu}(c_\nu D_R)$, with $\mu_{j,\nu}\in\Q$, and compute $w_j=c_1^{\mu_{j,1}}\ldots c_d^{\mu_{j,d}}$. Then $\pi_R(w_j)=\bar w_j$. Put $L=\langle D_R,w_1,\ldots,w_t\rangle$. We claim that
\begin{equation}
\pi_R^{-1}(W)=\Isol_R(L).
\label{eq:inverse-image-rational}
\end{equation}
If $x\in\pi_R^{-1}(W)$, write $\pi_R(x)=\sum_j a_j\bar w_j$ with $a_j\in\Q$, and choose $m\geq1$ such that $ma_j\in\mathbb Z$ for every $j$. Then $\pi_R(x^m)$ is an integral linear combination of $\bar w_1,\ldots,\bar w_t$. Hence there are integers $n_1,\ldots,n_t$ such that $\pi_R(x^m)=\pi_R(w_1^{n_1}\ldots w_t^{n_t})$. Therefore $x^m(w_1^{n_1}\ldots w_t^{n_t})^{-1}\in D_R$, so $x^m\in L$ and $x\in\Isol_R(L)$.

Conversely, $\pi_R^{-1}(W)$ is rational in $R$. Indeed, if $x^m\in\pi_R^{-1}(W)$, then $m\pi_R(x)=\pi_R(x^m)\in W$. Since $W$ is a $\Q$-subspace, $\pi_R(x)\in W$. Thus $x\in\pi_R^{-1}(W)$. Since $\pi_R^{-1}(W)$ contains $D_R,w_1,\ldots,w_t$, it contains $\Isol_R(L)$. This proves \eqref{eq:inverse-image-rational}.

Finally, put $L_0=\langle d_1,\ldots,d_t,w_1,\ldots,w_t\rangle$. Since $d_1,\ldots,d_t$ is a finite rational row description of $D_R$, one has $\Isol_R(L_0)=\Isol_R(L)$. Moreover, $R$ is rational in $(N_0)_{\Q}$. Hence, if $x\in(N_0)_{\Q}$ and $x^m\in L_0\leq R$ for some $m\geq1$, then $x\in R$, and therefore $\Isol_R(L_0)=\Isol_{(N_0)_{\Q}}(L_0)$. Applying \textup{(i)} to the coordinate vectors of $d_1,\ldots,d_t,w_1,\ldots,w_t$ computes a finite rational row description of $\pi_R^{-1}(W)$.
\end{proof}

We now apply these row operations to the kernel and image problem announced at the beginning of the section.

\begin{lemma}\label{lem:rationalkernel}
Let $Y$ be a recursively enumerable torsion-free nilpotent minimax subgroup of $N_0$, and suppose that a finite rational row description of $Y_{\Q}$ is given. Let $f:Y\longrightarrow A$ be a homomorphism to an abelian minimax group $A$. Assume that, given the coordinate vector of an element $y\in Y$ with respect to $u_{0,1},\ldots,u_{0,h_0}$, one can compute $f(y)$ and the coordinates of $f(y)\otimes1$ with respect to a fixed basis of $A_{\Q}$.

Then one can compute a finite rational row description of $(\Ker f)_{\Q}\leq Y_{\Q}$ and a basis of the $\Q$-vector space $f(Y)_{\Q}\leq A_{\Q}$. Moreover, given an element $a\in f(Y)_{\Q}$ by its coordinates with respect to the fixed basis of $A_{\Q}$, one can compute the coordinate vector, with respect to $u_{0,1},\ldots,u_{0,h_0}$, of an element $\widetilde a\in Y_{\Q}$ whose image in $A_{\Q}$ is $a$.
\end{lemma}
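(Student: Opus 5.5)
The plan is to linearize $f$ through the abelianization of $Y_{\Q}$ and then pull the rational kernel back with Lemma~\ref{lem:rationalrow}. Put $R=Y_{\Q}$, given by its finite rational row description $b_1,\ldots,b_s$. Since $A$ is abelian, $f$ extends uniquely to a homomorphism $f_{\Q}:R\to A_{\Q}$. Indeed, every $x\in R$ has a power $x^m\in Y$, and we set $f_{\Q}(x)=\frac1m f(x^m)\otimes1$; uniqueness of roots in $A_{\Q}$ makes this well defined and a homomorphism. This extension kills $[R,R]$, hence kills $D_R=\Isol_R([R,R])$, because $A_{\Q}$ is torsion-free. So $f_{\Q}$ factors through the $\Q$-linear map $\bar f:R/D_R\to A_{\Q}$. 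We also have $(\Ker f)_{\Q}=\Ker f_{\Q}$: an element $x$ lies in $\Ker f_{\Q}$ exactly when some power $x^m\in Y$ satisfies $f(x^m)\otimes1=0$, that is, $f(x^m)$ is torsion in $A$. Multiplying $m$ by the order of that torsion element then gives $x^{mk}\in\Ker f$. The step that needs care is that $f(Y)$ may contain torsion, but this argument handles it. Moreover $f(Y)_{\Q}=\im f_{\Q}=\im\bar f$.

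First apply Lemma~\ref{lem:rationalrow}(ii) to $b_1,\ldots,b_s$. This gives a basis $c_1D_R,\ldots,c_dD_R$ of $R/D_R$ and the matrix of $\pi_R$ in the row coordinates. Next compute the matrix of $\bar f$. For each basis element $c_\nu$, whose coordinate vector is known, we need an integer $m_\nu\ge1$ with $c_\nu^{m_\nu}\in Y$, together with its coordinates. The plan is to search $m=1,2,\ldots$ using the enumeration of $Y$ and equality of coordinate vectors: enumerate elements of $Y$, compute their coordinates via Lemma~\ref{lem:wordcoords}, and compare them with the coordinates of $c_\nu^m$ computed by the power polynomials $R$. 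The search terminates because $c_\nu\in\Isol(Y)$. The hypothesis then gives the coordinates of $f(c_\nu^{m_\nu})\otimes1$ in the fixed basis of $A_{\Q}$, and dividing by $m_\nu$ gives the $\nu$-th column of the matrix $M$ of $\bar f$.

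Gaussian elimination over $\Q$ now gives a basis of $\Ker M=W\le R/D_R$ and a basis of the column space of $M$, which is $f(Y)_{\Q}$. Lemma~\ref{lem:rationalrow}(iii) applied to $W$ gives a finite rational row description of $\pi_R^{-1}(W)=\Ker f_{\Q}=(\Ker f)_{\Q}$. For the lifting statement, given $a\in f(Y)_{\Q}$, solve $M\lambda=a$ over $\Q$ and output $\widetilde a=c_1^{\lambda_1}\cdots c_d^{\lambda_d}$. Its coordinates are computed with the polynomials $P$ and $R$ of Lemma~\ref{lem:coordinates}, and $f_{\Q}(\widetilde a)=\sum_\nu\lambda_\nu\bar f(c_\nu D_R)=a$.

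The main obstacle is the effective evaluation of $f_{\Q}$ on the elements $c_\nu$, which lie in $Y_{\Q}$ but usually not in $Y$. It is handled by the terminating search for a power lying in $Y$, which relies on recursive enumerability of $Y$. If enumerated elements of $Y$ are given as words rather than coordinate vectors, we first convert them to coordinates with Lemma~\ref{lem:wordcoords}.
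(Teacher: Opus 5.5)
Your proposal is correct and is essentially the paper's proof: extend $f$ to $f_{\Q}$, show $\Ker f_{\Q}=(\Ker f)_{\Q}$ and $\im f_{\Q}=f(Y)_{\Q}$, factor through $Y_{\Q}/\Isol_{Y_{\Q}}([Y_{\Q},Y_{\Q}])$ using Lemma~\ref{lem:rationalrow}\textup{(ii)}, apply rational Gaussian elimination, and pull the kernel back with Lemma~\ref{lem:rationalrow}\textup{(iii)}. The only variation is in how you obtain the matrix of $\bar f$. The paper enumerates $Y$ and greedily selects $y_1,\ldots,y_d\in Y$ whose images form a basis of the quotient, lifting via $y_1^{q_1}\cdots y_d^{q_d}$; you instead keep the computed basis $c_\nu D_R$ and search for a power $c_\nu^{m_\nu}\in Y$, which is fine provided the search is dovetailed over pairs $(m,y)$, and it has the small advantage that $\Ker M$ is already expressed in the basis that Lemma~\ref{lem:rationalrow}\textup{(iii)} takes as input.
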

\begin{proof}
The homomorphism
$y\mapsto f(y)\otimes1$ from $Y$ to $A_{\Q}$ extends uniquely to a homomorphism $f_{\Q}:Y_{\Q}\longrightarrow A_{\Q}$. We first identify its kernel. Under the fixed embedding of the rational completion of $\Ker f$ in $Y_{\Q}$, one has
$$
(\Ker f)_{\Q}
=
\Isol_{Y_{\Q}}(\Ker f).
$$
Since $A_{\Q}$ is torsion-free, $\Ker f_{\Q}$ is isolated in $Y_{\Q}$ and contains $\Ker f$.  Hence $(\Ker f)_{\Q}\leq\Ker f_{\Q}$. Conversely, let $x\in\Ker f_{\Q}$. Since $Y_{\Q}=\Isol_{Y_{\Q}}(Y)$, there is $m\geq1$ such that $x^m\in Y$. Then $$ f(x^m)\otimes1 = f_{\Q}(x^m) = m f_{\Q}(x) = 0. $$
Thus $f(x^m)$ is a torsion element of $A$.  Hence there is $n\geq1$ such that $f(x^{mn})=0$, so $x^{mn}\in\Ker f$.  Therefore $x\in(\Ker f)_{\Q}$.  We have proved
\begin{equation}
\Ker f_{\Q}=(\Ker f)_{\Q}.
\label{eq:4.6}
\end{equation}

We now claim that $\im f_{\Q}=f(Y)_{\Q}$. The left-to-right inclusion is immediate, as $\im f_{\Q}$ is a $\Q$-subspace of $A_{\Q}$ containing $f(Y)\otimes1$. Conversely, if $x\in Y_{\Q}$, choose $m\geq1$ such that $x^m\in Y$.  Then
$$
f_{\Q}(x)=\frac1m\bigl(f(x^m)\otimes1\bigr),
$$
so $f_{\Q}(x)$ belongs to the $\Q$-span of $f(Y)\otimes1$.

Put now $D_Y=\Isol_{Y_{\Q}}([Y_{\Q},Y_{\Q}])$. Since $A_{\Q}$ is abelian, $[Y_{\Q},Y_{\Q}]\leq\Ker f_{\Q}$.  The subgroup $\Ker f_{\Q}$ is isolated, and therefore $D_Y\leq\Ker f_{\Q}$. Consequently $f_{\Q}$ induces a unique $\Q$-linear map
$$
\overline f_{\Q}:Y_{\Q}/D_Y\longrightarrow A_{\Q}
$$
such that
$f_{\Q}=\overline f_{\Q}\circ\pi_Y$, where
$\pi_Y:Y_{\Q}\to Y_{\Q}/D_Y$ is canonical epimorphism.  By~\eqref{eq:4.6},
\begin{equation}
(\Ker f)_{\Q}
=
\pi_Y^{-1}(\Ker\overline f_{\Q})\qquad\text{ and }
\qquad
f(Y)_{\Q}
=
\operatorname{Im}(\overline f_{\Q}).
\label{eq:4.7}
\end{equation}

Apply Lemma~\ref{lem:rationalrow}\textup{(ii)} to the given finite rational row description of $Y_{\Q}$.  This computes a finite rational row description of $D_Y$, a basis of the finite-dimensional $\Q$-vector space $V=Y_{\Q}/D_Y,$ and the matrix of $\pi_Y$ with respect to the coordinates determined by the given finite rational row description of~$Y_{\Q}$ and the computed basis of $V$. Clearly, the subset $\pi_Y(Y)$ spans $V$. 

Let $d=\dim_{\Q}V$.  Enumerate the elements of $Y$.  Maintain a list $y_1,\ldots,y_k$, initially empty.  Whenever a new element $y\in Y$ is enumerated, compute the coordinates of $\pi_Y(y)$ in the computed basis of $V$.  If $\pi_Y(y)$ does not belong to the $\Q$-span of $\pi_Y(y_1),\ldots,\pi_Y(y_k)$, append $y$ to the list; otherwise discard it.  Stop when $k=d$. The procedure terminates because $\pi_Y(Y)$ spans $V$. At termination, $\pi_Y(y_1),\ldots,\pi_Y(y_d)$ is a basis of $V$.

For each $j$, the coordinates of $f(y_j)\otimes1$ in the fixed basis of $A_{\Q}$ are computable by hypothesis. Since
$$
\overline f_{\Q}(\pi_Y(y_j))=f(y_j)\otimes1,
\qquad
1\leq j\leq d,
$$
these coordinate vectors determine the matrix of $\overline f_{\Q}$ with respect to the basis $\pi_Y(y_1),\ldots,\pi_Y(y_d)$ of $V$ and the fixed basis of $A_{\Q}$.

Rational Gaussian elimination applied to this matrix computes a basis of $\Ker\overline f_{\Q}$, expressed in the computed basis of $V$, and a basis of $\operatorname{Im}\overline f_{\Q}$, expressed in the fixed basis of $A_{\Q}$.  By \eqref{eq:4.7}, the latter is a basis of $f(Y)_{\Q}$.  Applying Lemma~\ref{lem:rationalrow}\textup{(iii)} to $\Ker\overline f_{\Q}\leq V$ computes a finite rational row description of
$$
\pi_Y^{-1}(\Ker\overline f_{\Q})
=
(\Ker f)_{\Q}.
$$

Finally, let $a\in f(Y)_{\Q}$ be given by its coordinates in the fixed basis of $A_{\Q}$.  Using the matrix of $\overline f_{\Q}$, rational Gaussian elimination computes $q_1,\ldots,q_d\in\Q$ such that
$$
a
=
\sum_{j=1}^d q_j\bigl(f(y_j)\otimes1\bigr).
$$
Set
$
\widetilde a
=
y_1^{q_1}\ldots y_d^{q_d}\in Y_{\Q}.
$
Then
$f_{\Q}(\widetilde a)=a$.  The coordinate vectors of the $y_j$
with respect to $u_{0,1},\ldots,u_{0,h_0}$ are computable, and the
fixed rational power and multiplication polynomials for $N_0$
therefore compute the coordinate vector of $\widetilde a$.
\end{proof}

In the lifting induction the current element will repeatedly be replaced by a conjugate. We therefore need the preceding membership and rational-row data to be transported effectively under conjugation in $H$.

\begin{lemma}\label{lem:transport}
Let $Y$ be a subgroup of $N_0$ for which the following data are given\textnormal:
\begin{enumerate}[label=\textup{(\roman*)}]
\item a membership algorithm for $Y$\textnormal;
\item a finite rational row description of $Y_{\Q}$.
\end{enumerate}
Given a word representing an element $h\in H$, one can compute a membership algorithm for $Y^h$ and a finite rational row description of $(Y^h)_{\Q}$.

\end{lemma}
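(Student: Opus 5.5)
The plan is to reduce both tasks to the single explicit automorphism of $(N_0)_{\Q}$ given by conjugation by $h$, computed in the fixed coordinates. Since $N_0\trianglelefteq H$, conjugation by $h$ restricts to an automorphism $\alpha_h$ of $N_0$, which extends uniquely to an automorphism of $(N_0)_{\Q}$ by uniqueness of the Mal'cev completion. First write $h=h_{i_1}^{\delta_1}\ldots h_{i_r}^{\delta_r}$. For each letter, the coordinate vectors $\operatorname{cr}(u_{0,\nu}^{\,h_i^{\pm1}})$ are part of the fixed data. Formula \eqref{eq:external-aut} then gives the action of each $h_i^{\pm1}$ on coordinate vectors as a rational polynomial map. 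Composing these maps along the word computes the polynomial map $A_h$ sending $\operatorname{cr}(x)$ to $\operatorname{cr}(x^h)$. This formula holds first for integral coordinates, that is on $N_0^{\ast}$. It extends to all of $(N_0)_{\Q}$ because both sides are polynomial in the coordinates and agree on the Zariski-dense set $\Z^{h_0}$. Alternatively, one can argue via rational powers: $\alpha_h$ commutes with rational powers, and every element of $(N_0)_{\Q}$ has a power in $N_0^{\ast}$.

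For the membership algorithm, take a word $w$ and note that $w\in Y^h$ if and only if $hwh^{-1}\in Y$. The membership algorithm for $Y^h$ therefore forms the word $hwh^{-1}$ and calls the given algorithm for $Y$. This is uniform in $h$, because the word for $h$ is simply concatenated. A word $w$ that is known to lie in $N_0$ may also be given by its coordinates; in that case we pass through Lemma~\ref{lem:wordcoords} if needed, or apply $A_{h^{-1}}$ directly to the coordinates.

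For the rational row description, let $b_1,\ldots,b_s$ be the given description of $Y_{\Q}$. Because $\alpha_h$ is an automorphism of the uniquely divisible group $(N_0)_{\Q}$, it maps $\Isol(Y)=Y_{\Q}$ onto $\Isol(Y^h)=(Y^h)_{\Q}$. Hence $(Y^h)_{\Q}=\Isol_{(N_0)_{\Q}}(\langle b_1^h,\ldots,b_s^h\rangle)$, since the $b_j$ generate $Y_{\Q}$ up to isolation. Compute the coordinate vectors $A_h(\operatorname{cr}(b_j))$, and apply Lemma~\ref{lem:rationalrow}\textup{(i)} to this finite set to obtain a finite rational row description of $(Y^h)_{\Q}$. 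The images $b_j^h$ themselves need not be normalized with increasing pivots, because conjugation does not preserve the pivot structure. This is why we re-run the row reduction rather than simply transporting the description.

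The only delicate point is justifying that $A_h$ correctly computes conjugation on all rational coordinates, and not just on $N_0^{\ast}$. I would handle it with the density or rational-power argument above: for $x\in(N_0)_{\Q}$ choose $m$ with $x^m\in N_0^{\ast}$, and use $(x^h)^m=(x^m)^h$ together with uniqueness of roots and the power polynomials $R$. Everything else is a direct call to earlier lemmas.
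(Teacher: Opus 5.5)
Your proposal is correct and follows the paper's proof essentially verbatim: membership is decided via $hwh^{-1}\in Y$, and the row description is obtained by computing the coordinates of the $b_j^h$ letter by letter through \eqref{eq:external-aut}, using that conjugation by $h$ commutes with rational closure, and then re-running Lemma~\ref{lem:rationalrow}\textup{(i)}. Of your two justifications for applying \eqref{eq:external-aut} to rational coordinates, rely on the rational-power one: an automorphism $\alpha$ of $(N_0)_{\Q}$ commutes with rational powers, so $\alpha(u_{0,1}^{a_1}\cdots u_{0,h_0}^{a_{h_0}})=\alpha(u_{0,1})^{a_1}\cdots\alpha(u_{0,h_0})^{a_{h_0}}$ for rational $a_i$; the Zariski-density version presupposes that the conjugation map is already polynomial in rational coordinates, which is exactly what is to be shown.
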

\begin{proof}
Write the given word representing $h$ as
$w\!=\!h_{i_1}^{\varepsilon_1}\ldots h_{i_t}^{\varepsilon_t}$, where~\hbox{$\varepsilon_k\in\{\pm1\}$.} If $v$ is a word in
$h_1^{\pm1},\ldots,h_m^{\pm1}$ representing an element $n\in H$,
then $wvw^{-1}$ represents $n^{h^{-1}}$. Since
$n\in Y^h$ if and only if $n^{h^{-1}}\in Y$, the given membership
algorithm for $Y$ decides membership in $Y^h$.

Let $r_1,\ldots,r_s$ be the given finite rational row description of $Y_{\Q}$. For every $i=1,\ldots,m$ and $\varepsilon\in\{\pm1\}$, the coordinate vectors of $u_{0,1}^{\,h_i^\varepsilon},\ldots,u_{0,h_0}^{\,h_i^\varepsilon}$ with respect to $u_{0,1},\ldots,u_{0,h_0}$ are part of the fixed data. Hence \eqref{eq:external-aut} computes, in this coordinate system, the automorphism of $(N_0)_{\Q}$ induced by conjugation by $h_i^\varepsilon$. Successively applying these maps to the letters of $w$ computes the coordinate vector of $r_\nu^h$ for every $\nu=1,\ldots,s$. Since conjugation by $h$ extends to an automorphism of $(N_0)_{\Q}$, it commutes with taking the rational closure of subgroups of $N_0$. Then
$$
\Isol_{(N_0)_{\Q}}\bigl(\langle r_1^h,\ldots,r_s^h\rangle\bigr)
=
\left(
\Isol_{(N_0)_{\Q}}\bigl(\langle r_1,\ldots,r_s\rangle\bigr)
\right)^h
=
(Y_{\Q})^h
=
(Y^h)_{\Q}.
$$
Lemma~\ref{lem:rationalrow}\textup{(i)}, applied to the computed
coordinate vectors of $r_1^h,\ldots,r_s^h$, therefore computes a
finite rational row description of $(Y^h)_{\Q}$.

\end{proof}

For each prime $p$ and each $i=1,\ldots,c$, we shall use the fixed additive presentations of $(V_i)_p$ and $(W_i)_p$ supplied by Lemma~\ref{lem:fixed}\textup{(ii)}, together with the corresponding lifting procedures to $(N_i)_p$ and $(J_i)_p$. These procedures compute the lifts in the $\Qp$-extension of the fixed coordinate system of $N_0$, as specified in that lemma.


We shall also need an explicit description of the products $N_iK_0$
after tensor $p$-completion.  Set $A_i=N_iK_0/N_{i+1}K_0$.
The natural map
$$n\in N_i\mapsto nN_{i+1}K_0\in A_i$$ is surjective and has kernel $N_i\cap N_{i+1}K_0=N_{i+1}(N_i\cap K_0)=N_{i+1}J_i$, where $J_i=N_i\cap K_0$ as in \eqref{eq:3.4}.  Hence
$$
A_i\simeq N_i/N_{i+1}J_i\simeq V_i/W_i.
$$
In particular, every $A_i$ is an abelian minimax group.

Tensor exactness, applied to $W_i\trianglelefteq V_i$, gives the canonical identification
$$
(A_i)_p\simeq(V_i/W_i)_p\simeq(V_i)_p/(W_i)_p.
$$
We use this identification to represent elements of $(A_i)_p$ by cosets of $(W_i)_p$ in~$(V_i)_p$.  Lemma~\ref{lem:fixed}\textup{(ii)} supplies finite additive presentations of $(V_i)_p$ and~$(W_i)_p$ in the common ambient space $(V_i)_{\Q}\otimes_{\Q}\Qp$, over the exact coefficient field~$F_{i,p}$ fixed there.  Lemma~\ref{lem:additive}\textup{(iv)} therefore makes equality of represented cosets decidable. In particular, if $b=nN_{i+1}K_0\in A_i$ with $n\in N_i$, then its image in~$(A_i)_p$ is represented by $(nN_{i+1})+(W_i)_p$, and Lemma~\ref{lem:fixed}\textup{(i)} computes the coordinates of $nN_{i+1}$ with respect to $s_{i,1},\ldots,s_{i,r_i}$.

Rationally, flatness of $\Q$ over $\Z$ gives the canonical identification
$$
(A_i)_{\Q}\simeq(V_i/W_i)_{\Q}\simeq
(V_i)_{\Q}/(W_i)_{\Q}.
$$
We use this identification to represent elements of $(A_i)_{\Q}$ in the quotient $(V_i)_{\Q}/(W_i)_{\Q}$.  Lemma~\ref{lem:fixed}\textup{(i)} supplies a basis of this quotient and the matrix of the natural quotient map $(V_i)_{\Q}\to(V_i)_{\Q}/(W_i)_{\Q}$, so the quotient coordinates of represented elements are computable.

\begin{lemma}\label{lem:padicproduct}
For every $i$ and every prime $p$,
\begin{equation}
(N_iK_0)_p=(N_i)_p(K_0)_p
\label{eq:4.21}
\end{equation}
inside $(N_0)_p$, and this subgroup is closed in $(N_0)_p$. Moreover, if $g=nk=n'k'$ with $n,n'\in(N_i)_p$ and $k,k'\in(K_0)_p$, then $n$ and $n'$ have the same image in~$(A_i)_p$, and hence, under the fixed identification
$$
(A_i)_p\simeq(V_i)_p/(W_i)_p,
$$
determine the same represented coset.
\end{lemma}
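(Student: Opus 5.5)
The plan is to work throughout inside $(N_0)_p$ and to use Lemma~\ref{lem:tensor} repeatedly. First, $N_iK_0$ is a subgroup of $N_0$ because $N_i\trianglelefteq H$. It is nilpotent minimax and contains $N_i$ and $K_0$. By Lemma~\ref{lem:tensor}\textup{(i)} we may regard $(N_i)_p$, $(K_0)_p$ and $(N_iK_0)_p$ as subgroups of $(N_0)_p$, with the first two contained in the third. Moreover $N_i\trianglelefteq N_iK_0$, so by Lemma~\ref{lem:tensor}\textup{(ii)} the subgroup $(N_i)_p$ is normal in $(N_iK_0)_p$, with quotient $(N_iK_0/N_i)_p$. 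Here $N_iK_0/N_i\simeq K_0/(K_0\cap N_i)=K_0/J_i$.

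\textbf{Step 1: the product formula \eqref{eq:4.21}.} Since $(N_i)_p$ is normal in $(N_iK_0)_p$, the product $(N_i)_p(K_0)_p$ is a subgroup contained in $(N_iK_0)_p$. For the reverse inclusion I will compare images in the quotient $(N_iK_0)_p/(N_i)_p\simeq(K_0/J_i)_p$. The composite
$$
K_0\to N_iK_0/N_i\simeq K_0/J_i
$$
is onto. By Lemma~\ref{lem:tensor}\textup{(ii)}, applied to $J_i\trianglelefteq K_0$, the induced map $(K_0)_p\to(K_0/J_i)_p$ is also onto. Naturality (uniqueness in Lemma~\ref{lem:tensor}\textup{(iii)}) identifies this map with the composite $(K_0)_p\hookrightarrow(N_iK_0)_p\to(N_iK_0)_p/(N_i)_p$. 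Hence $(K_0)_p$ maps onto the quotient, which gives equality.

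\textbf{Step 2: closedness.} Let $M=N_iK_0\le N_0$. I will use the description of tensor completions as closures inside the $\Qp$-Mal'cev completion. The closure of $M$ in the $\Qp$-completion of $M$ is $M_p$. The $\Qp$-completion of $M$ embeds in $(N_0)_{\Q,p}$ as a closed subgroup: it is the $\Qp$-points of a unipotent subgroup, given by rational polynomial equations in the coordinates. Therefore $M_p$ is the closure of $M$ in $(N_0)_{\Q,p}$, hence closed there, and so closed in $(N_0)_p$. Identifying the two embeddings of $M_p$ once more uses uniqueness in Lemma~\ref{lem:tensor}\textup{(iii)}. I expect this to be the main obstacle: it requires that the direct-limit topology on $(N_0)_p$ agrees with the subspace topology from $(N_0)_{\Q,p}$, and that the two realizations of $M_p$ coincide. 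I would first try to cite the KL20 closure description directly for $M$ and for $N_0$, then argue by compatibility of the canonical maps.

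\textbf{Step 3: well-definedness of the coset.} Suppose $nk=n'k'$. Then $n'^{-1}n=k'k^{-1}\in(N_i)_p\cap(K_0)_p$. It therefore suffices to show that this intersection maps to $0$ in $(A_i)_p$. Consider the map $(N_i)_p\to(A_i)_p$ induced by the composite $N_i\to N_iK_0\to A_i=N_iK_0/N_{i+1}K_0$. By naturality it factors through $(N_iK_0)_p\to(A_i)_p$, whose kernel is $(N_{i+1}K_0)_p$ by Lemma~\ref{lem:tensor}\textup{(ii)}. This kernel contains $(K_0)_p$, so every element of $(N_i)_p\cap(K_0)_p$ maps to $0$.

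Finally, the identification $(A_i)_p\simeq(V_i)_p/(W_i)_p$ is induced by $N_i\to V_i\to V_i/W_i\simeq A_i$, and it is compatible with the map $(N_i)_p\to(A_i)_p$ by naturality. Hence $n$ and $n'$ determine the same represented coset.
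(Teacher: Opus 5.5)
Your proposal is correct. Steps 1 and 3 are essentially the paper's argument. In Step 1, $(K_0)_p$ maps onto $(K_0/J_i)_p\simeq(N_iK_0)_p/(N_i)_p$ by Lemma~\ref{lem:tensor}\textup{(ii)}. In Step 3, the extension of $N_iK_0\to A_i$ kills $(K_0)_p$. The paper writes this as $(q_i)_p(n)=(q_i)_p(g)=(q_i)_p(n')$; you write it via $n'^{-1}n\in(N_i)_p\cap(K_0)_p$, which amounts to the same thing.

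Step 2 takes a genuinely different route. The paper notes that $N_iK_0\hookrightarrow N_0$ induces an injective continuous homomorphism $(N_iK_0)_p\to(N_0)_p$. It then cites Kropholler--Lorensen (their Lemma~5.4(iv) with Lemma~4.2(i),(vii)) to conclude that the image is closed. You instead work inside $(N_0)_{\Q,p}$. There $M_{\Q,p}$ is the group of $\Qp$-points of a unipotent subgroup, hence closed, so $M_p=\overline{M}^{M_{\Q,p}}$ is the closure of $M$ in $(N_0)_{\Q,p}$.

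This works, and the obstacle you flag is milder than you fear. You do not need the direct-limit topology on $(N_0)_p$ to agree with the subspace topology, only that it is finer. Each $\widehat J_p$ embeds continuously, as the closure of $J$, into the Hausdorff group $(N_0)_{\Q,p}$. So, by the very definition of the direct-limit topology, the natural injection $\iota:(N_0)_p\to(N_0)_{\Q,p}$ is continuous. Hence the image of $M_p$, which is $\iota^{-1}(\overline M)$, is closed. The two realizations of $M_p$ coincide because both give continuous homomorphisms $M_p\to(N_0)_{\Q,p}$ that agree on the dense subgroup $M$.

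The closedness of $M_{\Q,p}$ can also be made explicit with the paper's own tools. Take a finite rational row description $b_1,\ldots,b_s$ of $M_{\Q}$. The polynomial reduction map $\rho_B$ of Remark~\ref{rem:rational-reduction}, read over $\Qp$, cuts out $\{b_1^{\lambda_1}\cdots b_s^{\lambda_s}:\lambda\in\Qp^s\}$ as the zero set of $\rho_B(x)=1$.

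Your route is longer but self-contained. It also yields more: $(N_iK_0)_p$ is the $p$-adic closure of $N_iK_0$ in the fixed coordinates, and the argument works for any subgroup of $N_0$. The paper's citation is shorter but leaves the closedness mechanism hidden in external structural lemmas.
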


\begin{proof}
Put $P_i=N_iK_0$. The natural map $K_0\to P_i/N_i$ is surjective, with kernel $J_i=N_i\cap K_0$. Hence $K_0/J_i\simeq P_i/N_i$. By Lemma~\ref{lem:tensor}\textup{(ii)}, the induced map
$$
(K_0)_p\longrightarrow (P_i/N_i)_p\simeq(P_i)_p/(N_i)_p
$$
is surjective. Therefore every coset of $(N_i)_p$ in $(P_i)_p$ has a representative in $(K_0)_p$. Thus, for every $g\in(P_i)_p$, there exists $k\in(K_0)_p$ such that $g(N_i)_p=k(N_i)_p$, whence $gk^{-1}\in(N_i)_p$, and therefore $(P_i)_p\leq(N_i)_p(K_0)_p$. The reverse inclusion follows from Lemma~\ref{lem:tensor}\textup{(i)}, applied to $N_i,K_0\leq P_i$. Hence
$$
(N_iK_0)_p=(N_i)_p(K_0)_p.
$$

The groups $P_i$ and $N_0$ are nilpotent and minimax. The inclusion $P_i\hookrightarrow N_0$ extends to a continuous homomorphism $(P_i)_p\to(N_0)_p$, which is injective by Lemma~\ref{lem:tensor}\textup{(i)}. By \cite[Lemma~5.4(iv)]{KL20} together with \cite[Lemma~4.2(i),(vii)]{KL20}, its image is closed. Thus $(N_iK_0)_p=(N_i)_p(K_0)_p$ is closed in $(N_0)_p$.

Finally, let $q_i:P_i\to A_i=P_i/N_{i+1}K_0$ be the quotient map. Its extension $(q_i)_p:(P_i)_p\to(A_i)_p$ is trivial on $(K_0)_p$, since $q_i$ is trivial on $K_0$. Hence, if $g=nk=n'k'$ with $n,n'\in(N_i)_p$ and $k,k'\in(K_0)_p$, then
$$
(q_i)_p(n)=(q_i)_p(g)=(q_i)_p(n').
$$
Thus $n$ and $n'$ have the same image in $(A_i)_p$, and therefore determine the same represented coset under the fixed identification $(A_i)_p\simeq(V_i)_p/(W_i)_p$.
\end{proof}

We next record the only additional $p$-adic input needed in the lifting argument.  

\begin{lemma}\label{lem:fixedcharts}
Set
$$
\mathcal Y=\{N_0,N_1,\ldots,N_{c+1},K_0\}.
$$
For every $Y\in\mathcal Y$ there is a finite set of primes $\Sigma_Y$ with the following property. Given a prime $p$, one can construct non-negative integers $r=r(Y,p)$ and $s=s(Y,p)$ and elements
$$
r_{1,p},\ldots,r_{r,p},s_{1,p},\ldots,s_{s,p}\in Y_p,
$$
whose coordinate vectors with respect to the fixed $\Qp$-extension of the coordinate system $u_{0,1},\ldots,u_{0,h_0}$ of $N_0$ can be approximated effectively by rational vectors to arbitrary $p$-adic precision, with a computable modulus, such that
\begin{equation}
\Theta_{Y,p}:\Qp^r\times\Zp^s\longrightarrow Y_p,\qquad
(\lambda,z)\longmapsto r_{1,p}^{\lambda_1}\ldots r_{r,p}^{\lambda_r}s_{1,p}^{z_1}\ldots s_{s,p}^{z_s}
\label{eq:fixedchart}
\end{equation}
is surjective. For every $\nu$ and every $\lambda\in\Qp$, the element $r_{\nu,p}^{\lambda}$ belongs to $Y_p$.

If $p\notin\Sigma_Y$, then $r=0$ and the coordinate vectors of the fixed elements $s_{1,p},\ldots,s_{s,p}$ are rational. If $p\in\Sigma_Y$, the finitely many non-rational coordinates of the fixed elements $r_{1,p},\ldots,r_{r,p},s_{1,p},\ldots,s_{s,p}$ are included among the fixed coefficient data of {\rm Convention~\ref{def:fixeddata}}; together with the effective rational approximation procedures above, they therefore belong to a finitely generated exact coefficient field in which equality, the field operations and the $p$-adic valuation are computable. For the finitely many groups $Y\in\mathcal Y$ these fields may, for each fixed $p$, be replaced by one common finitely generated exact field containing them through the effective embeddings fixed in Convention~\ref{def:fixeddata}. The coordinate vector of $\Theta_{Y,p}(\lambda,z)$ is a polynomial function of the variables $\lambda_\nu,z_\mu$, with coefficients in this exact field.
\end{lemma}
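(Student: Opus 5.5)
The plan is to build the chart for $Y_p$ layer by layer along a central series of $Y$ with torsion-free abelian factors, using the factor lifts of Lemma~\ref{lem:fixed}\textup{(ii)} and Lemma~\ref{lem:factorlift}. I treat $Y=N_i$ first. The series $N_i\geq N_{i+1}\geq\ldots\geq N_{c+1}=1$ has factors $V_i,\ldots,V_c$. Lemma~\ref{lem:tensor}\textup{(i),(ii)} gives $(N_j)_p/(N_{j+1})_p\simeq(V_j)_p$, so every element of $(N_i)_p$ is an ordered product $\widetilde v_i\widetilde v_{i+1}\cdots\widetilde v_c$ with $\widetilde v_j\in(N_j)_p$ lifting an element of $(V_j)_p$.

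By Lemma~\ref{lem:fixed}\textup{(ii)}, $(V_j)_p=\operatorname{span}_{\Qp}\{d^V_\nu\}+\sum_k\Zp\ell^V_k$. For the generators I take the lifts produced in the proof of Lemma~\ref{lem:factorlift}. For the lattice part these are the fixed elements $u_{0,\cdot}$ with rational coordinates, taken as the $s_{\mu,p}$. For each divisible direction $d^V_\nu=v_{j,p,\nu}$, that proof supplies an element $w$ with $\pi_p(w)=v_{j,p,\nu}$, namely the case $s_j=0$: $w=\widehat b^{\,q^{N}}\sigma_L(e)$. This $w$ is a product of a rational element and $\Zp$-powers of lattice lifts, with exponents $e_k$ that are effectively approximable by Lemma~\ref{lem:effectivechains}; I take it as an $r_{\nu,p}$. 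Since $\lambda v\in(V_j)_p$ for every $\lambda\in\Qp$ and the lift is built multiplicatively, $r_{\nu,p}^\lambda$ lies in the closure of $N_j$ in the $\Qp$-completion, which is $(N_j)_p$. Ordering all these generators level by level, a descending induction on $j$ shows surjectivity of $\Theta_{N_i,p}$. At level $j$, the product of the level-$j$ powers maps onto any prescribed element of $(V_j)_p$ modulo $(N_{j+1})_p$, because the coordinates of different layers are additive modulo the next layer (Remark~\ref{rem:coordinate-tails}). The remaining factor in $(N_{j+1})_p$ is then handled by induction.

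I set $\Sigma_{N_i}=\bigcup_{j\geq i}\spec(V_j)$. Outside this set there are no divisible directions, so $r=0$ and all generators are the rational $u_{0,\cdot}$. Polynomiality of the coordinates of $\Theta$ follows from the polynomials $P$ and $R$ evaluated over $\Qp$. The non-rational coefficients are the $e_k$, which lie in $F_{j,p}$, and I adjoin them to the Convention~\ref{def:fixeddata} data.

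For $Y=K_0$ I use the same procedure with the series $J_i=K_0\cap N_i$. Its factors $J_i/J_{i+1}\simeq W_i$ embed in $V_i$, since $J_i\cap N_{i+1}=J_{i+1}$. I use the lifts $\widetilde w\in(J_i)_p$ of Lemma~\ref{lem:fixed}\textup{(ii)}, converted to $N_0$-coordinates, and set $\Sigma_{K_0}=\bigcup_i\spec(W_i)$.

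The main obstacle is verifying that $\lambda\mapsto r_{\nu,p}^\lambda$ stays inside $Y_p$ for all $\lambda\in\Qp$, and not merely modulo the next layer. For the first nontrivial layer this is not automatic. If it fails, I would instead use $\Qp$-powers only modulo the next layer and correct the error inside $(N_{j+1})_p$. Since the error is a polynomial in $\lambda$ landing in $(N_{j+1})_p$, it is absorbed by the lower generators, and surjectivity is unaffected.
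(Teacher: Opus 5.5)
The gap is at the point you flag as the main obstacle, and neither of your two arguments closes it. The lift $w=\widehat b^{\,p^{N}}\sigma_L(e)$ from Lemma~\ref{lem:factorlift} controls only the image of $w$ in $(V_j)_p$. Its component in $(N_{j+1})_{\Q,p}$ comes from the arbitrary fixed word $\widehat b$, so ``the lift is built multiplicatively'' does not give $w^\lambda\in(N_j)_p$. Concretely, let $N_0=\Z[1/p]\times\Gamma$, where $\Gamma$ is the integral Heisenberg group and $\zeta$ generates $Z(\Gamma)=N_2$. Then $(N_0)_p=\Qp\times\Gamma_p$, with $\Gamma_p$ the unitriangular group over $\Zp$. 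Suppose the fixed word lifting $b_{p,1,1}=(1/p,0,0)\in V_1$ represents $(1/p,\zeta)$, and the chain is $b_{p,1,n}=(p^{-n},0,0)$. Your construction then gives $r_{1,p}=w=(1,\zeta^{p})$, and $w^\lambda=(\lambda,\zeta^{p\lambda})\notin(N_0)_p$ as soon as $v_p(\lambda)\leq-2$. In fact the only lifts of $(1,0,0)$ with all $\Qp$-powers in $(N_0)_p$ are those with trivial central component.

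Your fallback fails too. Suppose $w^\lambda\notin(N_j)_p$ and $y\in(N_j)_p$ is any lift of $\lambda v$. Then $y^{-1}w^\lambda$ lies in $(N_{j+1})_{\Q,p}$ but \emph{not} in $(N_{j+1})_p$, since otherwise $w^\lambda$ would already lie in $(N_j)_p$. So this error cannot be absorbed by the lower generators, which parameterize only $(N_{j+1})_p$. Using $\lambda$-dependent corrected elements instead abandons the product form \eqref{eq:fixedchart} and presupposes the very lift that is missing. This is not cosmetic: in Theorem~\ref{thm:lifting} the charts replace the existential quantifiers over $(N_0)_p$, $(N_{i+1})_p$ and $(K_0)_p$, so a chart leaving $Y_p$ would create spurious solutions.

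The missing idea is to construct, for each $\Qp$-generator, a lift lying in the finite residual $R(Q_p)$. This subgroup is closed and radicable, so all $\Qp$-powers of such a lift stay in $Q_p$. The paper builds it by induction on the length of the quotients $N_a/N_b$ and $J_a/J_b$, splitting off the bottom central factor $Z$:
\begin{enumerate}[label=\textup{(\arabic*)}]
\item lift $\overline r^{\,p^{-n}}$ to $\widetilde y_n\in Q_p$;
\item correct each new lift by $-p^{-1}$ times the divisible part of $y_{n+1}^{p}\widetilde y_n^{-1}\in Z_p$, so that $\widetilde y_{n+1}^{p}\widetilde y_n^{-1}$ lies in the lattice part of $Z_p$;
\item set $r=\lim_n\widetilde y_n^{\,p^n}$.
\end{enumerate}
The lattice condition gives an effective Cauchy modulus, and $r$ lies in every finite-index subgroup of $Q_p$. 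Such an $r$ generally has coordinates that are new $p$-adic limits, not elements of the factor fields $F_{j,p}$. For example, take $Q=\Z^2+\sum_n\Z\,p^{-n}(1,a_n)\leq\Q^2$ with $a_{n+1}\equiv a_n\pmod{p^n}$ and $a_n\to\alpha\in\Zp\setminus\Q$, and $Z=0\oplus\Z$. Then $Q_p=\Zp^2+\Qp(1,\alpha)$, and $(1,\alpha)$ is the only lift of $1\in(Q/Z)_p=\Qp$ whose $\Qp$-multiples stay in $Q_p$, although both factors have rational data. So your claim that the only non-rational coefficients are the $e_k\in F_{j,p}$ must also be revised; these limit coordinates are what the paper adjoins to the data of Convention~\ref{def:fixeddata}. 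Because the correction has to be carried through every lower layer, your one-shot lift into $(N_j)_p$ must be replaced by this inductive construction. The rest of your outline then goes through as in the paper: the two series, the sets $\Sigma_Y$, surjectivity by lifting and correcting layer by layer, and polynomiality.
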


\begin{proof}
For $Y=N_i$ and $Y=K_0$ we use the fixed central series
$$
N_i\geq N_{i+1}\geq\ldots\geq N_{c+1}=\{1\},\qquad
K_0=J_1\geq J_2\geq\ldots\geq J_{c+1}=\{1\},
$$
respectively. The successive factors of the first series are $V_j=N_j/N_{j+1}$. For the second series, the natural map $x\in J_j\mapsto xN_{j+1}\in V_j$, has kernel $J_j\cap N_{j+1}=J_{j+1}$ and image $W_j$, and hence induces a canonical isomorphism $J_j/J_{j+1}\simeq W_j$. We use this as an effective identification. Lemma~\ref{lem:fixed}\textup{(ii)} supplies finite $p$-adic additive presentations of $(V_j)_p$ and $(W_j)_p$, together with effective procedures for lifting elements of $(V_j)_p$ to $(N_j)_p$ and elements of $(W_j)_p$ to $(J_j)_p$, in the fixed ambient coordinates of $(N_0)_p$.

We prove simultaneously that, for every $1\leq a\leq b\leq c+1$, each of the fixed quotients $N_a/N_b$ and $J_a/J_b$ has, after tensor $p$-completion, a surjective parameterization of the form \eqref{eq:fixedchart}, with the same conclusions concerning $\Qp$-powers, exact coordinates and coefficient fields. We argue by induction on $b-a$. If $a=b$, the quotient is trivial and we take $r=s=0$.

Suppose that $b=a+1$. The quotient is $V_a$ in the first series and is canonically identified with $W_a$ in the second. Lemma~\ref{lem:fixed}\textup{(ii)} gives its tensor $p$-completion in the form
$$
Q_p=\operatorname{span}_{\Qp}\{d_1,\ldots,d_r\}+\sum_{k=1}^s\Zp\ell_k.
$$
Taking $r_{\nu,p}=d_\nu$ and $s_{k,p}=\ell_k$ gives the required surjective parameterization. Since $Q_p$ is abelian, multiplication by scalars in $\Qp$ and $\Zp$ is precisely the corresponding power operation.

Assume now that $b-a\geq2$ and that the required parameterizations have been constructed for all the fixed quotients with fewer than $b-a$ non-trivial factors. Let $Q$ be either $N_a/N_b$ or $J_a/J_b$, and let $Z$ be the last non-trivial term of its induced central series. Thus $Z=N_{b-1}/N_b$ in the first case and $Z=J_{b-1}/J_b\simeq W_{b-1}$ in the second. In either case $Z$ is central in $Q$. Put $\overline Q=Q/Z$. Then $\overline Q$ is, respectively, $N_a/N_{b-1}$ or $J_a/J_{b-1}$, so it has one fewer non-trivial factor. Tensor exactness gives $Q_p/Z_p\simeq_{\pi_p}(\overline Q)_p$. The induction hypothesis gives a surjective parameterization of $(\overline Q)_p$. Using successively the lifting
procedures of Lemma~\ref{lem:fixed}\textup{(ii)} along the fixed central series, we compute in $Q_p$ a lift, with exact ambient coordinates, of each fixed element occurring in this parameterization.

Let $\overline s\in(\overline Q)_p$ be a fixed element which occurs in the parameterization supplied by the induction hypothesis in the form $\overline s^{\,z}$, with $z\in\Zp$, and choose a computed lift $s\in Q_p$. For every $z\in\Zp$, functoriality of tensor $p$-completion gives $\pi_p(s^z)=\overline s^{\,z}$. Thus, once a lift $s$ of $\overline s$ has been chosen, the elements $s^z$, $z\in\Zp$, provide lifts of all the elements $\overline s^{\,z}$ occurring in the parameterization.

Now let $\overline r\in(\overline Q)_p$ be one of the fixed elements occurring in the parameterization supplied by the induction hypothesis with an exponent ranging over $\Qp$. Thus $\overline r^{\,\lambda}\in(\overline Q)_p$ for every $\lambda\in\Qp$. We construct a lift $r\in Q_p$ of $\overline r$ such that $r^\lambda\in Q_p$ and $\pi_p(r^\lambda)=\overline r^{\,\lambda}$ for every $\lambda\in\Qp$.

Write the fixed additive presentation of the central factor as
$$
Z_p=D+L,\qquad
D=\operatorname{span}_{\Qp}\{d_1,\ldots,d_u\},\qquad
L=\sum_{k=1}^v\Zp\ell_k.
$$
Choose a lift $\widetilde y_0\in Q_p$ of $\overline r$, together with its exact coordinate vector. Suppose that, for some $n\geq0$, we have constructed a lift $\widetilde y_n\in Q_p$ of $\overline r^{\,p^{-n}}$, together with its exact coordinate vector. By the induction hypothesis, the exact coordinate vector of $\overline r^{\,p^{-(n+1)}}$ is computable by evaluating the parameterization of $(\overline Q)_p$ at $p^{-(n+1)}$ in the parameter corresponding to $\overline r$ and at zero in all the other parameters. Applying successively the lifting procedures of Lemma~\ref{lem:fixed}\textup{(ii)} to the factors between $Q$ and $Z$, we compute a lift $y_{n+1}\in Q_p$ of $\overline r^{\,p^{-(n+1)}}$, together with its exact coordinate vector.

The elements $y_{n+1}^{p}$ and $\widetilde y_n$ have the same image under $\pi_p$, since
$$
\pi_p(y_{n+1}^{p})=(\overline r^{\,p^{-(n+1)}})^p=\overline r^{\,p^{-n}}=\pi_p(\widetilde y_n).
$$
Hence $c_n:=y_{n+1}^{p}\widetilde y_n^{-1}$ belongs to $\operatorname{Ker}(\pi_p)=Z_p$. Since the exact coordinate vectors of $y_{n+1}$ and $\widetilde y_n$ are known, the fixed power, inversion and multiplication polynomials compute the exact coordinate vector of $c_n$. We now regard the abelian group $Z_p$ additively. Applying Lemma~\ref{lem:additive}\textup{(i)} to the exact coordinate vector of $c_n$, we compute coefficients $a_{n,1},\ldots,a_{n,u}$ in the fixed exact coefficient field and $z_{n,1},\ldots,z_{n,v}$ in its intersection with $\Zp$ such that
$$
c_n=\sum_{j=1}^u a_{n,j}d_j+\sum_{k=1}^v z_{n,k}\ell_k.
$$
Put $\delta_n:=\sum_{j=1}^u a_{n,j}d_j\in D$ and $\eta_n:=\sum_{k=1}^v z_{n,k}\ell_k\in L$. Thus $c_n=\delta_n+\eta_n$ in the additive notation of $Z_p$.

Since $D$ is a $\Qp$-vector space, we have $-p^{-1}\delta_n\in D$. The coefficients $a_{n,j}$ belong to the fixed exact coefficient field, which contains $\Q$. Hence the coefficients $-p^{-1}a_{n,j}$ are computable exactly, and therefore so is the element
$$
e_n:=-p^{-1}\delta_n\in D.
$$
We now set $\widetilde y_{n+1}=y_{n+1}e_n$, viewing $e_n$ as an element of $Z_p\leq Q_p$. Since $Z_p=\operatorname{Ker}(\pi_p)$, we have $\pi_p(e_n)=1$, and hence $\pi_p(\widetilde y_{n+1})=\pi_p(y_{n+1})=\overline r^{\,p^{-(n+1)}}$. Thus $\widetilde y_{n+1}$ is again a lift of $\overline r^{\,p^{-(n+1)}}$.

Since $e_n\in Z_p$ and $Z_p$ is central in $Q_p$, we have
$$
\widetilde y_{n+1}^{p}\widetilde y_n^{-1}=y_{n+1}^{p}e_n^{p}\widetilde y_n^{-1}=c_ne_n^{p}.
$$
In the additive notation of $Z_p$, the element $e_n^p$ corresponds to $pe_n=-\delta_n$. Since $c_n=\delta_n+\eta_n$, it follows that $c_ne_n^{p}=\eta_n\in L$. Therefore
$$
\widetilde y_{n+1}^{p}\widetilde y_n^{-1}=\eta_n\in L.
$$

For $n\geq0$, set $r_n=\widetilde y_n^{p^n}$. The preceding equality and the centrality of $\eta_n$ give $r_{n+1}r_n^{-1}=\eta_n^{p^n}$. Write the elements of $Z_p$ in the fixed coordinates of its ambient finite-dimensional $\Qp$-space. Since $L=\sum_{k=1}^v\Zp\ell_k$ and the coordinate vectors of the fixed generators $\ell_1,\ldots,\ell_v$ are known exactly, one can compute an integer $C$ such that every coordinate of every element of $L$ has $p$-adic valuation at least $C$. Hence every coordinate of $\eta_n^{p^n}$ has valuation at least $C+n$, and so for every integer $M$ one can compute an integer $N(M)$ such that, whenever $m>n\geq N(M)$, every coordinate of $r_mr_n^{-1}$ has $p$-adic valuation at least $M$.

Since $Q_p$ is closed in the corresponding rational $\Qp$-completion, $(r_n)$ converges to an element $r\in Q_p$. Moreover, the effective Cauchy estimate above computes the coordinate vector of $r$ to arbitrary prescribed $p$-adic precision. For every $n$, we have $\pi_p(\widetilde y_n)=\overline r^{\,p^{-n}}$. Hence, since $r_n=\widetilde y_n^{p^n}$,
$$
\pi_p(r_n)=\pi_p(\widetilde y_n)^{p^n}=\left(\overline r^{\,p^{-n}}\right)^{p^n}=\overline r.
$$
By continuity of $\pi_p$, $\pi_p(r)=\overline r$.

We next prove that $r^\lambda\in Q_p$ for every $\lambda\in\Qp$. By \cite[Lemma~5.4(iv) and Lemma~4.6]{KL20}, the finite residual $R(Q_p)$ is closed and radicable, and every finite-index subgroup of $Q_p$ is open. We first show that $r\in R(Q_p)$. Let $U\leq Q_p$ have finite index. Replacing $U$ by its normal core, we may assume that $U\trianglelefteq Q_p$. Since $U$ is open, the quotient map $Q_p\to Q_p/U$ is continuous. The finite group $Q_p/U$ is a $p$-group. Indeed, choose finitely many elements of $Q_p$ whose images generate $Q_p/U$. By the direct-limit definition of tensor $p$-completion, these elements all belong to the pro-$p$ completion of one finitely generated subgroup of $Q$. Their images in $Q_p/U$ therefore generate a finite quotient of a pro-$p$ group, and hence a finite $p$-group. If $p^e$ is the exponent of $Q_p/U$, then $r_n=\widetilde y_n^{p^n}\in U$ for every $n\geq e$. Since $U$ is closed and $r_n\to r$, we obtain $r\in U$. As this holds for every finite-index subgroup $U$ of $Q_p$, we conclude that $r\in R(Q_p)$.

Recall that $Q_p$ is identified with a subgroup of the rational $\Qp$-completion of $Q$. In this ambient group all rational powers are uniquely defined. Since $R(Q_p)$ is radicable, for every $q\in\Q$ the rational power $r^q$ belongs to $R(Q_p)$. Now let $\lambda\in\Qp$ and choose a sequence $(q_m)$ in $\Q$ with $q_m\to\lambda$. By the fixed power polynomials in the rational $\Qp$-completion, $r^{q_m}\to r^\lambda$. Since $R(Q_p)$ is closed and $r^{q_m}\in R(Q_p)$ for every $m$, it follows that $r^\lambda\in R(Q_p)\leq Q_p$. Moreover, for every $m$, $\pi_p(r^{q_m})=\overline r^{\,q_m}$, and continuity of $\pi_p$ and of the power operation gives
$$
\pi_p(r^\lambda)=\lim_{m\to\infty}\pi_p(r^{q_m})=\lim_{m\to\infty}\overline r^{\,q_m}=\overline r^{\,\lambda}.
$$
Thus $r^\lambda\in Q_p$ and $\pi_p(r^\lambda)=\overline r^{\,\lambda}$ for every $\lambda\in\Qp$.

To prove surjectivity, let $y\in Q_p$. Represent $\pi_p(y)$ by the parameterization of $(\overline Q)_p$, and replace each fixed element in that representation by the corresponding chosen lift in $Q_p$. Let $y_0\in Q_p$ be the resulting product. By construction, $\pi_p(y_0)=\pi_p(y)$, so $y_0^{-1}y\in\ker(\pi_p)=Z_p$. Represent $y_0^{-1}y$ by the fixed additive parameterization of $Z_p$. Combining the two sets of parameters gives a representation of $y$. Hence the resulting map $\Qp^r\times\Zp^s\to Q_p$ is surjective. This completes the simultaneous induction.

We now specify the finite sets $\Sigma_Y$. For each $Y\in\mathcal Y$, let $\Sigma_Y$ be the union of the spectra of the successive abelian factors in the fixed central series used above for $Y$. Explicitly,
$$
\Sigma_{N_0}=\Sigma_{N_1}=\bigcup_{j=1}^{c}\spec(V_j),\qquad
\Sigma_{N_i}=\bigcup_{j=i}^{c}\spec(V_j)\quad(2\leq i\leq c),
$$
$$
\Sigma_{N_{c+1}}=\varnothing,\qquad
\Sigma_{K_0}=\bigcup_{j=1}^{c}\spec(W_j).
$$
These sets are finite and are determined by the fixed data. Suppose first that $p\notin\Sigma_Y$. Then, for every factor in the fixed central series of $Y$, Lemma~\ref{lem:fixed}\textup{(ii)} gives its $p$-completion using only the fixed $\Zp$-generators, whose coordinate vectors are rational. Consequently no $\Qp$-parameter is introduced at any stage of the induction, so $r=0$. Moreover, all fixed elements occurring in the resulting parameterization may be chosen with rational coordinate vectors with respect to $u_{0,1},\ldots,u_{0,h_0}$.

Suppose now that $p\in\Sigma_Y$. The fixed elements in the additive presentations of the successive factors have coordinates in the exact coefficient fields supplied by Lemma~\ref{lem:fixed}\textup{(ii)}, and the remaining fixed lifts obtained from them by the lifting procedures have exact coordinates in the corresponding coefficient fields. For every additional element $r$ constructed above, the effective Cauchy modulus for $(r_n)$ gives, coordinate by coordinate, rational approximations to the coordinates of $r$ to arbitrary prescribed $p$-adic precision. Since $Y$ and $p$ are fixed and the central series has only finitely many factors, only finitely many such additional coordinates occur. We include these coordinates among the fixed coefficient data prescribed by Convention~\ref{def:fixeddata}; the construction of $(r_n)$ supplies the required effective rational approximation procedures. Lemma~\ref{lem:field} then supplies exact equality, field operations and $p$-adic valuation in the resulting finitely generated exact coefficient field. For each fixed $p$, the finitely many coefficient fields arising from the groups $Y\in\mathcal Y$ may be embedded effectively into one common finitely generated exact field, as required by Convention~\ref{def:fixeddata}.

Finally, all fixed elements in the resulting parameterization have exact coordinate vectors in the fixed coefficient field. Since the fixed multiplication and power polynomials have rational coefficients, the coordinate vector of
$$
r_{1,p}^{\lambda_1}\ldots r_{r,p}^{\lambda_r}s_{1,p}^{z_1}\ldots s_{s,p}^{z_s}
$$
is a polynomial function of $\lambda_1,\ldots,\lambda_r,z_1,\ldots,z_s$, with coefficients in that exact field.
\end{proof}

The parameterizations of Lemma~\ref{lem:fixedcharts} will reduce the remaining $p$-adic membership conditions to the existence of solutions of polynomial systems. We record the uniform decision procedure for these systems.

\begin{lemma}\label{lem:padicdecision}
Let $\Sigma$ be a fixed finite set of primes. For each $p\in\Sigma$, fix a finitely generated exact coefficient field $E_p\leq\Qp$ of the type specified in Convention~\ref{def:fixeddata}, and put $E_p=\Q$ for $p\notin\Sigma$.

There is a single terminating procedure, depending only on these fixed data, which, on input a prime $p$, polynomials
$$
f_1,\ldots,f_m\in E_p[X_1,\ldots,X_n]
$$
given by their coefficients in the fixed exact representation of $E_p$, and a subset $I\subseteq\{1,\ldots,n\}$, decides whether there exists $(x_1,\ldots,x_n)\in\Qp^n$ such that
$$
f_1(x_1,\ldots,x_n)=\ldots=f_m(x_1,\ldots,x_n)=0
$$
and $x_i\in\Zp$ for every $i\in I$.
\end{lemma}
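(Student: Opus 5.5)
The plan is to reduce the question to the first-order theory of $\Qp$ in Macintyre's language and use effective quantifier elimination. The parameters from $E_p$ are then handled by evaluating the resulting quantifier-free formula with the exact arithmetic of Lemma~\ref{lem:field}.

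First, given $p$, the $f_j$ and $I$, replace each coefficient $c_{j,\kappa}\in E_p$ by a fresh parameter variable $C_{j,\kappa}$. This gives the formula
$$
\varphi(C)\;:\;\exists X_1\ldots\exists X_n\Bigl(\textstyle\bigwedge_j f_j(C,X)=0\;\wedge\;\bigwedge_{i\in I}v(X_i)\geq0\Bigr).
$$
It is written in the Macintyre language: ring operations, constants for rationals, and the predicates $P_k(x)$ (``$x$ is a non-zero $k$-th power''). The valuation condition $v(X_i)\geq0$ is itself expressible there, uniformly in $p$; for example $x\in\Zp$ iff $1+p^{3}x^{4}\in P_2$ works for all $p$, including $p=2$. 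Next, apply Macintyre's quantifier elimination for $\Qp$. Its effective and uniform-in-$p$ form can be cited, or obtained by enumerating proofs from the recursively axiomatized complete theory of $p$-adically closed fields of $p$-rank one with residue field $\mathbf F_p$ (Ax--Kochen--Ershov), whose axioms are enumerable uniformly in $p$. Either way, this produces a quantifier-free formula $\psi(C)$ with $\Qp\models\forall C\,(\varphi(C)\leftrightarrow\psi(C))$. Finding $\psi$ only requires searching for a quantifier-free $\psi$ together with a proof of the equivalence, and completeness guarantees that the search terminates. Thus one procedure works for all $p$ simultaneously.

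It remains to evaluate $\psi$ at the actual coefficients $c\in E_p^{N}$. The atomic subformulas of $\psi$ have the form $g(c)=0$ or $P_k(g(c))$ with $g\in\Q[C]$. Since the field operations in $E_p$ are computable, $a:=g(c)\in E_p$ is computed exactly. Equality to zero is then decided by the exact arithmetic of Lemma~\ref{lem:field}; for $p\notin\Sigma$ everything is rational and this is trivial. For $P_k(a)$ with $a\neq0$, compute $r=v_p(a)$ by Lemma~\ref{lem:field} and write $a=p^{r}u$ with $u\in\Zp^{\times}$. By Hensel's lemma, $a$ is a $k$-th power in $\Qp$ if and only if $k\mid r$ and $u$ is a $k$-th power modulo $p^{2v_p(k)+1}$. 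So it suffices to compute a rational approximation $a_n$ with $v_p(a-a_n)\geq n$ for $n=r+2v_p(k)+1$, which Lemma~\ref{lem:field} provides, and then test $k$-th powers in the finite ring $\Z/p^{2v_p(k)+1}$. All atomic truth values are therefore computable, and hence so is the truth value of $\psi(c)$.

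The main obstacle is the first step: I need an \emph{effective} elimination that is \emph{uniform in $p$}, not just decidability of each $\operatorname{Th}(\Qp)$ separately. I would first try the proof-search argument above, which needs only the uniform recursive axiomatization and completeness of the theories $T_p$. If a citation is preferred, I would invoke the effective form of Macintyre's theorem, for instance via Denef's cell decomposition with $p$ as an input. A secondary point to check is that the uniform definition of $\Zp$ I use is correct for $p=2$; if not, I would switch to the Denef--Pas or Macintyre language with an explicit valuation predicate.
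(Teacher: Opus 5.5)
Your argument is correct, but it is organized differently from the paper's.

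The paper splits into two cases:
\begin{itemize}
\item for $p\notin\Sigma$, where all coefficients are rational, it invokes B\'elair's effective quantifier elimination for the common theory of all the fields $\Qp$, with $p$ as input;
\item for each of the finitely many $p\in\Sigma$, it runs Cohen's fixed-prime elimination procedure directly on polynomials with coefficients in $E_p$. It argues that every coefficient computation this procedure needs (equality, field operations, valuations, residues modulo $p^k$) is available in the exact representation of $E_p$.
\end{itemize}

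You instead replace the coefficients by free parameter variables and perform a single elimination over the prime field, uniformly in $p$. You justify this by proof search in the uniformly recursive Ax--Kochen--Ershov axiomatization of the complete theory of $\Qp$, which admits Macintyre's quantifier elimination. Only then do you evaluate the finitely many atoms $g(c)=0$ and $P_k(g(c))$ at the actual coefficients, using Lemma~\ref{lem:field} and the Hensel criterion modulo $p^{2v_p(k)+1}$.

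What your route buys is uniformity and a clean separation between logic and arithmetic. No case distinction on $\Sigma$ is needed. You never have to inspect the internals of an elimination algorithm to see that it tolerates transcendental coefficients, because the only arithmetic done in $E_p$ is the explicit atom evaluation, which you verify completely.

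The paper's route gives a concrete, citation-based algorithm rather than an unbounded proof search. However, it leaves the compatibility of Cohen's procedure with $E_p$-coefficients as an assertion rather than a verified step.

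On your secondary worry: your uniform definition $x\in\Zp\iff P_2(1+p^3x^4)$ is correct for every prime, including $p=2$.
\begin{itemize}
\item If $v_p(x)\geq0$, then $1+p^3x^4$ is congruent to $1$ modulo $p^3$, hence a square; for $p=2$ this uses that units congruent to $1$ modulo $8$ are squares.
\item If $v_p(x)<0$, then $1+p^3x^4$ has odd valuation $3+4v_p(x)$, hence is not a square.
\end{itemize}
So no switch to a language with an explicit valuation predicate is needed.
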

\begin{proof}
Suppose first that $p\notin\Sigma$. Then $E_p=\Q$, and hence all coefficients of $f_1,\ldots,f_m$ are rational. By \cite[Proposition~3.5]{Belair88}, B\'elair proves that the theory $T'$ introduced there has the same models as $\operatorname{Th}\{\Qp:p\text{ prime}\}$, and \cite[Theorem~3.8]{Belair88} proves quantifier elimination for $T'$. The discussion following that theorem, \cite[pp.~15--16]{Belair88}, shows that this elimination can be carried out effectively. Moreover, the quantifier-free conditions produced by the procedure can be evaluated effectively from the input prime $p$ and the rational coefficients. Hence there is a single terminating procedure, with $p$ as part of the input, for deciding whether
$$
\exists x_1,\ldots,x_n\in\Qp\;
\left(
\bigwedge_{j=1}^{m} f_j(x_1,\ldots,x_n)=0
\ \wedge\
\bigwedge_{i\in I} v_p(x_i)\geq0
\right)
$$
holds.

Suppose now that $p\in\Sigma$. Since $\Sigma$ is fixed and finite, we may treat each such prime separately. We use Cohen's effective elimination procedure for a fixed $p$-adic field. The basic effective input is supplied by \cite[Theorem~$A_n$, pp.~140--144]{Coh69}, which constructs effectively, from the coefficients of a polynomial, the finite data required in the elimination process. Then \cite[Theorem~3.1]{Coh69} eliminates one quantified variable. Iterating this procedure, Cohen concludes explicitly on \cite[p.~145]{Coh69} that all bound variables can be eliminated and that this yields a decision procedure.

It remains to verify that this procedure can be carried out with coefficients given in the fixed exact representation of $E_p$. By Convention~\ref{def:fixeddata} and Lemma~\ref{lem:field}, equality and the field operations in $E_p$ are computable, as is $v_p(a)$ for every $a\in E_p$, and every element of $E_p$ can be approximated effectively by rationals to arbitrary prescribed $p$-adic precision. In particular, if $a\in E_p$ is integral and $k\geq1$, we can compute $q\in\Q$ such that $v_p(a-q)\geq k$, and hence compute the residue class of $a$ modulo $p^k$. For arbitrary $0\neq a\in E_p$, we first compute $v_p(a)$ and reduce to the integral element $p^{-v_p(a)}a$. Thus all coefficient computations required in Cohen's procedure are effective in the representation fixed for $E_p$.

Since $p$ is fixed, the finite residue rings occurring in the procedure are effective finite rings. Hence Cohen's decision procedure can be carried out with coefficients in $E_p$.

Finally, when $p\notin\Sigma$ we use the procedure obtained in the first part of the proof, while for each of the finitely many primes $p\in\Sigma$ we use the corresponding fixed Cohen procedure. This gives a single terminating algorithm, with $p$ as part of the input, deciding whether the required tuple exists.
\end{proof}

\section{Conjugacy lifting}\label{sec:5}

We now turn the calculations of Sections~\ref{sec:3} and \ref{sec:4} into an algorithm for conjugacy in the quotient $E=H/K_0$.  The subgroup $M=N_0/K_0$ is nilpotent and normal in $E$, and $E/M$ is finitely generated and virtually abelian.  We first decide conjugacy in this quotient, where the problem reduces to finitely many integral linear equations.

The isolated lower central series $N_0=N_1\geq\ldots\geq N_{c+1}=1$ induces the normal series 
$$
M=M_1\geq M_2\geq\ldots\geq M_{c+1}=\{1\},
\qquad
M_i=N_iK_0/K_0,
$$
in $E$. Starting from a conjugacy modulo $M=M_1$, we lift it successively through this series.  More precisely, let $x,y\in E$ be the two elements whose conjugacy is to be decided.  Suppose that, at stage $i$, an element $c\in E$ has been found such that $x^cM_i=yM_i$.  The next step is to determine whether this conjugacy modulo $M_i$ can be lifted modulo $M_{i+1}$.  Equivalently, we ask whether there exists $t\in E$ such that $x^{ct}M_{i+1}=yM_{i+1}$.  Such an element $t$ necessarily centralizes $x^cM_i$ in $E/M_i$.  Replacing $x$ by $x^c$, the lifting problem is therefore reduced to finding an element $t$ in the inverse image of $C_{E/M_i}(xM_i)$ such that $x^tM_{i+1}=yM_{i+1}$.  This condition is expressed as a derivation equation in the abelian factor $M_i/M_{i+1}$.

The first subsection gives the effective procedure for this derivation problem, and the second applies it successively to the series above, thereby deciding conjugacy in $E=H/K_0$.  In Section~\ref{sec:6}, the resulting algorithm is transferred from $E$ to $G$ through the finite kernel $K/K_0$.

\subsection{Equations in virtually abelian groups}

First, we isolate the only external matrix algorithm we need.

\begin{lemma}\label{lem:matrixorbit}
Let $T_1,\ldots,T_r\in\operatorname{GL}_d(\Q)$ commute and let $v,w\in\Q^d$. There is an algorithm which decides whether there exist integers
$n_1,\ldots,n_r$ such that
\begin{equation}
 T_1^{n_1}\ldots T_r^{n_r}v=w.
 \label{eq:5.1}
\end{equation}
If such integers exist, the algorithm returns a vector $\mathbf n_0\in\Z^r$ and a basis of a subgroup $L\leq\Z^r$ such that the set of all solutions $\mathbf n=(n_1,\ldots,n_r)\in\Z^r$ is precisely
\begin{equation}
 \mathbf n_0+L.
 \label{eq:5.2}
\end{equation}
\end{lemma}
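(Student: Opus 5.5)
The statement is essentially the orbit problem for a finitely generated abelian group of rational matrices. The plan is to treat the existence question and the description of the solution set separately, since they need different tools.

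\emph{Step 1: the stabilizer lattice.} Put
\[
\Phi:\Z^r\to\operatorname{GL}_d(\Q),\qquad \mathbf n\mapsto T_1^{n_1}\ldots T_r^{n_r}.
\]
Because the $T_j$ commute, $\Phi$ is a homomorphism. The set of solutions of \eqref{eq:5.1} is either empty or a coset $\mathbf n_0+L$, where
\[
L=\{\mathbf n\in\Z^r:\Phi(\mathbf n)v=v\}
\]
is the stabilizer of $v$. Indeed, if $\Phi(\mathbf n_0)v=w$, then $\Phi(\mathbf n)v=w$ holds if and only if $\Phi(\mathbf n-\mathbf n_0)v=v$. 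So it suffices to do two things: decide existence and find one solution $\mathbf n_0$, and compute a basis of $L$.

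Computing $L$ is itself an instance of the same kind of problem, namely finding all multiplicative relations. I would compute it by passing to a common splitting field $F$ of the characteristic polynomials of the $T_j$. Over $F$, commuting matrices can be simultaneously triangularized, and more usefully $\Q^d\otimes F$ decomposes into joint generalized eigenspaces, computable exactly by algebraic-number arithmetic. Decompose $v$ accordingly. On each joint generalized eigenspace, write $T_j=\mu_j(I+N_j)$ with the $N_j$ commuting and nilpotent. Then
\[
\Phi(\mathbf n)=\prod_j\mu_j^{n_j}\,\exp\Bigl(\sum_j n_j\log(I+N_j)\Bigr),
\]
and the logarithm series is finite here.

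The condition $\Phi(\mathbf n)v=v$ then splits into two parts.
\begin{enumerate}[label=\textup{(\alph*)}]
\item A polynomial condition, which in fact comes down to a linear condition on $\mathbf n$ over $\Q$: the unipotent exponential part fixes a vector in the relevant sense only when $\sum_j n_j\log(I+N_j)$ kills it.
\item A multiplicative condition: $\prod_j\mu_j^{n_j}=1$ for each eigencharacter $\mu$ occurring in the support of $v$.
\end{enumerate}
The lattice of multiplicative relations among finitely many algebraic numbers is computable; this is Masser's theorem, or Ge's algorithm. Intersecting the resulting lattices with the integer solutions of the linear conditions then gives a basis of $L$ by Hermite normal form.

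\emph{Step 2: existence of a solution.} Deciding whether some $\mathbf n$ solves \eqref{eq:5.1} is the orbit problem for a finitely generated abelian matrix group, and I would quote the algorithm of Babai, Beals, Cai, Ivanyos and Luks (\emph{Multiplicative equations over commuting matrices}, SODA 1996). That algorithm decides existence and produces a solution $\mathbf n_0$ when one exists. I would present this as the single external algorithm the lemma isolates.

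A self-contained fallback follows the structure of Step 1. Compare $w$ and $v$ componentwise in the joint generalized eigenspaces. The multiplicative part becomes a system
\[
\prod_j\mu_j^{n_j}=\gamma,
\]
with $\gamma$ determined by ratios of components. Such a system is solvable if and only if a computed candidate satisfies it modulo the relation lattice; this is again decidable via Masser-type bounds, and searching a bounded box yields $\mathbf n_0$. The unipotent part contributes polynomial, effectively linear, constraints on $\mathbf n$, which must be solved over $\Z$ jointly with the multiplicative ones.

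\emph{Main obstacle.} The hard point is the interaction between the multiplicative constraints coming from the eigenvalues and the additive constraints coming from the unipotent parts, i.e.\ the non-semisimple case. I would handle it by first solving the semisimple (eigenvalue) problem. Its solution set is a coset $\mathbf a+\Lambda$. I would then parametrize $\mathbf n=\mathbf a+\Lambda\mathbf m$ and substitute into the unipotent condition. On that coset the unipotent condition becomes a system of linear Diophantine equations in $\mathbf m$ once the semisimple parts agree: the higher-degree terms of the exponential must vanish simultaneously, which forces linear relations. That system is solved by Smith normal form, and the result gives both $\mathbf n_0$ and the basis of $L$ in the form \eqref{eq:5.2}.
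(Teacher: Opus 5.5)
Your Step~1 is sound. On a joint generalized eigenspace $E_\mu$ with $v_\mu\neq0$, the condition $\mu^{\mathbf n}\exp(X(\mathbf n))v_\mu=v_\mu$ forces $\mu^{\mathbf n}=1$, since a unipotent operator has only the eigenvalue $1$. It then forces $X(\mathbf n)v_\mu=0$, since $\exp(X)-I=X\cdot(\text{unit})$. So Masser/Ge plus integer linear algebra do compute $L$.

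The gap is in Step~2, which is where the remaining content of the lemma lies. The Babai--Beals--Cai--Ivanyos--Luks result you invoke is formulated for matrix equations $A_1^{x_1}\cdots A_k^{x_k}=B$. You apply it to the vector equation \eqref{eq:5.1} without any reduction. Your fallback was meant to supply this, but it breaks at the point you yourself call the main obstacle. The claim that, once the semisimple parts agree, the higher-degree terms of the exponential must vanish is true for the homogeneous equation $\exp(X)v_\mu=v_\mu$. It is false for $\exp(X(\mathbf n))v_\mu=w'_\mu$ with $w'_\mu\neq v_\mu$. Take a single $3\times3$ Jordan block $T=I+N$ with $Ne_3=e_2$, $Ne_2=e_1$, $Ne_1=0$, and let $v=e_3$, $w=T^5v=e_3+5e_2+10e_1$. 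At the solution $n=5$ the quadratic term $\binom{5}{2}N^2v=10e_1$ is non-zero, and imposing $X(n)^2v=0$ would wrongly give $n=0$. As written, the inhomogeneous unipotent condition is a genuinely polynomial equation in $\mathbf n$. You give no mechanism for linearizing it before a particular solution is known.

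The missing idea is the reduction that makes up the paper's proof. Let $C$ be the smallest $T_i^{\pm1}$-invariant subspace containing $v$; it is spanned by vectors $Rv$ with $R$ a product of powers of the $T_i$. If $T=T_1^{n_1}\cdots T_r^{n_r}$ satisfies $Tv=w$, then by commutativity $T(Rv)=Rw$. So $T|_C$ is forced to be the map $B\colon Rv\mapsto Rw$, and conversely $T|_C=B$ gives $Tv=w$. Hence if $w\notin C$, or $B$ is not a well-defined element of $\operatorname{GL}(C)$, there is no solution. Otherwise \eqref{eq:5.1} is equivalent to the matrix equation $\prod_i(T_i|_C)^{n_i}=B$.

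One application of BBCIL to this equation gives $\mathbf n_0$ together with the lattice $\{\mathbf m:\prod_i(T_i|_C)^{m_i}=I_C\}$. This lattice coincides with your stabilizer $L$, because an element of the group fixing $v$ fixes all of $C$. So your Step~1 is a valid independent description of $L$, but it becomes unnecessary.

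The same trick repairs your fallback. On the cyclic module generated by $v_\mu$, the operator $T_1^{n_1}\cdots T_r^{n_r}$ is forced by $w_\mu$. Hence so are its unipotent part $U_\mu$ and $\log U_\mu$, and the unipotent condition becomes the linear system $X(\mathbf n)v_\mu=(\log U_\mu)v_\mu$.
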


\begin{proof}
If $r=0$, equation \eqref{eq:5.1} is simply $v=w$; its solution set is either empty or the singleton subgroup $\Z^0$.  Assume $r\geq1$.  If $v=0$, then every product in \eqref{eq:5.1} sends $v$ to $0$.  Hence there is no solution when $w\neq0$, while for $w=0$ the complete solution set is $0+\Z^r$.  We may therefore assume $v\neq0$.

Construct the smallest $T_i^{\pm1}$-invariant rational subspace $C$ containing $v$: start with $C_0=\Q v$ and repeatedly replace $C_k$ by
$$
 C_{k+1}=C_k+\sum_{i=1}^r(T_iC_k+T_i^{-1}C_k).
$$
The dimension is at most $d$, so the process stabilizes after at most $d-1$ strict increases.  If $w\notin C$, equation \eqref{eq:5.1} has no solution.

Assume $w\in C$.  Choose a basis
$$
 v_1=v,
 \qquad
 v_j=R_jv\quad(2\leq j\leq e),
$$
where each $R_j$ is a product of powers of the commuting $T_i$.  If $T=T_1^{n_1}\ldots T_r^{n_r}$ satisfies $Tv=w$, then
$$
 Tv_j=TR_jv=R_jTv=R_jw.
$$
Thus the restriction $T|_C$ is uniquely determined by the condition $Tv=w$.  Rational row reduction decides whether the assignments
$$
 v_j\longmapsto R_jw
$$
define an invertible linear map $B\in\operatorname{GL}(C)$ commuting with every $T_i|_C$.  If not, \eqref{eq:5.1} has no solution.  If they do, then we proceed. Now, \eqref{eq:5.1} is equivalent to
\begin{equation}
 (T_1|_C)^{n_1}\ldots(T_r|_C)^{n_r}=B.
 \label{eq:5.7}
\end{equation}
Finally, Babai--Beals--Cai--Ivanyos--Luks \cite[Theorem~1.2 and Section~6.4]{BBCIL96}, applied over $\Q$, decide whether \eqref{eq:5.7} has an integral solution and, in the positive case, compute one such solution $\mathbf n_0\in\Z^r$ together with a basis of the following subgroup of $\Z^r$:
$$
L=\left\{\mathbf m\in\Z^r:
(T_1|_C)^{m_1}\ldots(T_r|_C)^{m_r}=I_C\right\}.
$$
The complete set of solutions of \eqref{eq:5.7} is then precisely $\mathbf n_0+L$, which proves the lemma.
\end{proof}

Let a group $P$ act on an abelian group $A$ on the right, and let
$
 \delta:P\longrightarrow A
$
be a derivation, so  
\begin{equation}
 \delta(uv)=\delta(u)^v+\delta(v).
 \label{eq:5.8}
\end{equation}
Define the associated affine action by  
\begin{equation}
 a*u=a^u+\delta(u).
 \label{eq:5.9}
\end{equation}
Equation \eqref{eq:5.8} gives
\begin{equation}
 (a*u)*v=a*(uv).
 \label{eq:5.10}
\end{equation}

We first solve the resulting affine orbit and stabilizer problems when the acting group is free abelian.

\begin{lemma}\label{lem:affineabelian}
Let $B=\langle z_1,\ldots,z_\ell\rangle$ be a finitely generated free abelian group with specified basis. Let $A$ be an abelian minimax group equipped with an effective representation in which addition, inverse and equality are computable. Suppose that $B$ acts on $A$ on the right and that $\delta:B\to A$ is a derivation for this action. Assume that:
\begin{enumerate}[label=\textup{(\roman*)}]
\item for every $i=1,\ldots,\ell$, the actions of $z_i$ and $z_i^{-1}$ on $A$, and the elements $\delta(z_i)$ and $\delta(z_i^{-1})$, are computable;
\item a rational basis of $A_{\Q}$ is fixed, and, with respect to this basis, the action of every $z_i$ on $A_{\Q}$ and the vector $\delta(z_i)\otimes1$ are given by rational matrices and rational coordinate vectors.
\end{enumerate}
Then there are terminating algorithms with the following properties:
\begin{enumerate}[label=\textup{(\alph*)}]
\item given $s,a\in A$, together with the coordinate vectors of $s\otimes1$ and $a\otimes1$ in the fixed rational basis of $A_{\Q}$, one decides whether there exists $z\in B$ such that $s*z=a$ and, in the positive case, returns such a $z$ as a word in $z_1^{\pm1},\ldots,z_\ell^{\pm1}$;
\item given $s\in A$, together with the coordinate vector of $s\otimes1$ in the fixed rational basis of $A_{\Q}$, one computes, as words in $z_1^{\pm1},\ldots,z_\ell^{\pm1}$, a finite generating set for $\operatorname{Stab}_B(s)=\{z\in B:s*z=s\}$.
\end{enumerate}
\end{lemma}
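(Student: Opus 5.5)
The plan is to linearize the affine action. Let $\widetilde A=A\oplus\Z$, on which $B$ acts linearly by
\[
(a,t)^u=(a^u+t\delta(u),t).
\]
By \eqref{eq:5.8} this is a right action, and $s*z=a$ is equivalent to $(s,1)^z=(a,1)$. Rationally, each $z_i$ then acts on $A_{\Q}\oplus\Q$ through the block matrix
\[
\widehat T_i=\begin{pmatrix}M_i&\delta(z_i)\otimes1\\0&1\end{pmatrix},
\]
written in the column convention. The matrices $\widehat T_i$ commute because $B$ is abelian, and their entries are given by hypothesis (ii). For $z=z_1^{n_1}\ldots z_\ell^{n_\ell}$, the vector $(s*z)\otimes1$ is therefore $\widehat T_1^{n_1}\ldots\widehat T_\ell^{n_\ell}(s\otimes1,1)$.

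\emph{Step 1 (rational orbit).} Apply Lemma~\ref{lem:matrixorbit} to the $\widehat T_i$ with $v=(s\otimes1,1)$ and $w=(a\otimes1,1)$. If it returns no solution, then $s*z\neq a$ already in $A_{\Q}$, and we answer NO. Otherwise we obtain the full set $\mathbf n_0+L$ of exponent vectors with $(s*z)\otimes1=a\otimes1$, where $L\leq\Z^\ell$ is a computed lattice. In particular, the rational stabilizer $L_s=\{z:(s*z)\otimes1=s\otimes1\}$ is computed as a subgroup of $B\simeq\Z^\ell$.

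\emph{Step 2 (torsion correction).} For $z\in z_0L_s$, where $z_0$ corresponds to $\mathbf n_0$, the element $s*z-a$ lies in the torsion subgroup $\tau(A)$. Since $A$ is abelian minimax, $\tau(A)$ is a direct sum of finitely many finite groups and quasicyclic groups. On $L_s$ the map $\epsilon(u)=s*u-s$ is a derivation into $\tau(A)$ for the restricted action, because $s*(uv)-s=(s*u-s)^v+(s*v-s)$. Write $a'=a-s*z_0\in\tau(A)$. Then $s*(z_0u)=a$ iff $(s*z_0)*u=a$, which, expanding $(s*z_0)*u=(s*z_0)^u+\delta(u)$ and using that $s*z_0$ and $s$ differ by a torsion element, reduces to an equation of the form $t^u+\epsilon(u)=t'$ with $t,t'\in\tau(A)$. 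So everything reduces to an affine orbit problem in $\tau(A)$ for the finitely generated abelian group $L_s$.

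\emph{Step 3 (finite reduction).} This is the main obstacle, because $\tau(A)$ need not be finite and the relevant data are not uniformly available. The approach I would try first is to show that the orbit of $s$ under $L_s$ meets only a finite subgroup $F\leq\tau(A)$ generated by $\epsilon(w_1),\ldots,\epsilon(w_k)$, where $w_1,\ldots,w_k$ is a basis of $L_s$. Since $\epsilon(w_j)\in\tau(A)$ has finite order, the subgroup these elements generate under the $L_s$-action is finite: $\tau(A)$ has finite rank, so each $\pi$-primary piece generated by finitely many elements under a finitely generated group action of this kind is a finite characteristic-type subgroup. The orders of the $\epsilon(w_j)$ can be found by searching with the computable equality of $A$. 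Once $F$ is known, we enumerate the finite set $F$, which is closed under the action and under $\epsilon$. We compute the finite-index kernel $L'$ of the action of $L_s$ on $F\cup\{a'\}$ together with $\epsilon$, by finite search over exponent vectors modulo the exponent of $F$. A finite orbit computation of $L_s/L'$ then decides whether $a'$ is reached and produces a witness $u$. The answer to (a) is $z_0u$, written in the basis. Part (b) follows by the same method with $a=s$: the stabilizer $\operatorname{Stab}_B(s)$ is the finite-index subgroup of $L_s$ fixing $0$ under this finite affine action, and a generating set is read off from the finite permutation representation by Schreier's lemma.

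The delicate point to verify is that the subgroup $F$ is genuinely finite and effectively found. If that fails, the fallback is to bound the order of $\epsilon(u)$ uniformly on $L_s$ using the cocycle identity together with a basis.
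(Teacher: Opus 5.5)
Your architecture matches the paper's: the commuting augmented matrices $T_i$, Lemma~\ref{lem:matrixorbit} to obtain the rational solution set $\mathbf n_0+L$, a reduction to a finite torsion orbit, and Schreier generators for (b). The gap is in the step you call the main obstacle. You leave the finiteness hedged with a fallback, and ``finite rank gives a finite characteristic-type subgroup'' is not an argument.

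The missing observation is short. Write $z(\mathbf n)=z_1^{n_1}\cdots z_\ell^{n_\ell}$, $b=s*z(\mathbf n_0)$, and let $\boldsymbol\ell_1,\ldots,\boldsymbol\ell_t$ be a basis of $L$. The finitely many torsion elements in play are $b-a$ and $a*z(\pm\boldsymbol\ell_k)-a$. They are computable by hypothesis (i), and their orders are found by testing successive multiples; let $m$ be a common multiple. Since $A$ is minimax, $A[m]$ is finite, and every automorphism preserves it. Hence $(a+\xi)*g-a=(a*g-a)+\xi^g\in A[m]$ for $\xi\in A[m]$ and $g=z(\pm\boldsymbol\ell_k)$. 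So the finite coset $a+A[m]$ contains $b$ and is stable under the generators, and closing $\{b\}$ under them terminates at exactly $b*z(L)$.

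Your Step~2 also rests on a false claim: $s*z_0$ and $s$ need not differ by a torsion element. Indeed $(s*z_0)\otimes1=a\otimes1$, which in general is not $s\otimes1$; what is torsion is $s*z_0-a$. As written, your $t=s*z_0-s$ and $t'=a-s$ are not torsion. There are two repairs:
\begin{itemize}
\item base the problem at $b$, whose rational stabilizer is again $L$ because $B$ is abelian; or
\item use $s*(z_0u)=(s*u)*z_0=s*z_0+\epsilon(u)^{z_0}$, which turns the equation into $\epsilon(u)=(a-s*z_0)^{z_0^{-1}}$.
\end{itemize}

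Your computation of a finite-index kernel $L'$ ``by finite search over exponent vectors modulo the exponent of $F$'' is also wrong. An automorphism of a finite abelian group of exponent $e$ need not have order dividing $e$. Take $B=\langle z_1\rangle$ acting on $A=\Z/7\Z$ by multiplication by $3$, with $\delta=0$. Then $\operatorname{Stab}_B(1)=\langle z_1^6\rangle$, which does not contain $z_1^7$.

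No kernel is needed. As in the paper, record a word $w_u$ for each point $u$ when it is first reached in the orbit closure. This decides (a) and yields the witness $z(\mathbf n_0)z(\boldsymbol\ell)$. For (b), run the closure from $s$, with $L$ the rational stabilizer of $s\otimes1$. The finitely many elements $w_u\eta w_v^{-1}$, where $\eta$ ranges over $z(\pm\boldsymbol\ell_k)$ and $u*\eta=v$, generate $\operatorname{Stab}_B(s)$ by the Schreier argument. If you do want $L'$, compute instead the order of the permutation that each basis element induces on the finite orbit.
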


\begin{proof}
Put $h=\dim_{\Q}A_{\Q}$. With respect to the fixed rational basis of $A_{\Q}$, let $M_i\in\operatorname{GL}_h(\Q)$ be the matrix of the action of $z_i$ and let $d_i\in\Q^h$ be the coordinate vector of $\delta(z_i)\otimes1$. The affine transformation $v\mapsto M_iv+d_i$ is represented on $A_{\Q}\oplus\Q$ by
\begin{equation}
 T_i=
 \begin{pmatrix}
 M_i&d_i\\
 0&1
 \end{pmatrix}
 \in\operatorname{GL}_{h+1}(\Q).
 \label{eq:5.16}
\end{equation}
The matrices $T_i$ commute, because the $z_i$ commute and the maps $v\mapsto v*z$ form a right action of $B$.

For $\mathbf n=(n_1,\ldots,n_\ell)\in\Z^\ell$, put $z(\mathbf n)=z_1^{n_1}\ldots z_\ell^{n_\ell}$. We first prove \textup{(a)}. The equation to be decided is
\begin{equation}
 s*z(\mathbf n)=a
 \qquad\text{for some }\mathbf n\in\Z^\ell.
 \label{eq:5.15}
\end{equation}
Applying Lemma~\ref{lem:matrixorbit} to the commuting matrices $T_1,\ldots,T_\ell$ and the augmented coordinate vectors $(\bar s,1)$ and $(\bar a,1)$ decides whether there exists $\mathbf n\in\Z^\ell$ such that
\[
 \bar s*z(\mathbf n)=\bar a,
\]
where $\bar x=x\otimes1$. If there is no such exponent vector, then \eqref{eq:5.15} has no solution. Otherwise the lemma computes $\mathbf n_0\in\Z^\ell$ and a basis $\boldsymbol\ell_1,\ldots,\boldsymbol\ell_t$ of a subgroup $L\leq\Z^\ell$ such that the complete set of exponent vectors solving the equation in $A_{\Q}$ is $\mathbf n_0+L$.

Put $b=s*z(\mathbf n_0)$. Then $\bar b=\bar a$, and hence
\begin{equation}
 b-a\in\tor A.
 \label{eq:5.18}
\end{equation}
If $\boldsymbol\ell\in L$, then both $\mathbf n_0$ and $\mathbf n_0+\boldsymbol\ell$ solve the equation in $A_{\Q}$. Since $B$ is abelian,
\[
 z(\mathbf n_0+\boldsymbol\ell)=z(\mathbf n_0)z(\boldsymbol\ell),
\]
and therefore
\[
 \bar a*z(\boldsymbol\ell)
 =
 (\bar s*z(\mathbf n_0))*z(\boldsymbol\ell)
 =
 \bar s*z(\mathbf n_0+\boldsymbol\ell)
 =
 \bar a.
\]
Consequently, for $1\leq k\leq t$,
\begin{equation}
 a*z(\pm\boldsymbol\ell_k)-a\in\tor A.
 \label{eq:5.19}
\end{equation}

The torsion elements in \eqref{eq:5.18}--\eqref{eq:5.19} are explicitly computable. Since addition and equality in $A$ are computable, their orders are found by testing successive positive multiples until zero is reached. Let $m$ be a common multiple of these finitely many orders. If $g=z(\pm\boldsymbol\ell_k)$ and $\xi\in A[m]$, then
\[
 (a+\xi)*g-a=(a*g-a)+\xi^g.
\]
The first term belongs to $A[m]$ by the choice of $m$, while the second does because the action of $g$ is an automorphism of $A$ and therefore preserves $A[m]$. Thus every $z(\pm\boldsymbol\ell_k)$ preserves the coset $a+A[m]$. Since $A$ is minimax, $A[m]$ is finite, and \eqref{eq:5.18} gives $b\in a+A[m]$.

Starting with $\mathcal O_0=\{b\}$, define
\[
 \mathcal O_{n+1}
 =
 \mathcal O_n\cup
 \{u*z(\pm\boldsymbol\ell_k):u\in\mathcal O_n,\ 1\leq k\leq t\}.
\]
Each $\mathcal O_n$ is effectively computable, and all these sets lie in the finite set $a+A[m]$, so the process terminates. Its stable value is exactly
\begin{equation}
 \{b*z(\boldsymbol\ell):\boldsymbol\ell\in L\}.
 \label{eq:5.20}
\end{equation}
Whenever a new point is reached, retain a word in the elements $z(\pm\boldsymbol\ell_k)$ which produces it from $b$. We can therefore decide whether $a$ belongs to this orbit. If it does, the recorded word gives some $\boldsymbol\ell\in L$ such that $b*z(\boldsymbol\ell)=a$, and hence
\[
 s*\bigl(z(\mathbf n_0)z(\boldsymbol\ell)\bigr)=a.
\]
Thus $z(\mathbf n_0)z(\boldsymbol\ell)$ is the required element of $B$. This proves \textup{(a)}.

We now prove \textup{(b)}. Apply Lemma~\ref{lem:matrixorbit} to the same commuting matrices $T_1,\ldots,T_\ell$, now with both start and target equal to $(\bar s,1)$. Since $\mathbf0$ is a solution, the complete set of exponent solutions is a subgroup $L\leq\Z^\ell$, and the lemma computes a basis $\boldsymbol\ell_1,\ldots,\boldsymbol\ell_t$ of $L$. Every element of $\operatorname{Stab}_B(s)$ belongs to $z(L)$, so $\operatorname{Stab}_B(s)$ is exactly the stabilizer of $s$ inside $z(L)$.

For each basis element, $s*z(\pm\boldsymbol\ell_k)-s$ is torsion. Compute the orders of these finitely many torsion elements and choose a common multiple $m$, taking $m=1$ if $t=0$. If $g=z(\pm\boldsymbol\ell_k)$ and $\xi\in A[m]$, then
\[
 (s+\xi)*g-s=(s*g-s)+\xi^g\in A[m].
\]
Since $A$ is minimax, $A[m]$ is finite. Starting from $\mathcal O_0=\{s\}$, close successively under the transformations $z(\pm\boldsymbol\ell_k)$. The process therefore terminates at the exact orbit $s*z(L)$. Whenever a point $u$ is first reached, retain a word $w_u$ in the alphabet $z(\pm\boldsymbol\ell_1),\ldots,z(\pm\boldsymbol\ell_t)$ such that $s*w_u=u$, taking $w_s=1$.

For every $u$ in this finite orbit and every $\eta\in\{z(\pm\boldsymbol\ell_1),\ldots,z(\pm\boldsymbol\ell_t)\}$ such that $u*\eta=v$, define
\begin{equation}
 g_{u,\eta}=w_u\eta w_v^{-1}.
 \label{eq:5.23}
\end{equation}
Then $s*g_{u,\eta}=s$, so every $g_{u,\eta}$ belongs to $\operatorname{Stab}_B(s)$. Conversely, let an element of $\operatorname{Stab}_B(s)$ be represented by a word $\eta_1\ldots\eta_q$ in the generators $z(\pm\boldsymbol\ell_k)$, and put $u_k=s*(\eta_1\ldots\eta_k)$, with $u_0=s$. Since the element fixes $s$, we have $u_q=s$. Inserting the terms $w_{u_k}^{-1}w_{u_k}$ between successive letters gives
\[
 \eta_1\ldots\eta_q
 =
 (w_{u_0}\eta_1w_{u_1}^{-1})
 (w_{u_1}\eta_2w_{u_2}^{-1})
 \ldots
 (w_{u_{q-1}}\eta_qw_{u_q}^{-1}),
\]
and every factor on the right is one of the elements in \eqref{eq:5.23}. Hence these finitely many elements generate $\operatorname{Stab}_B(s)$, proving \textup{(b)}.
\end{proof}

We now pass from the free abelian case to finitely generated subgroups of a fixed virtually abelian group. The finite quotient will reduce both the derivation equation and the stabilizer computation to finitely many applications of Lemma~\ref{lem:affineabelian}.

\begin{theorem}\label{thm:derivation}
Let $V$ be a fixed finitely generated virtually abelian group. Fix also a finite generating set $X=\{x_1,\ldots,x_m\}$ of $V$. Fix the following finite effective data for $V$\textnormal:
\begin{enumerate}[label=\textup{(\roman*)}]
\item a free abelian normal subgroup $Q_0$ of $V$ of finite index, together with a basis $q_1,\ldots,q_d$ of $Q_0$\textnormal;

\item the finite quotient $F=V/Q_0=\{f_1,\ldots,f_s\}$, with $f_1=1$, together with its multiplication table, a transversal $\tau:F\to V$ with $\tau(1)=1$, and, for every $f,g\in F$, the coordinate vector with respect to $q_1,\ldots,q_d$ of the unique element $\kappa(f,g)\in Q_0$ such that
\begin{equation}
 \tau(f)\tau(g)=\kappa(f,g)\tau(fg)\textnormal;
 \label{eq:5.11}
\end{equation}

\item for every $f\in F$, the integral matrix, with respect to $q_1,\ldots,q_d$, of the automorphism of $Q_0$ induced by conjugation by $\tau(f)$\textnormal;

\item for every $1\leq\nu\leq m$ and $\varepsilon=\pm1$, the unique pair
\[
 \bigl(\mathbf a_{\nu,\varepsilon},f_{\nu,\varepsilon}\bigr)\in\Z^d\times F,
 \qquad
 \mathbf a_{\nu,\varepsilon}=(a_{\nu,\varepsilon,1},\ldots,a_{\nu,\varepsilon,d}),
\]
such that
\[
 x_\nu^\varepsilon=q_1^{a_{\nu,\varepsilon,1}}\ldots q_d^{a_{\nu,\varepsilon,d}}\tau(f_{\nu,\varepsilon}).
\]
\end{enumerate}

\noindent Let $P\leq V$ be the subgroup generated by elements $p_1,\ldots,p_r$, each given as a word in $X^{\pm1}$. Let $A$ be an abelian minimax group equipped with a fixed effective representation in which addition, inverse and equality are computable. Suppose that $P$ acts on~$A$ and that $\delta:P\to A$ is a derivation for this action. Assume moreover that\textnormal:
\begin{enumerate}[label=\textup{(\roman*)},resume]
\item for every $j=1,\ldots,r$, the actions of $p_j$ and $p_j^{-1}$ on $A$, and the elements $\delta(p_j)$ and $\delta(p_j^{-1})$, are computable; hence the action and the value of $\delta$ are computable on every word in $p_1^{\pm1},\ldots,p_r^{\pm1}$ by \eqref{eq:5.8}\textnormal;

\item a rational basis of $A_{\Q}$ is fixed, and, with respect to this basis, the action of every $p_j$ on $A_{\Q}$ and the vector $\delta(p_j)\otimes1$ are given by rational matrices and rational coordinate vectors.
\end{enumerate}

\noindent Then there are terminating algorithms with the following properties\textnormal:
\begin{enumerate}[label=\textup{(\alph*)}]
\item given $a\in A$, together with the coordinate vector of $a\otimes1$ with respect to the rational basis of $A_{\Q}$ fixed in \textup{(vi)}, one decides whether there exists $u\in P$ such that $\delta(u)=a$; if such an element exists, the algorithm returns a word in $p_1^{\pm1},\ldots,p_r^{\pm1}$ representing one such $u$\textnormal;

\item given $a\in A$, together with the coordinate vector of $a\otimes1$ with respect to the rational basis of $A_{\Q}$ fixed in \textup{(vi)}, one computes, as words in $p_1^{\pm1},\ldots,p_r^{\pm1}$, a finite generating set for $\operatorname{Stab}_P(a)=\{u\in P:a*u=a\}$.
\end{enumerate}
\end{theorem}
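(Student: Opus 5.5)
The plan is to reduce to Lemma~\ref{lem:affineabelian} through the free abelian subgroup $P_0=P\cap Q_0$, which has finite index in $P$.

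\textbf{Step 1: a transversal for $P_0$ in $P$.} Using the data (ii)--(iv), every word in $X^{\pm1}$ is put effectively into normal form $q_1^{a_1}\ldots q_d^{a_d}\tau(f)$. To do this, multiply letters, move the $\tau$-parts to the right using the matrices in (iii), and absorb the cocycle values $\kappa(f,g)$.

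The image $\bar P$ of $P$ in $F$ is the subgroup generated by the $F$-components of $p_1,\ldots,p_r$, and it is computable. By enumerating words in the $p_j^{\pm1}$, we find for each $f\in\bar P$ a word $\sigma_f$ in the $p_j^{\pm1}$ with image $f$, taking $\sigma_1$ to be the empty word. These words form a right transversal of $P_0$ in $P$.

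The Reidemeister--Schreier elements $\sigma_f p_j^{\pm1}\sigma_{fg_j^{\pm1}}^{-1}$ then generate $P_0$. They are computed as words in the $p_j^{\pm1}$ and also as integer vectors in $Q_0=\Z^d$.

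Integral Hermite/Smith reduction gives a basis $z_1,\ldots,z_\ell$ of $P_0$. Each $z_k$ is recorded both as an integer vector and as an explicit integral combination of the Schreier generators, hence as a word in the $p_j^{\pm1}$. Along this word, hypotheses (v) and (vi) compute the action of $z_k^{\pm1}$ on $A$ and on $A_{\Q}$, together with $\delta(z_k^{\pm1})$ and $\delta(z_k)\otimes1$, via \eqref{eq:5.8}. So Lemma~\ref{lem:affineabelian} applies to $B=P_0$ with the restricted action and derivation.

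\textbf{Step 2: proof of (a).} Every $u\in P$ can be written as $u=z\sigma_f$ with $z\in P_0$ and $f\in\bar P$. Note that $\delta(u)=a$ is equivalent to $0*u=a$ for the affine action \eqref{eq:5.9}, because $\delta(u)=0^u+\delta(u)$.

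By \eqref{eq:5.10}, $0*(z\sigma_f)=a$ holds if and only if $0*z=a*\sigma_f^{-1}$. For each of the finitely many $f\in\bar P$, compute $a_f:=a*\sigma_f^{-1}$ in $A$ and its rational coordinates. Then call Lemma~\ref{lem:affineabelian}(a) with $s=0$ and target $a_f$.

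If some call succeeds with output $z$, return the word $z\sigma_f$. If every call fails, answer no.

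\textbf{Step 3: proof of (b).} The stabilizer $S=\operatorname{Stab}_P(a)$ is computed in two stages.

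First, the orbit of $a$ under the finite set of cosets is handled by working with the finitely many points $a*\sigma_f$. For each pair $f,g\in\bar P$, decide by Lemma~\ref{lem:affineabelian}(a) whether $a*\sigma_f\in(a*\sigma_g)*P_0$. Equivalently, this finds which cosets $P_0\sigma_f$ meet $S$, and for each such coset it produces an element $u_f=z_f\sigma_f\in S$.

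Second, $S\cap P_0=\operatorname{Stab}_{P_0}(a)$ is given by Lemma~\ref{lem:affineabelian}(b) applied to $s=a$.

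The elements $u_f$ are coset representatives for $S\cap P_0$ in $S$, since $S\cap P_0\sigma_f=(S\cap P_0)u_f$. Hence $S$ is generated by $\operatorname{Stab}_{P_0}(a)$ together with the $u_f$. All of these are output as words in the $p_j^{\pm1}$.

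\textbf{Main obstacle.} I expect the main obstacle to be the bookkeeping in Step~1. Hypothesis (vi) of Lemma~\ref{lem:affineabelian} requires rational matrices for the $z_k$, and these must be obtained consistently with the computed word expressions. They follow by multiplying the matrices from (vi) along the word, and the affine part by \eqref{eq:5.8}. A second point to check is that the matrices of the $z_k$ commute as required in Lemma~\ref{lem:affineabelian}. This holds automatically, because $P_0$ is abelian and the affine action is a genuine right action.
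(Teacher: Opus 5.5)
Your proposal is correct and follows essentially the same route as the paper: you reduce to $P_0=P\cap Q_0$ via Schreier generators and a computed basis, you settle each coset by Lemma~\ref{lem:affineabelian}(a), and you assemble $\operatorname{Stab}_P(a)$ from $\operatorname{Stab}_{P_0}(a)$ together with one representative from each coset that meets the stabilizer. The differences are cosmetic: in (a) you fix the start point $0$ and vary the target $a*\sigma_f^{-1}$, where the paper fixes the target $a$ and varies the start point $\delta(c_j)$; in (b) your pairwise comparisons are more than needed, since only the tests $a\in(a*\sigma_f)*P_0$ matter, and each successful test gives $\sigma_f z\in S$, which equals $z'\sigma_f$ with $z'\in P_0$ because $P_0\trianglelefteq P$.
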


\begin{proof}
For $\mathbf a=(a_1,\ldots,a_d)\in\Z^d$, write $q(\mathbf a)=q_1^{a_1}\ldots q_d^{a_d}$. Since $Q_0\trianglelefteq V$ and $\tau:F\to V$ is a transversal for $F=V/Q_0$, every element $v\in V$ has a unique expression $v=q(\mathbf a)\tau(f)$ with $\mathbf a\in\Z^d$ and $f\in F$. We now explain how this normal form is effectively computed from the fixed data. By \textup{(iv)}, for every letter $x_\nu^\varepsilon$ in $X^{\pm1}$ we know the unique pair $(\mathbf a_{\nu,\varepsilon},f_{\nu,\varepsilon})$ satisfying $x_\nu^\varepsilon=q(\mathbf a_{\nu,\varepsilon})\tau(f_{\nu,\varepsilon})$. For $f\in F$, let $\rho_f\in\operatorname{GL}_d(\Z)$ be the matrix, with respect to $q_1,\ldots,q_d$, of the automorphism $u\mapsto u^{\tau(f)}$ supplied by \textup{(iii)}, and let $\mathbf k(f,g)\in\Z^d$ be the coordinate vector of $\kappa(f,g)$. Hence $\rho_f^{-1}$ is the matrix of $u\mapsto\tau(f)u\tau(f)^{-1}$. If $q(\mathbf a)\tau(f)$ and $q(\mathbf b)\tau(g)$ are in normal form, then
\[
\begin{aligned}
 q(\mathbf a)\tau(f)q(\mathbf b)\tau(g)
 &=
 q(\mathbf a)\bigl(\tau(f)q(\mathbf b)\tau(f)^{-1}\bigr)\tau(f)\tau(g)\\
 &=
 q(\mathbf a)q(\rho_f^{-1}\mathbf b)q(\mathbf k(f,g))\tau(fg)\\
 &=
 q\bigl(\mathbf a+\rho_f^{-1}\mathbf b+\mathbf k(f,g)\bigr)\tau(fg).
\end{aligned}
\]
Thus, from the two pairs $(\mathbf a,f)$ and $(\mathbf b,g)$, the normal form of their product is obtained by integer matrix multiplication and addition, together with multiplication in the finite group $F$ and the fixed vectors $\mathbf k(f,g)$. Starting with the normal form $q(\mathbf 0)\tau(1)=1$ and reading a word in $X^{\pm1}$ from left to right, repeated use of this multiplication rule therefore computes its unique normal form.

Apply the preceding normal-form procedure to each of the given words $p_1,\ldots,p_r$. Thus, for every $j=1,\ldots,r$, we compute $\mathbf a_j\in\Z^d$ and $f_j\in F$ such that $p_j=q(\mathbf a_j)\tau(f_j)$. Since $q(\mathbf a_j)\in Q_0$, the image of $p_j$ in $F=V/Q_0$ is precisely $f_j$. We denote this image by $\overline p_j=f_j$. Let $\overline P$ be the image of $P$ in~$F$. Starting with $\Omega_0=\{1\}$, define
\[
 \Omega_{n+1}
 =
 \Omega_n\cup
 \{q\overline p_j^{\varepsilon}:q\in\Omega_n,\ 1\leq j\leq r,\ \varepsilon=\pm1\}.
\]
Since $F$ is finite and its multiplication and equality are effective, this sequence can be computed and eventually stabilizes. Its stable value is precisely the subgroup $\overline P=\langle\overline p_1,\ldots,\overline p_r\rangle$. Whenever an element $q$ is reached for the first time, retain the corresponding word $w_q$ in $p_1^{\pm1},\ldots,p_r^{\pm1}$. Thus $w_q$ maps to $q$ in $F$ and we may take $w_1=1$.

Put $P_0=P\cap Q_0$. The restriction to $P$ of the quotient map $V\to F$ has image $\overline P$ and kernel $P_0$. Hence the elements $w_q$, $q\in\overline P$, form a complete set of representatives for the cosets of $P_0$ in $P$. Schreier rewriting gives the finite generating set
\begin{equation}
 \mathcal S=
 \left\{
 w_qp_j^{\varepsilon}
 w_{q\overline p_j^{\varepsilon}}^{-1}:
 q\in\overline P,\ 1\leq j\leq r,\ \varepsilon=\pm1
 \right\}
 \label{eq:5.13}
\end{equation}
for $P_0$. Every element of $\mathcal S$ lies in $Q_0$, so its coordinate vector with respect to $q_1,\ldots,q_d$ is obtained from the normal-form procedure above. Since $Q_0\simeq\Z^d$, the subgroup generated by these finitely many coordinate vectors is a finitely generated free abelian subgroup of $\Z^d$. A basis for this subgroup can be computed effectively. Thus we compute elements $z_1,\ldots,z_\ell\in P_0$ such that $P_0=\langle z_1,\ldots,z_\ell\rangle$, and each $z_i$ is obtained explicitly as a product of powers of the generators in~\eqref{eq:5.13}. Substituting gives each $z_i$ as an explicit word in $p_1^{\pm1},\ldots,p_r^{\pm1}$. Relabel the finitely many words $w_q$ as $c_1,\ldots,c_e$.

Since every $z_i$ and every $z_i^{-1}$ is available as an explicit word in $p_1^{\pm1},\ldots,p_r^{\pm1}$, hypothesis \textup{(v)} and \eqref{eq:5.8} compute their actions on $A$ and the elements $\delta(z_i^{\pm1})$. Likewise, by composing and inverting the rational affine transformations supplied by \textup{(vi)}, we compute the corresponding rational action matrices and the coordinate vectors of $\delta(z_i)\otimes1$. Thus the restriction of the action and of $\delta$ to $P_0$, with basis $z_1,\ldots,z_\ell$, satisfies all the hypotheses of Lemma~\ref{lem:affineabelian}.

\smallskip

We first prove \textup{(a)}. Fix a coset $c_jP_0$ and put $s_j=0*c_j=\delta(c_j)$. Every element of $c_jP_0$ has a unique form $c_jz$ with $z\in P_0$, and by \eqref{eq:5.10}
\[
 \delta(c_jz)=0*(c_jz)=(0*c_j)*z=s_j*z.
\]
The element $s_j$ and the coordinate vector of $s_j\otimes1$ are computable because $c_j$ is an explicit word in the generators $p_1^{\pm1},\ldots,p_r^{\pm1}$. Therefore Lemma~\ref{lem:affineabelian}\textup{(a)}, applied to the restricted action of $P_0$, decides whether there exists $z\in P_0$ such that $s_j*z=a$ and, in the positive case, returns such a $z$ as a word in $z_1^{\pm1},\ldots,z_\ell^{\pm1}$. In that case $u=c_jz$ is an explicit word in $p_1^{\pm1},\ldots,p_r^{\pm1}$ and satisfies $\delta(u)=a$. Repeating this for the finitely many cosets $c_jP_0$ proves \textup{(a)}.

\smallskip

We now prove \textup{(b)}. Write $s=a$. Fix a coset $c_jP_0$. An element of this coset has the form $c_jz$ with $z\in P_0$, and it belongs to $\operatorname{Stab}_P(s)$ precisely when
\begin{equation}
 (s*c_j)*z=s.
 \label{eq:5.22}
\end{equation}
The element $s*c_j$ and the coordinate vector of its image in $A_{\Q}$ are computable from the given data. Hence Lemma~\ref{lem:affineabelian}\textup{(a)} decides whether \eqref{eq:5.22} has a solution $z\in P_0$ and, whenever it does, returns one such $z$. Thus we determine exactly which cosets $c_jP_0$ meet $\operatorname{Stab}_P(s)$ and obtain an explicit element
\[
 r_j=c_jz\in c_jP_0\cap\operatorname{Stab}_P(s)
\]
for every non-empty intersection.

Lemma~\ref{lem:affineabelian}\textup{(b)}, applied to $s$, computes a finite generating set for
\[
 S_0=\operatorname{Stab}_{P_0}(s)=\operatorname{Stab}_P(s)\cap P_0
\]
as words in $z_1^{\pm1},\ldots,z_\ell^{\pm1}$, and hence as words in $p_1^{\pm1},\ldots,p_r^{\pm1}$. Finally, whenever $c_jP_0\cap\operatorname{Stab}_P(s)$ is non-empty, one has
\[
 c_jP_0\cap\operatorname{Stab}_P(s)=r_jS_0.
\]
Indeed, if $g$ belongs to this intersection, then $r_j^{-1}g\in P_0$, and since both $r_j$ and $g$ fix $s$, also $r_j^{-1}g$ fixes $s$; hence $r_j^{-1}g\in S_0$. The reverse inclusion is immediate. Therefore the generators of $S_0$, together with one element $r_j$ from every non-empty coset intersection, form a finite generating set for $\operatorname{Stab}_P(s)$. All of them are retained as explicit words in $p_1^{\pm1},\ldots,p_r^{\pm1}$.
\end{proof}

\subsection{Lifting through the nilpotent subgroup}

Put
$$E=H/K_0,\qquad
 M=N_0/K_0,\qquad
 M_i=N_iK_0/K_0.
$$
Membership in $K_0$ is decidable in $H$, so equality of words in the quotient $E=H/K_0$ is decidable. Since $K_0\leq N_0$, the natural quotient map induces a canonical isomorphism $E/M\simeq H/N_0=P$. We use this isomorphism to identify $E/M$ with $P$ throughout; in particular, the images of the fixed generators are represented by the same words in the two quotients.

Each $N_i$ is characteristic in $N_0$. Thus, since $N_0\trianglelefteq H$ and $K_0\trianglelefteq H$, every $N_iK_0/K_0$ is normal in $E$. Consequently,
\begin{equation}
 M=M_1\geq M_2\geq\ldots\geq M_{c+1}=1,
 \qquad
 M_i/M_{i+1}\simeq A_i
 \label{eq:6.2}
\end{equation}
is a normal series in $E$. The displayed isomorphism is the canonical one induced by
$$
(nK_0)M_{i+1}\longmapsto nN_{i+1}K_0
\qquad (n\in N_iK_0),
$$
and we use it to identify $M_i/M_{i+1}$ with $A_i$ throughout. By \eqref{eq:2.1},
\begin{equation}
 [M,M_i]\leq M_{i+1},
 \label{eq:6.3}
\end{equation}
so, under this identification, $A_i$ is central in $M/M_{i+1}$.

For $x\in E$ let
\begin{equation}
 D_i(x)=\{d\in E:[x,d]\in M_i\},
 \label{eq:6.4}
\end{equation}
be the inverse image of $C_{E/M_i}(xM_i)$, and put $L_i(x)=D_i(x)\cap M$. Let
\begin{equation}
 \Lambda_i(x)=\{n\in N_0:nK_0\in L_i(x)\}
 \label{eq:6.6}
\end{equation}
be its inverse image in $N_0$ under $N_0\twoheadrightarrow M$. Under the fixed identification $E/M=P$, the image $D_i(x)M/M$ is a subgroup of the finitely generated virtually abelian group $P$ and is therefore finitely generated.

For the current element $x$, let $\widehat x\in H$ be the word lift retained by the algorithm. Since $N_0\trianglelefteq H$, conjugation by $\widehat x$ preserves $N_0$ and therefore extends, for every prime $p$, to an automorphism of $(N_0)_p$. For $u\in(N_0)_p$ put
$$
d_x(u)=(u^{\widehat x})^{-1}u.
$$

The induction at stage $i$ maintains the following data and properties:
\begin{enumerate}[label=\textup{(I\arabic*)}]
\item finite generators of $D_i(x)M/M$ and chosen lifts to $D_i(x)$;
\item a membership algorithm for $\Lambda_i(x)$;
\item a finite rational row description of $(\Lambda_i(x))_{\Q}$;
\item for every prime $p$, the identity
\begin{equation}
(\Lambda_i(x))_p
=
\{u\in(N_0)_p:d_x(u)\in(N_iK_0)_p\}.
\label{eq:6.padic-invariant}
\end{equation}
\end{enumerate}

To initialize the induction modulo $M$ and to obtain the corresponding centralizer data, we shall use the following elementary algorithm in the fixed virtually abelian quotient $P=E/M$.

\begin{lemma}\label{lem:virtabelconj}
In the fixed finitely generated virtually abelian group $P=H/N_0$ there are algorithms with the following properties. Given $x,y\in P$ as words in the fixed generators of $P$, one decides whether $x^g=y$ for some $g\in P$ and, in the positive case, returns a word in the fixed generators representing such a $g$. Given $x\in P$ as a word in the fixed generators, one returns, as words in these generators, a finite generating set for $C_P(x)$.
\end{lemma}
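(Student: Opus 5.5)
We will reduce both problems to Theorem~\ref{thm:derivation}, taking $V=P$, the abelian group $A=Q_0\simeq\Z^d$ written additively, and a suitable derivation. First fix the data (i)--(iv) of Theorem~\ref{thm:derivation} for $P$. Choose a free abelian normal subgroup $Q_0$ of finite index, a basis $q_1,\ldots,q_d$, the finite quotient $F$ with its multiplication table, a transversal $\tau$, the cocycle vectors $\kappa(f,g)$, the conjugation matrices $\rho_f$, and the normal forms of the generators. All of these are finitely many fixed items, since $P$ is fixed. The normal-form procedure in the proof of Theorem~\ref{thm:derivation} then computes the normal form $q(\mathbf a)\tau(f)$ of any word, and hence decides equality in $P$.

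\textbf{Centralizers.} Given $x$, put $A=Q_0$ with the action of $P$ by conjugation. This action is computable, because conjugation by $q(\mathbf a)\tau(f)$ acts on $Q_0$ through $\rho_f$. Rationally, $A_{\Q}=\Q^d$ with the same integral matrices. For $g\in C_P(x)$ we have $x^g=x$. To make this a derivation condition, work coset by coset modulo $Q_0$.
\begin{itemize}
\item Let $\overline x$ be the image of $x$ in $F$. Compute, in the finite group $F$, the centralizer $C_F(\overline x)$, and pick preimages of its elements.
\item Every $g\in C_P(x)$ maps into $C_F(\overline x)$. Let $P'$ be the full preimage of $C_F(\overline x)$ in $P$; it is generated by $q_1,\ldots,q_d$ together with the chosen transversal elements, all given as words.
\item For $g\in P'$ we have $x^g\in xQ_0$. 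The map $\delta(g)=x^{-1}x^g\in Q_0$ is a derivation for the conjugation action, since $x^{-1}x^{gh}=(x^{-1}x^g)^h\,x^{-1}x^h$, and it is computable on generators by the normal-form procedure.
\item Then $C_P(x)=\{g\in P':\delta(g)=0\}=\operatorname{Stab}_{P'}(0)$ for the affine action $a*g=a^g+\delta(g)$.
\end{itemize}
Theorem~\ref{thm:derivation}(b), applied with $a=0$, returns finite generators of this stabilizer as words in the generators of $P'$, and therefore as words in the fixed generators.

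\textbf{Conjugacy.} Given $x,y$, compute their images $\overline x,\overline y$ in $F$ and search the finite group $F$ for all $f$ with $\overline x^{\,f}=\overline y$. If there are none, answer NO. Otherwise, for each such $f$ put $x_f=x^{\tau(f)}$; then $x_f$ and $y$ have the same image in $F$. A conjugating element of the form $g=\tau(f)h$ satisfies $x_f^h=y$, and this forces $h\in P'_f$, the preimage of $C_F(\overline y)$. On $P'_f$, use the derivation $\delta_f(h)=x_f^{-1}x_f^h\in Q_0$. The equation $x_f^h=y$ becomes $\delta_f(h)=x_f^{-1}y$, whose right-hand side lies in $Q_0$ and is computable, with its rational coordinates. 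Theorem~\ref{thm:derivation}(a) decides this equation and returns a word for $h$ when a solution exists. We then output $g=\tau(f)h$. Every conjugator $g$ has image some such $f$, so ranging over the finitely many $f$ is exhaustive.

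\textbf{Main obstacle.} We expect the main point to be checking that $\delta$ is indeed a derivation for the right action $a^g=g^{-1}ag$ with the conventions of \eqref{eq:5.8}. One must also confirm that hypotheses (v) and (vi) hold for the generators of $P'$, including inverses. Since $A$ is finitely generated free abelian, $\tor A=0$ and everything is explicit, so this reduces to bookkeeping with the normal-form multiplication rule.
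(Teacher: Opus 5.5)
Your proposal is correct, but it takes a genuinely different and much heavier route than the paper. The paper does not invoke Theorem~\ref{thm:derivation} at all. It fixes a free abelian normal subgroup $P_0$ of finite index and a transversal $c_1,\ldots,c_m$. Once $x^{c_j}P_0=yP_0$ has been checked in the finite quotient, the equation $x^{c_jz}=y$ with $z\in P_0$ is equivalent to $\theta_j(z)=(x^{c_j})^{-1}y$. Here $\theta_j(z)=[x^{c_j},z]$ is a homomorphism $P_0\to P_0$, because $P_0$ is abelian and normal. So conjugacy on each coset is an integral linear system, $C_P(x)\cap P_0=\Ker\theta_1$, and one representative from each non-empty coset intersection completes a generating set of $C_P(x)$.

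You instead package the same equations as derivation problems for subgroups $P'\leq P$ acting by conjugation on $A=Q_0$, and call Theorem~\ref{thm:derivation}(a),(b). That theorem in turn rests on Lemma~\ref{lem:affineabelian}, Lemma~\ref{lem:matrixorbit}, and ultimately the Babai--Beals--Cai--Ivanyos--Luks algorithm. This is logically sound, since Theorem~\ref{thm:derivation} is proved earlier and independently of this lemma. The checks you list are exactly the hypotheses required:
\begin{itemize}
\item the derivation identity, which uses $h\in P'$ so that $x^{-1}x^h\in Q_0$ commutes with $(x^{-1}x^g)^h$;
\item computability of the action and of $\delta$ on generators and their inverses, via normal forms;
\item integrality of the rational data.
\end{itemize}

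Your route buys economy: you only verify the hypotheses of an already proved theorem. It costs transparency and brings an unnecessary dependence on the matrix-equation algorithm. Inside the proof of Theorem~\ref{thm:derivation}, the subgroup $P'\cap Q_0=Q_0$ acts trivially on $A$. So $\delta|_{Q_0}$ is precisely a map of the form $\theta_j$, and the orbit problems handed to Lemma~\ref{lem:matrixorbit} are translations by elements of a lattice, i.e.\ integer linear algebra in disguise.

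A minor point about your conjugacy step: a single $f$ with $\bar x^{\,f}=\bar y$ already suffices. Indeed, $\tau(f)P'_f$ is the full preimage of $fC_F(\bar y)$, which contains the image of every conjugator. Looping over all such $f$ is harmless but redundant.
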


\begin{proof}
Since $P$ is fixed, fix once and for all a normal free abelian subgroup $P_0\trianglelefteq P$ of finite index, a basis $q_1,\ldots,q_d$ of $P_0$, and a finite transversal $c_1,\ldots,c_m$ for $P_0$ in $P$, with $c_1=1$.  Fix also the finite quotient $P/P_0$ and the images in it of the fixed generators of $P$.  Thus membership in a coset of $P_0$ is decidable.  We also fix a procedure which, given a word known to represent an element of $P_0$, computes its coordinate vector with respect to $q_1,\ldots,q_d$.  Such a procedure exists since $P$ is fixed: one may enumerate $(n_1,\ldots,n_d)\in\Z^d$ and use the Word Problem in $P$ until $w=q_1^{n_1}\cdots q_d^{n_d}$. The uniqueness of the expression in the basis $q_1,\ldots,q_d$ guarantees termination.

Let $x,y\in P$.  For each $j=1,\ldots,m$, every element of the coset $c_jP_0$ has the form $c_jz$ with $z\in P_0$.  Hence this coset contains an element conjugating~$x$ to $y$ precisely when $x^{c_jz}=y$. If $x^{c_j}P_0\neq yP_0$, which is decidable in the finite quotient $P/P_0$, then $x^{c_jz}=y$ has no solution since $z\in P_0$.  Otherwise $a=(x^{c_j})^{-1}y\in P_0$, and its coordinate vector with respect to $q_1,\ldots,q_d$ is computable by the fixed procedure above. For $z\in P_0$, define
$$
 \theta_j(z)=(x^{c_j})^{-1}(x^{c_j})^z=[x^{c_j},z].
$$
Since $P_0$ is normal, $\theta_j(z)\in P_0$, and since $P_0$ is abelian,
$$
 \theta_j(z_1z_2)=\theta_j(z_1)\theta_j(z_2)
 \qquad(z_1,z_2\in P_0).
$$
Thus $\theta_j:P_0\to P_0$ is a homomorphism.  Its matrix with respect to the basis $q_1,\ldots,q_d$ is computable: for each $k=1,\ldots,d$, compute the word representing $[x^{c_j},q_k]\in P_0$ and then compute its coordinate vector in the fixed basis.  The equation $x^{c_jz}=y$ is therefore equivalent to
\begin{equation}
 \theta_j(z)=a.
 \label{eq:6.11}
\end{equation}
In coordinates, \eqref{eq:6.11} is an integral linear system. Elementary integer linear algebra decides whether it has a solution and, when it does, computes the coordinate vector of one such $z\in P_0$.  Since there are only finitely many cosets $c_jP_0$, this decides whether $x$ and $y$ are conjugate in $P$ and returns a conjugating element when one exists.

For the centralizer, repeat the same calculation with $y=x$. For each $j$, the coset $c_jP_0$ meets $C_P(x)$ precisely when $x^{c_j}P_0=xP_0$ and $\theta_j(z)=(x^{c_j})^{-1}x$ has a solution in $P_0$.  The procedure above decides this and, whenever the intersection is non-empty, returns one element $r_j=c_jz\in C_P(x)\cap c_jP_0$. For the identity coset, the intersection is $C_P(x)\cap P_0=\Ker\theta_1$. The matrix of~$\theta_1$ is already computable, so elementary integer linear algebra computes a basis of this kernel.  Finally, whenever $C_P(x)\cap c_jP_0$ is non-empty and $r_j$ is the representative computed above, one has
\[
 C_P(x)\cap c_jP_0=r_j\bigl(C_P(x)\cap P_0\bigr).
\]
Hence a basis of $C_P(x)\cap P_0$, together with one such $r_j$ from
each non-empty coset intersection, forms a finite generating set for
$C_P(x)$.
\end{proof}

We can now combine the virtually abelian base case with the effective derivation machinery above and lift conjugacy successively through the series \eqref{eq:6.2}.

\begin{theorem}\label{thm:lifting}
There is an algorithm which, given $x,y\in E=H/K_0$ as words in the fixed generators of $E$, decides whether there exists $g\in E$ such that $x^g=y$. In the positive case it returns a word representing one such~$g$. At every level $i$ the algorithm also maintains \textup{(I1)}--\textup{(I4)} for the centralizer of the current element modulo $M_i$.
\end{theorem}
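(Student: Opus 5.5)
The algorithm is an induction on the level $i=1,\ldots,c+1$ of the series \eqref{eq:6.2}. At stage $i$ the current pair satisfies $xM_i=yM_i$, after $x$ has already been replaced by a conjugate. The data (I1)--(I4) are maintained for $D_i(x)$. At $i=c+1$ we have $M_{c+1}=1$, so $x=y$, and the product of the conjugators accumulated along the way is returned.

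\emph{Base case $i=1$.} Apply Lemma~\ref{lem:virtabelconj} in $P=E/M$. If $xM$ and $yM$ are not conjugate, answer NO. Otherwise obtain $c$ with $x^cM=yM$ and replace $x$ by $x^c$. The same lemma gives generators of $C_P(xM)=D_1(x)M/M$, and their word lifts give (I1). For $i=1$ we have $\Lambda_1(x)=N_0$: membership is decidable, $(N_0)_{\Q}$ has the trivial row description $u_{0,1},\ldots,u_{0,h_0}$, and (I4) holds trivially because $d_x(u)\in(N_0)_p=(N_1K_0)_p$.

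\emph{Inductive step.} Assume $xM_i=yM_i$ with (I1)--(I4) at level $i$. Write $a=x^{-1}y\in M_i$ and read it in $A_i=M_i/M_{i+1}$. We seek $t\in D_i(x)$ with $x^tM_{i+1}=yM_{i+1}$. Any such $t$ can be written as $t=dn$, where $d$ ranges over the chosen lifts of the generators of $D_i(x)M/M$ and $n\in L_i(x)$. The map $t\mapsto x^{-1}x^t\bmod M_{i+1}$ is a derivation $\delta$ of $D_i(x)$ into $A_i$. By \eqref{eq:6.3}, $M$ acts trivially on $A_i$, so the action factors through $D_i(x)M/M\le P$. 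The restriction of $\delta$ to $L_i(x)$ is a homomorphism $f:\Lambda_i(x)\to A_i$, namely $n\mapsto d_x(n)^{-1}$ read modulo $N_{i+1}K_0$, and its image $f(\Lambda_i(x))$ is a $D_i(x)$-invariant subgroup. I reduce the problem to the derivation equation in $\bar A=A_i/f(\Lambda_i(x))$ for the finitely generated virtually abelian group $D_i(x)M/M$, and solve it with Theorem~\ref{thm:derivation}(a). This requires three ingredients.

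\textbf{Effective equality in $\bar A$.} This is the key step. Compute $(\Ker f)_{\Q}$ and a basis of $f(\Lambda_i(x))_{\Q}$ with Lemma~\ref{lem:rationalkernel}, using (I3) and the rational coordinates of Lemma~\ref{lem:fixed}(i). For the $p$-adic side, decide membership in $f(\Lambda_i(x))_p$ by (I4): an element $b$ belongs to it iff there is $u\in(N_0)_p$ with $d_x(u)\in(N_iK_0)_p$ whose image in $(A_i)_p$ equals $b$. Parametrize $(N_0)_p$, $(N_i)_p$ and $(K_0)_p$ by Lemma~\ref{lem:fixedcharts}, and use Lemma~\ref{lem:padicproduct} to make the image in $(A_i)_p$ well defined. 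The condition then becomes a polynomial system over $\Qp$ with $\Zp$-constrained variables, and Lemma~\ref{lem:padicdecision} decides it uniformly in $p$. Since $f(\Lambda_i(x))$ is recursively enumerable, Lemma~\ref{lem:ratpadicmembership} now gives membership in $f(\Lambda_i(x))$, hence equality in $\bar A$. The rational data for $\bar A_{\Q}$ come from Lemma~\ref{lem:fixed}(i) by taking a further quotient.

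\textbf{Solving and updating.} Theorem~\ref{thm:derivation}(a) then decides whether $\delta(d)\equiv a$ modulo $f(\Lambda_i(x))$ for some $d$ in the group generated by (I1). If no such $d$ exists, answer NO. Otherwise the membership procedure of Lemma~\ref{lem:ratpadicmembership} is run to produce an explicit $n$. The new conjugator is $t=dn$, and we replace $x$ by $x^t$.

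\textbf{Re-establishing (I1)--(I4) at level $i+1$.} For (I1), Theorem~\ref{thm:derivation}(b) gives generators of the stabilizer of $0$ in $\bar A$, corrected by elements of $L_i(x)$ found by search. For (I2) and (I3), take $\Lambda_{i+1}=\Lambda_i\cap\Ker f$ (the kernel is decidable through the word problem) and obtain its rational row description from Lemma~\ref{lem:rationalkernel}. The change of base point is handled by Lemma~\ref{lem:transport}. For (I4), combine the old identity with exactness of $p$-completion (Lemma~\ref{lem:tensor}), giving $(\Ker f)_p=\Ker f_p$.

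\textbf{Main obstacle.} I expect the hardest part to be justifying (I4) at level $i+1$. In particular, one must show that $(\Lambda_i\cap\Ker f)_p$ equals the set cut out by the $p$-adic condition. This needs the closedness statement in Lemma~\ref{lem:padicproduct} together with a density argument, and I would attempt it first, via continuity of $d_x$ and injectivity of $p$-completion on subgroups.
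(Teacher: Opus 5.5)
Your proposal is correct and follows the paper's proof essentially step for step: the base case via Lemma~\ref{lem:virtabelconj}; the derivation equation reduced modulo $I_i=\delta_i(L_i(x))$ and solved with Theorem~\ref{thm:derivation}; membership in $I_i$ via Lemmas~\ref{lem:rationalkernel}, \ref{lem:fixedcharts}, \ref{lem:padicdecision} and \ref{lem:ratpadicmembership} together with (I4); and the update of the invariants through Theorem~\ref{thm:derivation}(b), $\Lambda_{i+1}(x)=\Ker\widetilde\delta_i$, Lemma~\ref{lem:transport} and exactness of $p$-completion. Two small fixes: first, the restriction of $\delta_i$ is $n\mapsto d_x(n)$ modulo $N_{i+1}K_0$, not $d_x(n)^{-1}$, which matters when you search for $n$ with $\widetilde\delta_i(n)=a-\delta_i(d)$; second, with $q$ ranging over $(N_i)_p$ the condition ``image in $(A_i)_p$ equals $b$'' needs the additive presentation of $(W_i)_p$ from Lemma~\ref{lem:fixed}(ii) as extra variables, whereas the paper avoids this by writing the test as $d_x(u)=n_bqk$ with $q\in(N_{i+1})_p$ and $k\in(K_0)_p$.
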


\begin{proof}
We shall apply Theorem~\ref{thm:derivation} with fixed ambient group
$$
V=P=H/N_0\simeq E/M.
$$
Choose once and for all the finite effective data \textup{(i)}--\textup{(iv)} required by Theorem~\ref{thm:derivation} for this fixed group $P$. These data depend only on $P$ and are therefore fixed independently of the inputs and of the stage of the induction.

At the $i$-th stage of the induction, the variable subgroup to which Theorem~\ref{thm:derivation} is applied will be
$$
D_i(x)/L_i(x)\simeq D_i(x)M/M\leq E/M\simeq P,
$$
with generators and chosen lifts supplied by \textup{(I1)}.

\smallskip

First apply Lemma~\ref{lem:virtabelconj} in $E/M\simeq P$ to $xM$ and $yM$. If these elements are not conjugate, the algorithm returns NO. Otherwise Lemma~\ref{lem:virtabelconj} returns a word $w$ in the fixed generators of $E/M$ representing an element $c_0M$ such that $(xM)^{c_0M}=yM$. Replacing in $w$ each generator of $E/M$ by its fixed lift to $E$ gives a word representing an element $c_0\in E$ whose image in $E/M$ is $c_0M$. Let $x_0$ denote the original input $x$. Replace $x$ by the conjugate $x_0^{c_0}$ and put $\gamma=c_0$, so $xM=yM$. The corresponding word representing the current $x$ in $E$ determines, by replacing each generator of $E$ by its fixed lift to $H$, a word lift $\widehat x\in H$.

The centralizer part of Lemma~\ref{lem:virtabelconj}, applied to $xM\in E/M$, gives generators of $D_1(x)/M=C_{E/M}(xM)$ as words in the fixed generators of $E/M$. Replacing in these words each generator of $E/M$ by its fixed lift to $E$ gives elements of $D_1(x)$ whose images generate $D_1(x)/M$. This gives \textup{(I1)} at level~$1$.

Since $M_1=M$, every element of $M$ belongs to $D_1(x)$, and hence
$$
L_1(x)=D_1(x)\cap M=M.
$$
Therefore its inverse image in $N_0$ is $\Lambda_1(x)=N_0$. Membership in $N_0$ is decidable by Lemma~\ref{lem:fixed}\textup{(i)}, and the fixed coordinate elements $u_{0,1},\ldots,u_{0,h_0}$ form a finite rational row description of $(N_0)_{\Q}$ by Lemma~\ref{lem:coordinates}. Hence \textup{(I2)}--\textup{(I3)} hold at level $1$.

Finally, \textup{(I4)} also holds at level $1$. Indeed, $N_1=N_0$ and $K_0\leq N_0$, so $N_1K_0=N_0$. Since $N_0\trianglelefteq H$, conjugation by the fixed lift $\widehat x$ preserves $(N_0)_p$. Thus, for every $u\in(N_0)_p$,
$$
d_x(u)=(u^{\widehat x})^{-1}u\in(N_0)_p=(N_1K_0)_p.
$$
Consequently
$$
(\Lambda_1(x))_p=(N_0)_p=\{u\in(N_0)_p:d_x(u)\in(N_1K_0)_p\},
$$
which is \eqref{eq:6.padic-invariant} for $i=1$. We therefore begin the induction with
\begin{equation}
xM_1=yM_1
\quad\text{and data \textup{(I1)}--\textup{(I4)} for }D_1(x).
\label{eq:6.13}
\end{equation}

Assume that, for some $i$, the current element $x$ satisfies $xM_i=yM_i$ and that \textup{(I1)}--\textup{(I4)} have been constructed for $D_i(x)$. We pass from $M_i$ to $M_{i+1}$ by solving one explicit lifting equation.

We use throughout the natural identification $M_i/M_{i+1}\simeq A_i=V_i/W_i$ induced by $nN_{i+1}K_0\mapsto(nN_{i+1})+W_i$ for $n\in N_i$. Put
\begin{equation}
a=x^{-1}yM_{i+1}\in A_i.
\label{eq:6.14}
\end{equation}
Lift the word $x^{-1}y$ to the fixed generators of $H$, and let $h\in H$ be the element represented by this word. Since $xM_i=yM_i$, we have $h\in N_iK_0$. The fixed subgroups $N_i$ and $K_0$ are recursively enumerable. Dovetail their enumerations and use the Word Problem in $H$ until a pair $n\in N_i$, $k\in K_0$ satisfying $h=nk$ is found. Such a pair exists, so the search terminates. Under the identification above, $a=nN_{i+1}K_0\in A_i$. This element is independent of the successful decomposition: if $nk=n'k'$, then $n^{-1}n'=kk'^{-1}\in N_i\cap K_0=J_i$, and hence $n$ and $n'$ have the same image in $V_i/W_i$. Lemma~\ref{lem:fixed}\textup{(i)} computes the coordinate vector of $nN_{i+1}$ in $(V_i)_{\Q}$ with respect to $s_{i,1},\ldots,s_{i,r_i}$. Applying the fixed quotient matrix $R_i^{\mathrm{quot}}$ then gives the coordinate vector of $a\otimes1$ in the fixed basis of $(A_i)_{\Q}\simeq(V_i)_{\Q}/(W_i)_{\Q}$.

Put $D=D_i(x)$ and $L=L_i(x)$. Since $d\in D$ implies $x^{-1}x^d\in M_i$, the formula
$$
\delta_i:D\longrightarrow A_i,\qquad \delta_i(d)=x^{-1}x^dM_{i+1}
$$
is well-defined. The group $D$ acts on $A_i$ by conjugation. For $d,e\in D$, direct calculation gives
\begin{equation}
\delta_i(de)=\delta_i(d)^e+\delta_i(e),
\label{eq:6.16}
\end{equation}
so $\delta_i$ is a derivation. Since $L\leq M$ and $[M,M_i]\leq M_{i+1}$ by \eqref{eq:6.3}, the action of $L$ on $A_i$ is trivial. Consequently
$$
\delta_i(\ell_1\ell_2)=\delta_i(\ell_1)+\delta_i(\ell_2)\qquad(\ell_1,\ell_2\in L).
$$

For $n\in\Lambda_i(x)$, the element $d_x(n)$ introduced above is represented in $H$ by
\begin{equation}
d_x(n)=(n^{\widehat x})^{-1}n=\widehat x^{-1}n^{-1}\widehat x n.
\label{eq:6.18}
\end{equation}
Its image in $E$ is $x^{-1}x^{\,nK_0}$. Since $nK_0\in L_i(x)\leq D_i(x)$, the definition of~$D_i(x)$ gives $x^{-1}x^{\,nK_0}\in M_i$. On the other hand, the image of $d_x(n)$ in $E=H/K_0$ is $d_x(n)K_0=x^{-1}x^{\,nK_0}$. Hence $d_x(n)K_0\in M_i=N_iK_0/K_0$, and therefore $d_x(n)\in N_iK_0$. Dovetail the fixed enumerations of $N_i$ and $K_0$ and use the Word Problem in $H$ until $d_x(n)=qk$ for some $q\in N_i$ and $k\in K_0$. The search terminates, and we define
\begin{equation}
\widetilde\delta_i(n)=qN_{i+1}K_0\in A_i.
\label{eq:6.20}
\end{equation}
This value is independent of the successful decomposition. Indeed, if $qk=q'k'$, then $q^{-1}q'=kk'^{-1}\in N_i\cap K_0=J_i$, whose image in $V_i=N_i/N_{i+1}$ lies in $W_i=J_iN_{i+1}/N_{i+1}$.

If $\ell=nK_0\in L$, then \eqref{eq:6.18}--\eqref{eq:6.20} show, under the fixed identification of $M_i/M_{i+1}$ with $A_i$, that $\widetilde\delta_i(n)=\delta_i(\ell)$. Hence
$$
\widetilde\delta_i(n_1n_2)=\widetilde\delta_i(n_1)+\widetilde\delta_i(n_2),
$$
and
\begin{equation}
I_i:=\delta_i(L)
=\widetilde\delta_i(\Lambda_i(x))
\leq A_i.
\label{eq:6.22}
\end{equation}
By \textup{(I2)}, the elements of $\Lambda_i(x)$ can be effectively enumerated, and for each such element $n$, the procedure \eqref{eq:6.18}--\eqref{eq:6.20} computes $\widetilde\delta_i(n)$, so $I_i$ is recursively enumerable.

The subgroup $I_i$ is $D$-invariant. In fact, if $\ell\in L=D\cap M$ and $d\in D$, then $\ell^d\in L$, and using \eqref{eq:6.16}, we obtain
$$
\begin{array}{c}
\delta_i(\ell^d)
=\delta_i(d^{-1}\ell d)
=\delta_i(d^{-1})^{\ell d}+\delta_i(\ell)^d+\delta_i(d)
\\[0.2cm]
=\delta_i(d^{-1})^d+\delta_i(\ell)^d+\delta_i(d)
=\delta_i(\ell)^d.
\end{array}
$$

We first compute the rational data needed to decide membership in $I_i$. By \textup{(I2)}, the subgroup $\Lambda_i(x)$ is recursively enumerable, while \textup{(I3)} gives a finite rational row description of $(\Lambda_i(x))_{\Q}$. Moreover, given the coordinate vector of an element $n\in\Lambda_i(x)$ with respect to $u_{0,1},\ldots,u_{0,h_0}$, one can recover a word representing $n$ as follows. Enumerate the elements of $N_0$ as words in the fixed generators of $H$. For each such word, Lemma~\ref{lem:wordcoords} computes its coordinate vector with respect to $u_{0,1},\ldots,u_{0,h_0}$. Comparing these vectors with the given one, the search terminates when the word representing $n$ is reached, by uniqueness of the coordinate expression in Lemma~\ref{lem:coordinates}. The procedure \eqref{eq:6.18}--\eqref{eq:6.20} then computes $\widetilde\delta_i(n)$ together with a word representing an element $q\in N_i$ such that $\widetilde\delta_i(n)=qN_{i+1}K_0$. Lemma~\ref{lem:fixed}\textup{(i)} computes the coordinate vector of $qN_{i+1}$ in $(V_i)_{\Q}$ with respect to $s_{i,1},\ldots,s_{i,r_i}$, and applying the fixed quotient matrix gives the coordinate vector of $\widetilde\delta_i(n)\otimes1$ in the fixed basis of $(A_i)_{\Q}\simeq(V_i)_{\Q}/(W_i)_{\Q}$. Thus all the hypotheses of Lemma~\ref{lem:rationalkernel} are satisfied for $\widetilde\delta_i:\Lambda_i(x)\to A_i$, and that lemma computes
\begin{equation}
(\Ker\widetilde\delta_i)_{\Q}
\quad\text{and}\quad
(I_i)_{\Q}.
\label{eq:6.24}
\end{equation}

Fix a prime $p$. Tensor exactness applied to the epimorphism \hbox{$\widetilde\delta_i:\Lambda_i(x)\twoheadrightarrow I_i$,} together with Lemma~\ref{lem:tensor}\textup{(i)} for $I_i\leq A_i$, gives
\begin{equation}
\widetilde\delta_{i,p}:
(\Lambda_i(x))_p\longrightarrow(A_i)_p
\label{eq:6.25}
\end{equation}
with image exactly $(I_i)_p$. For $u\in(\Lambda_i(x))_p$, define, using the same word lift $\widehat x$, $d_x(u)=(u^{\widehat x})^{-1}u$. By the definition of tensor $p$-completion and its direct-limit topology, $\Lambda_i(x)$ is dense in $(\Lambda_i(x))_p$. Since $d_x(n)\in N_iK_0$ for every $n\in\Lambda_i(x)$, continuity of conjugation by $\widehat x$, inversion and multiplication, together with the closedness of $(N_iK_0)_p$ in $(N_0)_p$, therefore gives
$$
d_x(u)\in(N_iK_0)_p\qquad\bigl(u\in(\Lambda_i(x))_p\bigr).
$$
By Lemma~\ref{lem:padicproduct}, write $d_x(u)=qk$ with $q\in(N_i)_p$ and $k\in(K_0)_p$. On $\Lambda_i(x)$, the map $\widetilde\delta_i$ is obtained by composing $d_x$ with the natural map
$$
N_iK_0\longrightarrow A_i=N_iK_0/N_{i+1}K_0.
$$
Hence its continuous extension $\widetilde\delta_{i,p}$ is obtained in the same way on $(\Lambda_i(x))_p$. Since the factor $k\in(K_0)_p$ maps trivially to $(A_i)_p$, and under the fixed identification
$$
(A_i)_p\simeq(V_i)_p/(W_i)_p,
$$
the image of $q$ is represented by $\bigl(q(N_{i+1})_p\bigr)+(W_i)_p$, so
\begin{equation}
\widetilde\delta_{i,p}(u)
=\bigl(q(N_{i+1})_p\bigr)+(W_i)_p.
\label{eq:6.28}
\end{equation}

We now give a terminating membership test for $(I_i)_p$. Let $b\in A_i$ be given in its fixed representation, say $b=n_bN_{i+1}K_0$ with $n_b\in N_i$. We claim that
\begin{equation}
\begin{split}
b\in(I_i)_p
\quad\Longleftrightarrow\quad
&\text{there exist }u\in(N_0)_p,\ q\in(N_{i+1})_p,\
k\in(K_0)_p\\
&\text{such that }d_x(u)=n_bqk.
\end{split}
\label{eq:6.padic-image-test}
\end{equation}

Suppose first that $b\in(I_i)_p$. By \eqref{eq:6.25}, choose $u\in(\Lambda_i(x))_p$ such that $\widetilde\delta_{i,p}(u)=b$. Since $\widetilde\delta_{i,p}(u)$ is the image of $d_x(u)$ in $(A_i)_p$, while $b$ is the image of $n_b$, we have
$$
n_b^{-1}d_x(u)\in(N_{i+1}K_0)_p.
$$
Lemma~\ref{lem:padicproduct}, applied at level $i+1$, gives $n_b^{-1}d_x(u)=qk$ for some $q\in(N_{i+1})_p$ and $k\in(K_0)_p$, and hence $d_x(u)=n_bqk$.

Conversely, suppose that $d_x(u)=n_bqk$ with $u\in(N_0)_p$, $q\in(N_{i+1})_p$ and $k\in(K_0)_p$. Since $n_b\in N_i$ and $N_{i+1}\leq N_i$, this equality implies $d_x(u)\in(N_iK_0)_p$. The induction hypothesis \eqref{eq:6.padic-invariant} therefore gives $u\in(\Lambda_i(x))_p$. Moreover, $n_b^{-1}d_x(u)=qk\in(N_{i+1}K_0)_p$, so $d_x(u)$ and $n_b$ have the same image in $(A_i)_p$. Hence $\widetilde\delta_{i,p}(u)=b$, and therefore $b\in(I_i)_p$. The claim is proved.

Apply Lemma~\ref{lem:fixedcharts} to the three fixed subgroups $N_0$, $N_{i+1}$ and $K_0$. For the given prime $p$, write the corresponding surjective parameterizations as
$$
\Theta_{N_0,p},\qquad \Theta_{N_{i+1},p},\qquad \Theta_{K_0,p}.
$$
Thus the variables $u$, $q$ and $k$ occurring in \eqref{eq:6.padic-image-test} may be replaced by the corresponding $\Qp$- and $\Zp$-parameters in these three maps. Since the parameterizations are surjective, this replacement does not change the existence condition in \eqref{eq:6.padic-image-test}.

In the fixed $\Qp$-extension of the coordinate system of $N_0$, the coordinate vectors of the three parameterizations are polynomial functions of their parameters by Lemma~\ref{lem:fixedcharts}. Moreover, multiplication and inversion are given by the fixed coordinate polynomials. Conjugation by the current word $\widehat x$ is also given effectively by a polynomial map, obtained by composing the fixed conjugation maps associated with the letters of $\widehat x$. Finally, the coordinate vector of $n_b\in N_i\leq N_0$ is computable from its fixed word representative. Consequently, after substituting the three parameterizations, the equality
$$
d_x(u)=n_bqk
$$
is equivalent to a finite system of polynomial equations over $\Qp$, obtained by equating the corresponding coordinate vectors.

Put
$$
\Sigma=\bigcup_{Y\in\mathcal Y}\Sigma_Y.
$$
Since $\mathcal Y$ is finite and each $\Sigma_Y$ is finite by Lemma~\ref{lem:fixedcharts}, the set $\Sigma$ is fixed and finite. For $p\notin\Sigma$, Lemma~\ref{lem:fixedcharts} gives $r(Y,p)=0$ for each of the three groups $Y=N_0,N_{i+1},K_0$, and the fixed elements occurring in their parameterizations have rational coordinate vectors. The coordinate polynomials for multiplication, inversion and conjugation have rational coefficients, and the coordinate vector of $n_b$ is also rational and effectively computable. Hence the polynomial system obtained from \eqref{eq:6.padic-image-test} has coefficients in $\Q$.

Now suppose that $p\in\Sigma$. For each $Y\in\mathcal Y$, Lemma~\ref{lem:fixedcharts} places the coordinates of the finitely many elements occurring in $\Theta_{Y,p}$ in a finitely generated exact subfield of $\Qp$ of the type fixed in Convention~\ref{def:fixeddata}. Since $\mathcal Y$ is finite, the same lemma allows us, for this fixed prime $p$, to replace these fields by one common finitely generated exact coefficient field $E_p\leq\Qp$. All remaining coefficients entering the polynomial system, those coming from the fixed multiplication, inversion and conjugation polynomials and from the coordinate vector of $n_b$, are rational, and therefore also belong to $E_p$.

Thus, if we put $E_p=\Q$ for $p\notin\Sigma$, then for every prime $p$ the system corresponding to \eqref{eq:6.padic-image-test} consists of finitely many polynomial equations with coefficients given in the fixed exact representation of $E_p$. The parameters arising from the $\Qp$-factors in the three parameterizations are unrestricted variables in $\Qp$, while those arising from the $\Zp$-factors are required to belong to $\Zp$. These are exactly the hypotheses of Lemma~\ref{lem:padicdecision}. Consequently, with the current lift $\widehat x$ fixed at this stage of the induction, Lemma~\ref{lem:padicdecision} gives a single terminating procedure which, on input $p$ and the represented element $b\in A_i$, decides whether \eqref{eq:6.padic-image-test} has a solution, and hence whether $b\in(I_i)_p$.

We now verify the hypotheses needed to decide membership in $I_i$. We represent an element of
$$
A_i=N_iK_0/N_{i+1}K_0\simeq V_i/W_i
$$
by a word representing some $n\in N_i$, with the corresponding element of $A_i$ written $nN_{i+1}K_0$. Products and inverses are computed by multiplying and inverting such representatives. If $a=nN_{i+1}K_0$ and $a'=n'N_{i+1}K_0$, then
$$
a=a'\quad\Longleftrightarrow\quad n^{-1}n'\in N_{i+1}K_0\quad\Longleftrightarrow\quad(n^{-1}n')N_{i+1}\in W_i.
$$
Lemma~\ref{lem:fixed}\textup{(i)} computes the coordinates of $(n^{-1}n')N_{i+1}$ in $V_i$ and decides membership in $W_i$. Thus $A_i$ has a fixed effective representation in which equality is decidable. Moreover, $A_i$ is abelian minimax.

We apply Lemma~\ref{lem:ratpadicmembership} with $A=A_i$ and $B=I_i$. The subgroup $I_i$ is recursively enumerable. For the rational membership test, Lemma~\ref{lem:rationalkernel} has computed in \eqref{eq:6.24} a basis of the $\Q$-vector space $(I_i)_{\Q}\leq(A_i)_{\Q}$. Given $b\in A_i$, represented by $n_b\in N_i$, Lemma~\ref{lem:fixed}\textup{(i)}, together with the fixed quotient matrix $(V_i)_{\Q}\to(V_i)_{\Q}/(W_i)_{\Q}\simeq(A_i)_{\Q}$, computes the coordinate vector of $b\otimes1$ in $(A_i)_{\Q}$. Membership of $b\otimes1$ in $(I_i)_{\Q}$ is therefore decidable by rational linear algebra. For every prime $p$, membership of $b$ in $(I_i)_p$ is decided, uniformly in $p$ and $b$, by \eqref{eq:6.padic-image-test} and the procedure constructed above. All the hypotheses of Lemma~\ref{lem:ratpadicmembership} are therefore satisfied, and membership in~$I_i$ is decidable.

\smallskip

It follows that $A_i/I_i$ also has an effective representation: an element is represented by a representative in $A_i$, the group operations are induced by those of $A_i$, and $a+I_i=a'+I_i$ if and only if $a'-a\in I_i$, which is decidable by the membership algorithm just obtained. Since $A_i$ is abelian minimax, so is~$A_i/I_i$. Moreover, exactness of tensoring with $\Q$ gives
$$
(A_i/I_i)_{\Q}\simeq(A_i)_{\Q}/(I_i)_{\Q}.
$$
Also, the subgroup $I_i$ is invariant under the action of $D$ on $A_i$, so this action descends to $A_i/I_i$. Furthermore, $L$ acts trivially on $A_i$, and hence the induced action on $A_i/I_i$ factors through $D/L$.

Now, if $dL=d'L$, then $d'=d\ell$ for some $\ell\in L$, and the derivation identity \eqref{eq:6.16}, together with the trivial action of $L$ on $A_i$, gives
$$
\delta_i(d')=\delta_i(d\ell)=\delta_i(d)+\delta_i(\ell).
$$
Since $\delta_i(\ell)\in I_i$, the coset $\delta_i(d)+I_i$ depends only on $dL$. Therefore
\begin{equation}
\bar\delta_i:D/L\longrightarrow A_i/I_i,
\qquad
\bar\delta_i(dL)=\delta_i(d)+I_i
\label{eq:6.29}
\end{equation}
is well-defined. The identity \eqref{eq:6.16}, together with the induced action of $D/L$ on $A_i/I_i$, shows that $\bar\delta_i$ is a derivation.

The quotient map $D\to E/M\simeq P$ has kernel $D\cap M=L$. Hence the map $dL\mapsto dM$ identifies $D/L$ with the finitely generated subgroup $DM/M\leq P$. The generators and the words representing them in the fixed generators of~$P$ are precisely those stored in \textup{(I1)}. We now verify the remaining effective data required to apply Theorem~\ref{thm:derivation} with the fixed ambient group $V=P$ and the subgroup $D/L\leq P$.

For every stored generator $dL$ of $D/L$, a word lift $d\in D\leq E$ is known. Conjugation by $d$ and by $d^{-1}$ on $A_i$ is therefore computable from words. Moreover, since $d,d^{-1}\in D$, the elements $x^{-1}x^d$ and $x^{-1}x^{d^{-1}}$ belong to $M_i$; applying the same effective decomposition in $N_iK_0$ used above to define $\widetilde\delta_i$ computes $\delta_i(d)$ and $\delta_i(d^{-1})$. Passing modulo $I_i$ gives the action and the values of $\bar\delta_i$ required in the effective representation of $A_i/I_i$.

Rationally, Lemma~\ref{lem:fixed}\textup{(i)} supplies a fixed basis of $(A_i)_{\Q}\simeq(V_i)_{\Q}/(W_i)_{\Q}$ and the corresponding quotient map. From the fixed coordinate polynomials on $(N_0)_{\Q}$ and the word representing $d$, we compute the matrix of conjugation by $d$ on $(A_i)_{\Q}$ and the coordinate vector of $\delta_i(d)\otimes1$. Equation \eqref{eq:6.24} gives a basis of the $D$-invariant subspace $(I_i)_{\Q}$. Rational linear algebra therefore computes a basis of $(A_i)_{\Q}/(I_i)_{\Q}$, the corresponding quotient matrix, the induced action matrices, and the coordinate vectors of $\bar\delta_i(dL)\otimes1$.

The same quotient matrix, applied to the coordinate vector of $a$ computed after \eqref{eq:6.14}, gives the coordinate vector of $(a+I_i)\otimes1$ in the chosen basis of~$(A_i/I_i)_{\Q}$. Thus all the hypotheses, including the required coordinate data for the input element $a+I_i$, of Theorem~\ref{thm:derivation}\textup{(a)} are satisfied.

We now decide whether there exists $g\in E$ such that $x^gM_{i+1}=yM_{i+1}$. It is enough to consider elements of $D=D_i(x)$. Indeed, if $g\in E$ satisfies $x^gM_{i+1}=yM_{i+1}$, then, since $M_{i+1}\leq M_i$ and the induction hypothesis gives $xM_i=yM_i$, we obtain
$$
x^gM_i=yM_i=xM_i.
$$
Equivalently, $x^{-1}x^g\in M_i$, so by the definition of $D_i(x)$ we have $g\in D$.

For $d\in D$, recall that
$$
\delta_i(d)=x^{-1}x^dM_{i+1}\qquad\text{and}\qquad a=x^{-1}yM_{i+1}.
$$
Hence
\begin{equation}
x^dM_{i+1}=yM_{i+1}
\quad\Longleftrightarrow\quad
\delta_i(d)=a.
\label{eq:6.30}
\end{equation}
We claim that
\begin{equation}
\exists\,d\in D\text{ such that }\delta_i(d)=a
\quad\Longleftrightarrow\quad
a+I_i\in\bar\delta_i(D/L).
\label{eq:6.31}
\end{equation}
For the forward implication, if $\delta_i(d)=a$, then by the definition of $\bar\delta_i$,
$$
\bar\delta_i(dL)=\delta_i(d)+I_i=a+I_i.
$$
Conversely, suppose that $\bar\delta_i(dL)=a+I_i$ for some $dL\in D/L$. Then
$$
a-\delta_i(d)\in I_i.
$$
By \eqref{eq:6.22}, $I_i=\delta_i(L)$, so there exists $\ell\in L$ such that
$$
\delta_i(\ell)=a-\delta_i(d).
$$
Since $L$ acts trivially on $A_i$, the derivation identity \eqref{eq:6.16} gives
$$
\delta_i(d\ell)=\delta_i(d)^\ell+\delta_i(\ell)=\delta_i(d)+\delta_i(\ell)=a.
$$
Thus $d\ell\in D$ satisfies \eqref{eq:6.31}.

By the preceding verification, all the hypotheses of Theorem~\ref{thm:derivation}\textup{(a)} are satisfied with fixed ambient group $V=P$, subgroup $D/L\leq P$, abelian minimax group $A_i/I_i$, derivation $\bar\delta_i$, and target element $a+I_i$, together with the computed coordinate vector of $(a+I_i)\otimes1$. The theorem therefore decides whether \eqref{eq:6.31} holds. If the answer is negative, then by \eqref{eq:6.31} and \eqref{eq:6.30} there is no $d\in D$ satisfying $x^dM_{i+1}=yM_{i+1}$; by the first paragraph, there is consequently no $g\in E$ satisfying this equality.

Suppose now that the answer is positive. Theorem~\ref{thm:derivation} returns a word in the stored generators of $D/L$ representing a coset $dL$ such that
$$
\bar\delta_i(dL)=a+I_i.
$$
Evaluating this word on the chosen lifts stored in \textup{(I1)} produces a word representing an element $d\in D$ with the prescribed coset $dL$. By the definition of $\bar\delta_i$,
\begin{equation}
a-\delta_i(d)\in I_i.
\label{eq:6.32}
\end{equation}

We now construct an element of $L$ whose image under $\delta_i$ is $a-\delta_i(d)$. Use the fixed effective enumeration of $N_0$ by words in the generators $h_1^{\pm1},\ldots,h_m^{\pm1}$ of $H$, and retain exactly those enumerated elements that belong to $\Lambda_i(x)$, using the membership algorithm \textup{(I2)}. For each retained element $n$, compute $\widetilde\delta_i(n)$ by the procedure \eqref{eq:6.18}--\eqref{eq:6.20}, and compare it in $A_i$ with the computable element $a-\delta_i(d)$. Equality in $A_i$ is decidable by the procedure established above. Stop when
\begin{equation}
\widetilde\delta_i(n)=a-\delta_i(d).
\label{eq:6.33}
\end{equation}
This search terminates. For the resulting $n\in\Lambda_i(x)$, put $\ell=nK_0\in L$. By the definition of $\widetilde\delta_i$,
$$
\delta_i(\ell)=\widetilde\delta_i(n).
$$
Since $\ell\in L$ and $L$ acts trivially on $A_i$, \eqref{eq:6.16} and \eqref{eq:6.33} give
$$
\delta_i(d\ell)=\delta_i(d)^\ell+\delta_i(\ell)=\delta_i(d)+\widetilde\delta_i(n)=a.
$$
Therefore, with $t=d\ell\in D$, equation~\eqref{eq:6.30} gives
$$
x^tM_{i+1}=yM_{i+1}.
$$
A word representing $t$ in $E$ is obtained by multiplying the word representing $d$ by the image in $E=H/K_0$ of the word representing $n$.

\smallskip

It remains to construct the data \textup{(I1)}--\textup{(I4)} at level $i+1$. We first determine the image of $D_{i+1}(x)$ in $D/L$. Recall that $D=D_i(x)$ and $L=D\cap M$. Since $M_{i+1}\leq M_i$, one has $D_{i+1}(x)\leq D$. For $d\in D$, the coset $dL$ contains an element of $D_{i+1}(x)$ if and only if
\begin{equation}
\bar\delta_i(dL)=0.
\label{eq:6.35}
\end{equation}
Indeed, every representative of $dL$ has the form $d\ell$ with $\ell\in L$, and $d\ell\in D_{i+1}(x)$ is equivalent, by the definition of $\delta_i$, to $\delta_i(d\ell)=0$. Since $L$ acts trivially on $A_i$, \eqref{eq:6.16} gives $\delta_i(d\ell)=\delta_i(d)+\delta_i(\ell)$. Thus such an $\ell$ exists if and only if $-\delta_i(d)\in\delta_i(L)=I_i$, equivalently $\delta_i(d)\in I_i$, which is precisely $\bar\delta_i(dL)=0$.

We now apply Theorem~\ref{thm:derivation}\textup{(b)} in order to compute the subgroup of $D/L$ consisting of the cosets which contain an element of $D_{i+1}(x)$. Recall that the action of $D/L$ on $A_i/I_i$ is the action induced by conjugation: for $b\in A_i$ and $d\in D$,
$$
(b+I_i)^{dL}=b^d+I_i.
$$
Moreover, by \eqref{eq:6.29}, $\bar\delta_i(dL)=\delta_i(d)+I_i$ is a derivation for this action. The action and the derivation $\bar\delta_i$ define a right affine action of $D/L$ on $A_i/I_i$ by
$$
(b+I_i)*(dL):=(b+I_i)^{dL}+\bar\delta_i(dL)=b^d+\delta_i(d)+I_i.
$$
Write $0$ for the identity element $I_i$ of the additive group $A_i/I_i$. Since every automorphism of $A_i/I_i$ fixes $0$, the preceding formula gives, for every $dL\in D/L$,
$$
0*(dL)=0^{dL}+\bar\delta_i(dL)=\bar\delta_i(dL).
$$
Consequently,
$$
\operatorname{Stab}_{D/L}(0)=\{dL\in D/L:0*(dL)=0\}=\{dL\in D/L:\bar\delta_i(dL)=0\}=\Ker\bar\delta_i.
$$

The effective hypotheses of Theorem~\ref{thm:derivation}\textup{(b)} have already been verified with fixed ambient group $V=P$, subgroup $D/L\leq P$, abelian minimax group $A_i/I_i$, and derivation $\bar\delta_i$. For the present application the input element of $A_i/I_i$ is $0$, and its coordinate vector in the fixed rational basis of $(A_i/I_i)_{\Q}$ is the zero vector. Therefore Theorem~\ref{thm:derivation}\textup{(b)} computes, as words in the stored generators of $D/L$, a finite generating set
$$
c_1L,\ldots,c_rL
$$
for $\operatorname{Stab}_{D/L}(0)=\Ker\bar\delta_i$.

For each $j$, evaluate the returned word on the chosen lifts stored in \textup{(I1)}, obtaining an explicit word representing $c_j\in D$. Since $c_jL\in\Ker\bar\delta_i$, one has $-\delta_i(c_j)\in I_i$. Enumerate $N_0$ by words in the fixed generators of $H$, retain exactly the elements belonging to $\Lambda_i(x)$ by \textup{(I2)}, and compute $\widetilde\delta_i$ on each retained element using \eqref{eq:6.18}--\eqref{eq:6.20}. Equality in $A_i$ is decidable by Lemma~\ref{lem:fixed}\textup{(i)}, through the corresponding membership test in $W_i$. Hence we may stop when an element $n_j\in\Lambda_i(x)$ satisfies
\begin{equation}
\widetilde\delta_i(n_j)=-\delta_i(c_j).
\label{eq:6.36}
\end{equation}
The search terminates because $I_i=\widetilde\delta_i(\Lambda_i(x))$ by \eqref{eq:6.22}. Put $\ell_j=n_jK_0\in L$ and $e_j=c_j\ell_j$. Since $\delta_i(\ell_j)=\widetilde\delta_i(n_j)$ and $L$ acts trivially on $A_i$, \eqref{eq:6.16} and \eqref{eq:6.36} give $\delta_i(e_j)=0$. Thus $e_j\in D_{i+1}(x)$, and the words for $c_j$ and $n_j$ give an explicit word representing $e_j$ in $E$.

The elements $e_1,\ldots,e_r$ give the data required in \textup{(I1)}. Indeed, the map $dL\mapsto dM$ identifies $D/L$ with $DM/M$. Under this identification, the image of $D_{i+1}(x)$ is exactly $\Ker\bar\delta_i$: if $e\in D_{i+1}(x)$, then $\delta_i(e)=0$, so $eL\in\Ker\bar\delta_i$; conversely, \eqref{eq:6.35} shows that every coset in $\Ker\bar\delta_i$ contains an element of~$D_{i+1}(x)$. Therefore the images $e_1M,\ldots,e_rM$ generate $D_{i+1}(x)M/M$.

We now prove that $\Lambda_{i+1}(x)=\Ker\widetilde\delta_i$. Since $D_{i+1}(x)\leq D_i(x)$, one has $L_{i+1}(x)\leq L_i(x)$ and therefore $\Lambda_{i+1}(x)\leq\Lambda_i(x)$. Let $n\in\Lambda_i(x)$ and put $\ell=nK_0\in L_i(x)=L$. By the definition of $\widetilde\delta_i$, $\widetilde\delta_i(n)=\delta_i(\ell)$. Since $\ell\in M$, we have
$$
n\in\Lambda_{i+1}(x)\quad\Longleftrightarrow\quad\ell\in L_{i+1}(x)\quad\Longleftrightarrow\quad\ell\in D_{i+1}(x)\quad\Longleftrightarrow\quad\delta_i(\ell)=0.
$$
Hence
\begin{equation}
\Lambda_{i+1}(x)=\Ker\widetilde\delta_i.
\label{eq:6.37}
\end{equation}
The finite rational row description required in \textup{(I3)} at level $i+1$ is therefore the finite rational row description of $(\Ker\widetilde\delta_i)_{\Q}$ already computed in \eqref{eq:6.24}.

We now construct the membership algorithm required in \textup{(I2)}. The subgroup $N_{i+1}K_0$ is fixed independently of the input $x,y$. It is recursively enumerable, since both $N_{i+1}$ and $K_0$ are recursively enumerable and products $qk$, with $q\in N_{i+1}$ and $k\in K_0$, enumerate $N_{i+1}K_0$. Since $H$ has a recursive presentation, \cite[Theorem~9.3.4]{LR04} therefore decides membership in this fixed subgroup. Given a word $n$, first decide whether it represents an element of~$N_0$. If not, then $n\notin\Lambda_{i+1}(x)$. If $n\in N_0$, compute $d_x(n)$ from the maintained word lift $\widehat x$. Its image in $E=H/K_0$ is $x^{-1}x^{\,nK_0}$, and hence
$$
d_x(n)\in N_{i+1}K_0\quad\Longleftrightarrow\quad x^{-1}x^{\,nK_0}\in M_{i+1}\quad\Longleftrightarrow\quad nK_0\in D_{i+1}(x).
$$
Since $nK_0\in M$, the last condition is equivalent to $nK_0\in L_{i+1}(x)$. Thus $n\in\Lambda_{i+1}(x)$ if and only if $d_x(n)\in N_{i+1}K_0$, giving the required membership algorithm.

It remains to verify \textup{(I4)} at level $i+1$. By \eqref{eq:6.37}, the homomorphism $\widetilde\delta_i:\Lambda_i(x)\to I_i$ is surjective and has kernel $\Lambda_{i+1}(x)$. Hence
$$
1\longrightarrow\Lambda_{i+1}(x)\longrightarrow\Lambda_i(x)\overset{\widetilde\delta_i}{\longrightarrow}I_i\longrightarrow1
$$
is exact. Since $\Lambda_i(x)$ is a nilpotent minimax group, Lemma~\ref{lem:tensor}\textup{(ii)} gives $(\Lambda_i(x))_p/(\Lambda_{i+1}(x))_p\simeq(I_i)_p$. Moreover, Lemma~\ref{lem:tensor}\textup{(i)} identifies $(I_i)_p$ with its image in $(A_i)_p$. By uniqueness of the continuous extension in Lemma~\ref{lem:tensor}\textup{(iii)}, the resulting map $(\Lambda_i(x))_p\to(A_i)_p$ is precisely $\widetilde\delta_{i,p}$. Therefore
$$
(\Lambda_{i+1}(x))_p=\Ker\widetilde\delta_{i,p}.
$$

Let $u\in(\Lambda_i(x))_p$, so $d_x(u)\in(N_iK_0)_p$. Let $\pi_i:N_iK_0\to A_i=N_iK_0/N_{i+1}K_0$ be the quotient map. On $\Lambda_i(x)$ one has $\widetilde\delta_i=\pi_i\circ d_x$, and both sides extend continuously to $(\Lambda_i(x))_p$. Hence $\widetilde\delta_{i,p}(u)=(\pi_i)_p(d_x(u))$. Since $N_{i+1}K_0\trianglelefteq N_iK_0$, Lemma~\ref{lem:tensor}\textup{(ii)} applied to the quotient map $\pi_i$ gives $\Ker(\pi_i)_p=(N_{i+1}K_0)_p$. Thus
$$
\widetilde\delta_{i,p}(u)=0\quad\Longleftrightarrow\quad d_x(u)\in(N_{i+1}K_0)_p.
$$

It remains only to remove the restriction $u\in(\Lambda_i(x))_p$. By Lemma~\ref{lem:tensor}\textup{(i)}, the inclusion $N_{i+1}K_0\leq N_iK_0$ induces $(N_{i+1}K_0)_p\leq(N_iK_0)_p$. Hence, if $u\in(N_0)_p$ satisfies $d_x(u)\in(N_{i+1}K_0)_p$, then $d_x(u)\in(N_iK_0)_p$, and the induction hypothesis \eqref{eq:6.padic-invariant} implies $u\in(\Lambda_i(x))_p$. Combining this observation with the two equalities above gives
\begin{equation}
(\Lambda_{i+1}(x))_p
=
\{u\in(N_0)_p:
d_x(u)\in(N_{i+1}K_0)_p\}.
\label{eq:6.padic-next}
\end{equation}
This is precisely \textup{(I4)} at level $i+1$. Together with the data constructed above, \textup{(I1)}--\textup{(I4)} now hold at level $i+1$ for the current element $x$.

\smallskip

We now replace the current element by the conjugate $x'=x^t$ constructed above, for which $x'M_{i+1}=yM_{i+1}$. Let $\widehat t\in H$ be the word obtained from the computed word for $t\in E$ by replacing the generators of $E$ by their fixed lifts to $H$, and put $\widehat x'=\widehat x^{\,\widehat t}$. This is a word lift of $x'$. Since $M_{i+1}\trianglelefteq E$, conjugation by $t$ gives $D_{i+1}(x')=D_{i+1}(x)^t$. Moreover, $N_0$ and $K_0$ are normal in $H$, and hence the definition of $\Lambda_{i+1}$ gives $\Lambda_{i+1}(x')=\Lambda_{i+1}(x)^{\widehat t}$.

The elements $e_1^t,\ldots,e_r^t$ lie in $D_{i+1}(x')$. Since $M\trianglelefteq E$, conjugation by~$t$ induces an automorphism of $E/M$, and the images of these elements generate $D_{i+1}(x')M/M$. Their words are obtained effectively by conjugating the previously computed words for $e_1,\ldots,e_r$ by the computed word for $t$. This gives \textup{(I1)} for the new current element $x'$.

Apply Lemma~\ref{lem:transport} with $Y=\Lambda_{i+1}(x)$ and $h=\widehat t$. Its two hypotheses are exactly the data already constructed at level $i+1$: the membership algorithm from \textup{(I2)} and the finite rational row description from \textup{(I3)}. The lemma therefore computes a membership algorithm for $\Lambda_{i+1}(x)^{\widehat t}=\Lambda_{i+1}(x')$ and a finite rational row description of its rational completion. Thus \textup{(I2)} and \textup{(I3)} hold for $x'$.

Finally, conjugation by $\widehat t$ is an automorphism of $N_0$ and, by Lemma~\ref{lem:tensor}\textup{(iii)}, extends to an automorphism of $(N_0)_p$. Since $N_{i+1}$ is characteristic in $N_0$ and $K_0\trianglelefteq H$, the subgroup $N_{i+1}K_0$ is normal in $H$; hence this automorphism preserves $(N_{i+1}K_0)_p$. It also identifies $(\Lambda_{i+1}(x))_p$ with $(\Lambda_{i+1}(x)^{\widehat t})_p=(\Lambda_{i+1}(x'))_p$. From $\widehat x'=\widehat x^{\,\widehat t}$ one obtains, for every $u\in(N_0)_p$,
$$
d_{x'}(u^{\widehat t})=d_x(u)^{\widehat t}.
$$
Therefore \eqref{eq:6.padic-next} is carried by conjugation by $\widehat t$ to
$$
(\Lambda_{i+1}(x'))_p=\{v\in(N_0)_p:d_{x'}(v)\in(N_{i+1}K_0)_p\},
$$
which is \textup{(I4)} for $x'$.

Replace the current element $x$ by $x'=x^t$, and replace its word lift $\widehat x$ by $\widehat x'$. At the same time replace $\gamma$ by $\gamma t$. Indeed, before this replacement $x=x_{\mathrm{original}}^\gamma$, and therefore
$$
x'=x^t=(x_{\mathrm{original}}^\gamma)^t=x_{\mathrm{original}}^{\gamma t}.
$$
Thus, after the replacements, we again have $x=x_{\mathrm{original}}^\gamma$. Moreover, $xM_{i+1}=yM_{i+1}$ by the choice of $t$, and \textup{(I1)}--\textup{(I4)} hold at level $i+1$ for the new current element $x$. Hence the induction hypothesis is established at level $i+1$.

\smallskip

The series \eqref{eq:6.2} runs from $M_1=M$ to $M_{c+1}=\{1\}$, so the procedure performs at most $c$ lifting steps. If the derivation equation fails at some level, there is no element of $E$ sending the current $x$ to $y$ modulo the next term of the series. Since the current element has the form $x=x_{\mathrm{original}}^\gamma$, any conjugating element from $x_{\mathrm{original}}$ to $y$ would give, after multiplication on the left by $\gamma^{-1}$, a conjugating element from the current $x$ to $y$, a contradiction. Hence the original inputs are not conjugate in $E$. If every step succeeds, then at the final level $M_{c+1}=\{1\}$, so the induction invariant gives $x=y$. Since $x=x_{\mathrm{original}}^\gamma$, the stored word $\gamma$ satisfies $x_{\mathrm{original}}^\gamma=y$ and is the required conjugating element.
\end{proof}

\section{Proof of the Main Theorem}\label{sec:6}

Theorem~\ref{thm:lifting} solves conjugacy in $E=H/K_0$. Since $K_0\leq K=\Ker(\phi)$, the epimorphism $\phi:H\to G$ induces an epimorphism
$$
\bar\phi:E=H/K_0\longrightarrow G,
\qquad
\bar\phi(hK_0)=\phi(h),
$$
whose kernel is
$$
F:=K/K_0.
$$
Moreover, since $K_0=K\cap H_0$, the map
\begin{equation}
 kK_0\in K/K_0\longmapsto kH_0\in H/H_0
 \label{eq:7.1}
\end{equation}
is injective, with image $KH_0/H_0$. Since $H/H_0$ is finite, so is $F$. Thus $G\simeq E/F$, and the final step reduces to a finite list of conjugacy tests in $E$.

We compute, once and for all, representatives of $F$. Since membership in both $K$ and $H_0$ is decidable, both subgroups are recursively enumerable.  Hence the fixed subgroup $KH_0$ is recursively enumerable, by enumerating products $kh_0$ with $k\in K$ and $h_0\in H_0$. Thus, \cite[Theorem~9.3.4]{LR04} decides membership in $KH_0$.  Since $H/H_0$ is finite, inspect its finitely many cosets and determine the image
\begin{equation}
 KH_0/H_0\leq H/H_0.
 \label{eq:7.2}
\end{equation}
For every coset in \eqref{eq:7.2}, enumerate all words in the fixed generators of $H$, retain those which lie in $K$ using the decidable membership test for $K$, and stop when a retained element belongs to the chosen coset of $H_0$.  The definition of \eqref{eq:7.2} guarantees that such an element exists, so each of these finitely many searches terminates. By injectivity of \eqref{eq:7.1}, the resulting finite list
\begin{equation}
 f_1,\ldots,f_m\in K
 \label{eq:7.3}
\end{equation}
has the property that $f_1K_0,\ldots,f_mK_0$ are precisely the elements of $F$, without repetition.

Let $x,y\in G$ be given by words and use fixed lifts of the generators of $G$ to compute lifts $\widetilde x,\widetilde y\in E$.  We show that $x$ and $y$ are conjugate in $G$ exactly when, for some $j\in\{1,\ldots,m\}$, there exists $c\in E$ such that
\begin{equation}
 \widetilde x^c=(f_jK_0)\widetilde y.
 \label{eq:7.5}
\end{equation}

Suppose first that $x^g=y$ for some $g\in G$, and choose a lift $c\in E$ of $g$. Then $\bar\phi(\widetilde x^c\widetilde y^{-1})=1$, so $\widetilde x^c\widetilde y^{-1}\in F=K/K_0$.  Hence $\widetilde x^c\widetilde y^{-1}=f_jK_0$ for some~$j$, and therefore \eqref{eq:7.5} holds.  Conversely, if \eqref{eq:7.5} holds for some $j$ and $c\in E$, then applying $\bar\phi$ gives $x^{\bar\phi(c)}=y$.  Thus the claimed equivalence follows.

The list \eqref{eq:7.3} is finite and
Theorem~\ref{thm:lifting} decides, for each $j=1,\ldots,m$, whether \eqref{eq:7.5} has a solution $c\in E$.  Therefore conjugacy in $G$ is decidable. When the answer is positive, Theorem~\ref{thm:lifting} returns a word representing an element $c\in E$ satisfying \eqref{eq:7.5}. Replacing each generator of $E$ occurring in this word by its fixed image in the generators of $G$ produces a word representing $\bar\phi(c)\in G$, and the preceding equivalence gives
$$
x^{\bar\phi(c)}=y.
$$
This proves the Main Theorem.

\section*{Acknowledgements}
The authors are members of the non-profit association ``AGTA -- Advances in Group Theory and Applications'' (www.advgrouptheory.com) and are supported by GNSAGA (INdAM).

During the preparation of this manuscript, the authors used OpenAI's ChatGPT as an interactive tool for checking mathematical arguments, exploring proof strategies, and improving the exposition. The statements, arguments, and proofs presented in the paper remain primarily the work of the authors, with ChatGPT used in a supporting role. The authors take full responsibility for the content.

\begin{flushleft}
\rule{8cm}{0.4pt}\\
\end{flushleft}

{
\sloppy
\noindent
Mattia Brescia, Ernesto Ingrosso, Marco Trombetti

\noindent 
Dipartimento di Matematica e Applicazioni ``Renato Caccioppoli''

\noindent
Università di Napoli Federico II

\noindent
Complesso Universitario Monte S. Angelo

\noindent
Via Cintia, Napoli (Italy)

\noindent
e-mail: mattia.brescia@unina.it; ernesto.ingrosso2@unina.it\\ \phantom{e-mail:} marco.trombetti@unina.it 

}

\end{document}